\def\pgf@stroke@inner@line{%
 \let\pgf@temp@save=\pgf@strokecolor@global
 \pgfsys@beginscope%
 {%
  \pgfsys@roundcap%
  \pgfsys@setlinewidth{\pgfinnerlinewidth}%
  \pgfsetstrokecolor{\pgfinnerstrokecolor}%
  \pgfsyssoftpath@invokecurrentpath%
  \pgfsys@stroke%
 }%
 \pgfsys@endscope%
 \global\let\pgf@strokecolor@global=\pgf@temp@save
}
\definecolor{darkgreen}{rgb}{0.0, 0.2, 0.13}
\newcommand{\annette}[1]{{\color{orange} Annette: XYZ #1}}
\newcommand{\johan}[1]{{\color{red} Johan: XYZ #1}}
\newtheorem{lemma}{Lemma}[section]
\newtheorem*{lemma*}{Lemma}
\newtheorem*{thm*}{Theorem}
\newtheorem*{rem*}{Remark}
\newtheorem{proposition}[lemma]{Proposition}
\newtheorem{thm}[lemma]{Theorem}
\newtheorem{cor}[lemma]{Corollary}
\newtheorem{theorem}[lemma]{Theorem}
\newtheorem{conjecture}[lemma]{Conjecture}
\theoremstyle{definition}
\newtheorem{defn}[lemma]{Definition}
\newtheorem{setting}[lemma]{Setting}
\newtheorem{ex}[lemma]{Example}
\newtheorem{rem}[lemma]{Remark}
\newtheorem{remark}[lemma]{Remark}
\newtheorem{question}[lemma]{Question}
\newcommand{\ssm}{\smallsetminus}
\newcommand{\Mor}{\mathrm{Mor}}
\newcommand{\Hom}{\mathrm{Hom}}
\newcommand{\Spec}{\mathrm{Spec}}
\newcommand{\isom}{\cong}
\newcommand{\id}{\mathrm{id}}
\newcommand{\ohne}{\smallsetminus}
\newcommand{\tensor}{\otimes}
\newcommand{\cone}{\mathrm{Cone}}
\newcommand{\tot}{\mathrm{Tot}}
\newcommand{\Sm}{\mathrm{Sm}}
\newcommand{\SmAff}{\mathrm{SmAff}}
\newcommand{\SmProj}{\mathrm{SmProj}}
\newcommand{\dR}{\mathrm{dR}}
\newcommand{\DR}{\mathrm{DR}}
\newcommand{\RdR}{R\Gamma_\dR}
\newcommand{\an}{\mathrm{an}}
\newcommand{\interior}{\mathrm{int}}
\newcommand{\sing}{\mathrm{sing}}
\newcommand{\rd}{\mathrm{rd}}
\newcommand{\Per}{\mathcal{P}} %
\newcommand{\Pnaive}{\Per_{\mathrm{nv}}}
\newcommand{\Pgnaive}{\Per_{\mathrm{gnv}}}
\newcommand{\Pabs}{\Per_{\mathrm{abs}}}
\newcommand{\Pcoh}{\Per_{\mathrm{coh}}}
\newcommand{\Plog}{\Per_{\mathrm{log}}}
\newcommand{\Psmaff}{\Per_{\SmAff}}
\newcommand{\Pmot}{\Per_{\mathrm{mot}}}
\newcommand{\alg}{\mathrm{alg}}
\newcommand{\vol}{\mathrm{vol}}
\newcommand{\Q}{\mathbb{Q}}
\newcommand{\Qbar}{{\overline{\Q}}}
\newcommand{\Z}{\mathbb{Z}}
\newcommand{\R}{\mathbb{R}}
\def\C{\mathbb{C}}
\newcommand{\Pe}{\mathbb{P}} %
\newcommand{\A}{\mathbb{A}}
\newcommand{\Na}{\mathbb{N}}
\newcommand{\IN}{\Na}
\newcommand{\Oh}{\mathcal{O}}
\newcommand{\Eh}{\mathcal{E}}
\newcommand{\Sh}{\mathcal{S}}
\newcommand{\Def}{\mathrm{Def}}
\newcommand{\Blo}{\mathrm{Blo}}
\newcommand{\OBl}{\mathrm{OBl}}
\newcommand{\trdeg}{\mathrm{trdeg}}
\newcommand{\cc}[1]{\mathrm{cc}({#1})}
\newcommand{\md}{\,\mathrm{d}}
\newcommand{\e}{\mathrm{e}}
\newcommand{\Bcirc}{B^\circ}
\newcommand{\Bsharp}{B^\sharp}
\newcommand{\Ptilde}{{\tilde{\Pe}^1}}
\newcommand{\reg}{\mathrm{Reg}}
\def\xref#1{\cref{#1}}
\def\Aref#1{\cref{#1}}
\def\Bref#1{\cref{#1}}
\def\partAorB#1#2{#2}
\begin{document}
\title{Exponential periods and o-minimality}
\author{Johan Commelin}
\address{Mathematisch Instituut, Utrecht University, Budapestlaan 6, 3584CD Utrecht, The Netherlands}
\email{j.m.commelin@uu.nl}
\author{Philipp Habegger}
\address{Department of Mathematics and Computer Science, University of
Basel, 4051~Basel, Switzerland}
\email{philipp.habegger@unibas.ch}
\author{Annette Huber}
\address{Math. Institut, Universität Freiburg, Ernst-Zermelo-Str. 1, 79102~Freiburg, Germany}
\email{annette.huber@math.uni-freiburg.de}
\date{\today}

\begin{abstract}
 Let $\alpha \in \C$ be an exponential period.
 This paper shows that the real and imaginary part of $\alpha$
 are up to signs volumes of sets definable without parameters
 in the o-minimal structure generated by
 the real exponential function and ${\sin}|_{[0,1]}$.
 This is a weaker analogue of the precise
 characterisation of ordinary periods
 as numbers whose real and imaginary part
 are up to signs volumes of $\Q$-semi-algebraic sets.

 Furthermore, we define a notion of naive exponential period and
 compare it to the existing notions using cohomological methods.   
 In particular, naive exponential periods are the same
 as periods of exponential Nori motives,
 which justifies that the definition of naive exponential periods
 singles out the correct set of complex numbers
 to be called \emph{exponential} periods.
\end{abstract}

\maketitle

\tableofcontents

\section*{Introduction}

Exponential periods are, roughly speaking, complex numbers of the form
\begin{equation}\label{eq:rough} \int_\sigma \e^{-f}\omega\end{equation}
where $\omega$ is an algebraic differential form, $f$ an algebraic function and
$\sigma$ a domain of integration of algebraic
nature.
They have a conceptual interpretation as entries of the period matrix between
twisted de Rham cohomology and rapid decay homology;
more on this later.

The aim of this paper is
to give several definitions that make (\ref{eq:rough}) precise,
and to compare them.
Our main result is the following theorem.

\begin{thm}[\xref{thm:compare_all}]
 Let $k\subset\C$ be a subfield such that $k$ is algebraic over $k_0=k\cap \R$.
 The following subsets of $\C$ agree:
 \begin{enumerate}
  \item naive exponential periods over $k$;
  \item cohomological exponential periods of triples $(X,Y,f)$
   where $X$ is a smooth variety over~$k$,
   $Y \subset X$ is a simple normal crossings divisor
   and $f \in \Oh(X)$ is a regular function;
\item cohomological exponential periods of triples $(X,Y,f)$ where $X$ is an arbitrary variety over~$k$, $Y\subset X$ a closed subvariety and $f\in \Oh(X)$ is a regular function;
  \item periods of effective exponential Nori motives over $k$.
 \end{enumerate}
 Additionally, for every such number its real and imaginary part
 are up to signs volumes of compact subsets of $\R^n$  definable over
 $k_0$ 
 in the o-minimal structure $\R_{\sin,\exp}$.
\end{thm}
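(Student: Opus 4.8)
The plan is to use the equivalence of the four descriptions to reduce to one convenient model, and then to run the classical argument that periods are volumes of semi-algebraic sets, with the factor $\e^{-f}$ accounted for by the real exponential (for $\e^{-\mathrm{Re}(f)}$) and the restricted sine (for $\cos\mathrm{Im}(f)$, $\sin\mathrm{Im}(f)$). Since all four notions define the same subset of $\C$, it suffices to prove the last assertion for a cohomological exponential period of a smooth triple, i.e.\ for $\alpha=\int_\sigma\e^{-f}\omega$ where $\omega$ is a class in the twisted de Rham cohomology of $(X,Y,f)$ and $\sigma$ a rapid decay cycle. Fix a good compactification $\bar X\supset X$ over $k$ with $\bar X\setminus X$ together with the closure of $Y$ a normal crossings divisor $D$, and $\bar f\colon\bar X\to\Pe^1$ restricting to $f$ on $X$. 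Then $X(\C)\subset\bar X(\C)$, $Y(\C)$ and $D(\C)$ are real semi-algebraic sets over $k_0$, and $u:=\mathrm{Re}(f)$, $v:=\mathrm{Im}(f)$ are real-algebraic functions over $k_0$ on $X(\C)$ (using $k\subseteq\overline{k_0}$, as $k$ is algebraic over $k_0$).

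The heart of the argument, and the step I expect to be hardest, is to replace $\sigma$ within its rapid decay homology class by a semi-algebraic chain $\sigma'$, defined over $k_0$, on which $v$ is bounded, say $|v|\le M$ for a fixed integer $M$. The reason this should be possible is the steepest-descent picture: the rapid decay directions along a boundary stratum form an open set, so the part of $\sigma$ near $D$ can be pushed into the semi-algebraic ``horns'' $\{\mathrm{Re}(f)>T,\ |\mathrm{Im}(f)|<M\}$, equivalently toward level sets of $\mathrm{Im}(f)$, along which $\mathrm{Re}(f)$ is monotone by the Cauchy--Riemann equations; the remaining part of $\sigma$ stays in a compact region of $\bar X(\C)$ disjoint from $D$, where $v$ is automatically bounded. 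Turning this into a genuine semi-algebraic cycle that computes rapid decay homology is the real work — in effect a tame refinement of the Lefschetz-thimble description of rapid decay cycles — and I would isolate it as a separate lemma.

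Granting such a $\sigma'$, the rest follows the classical pattern. Take the closure $\bar\sigma'\subset\bar X(\C)$ and a semi-algebraic parametrisation $\gamma\colon\overline U\to\bar\sigma'$ with $\overline U\subset\R^n$ compact semi-algebraic over $k_0$. Since $\mathrm{Re}(\e^{-f}\omega)=\e^{-u}(\cos v\cdot\mathrm{Re}\,\omega+\sin v\cdot\mathrm{Im}\,\omega)$, the pullback $\gamma^*\mathrm{Re}(\e^{-f}\omega)=G\md y_1\wedge\cdots\wedge\md y_n$ has coefficient $G$ definable over $k_0$ in $\R_{\sin,\exp}$: the factor $\e^{-u}$ is definable in $\R_{\exp}$; the compositions $\cos v$ and $\sin v$ are definable in $\R_{\sin,\exp}$ because $v$ takes values in the bounded interval $[-M,M]$ and the restrictions of $\cos$ and $\sin$ to any bounded interval are definable over $\Q$ in $\R_{\sin,\exp}$ (obtained from $\sin|_{[0,1]}$ via square roots and the addition formulas); and the remaining factors are semi-algebraic with coefficients that are real and imaginary parts of elements of $k$, hence algebraic over $k_0$ and so definable over $k_0$. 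The rapid decay condition forces $\e^{-f}\omega$ to vanish to infinite order along $D$, and $\gamma$, being semi-algebraic, has at most polynomially growing derivatives, so $G$ extends continuously by $0$ across $\gamma^{-1}(D)$; in particular $G$ is bounded and $\mathrm{Re}(\alpha)=\int_U G\md y$ converges absolutely.

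It remains to convert $\int_U G\md y$ into a volume. Decompose $U$ along the sign of $G$ into pieces definable over $k_0$; over the piece where $G>0$ the integral equals the volume of the bounded definable-over-$k_0$ set $\{(y,t):y\in U,\ 0<t<G(y)\}$, and analogously where $G<0$, so that $\mathrm{Re}(\alpha)=\vol(H^+)-\vol(H^-)$ with $H^+,H^-\subset\R^{n+1}$ compact definable over $k_0$ (replacing each by its closure does not change the volume). A signed difference of volumes of compact definable sets is, up to sign, the volume of a single compact definable-over-$k_0$ set, by the same cut-and-paste as for ordinary periods. Applying the same reasoning to $\mathrm{Im}(\e^{-f}\omega)=\e^{-u}(\cos v\cdot\mathrm{Im}\,\omega-\sin v\cdot\mathrm{Re}\,\omega)$ gives the statement for $\mathrm{Im}(\alpha)$. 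Thus the only input beyond the classical ``periods are volumes'' argument is the tame control of rapid decay cycles on which $\mathrm{Im}(f)$ is bounded, which is where I expect essentially all of the difficulty to sit.
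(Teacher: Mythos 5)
Your proposal does not prove the theorem: its main content is the \emph{equality} of the four sets, and you take that equality as an input (``use the equivalence of the four descriptions to reduce to one convenient model''). The paper establishes it through a cycle of nontrivial inclusions --- naive $\subset$ generalised naive $=$ absolutely convergent $\subset$ log-pair cohomological $\subset$ general cohomological $\subset$ complexes of smooth affines $\subset$ motivic $\subset$ log-pair cohomological $\subset$ naive --- each of which is a separate proposition (resolution of singularities and the Belkale--Brosnan convergence argument for naive $\to$ cohomological; hypercovers and good compactifications of complexes for the motivic steps; the blow-up, homotopy and triangulation machinery for cohomological $\to$ naive). None of this appears in your write-up, so what you have is at best an argument for the final ``volumes'' clause conditional on the rest of the theorem.

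Even for that clause, the step you yourself flag as ``the real work'' --- replacing a rapid decay cycle by one on which $\Im(f)$ is bounded --- is exactly the paper's key lemma (the comparison of the homology of $\Bsharp(X,f)$ with that of $\Bcirc(X,f)$, \cref{prop:comp_homol}, proved by a local sheaf-theoretic computation on the real oriented blow-up showing that half-circles at infinity deformation retract onto single boundary points), and you leave it unproved. Note also that the paper's route is the reverse of yours: it proves the volume statement directly for \emph{naive} exponential periods, where boundedness of $\Im(f)$ on $G$ is part of the definition (\cref{thm:naive_is_volume} via \cref{thm:omin_volume}), and separately proves that cohomological periods are naive; your steepest-descent deformation is not needed if one organises the proof this way. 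Two further points would need care in your sketch: $\e^{-f}\omega$ vanishes to infinite order only along the components of the boundary where $\Re(f)\to+\infty$, not along horizontal components where $f$ stays finite and $\omega$ may have poles (the paper excludes these via the definition of $\Bcirc$ and the blow-up argument in \cref{prop:geometry}); and a merely semi-algebraic parametrisation $\gamma$ of the chain is not enough --- one needs triangulations that are globally of class $C^1$ (Czapla--Paw{\l}ucki) for Stokes and for the convergence argument of \cref{thm:omin_volume}.
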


The condition on $k$ is natural from the point of view of real semi-algebraic geometry going into the definition of naive exponential periods.
The most interesting case from the number theoretic point of view is $k=\Q$, or $\Qbar$ or $\Qbar\cap\R$,
which produce the same subset of exponential periods of~$\C$.

Let us now explain the notions appearing in this theorem.

\subsection{Naive exponential periods}
We propose the following very explicit definition
as one way of making (\ref{eq:rough}) precise.

\begin{defn}\label{defn:naive}\label{defn:naive2}
 Let $k\subset\C$ be a subfield such that $k$ is algebraic over $k \cap \R$.

 A \emph{naive exponential period} over $k$
 is a complex number of the form
 \[ \int_G \e^{-f} \omega \]
 where $G \subset \C^n$ is a pseudo-oriented (not necessarily compact) closed
 $(k \cap \R)$-semi-algebraic subset,
 $\omega$ is a rational algebraic differential form on $\A^n_k$
 that is regular on $G$
 and $f$ is a rational function on $\A^n_k$ such that $f$ is regular and proper
 on $G$
 and, moreover,
 $f(G)$ is contained in a strip
 \[
  S_{r,s}=\{z\in\C \mid \Re(z)>r, |\Im(z)|<s\},
 \]
 for some real numbers $r, s$.
\end{defn}
A pseudo-orientation on $G$ is the choice of an orientation on a
$(k\cap\R)$-semi-algebraic open subset whose complement has positive
codimension (and hence measure~$0$), see \cref{orientation}.

We check that these integrals converge absolutely.
In the case $f = 0$, we recover
the notion of an (ordinary) naive period
as introduced by Friedrich in \cite{Fr},
see \cite[Definition~12.1.1]{period-buch}\footnote{Unfortunately, the definition is misstated in loc. cit. See the erratum at \url{http://home.mathematik.uni-freiburg.de/arithgeom/preprints/buch-errata/buch.html}.}.
The definition of a naive exponential period
is not identical to the definition given by Kontsevich--Zagier
in~\cite[\S4.3]{kontsevich_zagier}.
See \cref{contrast_with_KZ} for more details about the difference.

\subsection{On o-minimality}
In his ``Esquisse d'un Programme'', Grothendieck set forth the need
for, and the principles of, some form of ``tame'' topology.
O-minimality provides a good theory of ``tame'' subsets of~$\R^n$,
avoiding Cantor sets, fractals, the graph of a space-filling curve and
$\sin(1/x)$.
In recent years,
o-minimality has seen spectacular applications in algebraic geometry,
most notably as an important tool
in the proof of the Andr\'e--Oort conjecture for
$\mathcal{A}_g$
via the Pila--Zannier strategy,
see the survey~\cite{KlinglerAOSurvey}.

The `o' in ``o-minimality'' stands for ``order''.
The concept was first introduced in work of Van den Dries~\cite{Dries_1984_Remarks_Tarski}
and Pillay--Steinhorn~\cite{PillaySteinhorn}
at about the same time that Grothendieck was writing his
``Esquisse d'un Programme''.
We recall the definition and some basic properties
    of o-minimal structures in Section \ref{ssec:omin}.

By work of Wilkie \cite{Wilkie_1996_Model_completeness}
and Van den Dries and Miller \cite{Dries_Miller_1994_real_exponential_field}, 
 the structure of subsets of
$\R^n$ defined using the quantifiers $\forall,\exists,$ the basic operations $+,\cdot,<$, the elements of $k\cap\R$,
the real exponential function $\exp$,
and the restriction of the analytic function $\sin$
to the bounded interval $[0,1]$ is an example of an o-minimal structure.
We denote it by $\R_{\sin,\exp,k}$.

\begin{thm}[See Theorem~\ref{thm:naive_is_volume}]
 Let $\alpha$ be a naive exponential period over~$k$.
 Then its real and imaginary part are up to signs
 volumes of compact subsets of $\R^n$
 definable in $\R_{\sin,\exp,k}$.
\end{thm}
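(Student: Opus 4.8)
The plan is to reduce the statement, step by step, to a finite sum of integrals of the form $\int_D g\,d\mu$ where $D\subset\R^m$ is a compact definable set in $\R_{\sin,\exp,k}$ and $g$ is a bounded nonnegative definable function; by the standard trick (graph of $g$ over $D$ is again compact and definable, and its volume in $\R^{m+1}$ equals $\int_D g$) such a quantity is itself a volume of a compact definable set, and signs of the finitely many summands can be absorbed by splitting into positive and negative parts. So everything hinges on making the various pieces of $\int_G \e^{-f}\omega$ definable and on controlling convergence well enough to truncate to compact domains. First I would fix a naive exponential period $\alpha=\int_G \e^{-f}\omega$ with $G\subset\C^n=\R^{2n}$ a pseudo-oriented closed $(k\cap\R)$-semi-algebraic set, $\omega$ rational and regular on $G$, and $f$ proper on $G$ with $f(G)\subset S_{r,s}$. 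Writing $z_j=x_j+\i y_j$, the integrand $\e^{-f}\omega$ expands, after choosing the pseudo-orientation on the semi-algebraic open locus $G^\circ\subset G$ of full dimension $d$, into a sum of terms $\e^{-\Re f}\bigl(\cos(\Im f)\,\text{or}\,\sin(\Im f)\bigr)\cdot(\text{rational function})\cdot dx_{I}\wedge dy_{J}$; integrating a top-degree form over the oriented $d$-manifold $G^\circ$ and taking real/imaginary parts produces a finite $\Q$-linear combination of integrals of the shape $\int_{G^\circ} h\cdot\e^{-\Re f}\cdot\{\cos(\Im f)\ \mathrm{or}\ \sin(\Im f)\}\,d\lambda_d$, with $h$ a rational function regular on $G$ and $\lambda_d$ the $d$-dimensional Hausdorff/Lebesgue measure pulled back through a semi-algebraic parametrisation (which exists by cell decomposition of $G^\circ$, refining to finitely many pieces each the graph of a semi-algebraic map over an open subset of $\R^d$).

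Next comes the definability bookkeeping. On each cell, the parametrisation is semi-algebraic, hence definable in $\R_{\sin,\exp,k}$; $\Re f$ and $\Im f$ are rational functions of the coordinates, hence definable; $\e^{-\Re f}$ is definable because $\exp$ is in the structure; and $\cos(\Im f),\sin(\Im f)$ are definable because $\sin|_{[0,1]}$ together with the field operations and standard angle-addition/reduction identities gives $\sin$ and $\cos$ on all of $\R$ as functions definable in $\R_{\sin,\exp,k}$ — this is the point where the restricted analytic function in the structure is used, and it is a finite and purely formal manipulation (range reduction modulo $2\pi$, with $\pi$ itself definable). The Jacobian factor coming from the parametrisation is semi-algebraic. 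Thus on each cell the integrand is a definable function of $d$ real variables ranging over a (typically non-compact, because $G$ is non-compact) open definable set $U\subset\R^d$.

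The real work is the truncation to compact domains, and this is where the strip condition $f(G)\subset S_{r,s}$ and the properness of $f$ on $G$ do their job; I expect this to be the main obstacle. Because $\Re f>r$ on $G$, the weight $\e^{-\Re f}$ is bounded by $\e^{-r}$; because $f$ is proper on $G$ and the sublevel sets $\{\Re f\le t\}\cap G$ are therefore compact semi-algebraic, I can stratify $G$ by the value of $\Re f$. The absolute-convergence estimate (asserted just after \cref{defn:naive}, which I may assume) says $\int_G |\e^{-f}\omega|<\infty$; combined with properness this should give, via a co-area / Fubini argument along the proper map $\Re f\colon G\to(r,\infty)$, that the "tail" $\int_{G\cap\{\Re f>t\}}|\e^{-f}\omega|$ tends to $0$ as $t\to\infty$. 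Hence for the purpose of expressing $\Re\alpha$ and $\Im\alpha$ as volumes it suffices to work over the compact pieces $G_t=G\cap\{\Re f\le t\}$ and take a limit — but since we want an exact equality with a single volume rather than an approximation, I would instead argue that the whole integral already equals an integral over a compact definable set: namely, replace the non-compact coordinate "$\Re f$" by a bounded reparametrisation (e.g. push forward along $u=\e^{-(\Re f - r)}\in(0,1]$, which is definable and has semi-algebraically-times-exponentially controlled Jacobian), turning $U$ into a bounded definable set whose closure $\overline{U}$ is compact and definable, and checking that the transformed integrand extends to a bounded definable function on $\overline{U}$ — boundedness near the new boundary $u=0$ being exactly the content of the tail estimate, and the $\cos/\sin$ factors causing no trouble because they are globally bounded by $1$. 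Finally, summing the finitely many cell-contributions and separating each into its positive and negative parts, each summand is $\int_{D}g\,d\lambda_d$ with $D$ compact definable and $g\ge 0$ bounded definable, hence a volume of the compact definable region $\{(x,t): x\in D,\ 0\le t\le g(x)\}\subset\R^{d+1}$; a sign in front of each records whether it was a positive or negative part, which is the "up to signs" in the statement. The one genuinely delicate point, worth isolating as a lemma, is that the reparametrised integrand is bounded and definable up to the boundary; everything else is cell decomposition plus the observation that $\exp$ and $\sin|_{[0,1]}$ are in the structure.
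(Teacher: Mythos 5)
There are two genuine problems with your argument. The first is your claim that $\sin|_{[0,1]}$ together with ``range reduction modulo $2\pi$'' yields $\sin$ and $\cos$ on all of $\R$ as definable functions in $\R_{\sin,\exp,k}$. This is false: the unrestricted sine has zero set $\pi\Z$, an infinite discrete set, so it cannot be definable in \emph{any} o-minimal structure (range reduction modulo $2\pi$ would require something like the floor function, which is equally non-definable). What is true, and what the paper uses (\cref{Rsinexp_cos_pi}), is that $\sin$ and $\cos$ restricted to a \emph{bounded} definable interval are definable. Your argument is salvageable only because the strip condition $f(G)\subset S_{r,s}$ forces $|\Im f|<s$ on $G$, so $\sin(\Im f)|_G$ factors through a bounded interval; but you invoke the strip condition only for $\Re f$ and for properness, not here, and as written the definability step fails. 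This is precisely the reason the theorem is stated for naive exponential periods and not for the generalised ones.

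The second problem is the compactification step, which you yourself flag as the delicate point but whose proposed justification does not work. You substitute $u=\e^{-(\Re f-r)}$ and assert that the transformed integrand extends to a bounded definable function on $\overline U$, with ``boundedness near $u=0$ being exactly the content of the tail estimate.'' Integrability of the tail does not imply pointwise boundedness of a reparametrised integrand: the co-area/Fubini decomposition along $\Re f$ introduces Jacobian factors that can blow up (the paper has an explicit remark on exactly this phenomenon, even for bounded semi-algebraic cells such as $t\mapsto(t,\sqrt t)$). The paper instead compactifies geometrically: it takes the closure $\bar G$ of $G$ in the real oriented blow-up $B_{\bar X}(X)$ (\cref{lem:GB}), where $\e^{-f}\omega$ extends \emph{continuously by zero} to the boundary because rapid decay beats any polynomial growth of the coefficients (\cref{lem:decay}); absolute convergence and the volume statement then follow from the general \cref{thm:omin_volume}, whose proof handles the unbounded-Jacobian issue by splitting $\omega$ into monomial summands and choosing, for each, a coordinate projection adapted to the cell (\cref{lem:Gdecomposition}). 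Your reduction to graphs of bounded nonnegative definable functions at the very end matches the paper's \cref{lem:Zlinearvolume} and the end of the proof of \cref{thm:omin_volume}, but without a correct compactness and boundedness argument the intermediate integrals are not yet in that form.
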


This generalises a result for ordinary periods:
their real and imaginary part are volumes of compact semi-algebraic sets,
see \cite[Proposition~12.1.6]{period-buch} together with \cite{viu-sos}. 
There is a significant difference though:
in the case of ordinary periods, we also have the converse implication.
The volume of a compact $\Q$-semi-algebraic set is by definition a naive period.
This is no longer clear or even expected in the exponential setting.
The definable subsets appearing in the theorem are of a special shape.
For example, we do not need to iterate the functions $\exp$ and ${\sin}|_{[0,1]}$.
The number $\e^\e$ is definable in the o-minimal structure
and hence also appears as a volume.
We do not expect it to be an exponential period.

\begin{question}
  Is there a natural way to characterise definable sets
  whose volumes are naive exponential periods?  
\end{question}

\subsection{Exponential periods and cohomology}
The origins of the theory of exponential periods
lie in a version of Hodge theory for vector bundles with irregular connections.
To our knowledge such a theory was first considered by Deligne, see \cite[p.~17]{irregular}.
A systematic study of the period isomorphism was started by Bloch and Esnault in~\cite{bloch_esnault_00},
and fully developed by Hien~\cite{hien-surfaces}.
He establishes  a period isomorphism
between de Rham cohomology of the connection and a suitable homology theory.
The special and central case of exponential connections
is treated by Hien and Roucairol~\cite{hien-roucairol}.
If $X$ is a smooth variety over a field $k\subset\C$
and $f\in\Oh(X)$ a regular function,
they consider the twisted de Rham complex $\Omega^*_f$
with differential $\omega\mapsto d\omega-df\wedge \omega$.
Its hypercohomology is \emph{twisted de Rham cohomology}.
They define rapid decay homology of $X^\an$ (see Section~\ref{ssec:rd})
taking the role of singular cohomology in the classical case
and a period pairing
\[ H_n^\rd(X,\Q)\times H^n_\dR(X,f)\to\C\]
inducing a perfect pairing after extending scalars to $\C$.
As in the classical case,
the theory can be extended to singular varieties and also relative cohomology.

The numbers in the image of the pairing are the exponential periods.
Their study in their own right
was proposed by Kontsevich and Zagier
in the last paragraph of~\cite{kontsevich_zagier}.

Ordinary periods have an even more conceptual interpretation
 via the torsor $T$ of isomorphisms between the de Rham realisation and the Betti realisation,
two fibre functors on the Tannakian category of mixed (Nori) motives. The period map defines a $\C$-valued point in $T(\C)$. We recover the period algebra as the image of $\Oh(T)\to\C$,
see \cite{period-buch}.
The same picture also applies in the case of exponential periods.
Fres\'an and Jossen have developed
a fully fledged theory of exponential motives in~\cite{fresan-jossen}. 
Their book also contains a very accessible account of the constructions
and the proof of the period isomorphism.
They also give many examples
of interesting numbers that appear as exponential periods.

We prove:

\begin{thm} [Propositions~\ref{prop:coh_implies_naive} and~\ref{naive_is_effective}
]
 A complex number $\alpha$ is a naive exponential period over $k$ if and only if
 there is a smooth variety $X$ over $k$,
 a simple normal crossings divisor $Y$,
 and a regular function $f\in\Oh(X)$ such that $\alpha$ is in the image of the period pairing
 \[ H_n^\rd(X,Y,\Q)\times H^n_\dR(X,Y,f)\to \C.\]
\end{thm}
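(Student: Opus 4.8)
**Proof plan for the equivalence of naive exponential periods and cohomological exponential periods (smooth with SNC divisor)**

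The plan is to establish the two implications separately, matching the two cited propositions. For the direction ``cohomological $\Rightarrow$ naive'' (Proposition~\ref{prop:coh_implies_naive}), I would start from a class in $H_n^\rd(X,Y,\Q)$ and a class in $H^n_\dR(X,Y,f)$ and unwind the definitions of both sides of the period pairing of Hien--Roucairol. A rapid decay homology class is represented by a (relative) singular chain on $X^\an$ together with a decoration ensuring $\e^{-f}$ times the de Rham form decays rapidly along the chain; a twisted de Rham class is represented by an algebraic form $\omega$ on $X$ (modulo $d\omega - df\wedge\omega$ and relative conditions on $Y$). The pairing is literally $\int_\sigma \e^{-f}\omega$. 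To put this in the shape of Definition~\ref{defn:naive}, I would first reduce to the affine situation by choosing an affine open cover and using a Mayer--Vietoris / partition-of-unity argument to replace the chain by a sum of chains each living in an affine chart, so that $X$ may be taken affine and hence embedded in some $\A^n_k$. Then I would triangulate (or rather, use a semi-algebraic cell decomposition adapted to $Y$ and to the strata where $f$ is large) to replace the topological chain by a $(k\cap\R)$-semi-algebraic one carrying a pseudo-orientation, invoking semi-algebraic triangulation theorems; the rapid-decay condition along the chain translates, after a further subdivision pushing the ``ends'' into the region where $\Re f \gg 0$, into the properness of $f$ on $G$ together with $f(G) \subset S_{r,s}$. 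Absolute convergence is then guaranteed by the strip condition exactly as in the check accompanying Definition~\ref{defn:naive}.

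For the reverse direction ``naive $\Rightarrow$ cohomological'' (Proposition~\ref{naive_is_effective}), I would take a naive exponential period $\int_G \e^{-f}\omega$ with $G \subset \C^n$ closed $(k\cap\R)$-semi-algebraic, $f$ proper on $G$ with $f(G)$ in a strip, and realise it as a period of a triple $(X,Y,f)$ with $X$ smooth over $k$ and $Y$ an SNC divisor. The natural ambient space is $\A^n_k$ with $f$ the given rational function; the task is to produce (after resolving the indeterminacy and poles of $f$ and $\omega$ by blow-ups and by removing a divisor, staying within smooth varieties with SNC boundary — here I would invoke Hironaka resolution over the field $k$) a genuine rapid decay cycle whose support is (the closure of) $G$ and whose pairing with the class of $\omega$ is the given integral. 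The semi-algebraic set $G$ with its pseudo-orientation defines a relative singular chain after semi-algebraic triangulation; the fact that $f$ is proper on $G$ with image in a strip is exactly what is needed to verify the rapid-decay decoration, because $\e^{-f}$ decays like $\e^{-\Re f}$ and properness forces $\Re f \to +\infty$ at the non-compact ends of $G$, uniformly enough (the strip bounds $|\Im f|$, so no oscillatory cancellation is needed). One then checks that the boundary of this chain lies on $Y$, so it is a cycle in relative rapid decay homology, and that $\omega$, being regular on $G$, extends to a class in $H^n_\dR(X,Y,f)$.

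The main obstacle, in both directions, is the careful bookkeeping at the non-compact ends: matching the analytic rapid-decay condition of Hien--Roucairol (decay of $\e^{-f}\omega$ and all its ``higher'' contributions faster than any polynomial along the chain, in a way compatible with the Stokes structure) with the clean but rigid hypotheses of Definition~\ref{defn:naive} (``$f$ proper on $G$ and $f(G)$ in a strip''). Showing these are interchangeable requires (i) arranging by subdivision that the part of $G$ ``near infinity in $X$'' maps into the half-plane $\Re f > r$, and (ii) a compactness argument on the strip $S_{r,s}$ to get uniform decay. A secondary difficulty is ensuring that all the geometric reductions — affinisation, embedding in affine space, resolution of singularities and normal crossings, semi-algebraic triangulation — can be carried out compatibly, i.e.\ that the triangulation can be chosen so that $G$ is a subcomplex and $f$, $Y$ are adapted to it; this is where one leans on the strength of the semi-algebraic triangulation theorem relative to a finite family of semi-algebraic sets and functions. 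The convergence and the identification of the integral with the period pairing are then formal, given the explicit description of the pairing.
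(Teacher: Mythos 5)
Your overall architecture matches the paper's (reduce to affine charts via hypercovers, triangulate semi-algebraically, identify the pairing with $\int_G \e^{-f}\omega$; conversely resolve singularities and read off a relative cycle from $G$), but there is one genuine gap in the direction ``cohomological $\Rightarrow$ naive''. You propose to obtain the condition $f(G)\subset S_{r,s}$ by ``a further subdivision pushing the ends into the region where $\Re f\gg 0$''. This conflates two different conditions. A rapid decay cycle lives in $\Bcirc(X,f)$, whose boundary at infinity is the whole open half-circle $\{s\infty \mid \Re(s)>0\}$; a chain may have ends approaching, say, $\e^{i\pi/4}\infty$, along which $\Re f\to+\infty$ but $\Im f$ is unbounded. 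No subdivision changes where the ends of the chain go, so this only yields $\overline{f(G)}\subset\Bcirc$, i.e.\ a \emph{generalised} naive exponential period (\cref{defn:gnaive}), not a naive one, which demands $|\Im f|$ bounded on $G$ (this distinction is exactly \cref{ex:3}, and it is essential for definability in $\R_{\sin,\exp}$, since only ${\sin}|_{[0,1]}$ is definable). The missing idea is to replace the cycle by a homologous one whose ends all approach the single direction $1\infty$: one must prove that the inclusion $\bigl(\Bsharp(X,f),\partial\Bsharp(X,f)\bigr)\hookrightarrow\bigl(\Bcirc(X,f),\partial\Bcirc(X,f)\bigr)$ induces an isomorphism on relative homology (\cref{prop:comp_homol}, proved by a local computation of higher direct images on the blow-up, showing the boundary inclusion is fibrewise a homotopy equivalence of circle arcs onto points). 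This is the key step of the whole comparison and is absent from your plan.

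Two secondary points. In the direction ``naive $\Rightarrow$ cohomological'', after compactifying you must also arrange that the closure of $G$ avoids the \emph{horizontal} components of the boundary divisor (those on which $\bar f$ stays finite), since $\Bcirc(X,f)$ deletes their preimage; otherwise $\bar G$ does not define a relative cycle. This requires a further modification of the compactification and a genuine local argument (\cref{very-good-cmpt,very-good-cmpt-aux}), not just Hironaka. Finally, ``invoking semi-algebraic triangulation theorems'' must be sharpened: the standard triangulations are only facewise differentiable, which is not enough for Stokes's theorem and hence for the pairing to descend to homology classes; one needs the globally $C^1$ triangulations of Czapla--Paw\l{}ucki (\cref{prop:czaplapawlucki,prop:triangle_manifold}) together with Whitney's version of Stokes (\cref{thm:stokes}).
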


Again this generalises the result for ordinary periods, see \cite[Theorem~12.2.1]{period-buch}.
Actually, the theorem also holds for general $X$ and~$Y$
or even all periods of effective exponential Nori motives,
see \xref{thm:compare_all}.

The general proof is quite technical,
and we therefore include the arguments in the curve case in \cref{sec:curves}.
This special case is more accessible,
yet already contains the main ideas.

\subsection{Method of proof}
The global strategy is similar to the case of ordinary periods.
Algebraic varieties admit triangulations by semi-algebraic simplices.
This allows us to represent homology classes by semi-algebraic sets.
In the simplest case, the period pairing on cohomology has the shape
\[ (\sigma,\omega)\mapsto \int_\sigma \e^{-f}\omega,\]
suggesting the relation to naive periods.
Conversely, the Zariski closure of a semi-algebraic set $G$
is an algebraic variety $X$,
and the Zariski closure of its boundary is a closed subvariety $Y \subset X$.

The main new tool compared to the classical case
is the real oriented blow-up of a smooth analytic variety at some divisor.
In the simplest case of $\Pe^1$ and the divisor $\infty$,
it is the compactification of $\C$ by a circle at infinity.
The points correspond to the directions of half rays.
Its use is of long standing in the theory of irregular connections.
Hien and Roucairol and also the exposition of Fres\'an--Jossen
use it to establish the period isomorphism in the exponential case.
Indeed, rapid decay homology  of $X$ can be computed
as the homology of a certain partial compactification
$\Bcirc(X^\an,f)$
of $X^\an$ relative to its boundary, see \Cref{prop:fj_rd}.
For details on $\Bcirc(X^\an,f)$
see \cref{defn:our_Bcirc} and \cref{ssec:orbmc}.
It is still semi-algebraic, more precisely,
a semi-algebraic manifold with corners.

However, this is not yet enough to bound the imaginary part of $f(G)$,
something that is crucial in showing
that $\int_G \e^{-f}\omega$ is the volume of a definable set
in the o-minimal structure
$\R_{\exp,\sin,k}$.
Recall that the complex exponential is \emph{not} definable,
only the real exponential and $\sin$ (or $\cos$) restricted to bounded intervals.
We introduce a smaller semi-algebraic subset $\Bsharp(X,f)$ of $\Bcirc(X,f)$.
The actual key step in the proof of our main theorem
is the comparison between the homology of $\Bsharp(X,f)$ and $\Bcirc(X,f)$
in \xref{prop:comp_homol}.
In the simplest case, they agree because a half-circle is contractible to a single point.

There are two reasons for the considerable length of this paper:
on the one hand, we aim 
for readers without a background
in o-minimality and/or in the classical theory of periods
and have chosen to reproduce definitions from the literature
and to give detailed arguments and references.
We have also added a section on the case of curves
that is not needed for the proof of the main theorems,
but should be more accessible and still uses all of the main ideas.

On the other hand, we ran into many technical problems.
\begin{itemize}
 \item For example, we do not know if the real oriented blow-up of a smooth variety
  can be embedded into $\R^n$ preserving both the semi-algebraic and differentiable structure.
  Instead we introduce the notion of a semi-algebraic (or more general: definable) manifold,
  at the price of having to extend some results that are well-known for semi-algebraic subsets of $\R^n$ to the manifold setting.
 \item The standard triangulation results in semi-algebraic geometry
  or for sets definable in an o-minimal structure only give facewise differentiability of the simplices.
  This is not strong enough for a straightforward application of Stokes's theorem---something that we need for a well-defined period pairing depending only on homology classes.
  Our way out is by a result of  Czapla--Paw\l{}ucki \cite{omin-triang},
   who prove the existence of
  o-minimal $C^1$-triangulations.
  We can then use a subtle version of Stokes's theorem proved by Whitney in~\cite{whitney}
  for ``regular'' differentials on $C^1$-manifolds.
 \item Finally,
  the period isomorphism has a simple description only in the case of a smooth affine variety.
  The general case is handled by hypercovers.
  This involves some checking of strict compatibilities between our real oriented blow-ups and their subspaces
  and a  check that the abstract period pairing is still realised by
  integration.
\end{itemize}

\subsection{Structure of the paper}

The following diagram explains the global structure of the paper,
and how the different theorems contribute to the main comparison result.

\[
 \begin{tikzcd}
  \text{Vol}
  \\ \\
  \partAorB{\hyperref[defn:naive]{\Pnaive}}{\hyperref[defn:naive2]{\Pnaive}}
  \ar[uu, hook, "\text{\Aref{thm:naive_is_volume}}"]
  \ar[rr, hook, "\text{\Aref{naive-is-generalised}}"]
  &
  & \partAorB{\hyperref[defn:gnaive]{\Pgnaive}}{\hyperref[defn:naive2]{\Pgnaive}}
  \ar[ddl, hook, "\text{\Bref{naive_is_effective}}"]
  \ar[rr, equal, "\text{\Aref{cor:abs=naive}}"]
  &
  & \partAorB{\hyperref[defn:abs]{\Pabs}}{\hyperref[defn:naive2]{\Pabs}}
  \\ \\
  & \partAorB{\hyperref[defn:coh_period]{\Plog}}{\hyperref[defn:coh_period]{\Plog}}
  \ar[uul, hook, "\text{\Bref{prop:coh_implies_naive}}"]
  \ar[r, hook, "\text{triv}"]
  & \partAorB{\hyperref[defn:coh_period]{\Pcoh}}{\hyperref[defn:coh_period]{\Pcoh}}
	 \ar[r, hook, "\text{\hyperref[rem:is_smaff]{Rmk}}", "\labelcref{rem:is_smaff}"']
  & \partAorB{\Psmaff}{\hyperref[defn:smaff]{\Psmaff}}
  \ar[ddl, hook, "\text{\Bref{smaff_sub_mot}}"]
  \\ \\
  &
  & \partAorB{\Pmot}{\hyperref[sec:concl]{\Pmot}}
  \ar[uul, hook, "\text{\Bref{mot_sub_coh}}"]
 \end{tikzcd}
\]

After settling notation in \xref{sec:not}, we review o-minimal structures for those readers not familiar with the theory in \xref{ssec:omin}. In \xref{sec:definable-manifold}, we introduce the notion of a definable $C^p$-manifold with corners because rapid decay homology has a natural description in such terms. We set up the theory of integration of differential forms. The main result is \xref{thm:omin_volume}: certain integrals can be expressed as volumes.

In \xref{realblow}, we review the construction of the real oriented blow-up and show that it is a semi-algebraic manifold with corners.

\xref{sec:naive} discusses naive exponential periods and their variants, in particular the issue of their convergence. By applying the results of \xref{sec:definable-manifold} we establish that they can be expressed as volumes, the first step in our comparison result.

In \xref{sec:exp}, we review the definition of rapid decay homology and twisted de Rham cohomology and the period isomorphism, concentrating on the more accessible smooth affine case. The technical \xref{sec:triangle} discusses existence and properties of triangulations of semi-algebraic manifolds. This is crucial input for the inclusion of the set of cohomological exponential periods into naive exponential periods.

\Cref{sec:curves} proves part of the upper triangle in the case of curves.
In this special case the main ideas of the proof are present,
but several delicate problems are avoided.

For the remainder of the paper, the technical level is notched up.
\Cref{sec:exp_gen} extends
the definition of cohomological exponential periods
to arbitrary pairs $(X, Y)$
of a variety~$X$ and a closed subvariety $Y \subset X$.
\cref{sec:coh_is_naive} is devoted to proving $\Plog(k) \subset \Pnaive(k)$,
whereas
\cref{sec:gnaive_is_coh} shows the inclusion $\Pgnaive(k) \subset \Plog(k)$.
Finally, in \cref{sec:concl} we
prove the remaining parts, which are all very formal,
and glue all the pieces together to obtain the main theorem.

\subsection{Outlook}
Our comparison results point to a deeper relation between periods and o-minimal theory.
While the case of ordinary periods---with their incarnations as entries of periods matrices or as volumes of semi-algebraic sets---might be seen as a coincidence, this second instance suggests that this is not the case.
Bakker, Brunebarbe, Klingler and Tsimerman
have been pursuing a project
of making a systematic use of tame geometry in Hodge theory
and apply it successfully to questions related to the Hodge
conjecture. A central tool was their  GAGA theory merging complex
spaces with o-minimal geometry. We hope that de Rham cohomology and the period isomorphism can also be extended to  a suitable category of definable spaces in a compatible way with the case of algebraic varieties, providing a new point of view on period numbers. 
The note \cite{huber_sa_motives} shows that this cannot be achieved on the naive candidate category  of all definable spaces. The category of definable complex spaces (or a suitable subcategory) is a better candidate.

\subsection*{Acknowledgements}
Many thanks to Amador Martin-Pizarro for teaching two of us (Commelin and 
Huber) not only the formalism but also the intuition of o-minimal theory. 
The review of o-minimal theory in \cref{ssec:omin} owes a lot to his talks.
We also thank Fabrizio Barroero and Reid Barton for discussions on
o-minimality
and Lou van den Dries for carefully explaining aspects
 of $C^p$-cell decomposition and triangulation.

We thank Marco Hien, Ulf Persson and Claus Scheiderer for answering questions on the real oriented blow-up.
Stefan Kebekus shared his insights on blow-ups in algebraic geometry.
Finally, we appreciated the help of Nadine Gro\ss{}e with the theory of integration.

We thank Amador Martin-Pizarro and Javier Fres\'an for their comments on our first draft.

We are thankful to the referee for their careful reading and many helpful comments.

The second-named author does not take credit for the cohomological computations starting in Section~\ref{sec:exp_gen}.

\section{Notation and preliminaries}
\label{sec:not}
\subsection{Fields of definition}
\label{ssec:field}\label{ssec:field2}
If $z$ is a complex number,
we write $\Re(z)$ and $\Im(z)$ for its real and imaginary part.
Let $k\subset\C$ be a subfield.
We denote by $k_0$ the intersection $k \cap \R$,
by $\bar k$ the algebraic closure of~$k$ in~$\C$,
and by $\tilde k$ the real closure of~$k_0$ in~$\R$.
Note that $k$ is not automatically algebraic over~$k_0$.
(For example, let $a,b \in \R$ be such that $\trdeg_\Q(\Q(a,b)) = 2$,
and consider $k = \Q(a + bi)$. Then $k_0 = \Q$.)
The following conditions on $k$ are equivalent:
\[
 \text{$k_0 \subset k$ is alg.} \iff
 \text{$k_0 \subset \bar k$ is alg.} \iff
 \text{$\tilde k \subset \bar k$ is alg.} \iff
 \text{$[\bar k : \tilde k] = 2$}.
\]
If $k$ satisfies these conditions,
then so does every intermediate extension $k \subset L \subset \C$
with $k \subset L$ algebraic.

\subsection{Categories of varieties}
Let $k\subset\C$ be a subfield.
By variety we mean a quasi-projective reduced separated scheme of finite type over $k$.
We denote the associated analytic space on $X(\C)$ by~$X^\an$.

\subsection{Good compactifications}\label{ssec:gc}\label{ssec:gc2}
We say that $(X,Y)$ is a \emph{log-pair}
if $X$ is smooth of pure dimension $d$, and
$Y$ a simple normal crossings divisor.
A \emph{good compactification} of $(X,Y)$
is the choice of an open immersion $X \subset \bar{X}$ such that
$\bar{X}$ is smooth projective, $X$ is dense in $\bar{X}$ and
$\bar{Y}+X_\infty$ is a simple normal crossings divisor where $\bar{Y}$ is
the closure of $Y$ in $\bar{X}$ and $X_\infty = \bar{X} \ssm X$.

If, in addition, we have a morphism $f:X\to\A^1$, we say that
$\bar{X}$ is \emph{a good compactification relative to $f$} \emph{(or a good compactification of $(X,f)$)} if $\bar{X}$ is a good compactification and $f$ extends
to $\bar{f}:\bar{X}\to\Pe^1$. Note that $\bar{f}^{-1}(\infty)$ is a simple normal crossings divisor in this case.
We say that $\bar{X}$ is \emph{a good compactification of the log-pair 
$(X,Y)$ relative to $f$} if it is a good compactification of $X$ relative to $f$ and at the same time a good compactification of $(X,Y)$.

Such compactifications exist, by the following argument.
Let $f:X\to\A^1$ be a morphism. Consider the graph of $f$ in
$X\times\A^1$ and take its Zariski closure $\bar{X}''$ inside
$\bar{X}'\times\Pe^1$, where $\bar{X}'$ is a projective variety
containing $X$ as a Zariski open and dense subset. We may consider $X$
as a Zariski open and dense subset of $\bar{X}''$.
The projection $\bar{X}''\to \Pe^1$ is a morphism that extends
$f$.
By applying resolution of singularities to~$\bar{X}''$ we see that a good compactification
relative to $f$ exists. Suppose that we have in addition a simple normal crossings divisor $Y$ in $X$. Again by resolution of singularities, we can achieve that $\bar{Y}+X_\infty$ is a simple normal crossings divisor.

\subsection{Some semi-algebraic sets} \label{sec:not_B}\label{sec:not_B2}
Let $k$ be as in Section~\ref{ssec:field}.
Let $X$ be a smooth variety,
$\bar{X}$ a good compactification,
$X_\infty = \bar{X} \ohne X$.
We denote by $B_{\bar{X}}(X)$
the oriented real blow-up $\OBl_{X_\infty}(X^\an)$
of $\bar{X}^\an$ in $X_\infty^\an$;
for details see 
Remark~\ref{alt-notn-blow-up} and Definition~\ref{defn:orb}.
It is a $k_0$-semi-algebraic $C^\infty$-manifold with corners, see Proposition~\ref{prop:is_sa}.

In the case $X = \A^1$, $\bar{X} = \Pe^1$,
we write $\tilde{\Pe}^1 = B_{\Pe^1}(\A^1)$.
This is a manifold with boundary:
the compactification of $\C \isom \R^{2}$ by a circle at infinity,
one point for each half ray.
For $s \in \C \ssm \{0\}$,
we write $s\infty$ for the point of $\partial \tilde{\Pe}^1$
corresponding to the half ray $s[0,\infty)$.
We say $\Re(s\infty) > 0$ if $\Re(s)>0$.
We put
\begin{align*}
 \Bcirc &= \Ptilde \ohne
 \{ s\infty \in \partial \Ptilde \mid \Re(s\infty) \leq 0\}
 = \C\cup \{s\infty \mid \Re(s) > 0 \}, \\
\partial \Bcirc&=\Bcirc\ssm \C=\{s\infty \mid \Re(s) > 0 \} ,\\
 \Bsharp &= \Ptilde \ohne
 \{ s\infty \in \partial \Ptilde \mid s\infty \neq 1\infty \}
 = \C\cup \{1\infty\},\\
\partial \Bsharp &=\Bsharp\ssm \C=\{1\infty\}.
\end{align*}

If $G\subset\R^n$ is a $k_0$-semi-algebraic subset,
we will also denote by $\partial G$ the complement $G\ssm G^\interior$
where $G^\interior$ is the interior of $G$ inside~$X(\R)$
where $X$ is the Zariski closure of $G$ in $\A^n_{k_0}$.
If $G$ is of dimension $d$, then $\partial G$ is of dimension at most $d-1$.

This is compatible with notation above if we identify $\Ptilde$ with a closed disc in $\R^2$.
Note that we do not assume that $G$ is closed.
%

\iffalse
\annette{

The notion is actually quite subtle.

\begin{ex}\label{ex:boundary} 
\begin{enumerate}
\item Let $G\subset \R^2$ be the semi-algebraic subset
\[ G=\{ (x,0)| x\leq 0\}\cup \{(x,x^3)|x\geq 0\}. \]
The Zariski-closure $X$ of $G$ is $\A^1\times\{0\}\cup \Gamma$ where
$\Gamma$ is the graph of $x\mapsto x^3$. We have $G^\interior=G\ssm \{(0,0)\}$
and hence $\partial G=\{(0,0)\}$.
\item Let $G'\subset\R^2$ be the semi-algebraic subset
\[ G'=\{ (X,y)| x\in\R, y\geq 1 \}\cup \{ (x,0)| x\in\R\}.\]
The Zariski-closure of $G'$ is $\A^2$, hence 
\[\partial G'=\R \times \{0,1\}.\]
\end{enumerate}
\end{ex}

}
\fi

%
%
%
%

\subsection{$C^1$-homology}\label{sec:homology}

A subset $\Delta\subset\R^n$ is called  a \emph{linear $m$-simplex} if it is the interior of a convex hull of  $m+1$ affinely independent points. Our most important example 
is the \emph{standard  simplex} $\Delta_n$ (in the normalisation of \cite{warner}), the convex hull of
$0$ and the standard basis vectors of $\R^n$. This means:
\[
\Delta_n= \left\{ (x_1, \dots, x_n) \mid x_i > 0 \text{ and } \sum_i x_i < 1 \right\}
 \subset \R^n.
\]
It is open in the ambient space.
We denote by $\bar \Delta_n$ its closure in~$\R^n$.
We fix the standard orientation.
We define the face maps $\bar{\Delta}_{n-1}\to \bar{\Delta}_n$ as
in \cite[(2)~p.142]{warner}. Moreover, for any topological space $X$ and subspace $Y$,
we let $H_n(X,Y ;R)$ denote $n$-th singular homology with coefficients in
the ring $R$.

A \emph{manifold with corners} is a second countable Hausdorff
topological space such that every point has a neighborhood that is
homeomorphic to an open subset of $\R^{m_1}\times\R^{m_2}_{\geq 0}$.
We will assume that each manifold with corners is equipped
with a set of charts which need not be maximal; later on this set will be finite.
Say $p\ge 1$.
A map defined on a subset $A$ of $\R^n$ with values in
$\R^m$ is called $C^p$ if it extends to a $C^p$ map on an open
neighborhood of $A$ in $\R^n$ with values in $\R^m$. A
\emph{$C^p$-manifold with corners} is a manifold with corners such
that all transition maps betweens charts are $C^p$.
A map between two  $C^p$-manifolds with corners is called $C^p$ if it
is $C^p$ on all charts. 
The \emph{boundary} of a manifold with corners $X$ is the 
subset $\partial X=X\ssm X^{\mathrm{sm}}$ where $X^{\mathrm{sm}}$ is the open subset of regular points of $X$, i.e., their image under the chart maps is in $\R^{m_1}\times \R^{m_2}_{>0}$.

\begin{rem}
  If $X$ is both a manifold with corners and a semi-algebraic set, then we have defined two conflicting notions of boundary $\partial X$ in the same spirit. 
  For the semi-algebraic manifolds with corners constructed in
  Chapter~\ref{realblow} and beyond, the two notions of boundary agree.
\end{rem}

\begin{defn}\label{defn:int_simplex}
Let $X$ be a manifold with corners.
A \emph{$C^1$-simplex} of dimension $n$ on $X$ is a continuous map
\[ \sigma:\bar{\Delta}_n\to X\]
such that for any chart $\phi: U\rightarrow V\subset \R^{m_1}\times\R^{m_2}_{\ge 0}$ with $U$ open in $X$
the composition $\phi\circ \sigma|_{\sigma^{-1}(U)}
:\sigma^{-1}(U)\rightarrow V$
is $C^1$ in the above sense.

Let $S_n(X)$ be the space of formal $\Q$-linear combinations
of $C^1$-simplices of dimension $n$.
For $A\subset X$ closed, we denote by $S_n(A)\subset S_n(X)$ the subspace spanned by simplices with image in $A$.
\end{defn}

The restriction of $\sigma$ to a face is again $C^1$, hence the usual boundary operator $\partial$ turns
$S_*(X)$ into a complex. The barycentric subdivision of a $C^1$-simplex is again $C^1$.

\begin{rem}\label{rem:easy_convergence}If $\omega$ is an $n$-form of class $C^1$, then
 $\sigma^*\omega=g\md t_1\wedge\dots\wedge \md t_n$ for a $C^0$-function $g$ on $\bar{\Delta}_n$.
	Hence $\int_\sigma \omega$ is equal to the Lebesgue integral of $g$ and converges absolutely.
\end{rem}

\begin{thm}\label{thm:compare}Let $X$ be a $C^1$-manifold with corners. Then the complex
$S_*(X)$ of $C^1$-chains computes singular homology of $X$ and the complex
$S_*(X)/S_*(\partial X)$ computes singular homology of $X$ relative to its boundary $\partial X$.
\end{thm}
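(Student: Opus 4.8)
\textbf{Proof plan for \Cref{thm:compare}.}

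The strategy is the standard acyclic-models / comparison argument adapted to the $C^1$ setting: show that the inclusion of the $C^1$-chain complex into the full singular chain complex is a quasi-isomorphism, both absolutely and relative to the boundary. First I would set up the formal framework. Let $S_*^{C^1}(X) = S_*(X)$ denote the $C^1$-chain complex and $S_*^{\mathrm{top}}(X)$ the ordinary singular chain complex; there is an obvious chain map $\iota \colon S_*^{C^1}(X) \to S_*^{\mathrm{top}}(X)$ given by viewing a $C^1$-simplex as a continuous one. Both $X \mapsto S_*^{C^1}(X)$ and $X \mapsto S_*^{\mathrm{top}}(X)$ are functorial for $C^1$-maps of $C^1$-manifolds with corners, and $\iota$ is a natural transformation. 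The key structural facts already recorded in the excerpt are that the boundary of a $C^1$-simplex is $C^1$ (so $S_*^{C^1}$ is a subcomplex) and that barycentric subdivision preserves the $C^1$-property; I will also want that $C^1$-simplices can be pulled back along $C^1$-maps.

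Next I would prove the absolute statement $H_*(S_*^{C^1}(X)) \cong H_*^{\mathrm{sing}}(X)$ by the usual two-step route. Step one: establish a $C^1$ version of the small-simplices theorem. Given any open cover $\mathcal{U}$ of $X$ (in practice the chart domains), the subcomplex $S_*^{C^1, \mathcal{U}}(X)$ of $\mathcal{U}$-small $C^1$-chains is quasi-isomorphic to $S_*^{C^1}(X)$; the proof is the classical iterated-barycentric-subdivision argument together with the Lebesgue number lemma, and it goes through verbatim because subdivision preserves $C^1$-ness and the subdivision operator together with its chain homotopy are built functorially from face inclusions and affine (hence $C^1$) maps on $\bar\Delta_n$. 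Step two: for a chart $\phi \colon U \to V \subset \R^n \times \R^m_{\geq 0}$, any continuous singular simplex with image in $U$ can be approximated, rel its boundary, by a $C^1$-simplex homotopic to it through maps into $U$; this is the smoothing argument (convolution/mollification in the Euclidean model $V$, pushed back via $\phi$), using that $C^p$-maps on a subset of $\R^n$ are by definition restrictions of $C^p$-maps on a neighbourhood, so mollification makes sense. Combining via the five lemma on the diagram relating $S_*^{\mathrm{top}}$, $S_*^{\mathrm{top},\mathcal{U}}$, $S_*^{C^1,\mathcal{U}}$ and $S_*^{C^1}$ — or more cleanly by an acyclic-models comparison over the category of $C^1$-charts, on which both functors are acyclic and free — yields that $\iota$ is a quasi-isomorphism.

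Finally, for the relative statement I would run the same machinery on the pair $(X, \partial X)$. One has a short exact sequence of complexes $0 \to S_*^{C^1}(\partial X) \to S_*^{C^1}(X) \to S_*^{C^1}(X)/S_*^{C^1}(\partial X) \to 0$ and likewise for singular chains, and $\iota$ is a map of short exact sequences. The absolute result applied to $X$ and to the $C^1$-manifold with corners $\partial X$ (its boundary being $\partial(\partial X)$, which is handled by induction on the depth of corners, or simply absorbed since $\partial X$ is again of the required type) shows two of the three vertical maps are quasi-isomorphisms, so by the five lemma on the long exact homology sequences the third is too; hence $S_*^{C^1}(X)/S_*^{C^1}(\partial X)$ computes $H_*(X, \partial X)$. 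The one point needing care here is the excision/smoothing near the corner strata: the mollification in step two of the absolute argument must be carried out so that simplices already lying in $\partial X$ stay in $\partial X$, i.e. the smoothing must be compatible with the boundary faces of the local model $\R^n \times \R^m_{\geq 0}$. This is where I expect the main obstacle to lie, and I would resolve it by mollifying only in the $\R^n$-directions and using a corner-preserving (radial) retraction in the $\R^m_{\geq 0}$-directions, so that the homotopies respect the stratification by corner depth; once that is set up the rest is routine.
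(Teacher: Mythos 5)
Your route (small-simplices plus mollification of continuous simplices, assembled by acyclic models/five lemma) is genuinely different from the one the paper takes, and it has two concrete gaps. The first is the smoothing step at the corners, exactly where you yourself locate the difficulty: your proposed fix --- ``mollify only in the $\R^n$-directions and use a radial retraction in the $\R^m_{\geq 0}$-directions'' --- does not produce a $C^1$ map, since composing with a merely continuous retraction destroys differentiability in those directions, and ordinary convolution of a map whose image meets a boundary face of $\R^n\times\R^m_{\ge 0}$ will in general push the image off that face (and off the quadrant). A face- and corner-stratum-preserving smoothing operator that is simultaneously a chain map with a natural chain homotopy is precisely the hard analytic input your plan needs and does not supply. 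The second gap is in the relative statement: $\partial X$ is \emph{not} a $C^1$-manifold with corners (the union of the two coordinate half-axes in $\R^2_{\ge 0}$ already has no induced $C^1$ structure at the origin), and $S_*(\partial X)$ is defined in \Cref{defn:int_simplex} as the subcomplex of $C^1$-chains of $X$ with image in the closed subset $\partial X$, not as the chain complex of some manifold structure on $\partial X$. So ``apply the absolute result to $\partial X$, by induction on corner depth'' is not available; one must argue directly that this subcomplex computes $H_*^{\mathrm{sing}}(\partial X)$.

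The paper sidesteps both issues by dualising to cohomology and running Warner's sheaf-theoretic comparison \cite[Section~5.31]{warner}: the $C^1$-cochain functor sheafifies to a fine sheaf, and local acyclicity on star-shaped models $B\cap(\R^{n_1}_{\ge 0}\times\R^{n_2})$ is proved by the explicit coning homotopy $\tilde h_p$, which visibly preserves the $C^1$ class and --- when the cone point is taken on the deepest corner stratum --- maps simplices with values in the boundary to simplices with values in the boundary. No approximation of continuous simplices by $C^1$ ones is ever performed, which is what makes the corner and boundary cases painless. If you want to salvage your approach, you would need to construct the corner-compatible Whitney-type approximation by induction over skeleta and then give a separate small-simplices argument for the closed subset $\partial X$; both are doable but each is a genuine piece of work that your plan currently treats as routine.
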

\begin{proof}
It suffices to show that $S_*(X)$ computes singular homology of $X$ and
$S_*(\partial X)$ computes singular homology of $\partial X$.
It is equivalent to prove the result in cohomology instead.  
The argument for the $C^\infty$-case and smooth manifolds without boundary is given in \cite[Section~5.31]{warner}.
It works without changes for $S_*(X)$ (the $C^1$-case).

We go through the argument to verify that it still applies to manifolds with corners and to their boundaries.
Recall that the boundary $\partial X$ is not a $C^1$-manifold itself, but only a closed subset in
a $C^1$-manifold with corners. We write $Y$ for either $X$ or $\partial X$. 
For every open subset $U\subset Y$ let $S^*(U)$ be the dual complex of
$S_*(U)$. This defines a presheaf of complexes.
We consider its sheafification $\Sh^*$  on $Y$. Warner shows that it is
a fine resolution of the constant sheaf $\Q$. Assuming this, we obtain isomorphisms
\[ H^n(Y,\Q)\isom H^n(\Sh^*(Y)).\]
In \cite[Section~5.32]{warner}, he shows that, moreover,
\[ H^n(\Sh^*(Y))\isom H^n(S^*(Y)).\]
His argument applies verbatim, once we note that the barycentric subdivison
of a $C^1$-simplex in $Y$ is again a $C^1$-simplex in $Y$. Together
\[ H^n(Y,\Q)\isom H^i(S^*(Y))\]
as claimed.

It remains to study $\Sh^*$. Warner shows (see \cite[p.~193-194]{warner}) that the complex
is fine. His argument on p.~193 depends on the construction of a partition of unity.
In the case $Y=\partial X$, we may use a partion of unity on $X$. Otherwise the argument is unchanged.

In order to show that the complex is a resolution of the constant sheaf $\Q$, it suffices to show
that $S^*(U)$ is contractible if $U$ is the intersection of a unit ball
with $Y$ (or more precisely its image under a chart). Warner achieves this by constructing a simplicial homotopy, see \cite[p.~194--196]{warner}.
Depending on the situation, $U$ is an open ball (the manifold case) or the intersection
of an open ball with $\R_{\geq 0}^{n_1}\times\R^{n_2}$ (the case of a manifold with corners) or the boundary of the latter (the case of $\partial X$). In all cases it contains the line segment $[0,x]$ for all $x\in U$.
If $\sigma$ is a simplex
with values in $U$, then so is $\tilde{h}_p(\sigma)$ of Equation~(21) of loc.\ cit. 
\begin{comment}
We follow the argument of \cite[Section~5.31]{warner}. Let
$\Sh^n$ be the sheafification of
\[ U\mapsto S^n(U).\]
The same argument as \cite[(14) p.193]{warner} shows that
it is a fine sheaf. Hence hypercohomology of the complex of sheaves
$\Sh^*$ agrees with $H^n(S^*(X))$.
It remains to show that
$\Sh^*(X)$ is a resolution of the constant sheaf $\Q$.
Let $P\in M$. Let $U_P\subset M$ be a neighbourhood and $\phi:U_P\to U$ a chart
mapping $P$ to $0$ such that $U$ is an open ball around
$0$ (respectively, half ball or quadrant if $P$ is on the boundary).
It suffices to show that
the complex
\[ 0\to \Q\to S^0(U)\to S^1(U)\to\dots\]
is acyclic. This is achieved for ordinary singular homology in \cite[p.~194]{warner} by writing down
an explicit chain homotopy. We use the same homotopy, but need to check
that the simplex $\tilde{h}_p(\sigma)$ of loc.\ cit.\ is again a $C^k$-integrable $C^m$-simplex.
This is the contents of Lemma~\ref{lem:homotopy}.
\end{comment}
\end{proof}
\begin{thm}[{\cite[Chapter~III, \S\S 16-17]{whitney}}]
 \label{thm:stokes}Let $X$ be a $C^2$-manifold with corners.
 Let $\omega$ be an $n$-form of class $C^1$ on $X$ and let
 $\sigma :\bar{\Delta}_{n+1}\to X$ be a
 $C^1$-simplex. Then
 \[ \int_{\sigma}d\omega=\int_{\partial \sigma }\omega.\] 
\end{thm}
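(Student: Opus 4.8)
The statement is attributed to Whitney, so the plan is essentially to reduce to the precise form of Stokes's theorem that Whitney proves in \cite[Chapter~III, \S\S16--17]{whitney} and to check the hypotheses are met. First I would recall what Whitney's version gives: for a $C^1$-form $\omega$ (so the coefficients are $C^1$ and $d\omega$ has continuous coefficients) and a map of class $C^1$ from an oriented $C^1$-manifold-with-boundary --- here the standard simplex $\bar\Delta_{n+1}$, whose boundary is the chain $\partial\bar\Delta_{n+1}=\sum_i(-1)^ik^i$ in the sense of \cite{warner} --- the integral $\int_\sigma d\omega$ equals $\int_{\partial\sigma}\omega$, provided all the relevant integrals converge absolutely. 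Convergence is exactly \cref{rem:easy_convergence}: since $\omega$ is $C^1$, $\sigma^*\omega$ has continuous coefficients on the compact set $\bar\Delta_{n+1}$, and likewise $\sigma^*d\omega$, so every Lebesgue integral in sight is finite.

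The content to verify is therefore purely local and coordinate-theoretic. I would work in a chart $\phi\colon U\to V\subset\R^n\times\R^m_{\ge0}$ around a point of $X$; by the definition of a $C^1$-simplex, $\phi\circ\sigma$ restricted to $\sigma^{-1}(U)$ extends to a $C^1$-map on an open neighbourhood in $\R^{n+1}$. Using a partition of unity subordinate to a finite cover of $\sigma(\bar\Delta_{n+1})$ by such charts (the ambient space is second countable Hausdorff and $\sigma(\bar\Delta_{n+1})$ is compact, so finitely many charts suffice), and using that pullback, $d$, and integration are all linear, I reduce to the case where $\omega$ is supported in a single chart. Now $\sigma^*\omega$ is a genuine $C^1$ $n$-form defined on a neighbourhood of $\bar\Delta_{n+1}$ in $\R^{n+1}$ (the corners of $V$ pull back to something harmless because we only care about the form on $\bar\Delta_{n+1}$ itself, whose own corner structure is the standard one). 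At this point the statement is precisely Whitney's Stokes theorem for $C^1$-forms on the standard simplex, together with the combinatorial identity $\int_{\partial\sigma}\omega=\sum_i(-1)^i\int_{\bar\Delta_n}(k^i)^*\sigma^*\omega$, which is the definition of $\partial$ on $S_*(X)$.

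The main obstacle --- and the reason one cannot just cite the classical smooth Stokes theorem --- is the low regularity: $\omega$ is only $C^1$, $\sigma$ is only $C^1$, and $X$ carries corners. Classical proofs of Stokes differentiate the coefficients of $\omega$ twice (or assume $\sigma$ smooth), which is unavailable here; this is exactly the subtlety the paper flags in the introduction, and it is why Whitney's careful treatment of ``regular'' differentials on $C^1$-manifolds is invoked. I would be careful to state that the ambient manifold is assumed $C^2$ in the hypothesis precisely so that the chart transition maps are $C^2$ and the notion of a $C^1$-form on $X$ is well defined (transforms correctly under $C^2$ changes of coordinates), while the form and the simplex themselves need only be $C^1$ --- and then observe that Whitney's theorem is stated and proved at exactly this level of generality, so no further work is needed beyond the localisation and the bookkeeping of orientations and face maps described above.
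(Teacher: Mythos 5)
Your overall strategy (localise, pull back to the standard simplex, invoke Whitney) is the right one, but there is a concrete error at the key step: you assert that $\sigma^*\omega$ is ``a genuine $C^1$ $n$-form'' on a neighbourhood of $\bar{\Delta}_{n+1}$. It is not. Writing $\omega=\sum_I a_I\,\md x_I$ with $a_I$ of class $C^1$, the coefficients of $\sigma^*\omega$ involve the partial derivatives of the chart representative of $\sigma$, and since $\sigma$ is only $C^1$ these are merely continuous. So $\sigma^*\omega$ is in general only a continuous form, and the classical Euclidean Stokes theorem for $C^1$-forms on the simplex does not apply to it. If the pullback really were $C^1$, the theorem would be elementary and none of Whitney's machinery would be needed; the loss of one degree of differentiability under pullback is precisely the difficulty.

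The repair is the one the paper uses, and it is exactly the ``regular differentials'' you mention in your last paragraph but do not actually deploy. A continuous $r$-form on an open set in Euclidean space is \emph{regular} if it admits a continuous $(r+1)$-form $\omega'$ satisfying $\int_\Delta\omega'=\int_{\partial\Delta}\omega$ for all oriented linear $(r+1)$-simplices $\Delta$. Since $\omega$ is $C^1$ in the chart, it is regular with $\omega'=d\omega$ (Whitney, Theorem~14A). Whitney's Theorem~16B then says that the pullback of a regular form under a $C^1$-map is again regular and that $d$ commutes with the pullback; hence $\sigma^*\omega$, though not $C^1$, is regular with $d(\sigma^*\omega)=\sigma^*(d\omega)$, and the identity $\int_{\bar{\Delta}_{n+1}}\sigma^*d\omega=\int_{\partial\bar{\Delta}_{n+1}}\sigma^*\omega$ holds by the very definition of regularity applied to the linear simplex $\bar{\Delta}_{n+1}$. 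One further remark on the localisation: the paper passes to a barycentric subdivision of $\sigma$ so that each sub-simplex lands in a single chart, which is compatible with the boundary operator on $S_*(X)$; your partition-of-unity reduction on the form can be made to work on a $C^2$-manifold, but you would still need to check that $\sigma^*(\rho_i\omega)$ extends by zero to a form on all of $\bar{\Delta}_{n+1}$ and, again, it will only be regular rather than $C^1$.
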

\begin{proof}
We put together the relevant statements of \cite{whitney}.  In working with \cite{whitney} note that Whitney's notion of smooth means $C^1$ in modern terms,
see loc.\ cit.\ p.~15.

We first recall the notion of a \emph{regular} differential form in
  a Euclidean space
  introduced by Cartan~\cite[Section III.16]{whitney}. (Warning: this
  notion is unrelated to the usual concept of regularity in algebraic
  geometry.)
  A continuous $r$-form $\omega$ on an open subset $R\subset\R^n$ is regular if 
  there exists a continuous and necessarily unique
  $(r+1)$-form $\omega'$ (then called $d\omega$) such that for every
  oriented linear
	$(r+1)$-simplex $\bar{\Delta} \subset R$ (i.e., the convex hull of $(r+2)$ affinely independent points) we
  have 
\[ \int_{\bar{\Delta}} \omega'=\int_{\partial\bar{\Delta}}\omega.\]
All $C^1$-forms on $R \subset \R^n$ are regular by the criterion of \cite[Lemma~III.16d]{whitney} (based on Stokes's Theorem for $\Delta$ as in \cite[Theorem~III.14A]{whitney}). 

	By \cite[Section~III.17]{whitney} a \emph{regular} $r$-form on a manifold (without boundary) is defined as a continuous $r$-form such that the restriction to all coordinate charts is regular with $d\omega$ independent of the chart. In particular, all $C^1$-forms on manifolds are also regular.
		By \cite[Theorem~III.17B]{whitney} the notion is stable under pull-back via $C^1$-maps. In particular, $\sigma^*\omega$ on $\bar{\Delta}_{n+1}$ is regular. The formula of the theorem holds.

We argue in more detail in order to handle the case of a manifold with corners.
Let $\omega$ and $\sigma$ be as in the hypothesis. 
After passing to
  a barycentric subdivision we may assume that $\sigma$ takes values
  in the domain of a chart $U \rightarrow
  V\subset\R^{m_1}\times\R^{m_2}_{\ge 0}$, with $U\subset X$ open. The form $\omega$ extends to a $C^1$-form on an open neighbourhood $\tilde{V}$ of $V$ in $\R^{m_1+m_2}$.
By definition, $\sigma$ extends to a $C^1$-map 
\[ \tilde{\sigma}:\Omega\to \tilde{V}\]
on an open
  neighbourhood $\Omega\subset\R^{n+1}$ of $\bar{\Delta}_{n+1}$.
 
  Thus $\omega$ is regular on $\tilde{V}$ and $\omega'$ is the usual $d\omega$ by
  Stokes's theorem for linear simplices, see \cite[Theorem
  III.14A]{whitney}. 
   By \cite[Theorem~III.16B]{whitney}, the
  pull-back of a regular form under a $C^1$\nobreakdash-map is again regular and
  pull-back commutes with $d$. So 
  $\tilde{\sigma}^*\omega$ is regular on $\Omega$ with $d\tilde{\sigma}^*\omega=\tilde{\sigma}^*d\omega$. By
  definition of regularity, we have
  \[ 
\int_{\bar{\Delta}_{n+1}} \tilde{\sigma}^*d\omega
  = \int_{\partial\bar{\Delta}_{n+1}} \tilde{\sigma}^*\omega
\]
  when considering $\bar{\Delta}_{n+1}$ as a linear simplex in
  $\R^{n+1}$  with the usual orientation. 
  So the formula of the theorem holds. 
\end{proof}

\begin{rem}The $C^2$-condition is needed in order to make the notion of a $C^1$-form and hence $d \omega$ well-defined. It
could be relaxed to a manifold with corners of class $C^1$ and  $\omega$ regular in the sense of Whitney (see the proof above). The $C^2$-case is enough for our purposes.
\end{rem}

\begin{rem} An alternative description of singular homology of semi-algebraic
spaces (or more generally, spaces definable in some o-minimal structure, see
Chapter~\ref{sec:definable-manifold}) using semi-algebraic (definable) maps without any regularity assumptions is given in \cite{huber-stokes}. Theorem~\ref{thm:stokes} is used to deduce Stokes's Theorem in this setting. 
\end{rem}

\section{O-minimal structures}

\label{ssec:omin}

For the purposes of our paper it is helpful to think of o\nobreakdash-minimal geometry as a generalisation of semi-algebraic geometry.
The canonical reference
for o\nobreakdash-minimality  is~\cite{D:oMin}.
Within the encyclopedia of mathematics,
o\nobreakdash-minimality is firmly rooted in the field of mathematical logic
and more particularly model theory.
In this section we briefly survey the essentials
in a fashion that is geared
towards geometers with no background in model theory.
The reader is warned in advance
that some of the definitions presented below
are severe mutations of more general concepts in model theory.
\begin{defn}[{\cite[Chapter 1, (2.1)]{D:oMin}}]
 \label{structure}
 A \emph{structure} on a non-empty set~$R$ is a sequence
 $\mathcal{S} = (\mathcal{S}_m)_{m \in \Z_{\ge0}}$
 such that for each $m \ge 0$
 \begin{enumerate}
  \item $\mathcal{S}_m$ is a boolean subalgebra of
   the power set~$\mathcal{P}(R^m)$:
   that is, $\varnothing \in \mathcal{S}_m$,
   and $\mathcal{S}_m$ is closed under complements
   and binary unions and intersections;
  \item if $A \in \mathcal{S}_m$,
   then $R \times A$ and $A \times R$ belong to $\mathcal{S}_{m+1}$;
  \item $\{(x_1, \ldots, x_m) \in R^m \mid x_1 = x_m\} \in \mathcal{S}_m$;
  \item if $A \in \mathcal{S}_{m+1}$, then $\pi(A) \in \mathcal{S}_m$,
   where $\pi \colon R^{m+1} \to R^m$
   is the projection onto the first~$m$ coordinates.
 \end{enumerate}
\end{defn}

We are actually only going to need the case $R=\R$, but in this section we will present the definitions in the general setting.

\begin{defn}
 \label{semi-algebraic}
 Let $k\subset\R$ be a subfield.
 A structure that is relevant to the topic of this paper
 is the structure of \emph{$k$-semi-algebraic sets} over~$\R$
 consisting  of those subsets of~$\R^m$ that are finite
   unions of sets of the form
 \[
  \big\{x \in \R^m \mid
  f_1(x) = \ldots = f_k(x) = 0 \text{ and }
  g_1(x) > 0, \ldots, g_l(x) > 0\big\}
 \]
 for some polynomials $f_i,g_j \in k[X_1, \ldots, X_m]$.
\end{defn}

It is a non-trivial fact that the collection of semi-algebraic sets
satisfies the final condition in \cref{structure}.
This result is known as the Tarski--Seidenberg theorem.
The structure does not change when we replace $k$ by an algebraic subextension in $\R$, hence we may assume $k$ to be real closed.

A structure can often be ``generated'' by a smaller collection of sets.
This leads to the following concept
(one that is more faithful to the model-theoretic point of view).
We follow the terminology of~\cite{D:oMin}.

\begin{defn}[{\cite[Chapter 1, (5.2)]{D:oMin}}]
  \label{def:mts}
 A \emph{model theoretic structure}
 $\mathcal{R} = (R, (S_i)_{i \in I}, (f_j)_{j \in J})$
 consists of a non-empty set~$R$, called its \emph{underlying set},
 relations $S_i \subset R^{m(i)}$ (for $i \in I$ and  $m(i) \in \Na_0$),
 and functions $f_j \colon R^{n(j)} \to R$ (for $j \in J$ and $n(j) \in \Na_0$).
 If $n(j) = 0$, we call $f_j$ a \emph{constant}
 and identify it with its unique value.
\end{defn}
If $\mathcal{R} = (R, (S_i)_{i \in I}, (f_j)_{j \in J})$
is a model theoretic structure,
and $C \subset R$ a subset,
then we denote the model theoretic structure
$(R, (S_i)_{i \in I}, (f_j)_{j \in J} \cup (c)_{c \in C})$
by $\mathcal{R}_C$.
The elements of~$C$ are called \emph{parameters}.

\begin{defn}[{\cite[Chapter 1, (5.3)]{D:oMin}}]
 \label{definable}\leavevmode
 \begin{enumerate}
  \item Let $\mathcal{R} = (R, (S_i)_{i \in I}, (f_j)_{j \in J})$
   be a model theoretic structure.
   We denote by $\Def(\mathcal{R})$ the smallest structure on~$R$
   that contains the $S_i$, for $i \in I$,
   and the graphs of the functions $f_j$ (for $j \in J$).
  \item A subset $A \subset R^m$ is called \emph{definable} in $\mathcal{R}$
   if $A \in \Def(\mathcal{R})_m$.
   A function $f \colon R^m \to R^n$ is definable in $\mathcal{R}$
   if its graph
   \[ \Gamma(f) = \{(x,y) \mid y = f(x)\} \subset R^m \times R^n = R^{m+n}\]
   is definable in $\mathcal{R}$.
   A point $x \in R^m$ is definable in $\mathcal{R}$
   if the singleton $\{x\} \subset R^m$ is definable in $\mathcal{R}$.
  \item Let $C \subset R$ be a subset.
   A subset/function/point is
   \emph{definable in $\mathcal{R}$ with parameters from~$C$}
   or \emph{definable over~$C$ in $\mathcal{R}$}
   or \emph{$C$-definable in $\mathcal{R}$}
   if it is definable in~$\mathcal{R}_C$.   
 \end{enumerate}
\end{defn}

The following proposition serves two purposes:
it makes the relation of the previous definitions with logic apparent,
and it is a useful result for showing that certain sets are definable.

\begin{rem}[{\cite[Chapter 1, (5.9) Exercise~1]{D:oMin}}]
 \label{definable_iff}
 If the tuple $\mathcal{R} = (R, (S_i)_{i \in I}, (f_j)_{j \in J})$
 is a model theoretic structure, and $C \subset R$ a subset,
 then a subset $A \subset R^m$ is definable in $\mathcal{R}$
 with parameters from~$C$
 if and only if
 there exists a definable $B\subset\R^{m+n}$  
 and elements $c_1, \ldots, c_n \in C$
 such that
 \[
  A = \big\{(a_1, \ldots, a_m) \in R^m \mid
   (a_1, \ldots, a_m, c_1, \ldots, c_n)\in B\big\}.
 \]
\end{rem}

\newcommand{\rcf}[2]{({#1},{<}, 0, 1, {+}, {\cdot}{#2})}

\begin{ex}
\begin{enumerate}
\item
From now on, we will denote by  $\R_\alg$ the model theoretic structure $\rcf{\R}{}$
and (consistent with \cref{definable}) for every subfield $k\subset\R$
 we denote by $\R_{\alg,k}$ the model theoretic structure obtained from~$\R_\alg$
 by adding elements in $k$  as constants.
This is justified by the fact that the structure $\Def(\R_{\alg,k})$
consists precisely of the
$k$-semi-algebraic sets introduced in
\Cref{semi-algebraic}.
 Indeed, they are defined
 by first-order formulas in the language of~$\R_\alg$
 with parameters from~$k$.
\item
 Let $A \subset \R^m$ be a $k$-semi-algebraic set.
 Using \Cref{definable_iff} it becomes straightforward
 to show that the topological closure $\bar A \subset \R^m$ is semi-algebraic.
 Indeed
 \[
  \bar A =
  \big\{x \in \R^m \mid
  \forall \varepsilon \in \R, \exists y \in A,
  \varepsilon > 0 \to |x - y| < \varepsilon \big\},
 \]
which is clearly a first-order formula.
\end{enumerate}
\end{ex}

\begin{remark}
  For our purposes it is essential to keep track of parameters. For
  example, $\pi$ is $\R$-definable in $\R_\alg$ but not
  $\Q$-definable in $\R_\alg$. When dealing with definable sets we
  usually explicitly mention the scope of our parameters. 
\end{remark}

\begin{defn}
We say that a model theoretic structure
	$\mathcal{R}$ \emph{expands} the structure $\rcf{\R}{}$
	if its underlying set is $\R$, and if it contains the relation  ${<}$, the constants
   $0,1$, and the functions ${+},{\cdot}$ with their usual interpretations. 
\end{defn}

Now we are finally ready for the central notion.

\begin{defn}[{\cite[Chapter 1, (3.2) and (5.7)]{D:oMin}}]  
 \label{o-minimal}
 A model theoretic structure $\mathcal{R}$ expanding
  $\rcf{\R}{}$
   is \emph{o-minimal} %
   if the $\R$-definable subsets of $\R$ 
   are exactly the finite unions of points and
   (possibly unbounded) open intervals in~$\R$.
 \end{defn}
 
	\begin{rem}Note that in this definition, Van den Dries considers  $\R$\nobreakdash-de\-fi\-nable
  subsets of $\R$ in $\mathcal{R}$.
  In particular, it is not required
  that every interval is definable in $\mathcal{R}$ without
  introducing additional parameters.

\end{rem}

\begin{ex}
 Since the $\R$-semi-algebraic subsets of the real line
 are exactly finite unions of points and (possibly unbounded) open intervals,
 we see that $\R_\alg$ is an o-minimal structure.

 Note that $\mathbb{N}$ and $\Z$ are not definable subsets
 in any o-minimal structure,
 because of the finiteness condition in the definition.
 In particular,
 the functions $\sin \colon \R \to \R$ and $\exp \colon \C \to \C$
 (after identifying $\C$ with $\R^2$)
 cannot be definable in any o-minimal structure.
\end{ex}

\begin{defn}
 \label{Rexp}
 The model theoretic structure
 $\rcf{\R}{,\exp}$ %
 will be denoted by $\R_{\exp}$.
 Here $\exp \colon \R \to \R$ is the \emph{real} exponential function
 (and not the complex one, this is important!).
\end{defn}

\begin{defn}\label{defn:Rexpsin}
 Let $\mathcal{F}_{\an}$ be the collection of
 \emph{restricted analytic} functions,
 that is, functions $f \colon \R^m \to \R$ that are zero outside $[0,1]^m$
 and such that $f|_{[0,1]^m}$ can be extended to a real analytic function
 on an open neighbourhood of $[0,1]^m$.

 We denote by $\R_{\an}$ the model theoretic structure
 $\rcf{\R}{,\mathcal{F}_{\an}}$
 and by $\R_{\an,\exp}$
 the model theoretic structure
 $\rcf{\R}{,\mathcal{F}_{\an},\exp}$.
Finally, we denote by $\R_{\sin,\exp}$ the model theoretic structure
 $\rcf{\R}{, {\sin}|_{[0,1]}, \exp}$.
 For every subfield $k\subset\R$, we denote by $\R_{\sin,\exp,k}$ the
 model theoretic structure where we adjoin all elements in $k$ as
 parameters. %
\end{defn}
This is one of the protagonists in this paper.

\begin{rem}
 \label{Rsinexp_cos_pi}
 The model theoretic structure $\R_{\sin,\exp}$ will be of most interest to us.
 Note that 
 if the {bounded} interval $I \subset \R$ is definable with parameters in~$C$,
 then the functions ${\sin}|_I$ and ${\cos}|_I$
 are definable in $\R_{\sin,\exp}$ with parameters in~$C$.
 Indeed, one may use the identity $\sin^2(\theta) + \cos^2(\theta) = 1$
 to define $\cos(\theta)$ for $\theta \in [0,1]$.
 After that, $\cos(\theta)$ can be arbitrarily extended
 using $\cos(-\theta) = \cos(\theta)$
 and $\cos(2\theta) = 2\cos^2(\theta) - 1$.
 This allows one to define $\pi$:
 it is twice the smallest positive zero of $\cos$. Finally, one can
 define $\sin$ on
 arbitrary bounded definable intervals
 by translating $\cos$ by $\pi/2$.
\end{rem}

\begin{thm}
 \label{o-minimal_eg}
 The model theoretic structures $\R_{\exp}$, $\R_{\an}$, $\R_{\an,\exp}$, and
 $\R_{\sin,\exp}$
 are o-minimal.
 \begin{proof}
  For $\R_{\an}$, the result
was proven
  by Van den Dries
  in~\cite{Dries_1986_generalization_Tarski_Seidenberg} using
     a theorem of Gabrielov.
  Wilkie proved that $\R_{\exp}$ is o-minimal
  in~\cite{Wilkie_1996_Model_completeness}.
  Building on Wilkie's result (that was already announced in 1991),
  Van den Dries and Miller~\cite{Dries_Miller_1994_real_exponential_field}
  showed that $\R_{\an,\exp}$ is o-minimal.
Finally, $\R_{\sin,\exp}$ is o-minimal because its definable
    sets are definable in the o-minimal structure $\R_{\an,\exp}$
 and it expands~$\R_{\alg}$.
 \end{proof}
\end{thm}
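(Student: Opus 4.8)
The plan is to reduce the o-minimality of each of the four structures to theorems in the literature, handling $\R_{\sin,\exp}$ by a reduct argument. For $\R_{\an}$ I would cite Van den Dries's theorem that the structure generated by the restricted analytic functions $\mathcal{F}_{\an}$ is o-minimal~\cite{Dries_1986_generalization_Tarski_Seidenberg}; for $\R_{\exp}$, Wilkie's model-completeness theorem for the real exponential field~\cite{Wilkie_1996_Model_completeness}; and for $\R_{\an,\exp}$, the theorem of Van den Dries and Miller~\cite{Dries_Miller_1994_real_exponential_field}. I would make no attempt to reprove these; indeed the only honest ``main obstacle'' is that everything genuinely hard is imported wholesale, so that the substantive part of the argument is confined to the last structure.

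For $\R_{\sin,\exp}$ the key observation will be that the function ${\sin}|_{[0,1]}$ appearing in \cref{defn:Rexpsin} is a restricted analytic function, i.e.\ lies in $\mathcal{F}_{\an}$, because $\sin$ extends to a real analytic function on an open neighbourhood of $[0,1]$. Hence the whole signature of $\R_{\sin,\exp}$---the order $<$, the constants $0,1$, the operations $+,\cdot$, the function ${\sin}|_{[0,1]}$, and $\exp$---consists of primitives already present in $\R_{\an,\exp}$, so that $\R_{\sin,\exp}$ is a reduct of $\R_{\an,\exp}$. From the minimality clause in \cref{definable} this yields $\Def(\R_{\sin,\exp})_m\subseteq\Def(\R_{\an,\exp})_m$ for every $m$, and the same inclusion persists after adjoining parameters from $\R$ to both structures.

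It will then remain to verify the defining condition of \cref{o-minimal}. On the one hand, the inclusion just obtained shows that every $\R$-definable subset of $\R$ in $\R_{\sin,\exp}$ is $\R$-definable in the o-minimal structure $\R_{\an,\exp}$, hence a finite union of points and (possibly unbounded) open intervals. On the other hand, $\R_{\sin,\exp}$ expands $\rcf{\R}{}$, so with parameters from $\R$ it defines every singleton and every open interval, and hence every finite union of such sets. Putting the two inclusions together gives that the $\R$-definable subsets of $\R$ in $\R_{\sin,\exp}$ are exactly the finite unions of points and open intervals, which is o-minimality. The one point demanding a little care is that the reduct inclusion be phrased so as to remain valid in the presence of parameters, and this is immediate from the description of $\Def(\cdot)$ as the smallest structure containing the given primitives.
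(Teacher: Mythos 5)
Your proposal is correct and follows essentially the same route as the paper: the first three structures are handled by citing Van den Dries, Wilkie, and Van den Dries--Miller, and $\R_{\sin,\exp}$ is treated as a reduct of $\R_{\an,\exp}$ (since ${\sin}|_{[0,1]}$ is restricted analytic) that still expands $\rcf{\R}{}$. You merely spell out in more detail the two inclusions that the paper's one-line argument leaves implicit.
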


\begin{remark}
  \label{o-min-dimension}
   A fundamental fact about o-minimal structures is that each 
    definable set is a finite disjoint union of basic building blocks
    called \textit{cells}. If the set is defined over a subfield $k\subset\R$, then so are the cells.
	This follows from 
        the Cell Decomposition Theorem~\cite[Chapter 3, (2.11)]{D:oMin}. 
Indeed, 
        suppose the set is definable in a model
          theoretic structure that expands
          $(\R,<,0,1,+,\cdot)$.
        Then the cells arising from the Cell Decomposition Theorem are
      defined in the same model theoretic structure. In particular, no
    additional constants are required to define the cells. For
    example, if a set $A$ is definable in $\R_{\alg,k}$ then the cells
  in a cell decomposition can be chosen to be definable in
  $\R_{\alg,k}$. In other words, these cells arise from polynomial
	equalities and inequalities with coefficients in $k$, see also 
\cite[Chapter~3, (2.19) Exercise 4]{D:oMin}.
    Using this theorem one can introduce 
    a good notion of
    dimension of definable sets
    that behaves as one expects intuitively.    
 For example, if $X$ is a non-empty definable set,
 then $\dim(\bar X \ssm X) < \dim(X)$.
 See \cite[Chapter 4]{D:oMin} %
 for details and other properties
 of the dimension. The cells in a decomposition of $A$ are in
   general not uniquely determined by $A$.
\end{remark}

\begin{remark}
  For the reader well versed in o-minimality we remark that for
  the remainder of this text, our o-minimal structures will always
  expand $\rcf{\R}{}$. In particular, a definable subset of $\R^n$ is
  \emph{connected}  if and only if it is \emph{definably connected}.
  Moreover, the word \emph{compact}
  retains its meaning from point set topology.
\end{remark}

\section{Definable manifolds}
\label{sec:definable-manifold}
Fix an arbitrary o-minimal structure $\mathcal{S}$ expanding
$\rcf{\R}{}$.
and a subfield $k \subset \R$.
In the remainder of this section,
all definable sets are understood to be definable
in~$\mathcal{S}$
with parameters from~$k$
unless otherwise specified.

\begin{defn}\label{defn:manifold}
Let $0\leq p\leq \infty$.
\begin{enumerate}
\item
  A \emph{definable $C^p$-manifold with corners} $M$ is a
  $C^p$-manifold with corners together with the choice of a finite atlas
  $(\phi_i:U_i\to V_i\subset\R^{n_i}\times\R_{\ge 0}^{m_i})_{i\in I}$ such that
  the $V_i$ are open in $\R^{n_i}\times\R^{m_i}_{\ge 0}$ and definable and the transition
  maps $\phi_{ij}=\phi_j\circ\phi_i^{-1}$ are definable and of class
  $C^p$ on their domains.
 Its boundary $\partial M$ is the union of the preimages of the boundaries
of $\R^{n_i}\times \R^{m_i}_{\geq 0}\subset\R^{n_i+m_i}$ under the~$\phi_i$.
\item
  A subset $G\subset M$ is called \emph{definable} if $\phi_i(G\cap U_i)$ is definable in $\R^{n_i + m_i}$ for all $i$.
\item
  A subset $N$ of a definable $C^p$-manifold with corners $M$ is called a
  \emph{submanifold} if there is a $C^p$-manifold $N'$ and a $C^p$-immersion
  $N'\rightarrow M$ that is a homeomorphism onto $N$
		(where $N$ carries the subspace topology).
		That is, our submanifolds are embedded and have no
                corners or boundary.
\item \label{it:map}Let $(M,\phi_i)$ and $(N,\psi_j)$ be definable $C^p$-manifolds with corners.
  A map $f \colon M \to N$ of definable $C^p$-manifolds with corners is called
  a \emph{definable $C^p$} map
  if all $\psi_j\circ f\circ \phi_i^{-1}$
  are definable and $C^p$ on their domains.
\item
  A definable subset $G$ of a $C^p$-manifold with corners $M$ is
  called \emph{affine} if there is a definable
  open neighbourhood of $G$ in $M$ that
  is definable $C^p$-isomorphic to an open subset of $\R^n\times \R^m_{\ge 0}$. 
\end{enumerate}
\end{defn}

\begin{rem}
 The definition of a definable manifold includes the choice of a finite atlas.
 The finiteness condition is important,
 as, for example, we do not want manifolds with infinitely
 many connected components.
 So we cannot work with a maximal atlas.
 However, we could work with an equivalence class of finite atlases.
 Alternatively, one may rephrase the definition in the language of
 locally ringed sites, using the Grothendieck topology of definable
 open subsets and finite covers.
 The definition of a definable manifold is inspired by and related to
 the semi-algebraic spaces of Delfs and Knebusch~\cite{DK81}
 and the complex analytic definable spaces of
 Bakker--Brunebarbe--Tsimerman~\cite{omingaga}.
 See \cite[Chapter  10,~\S 1]{D:oMin} for an introduction to general
 definable spaces.
\end{rem}

\begin{rem}
 \label{affine-semialg-spaces}
 Robson (see \cite{Robs83}) showed that all semi-algebraic spaces
 (the $C^0$-case of the above definition)
 are actually affine, i.e., can be embedded into $\R^n$.
 This was extended to $C^p$-manifolds without boundary by Kawakami, see \cite{kawakami}.
 It is not clear to us if the result extends to manifolds with corners.
 The above notion is general enough for our needs.
\end{rem}

\begin{ex}
 Let $\bar{\Delta}\subset\R^n$ be the closed linear simplex spanned by $v_0,\dots,v_m\in k^n$.
 Then $\bar{\Delta}$ is a definable $C^p$-manifold with corners for all $p\geq 0$.
 As this example shows,
 the boundary of a manifold with corners
 does not necessarily have a natural structure of $C^p$-submanifold for $p\neq 0$. Indeed, the boundary of the $2$-simplex (i.e., a triangle) is homeomorphic to a $1$-sphere, but it is not a $C^1$-submanifold of $\R^2$.
 We are particularly interested in the case $p=1$
 because every affine definable set $G$ has a triangulation such that
 the maps $\bar{\Delta}\to G$ are maps of definable $C^1$-manifolds in the above sense.
 See \cite{ohmoto-shiota-triangulation} and~\cite{omin-triang},
 and also \cref{prop:triangle_manifold}, where we quote this result.
\end{ex}

Another well-known example of definable manifolds without boundary are cells.
We refer to Chapter~3
of~\cite{D:oMin} for the definition and basic properties of $C^0$-cells.
Moreover, \cite[Chapter~7,~\S 3]{D:oMin} introduces $C^p$-cells and proves the
decomposition theorem for $p=1$; the general case is similar.

\begin{ex}\label{cell_Cp}
 Let $C\subset\R^n$ be a definable $C^p$-cell of dimension~$d$.
  By \cite[(2.7) in Chapter 3]{D:oMin} there is a choice 
  of $d$
  coordinates $\{x_{i_1},\dots,x_{i_d}\}$ on $\R^n$
   inducing a definable homeomorphism
   $\phi=(x_{i_1},\dots,x_{i_d}):C\to \phi(C)$
   with $\phi(C)$ an open cell in $\R^d$.
  We give $C$ the structure of an affine definable $C^p$-manifold
   using the chart $\phi$.
   Then the inclusion $C\to \R^n$
   is a definable $C^p$-map of definable $C^p$-manifolds.
   In other words, cells are definable $C^p$\nobreakdash-submanifolds of $\R^n$.
\end{ex}

\begin{defn}
  \label{def:Reg}
  Fix an integer $p\ge 1$, let $d\geq 0$ be an integer,
  and let $M$ be a definable $C^p$-manifold with corners
  and $G\subset M$ a definable subset.
  We define $\reg_d(G)$ to be the
  set of $x\in G$ that admit an open neighbourhood $U$ in $M$
  such that $G\cap U$ is a $C^p$-submanifold of $M$ of dimension $d$.
\end{defn}

\begin{rem}\label{rem:reg}
  The set $\reg_d(G)$ is open in $G$, it is empty if $\dim G<d$.
 If $\dim(G)=d$, it is the maximal subset of $G$ that is
 a submanifold of $M$ having connected components of dimension $d$.
 If $G$ and~$H$ are disjoint definable subsets of $M$,
 then in general there is no inclusion between the two sets $\reg_d(G\cup H)$ and
 $\reg_d(G)\cup \reg_d(H)$.

	For example, let $M$ be the plane,
	let $G$ be the coordinate axis $\{(x,y) \mid y = 0\}$,
	and let $H$ be the other coordinate axis minus the origin $\{(x,y) \mid x = 0 \text{ and } y \ne 0\}$.
	Then $\reg_d(G) \cup \reg_d(H)$ contains the origin, but $\reg_d(G \cup H)$ does not.

	In the other direction, let $M$ be the line,
	let $G = \{x \mid x \le 0\}$, and let $H = \{x \mid x > 0\}$.
	Then $\reg_d(G) \cup \reg_d(H) = M \ssm \{0\}$, whereas $\reg_d(G \cup H) = M$ contains the origin.
\end{rem}

The following lemma adapts to our situation
the fact that the $p$-regular
points of given dimension of a definable set constitute a definable set.
\begin{lemma}
 \label{Reg-definable}
 Let $M,G,$ and $\reg_d(G)$ be as in \Cref{def:Reg}.
 Then $\reg_d(G)$ is a definable subset of $M$ and
   $\dim G \ssm \reg_d(G) < d$ if $\dim G = d$.
\end{lemma}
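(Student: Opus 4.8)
The plan is to reduce the statement to the well-known affine case handled in Chapter~3 of~\cite{D:oMin} by working chart by chart. Since $M$ carries a \emph{finite} atlas $(\phi_i\colon U_i\to V_i\subset\R^{n_i}\times\R_{\ge0}^{m_i})_{i\in I}$, it suffices to prove the statement locally: a subset $A$ of $M$ is definable if and only if each $\phi_i(A\cap U_i)$ is definable in $\R^{n_i+m_i}$, and dimension can be computed chartwise (both facts are immediate from the definitions, using that the transition maps are definable $C^p$, hence in particular definable homeomorphisms on their domains, so they preserve definability and dimension). Thus fix $i\in I$ and abbreviate $V=V_i$, $\phi=\phi_i$, and write $G_i=\phi(G\cap U_i)$, a definable subset of the open definable set $V\subset\R^{N}$ with $N=n_i+m_i$.

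First I would observe that, since the property defining $\reg_d$ is local on $M$, we have $\phi(\reg_d(G)\cap U_i)=\reg_d^{V}(G_i)$, where $\reg_d^{V}(G_i)$ denotes the set of $x\in G_i$ having an open neighbourhood $W$ in $V$ with $G_i\cap W$ a $C^p$-submanifold of $V$ (equivalently of $\R^N$) of dimension~$d$; here one uses that a chart of $M$ near such a point, composed with $\phi_i$, identifies submanifolds of $M$ with submanifolds of $V$, and that $C^p$-submanifolds of $V$ in the sense of Definition~\ref{defn:manifold}(3) are exactly the embedded $C^p$-submanifolds of $\R^N$ without corners. So the whole matter is reduced to: for a definable subset $G_i\subset\R^N$, the set $\reg_d^{\R^N}(G_i)$ of points at which $G_i$ is, locally, an embedded $C^p$-submanifold of dimension~$d$, is definable, and $\dim(G_i\ssm\reg_d^{\R^N}(G_i))<d$ when $\dim G_i=d$.

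For this affine statement I would invoke the $C^p$-cell decomposition theorem (Chapter~7.3 of~\cite{D:oMin} for $p=1$, the general case being analogous, as recalled in the text): decompose $G_i$ into finitely many $C^p$-cells $C_1,\dots,C_r$. Each cell $C_j$ of dimension~$d$ is, by Example~\ref{cell_Cp} (i.e.\ (2.7) in Chapter~3 of~\cite{D:oMin}), a $C^p$-submanifold of $\R^N$ of dimension~$d$ via a coordinate projection; so $C_j\subset\reg_d^{\R^N}(G_i)$ provided that in a neighbourhood of $C_j$ the set $G_i$ coincides with $C_j$, which fails only along $\overline{C_j}\ssm C_j$ and along the other cells, all of which have dimension $<d$ or are contained in lower cells. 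More precisely, set $Z=\bigcup_{j:\dim C_j<d}\overline{C_j}\;\cup\;\bigcup_{j:\dim C_j=d}(\overline{C_j}\ssm C_j)$; by Remark~\ref{o-min-dimension}, $\dim Z<d$. One checks $\reg_d^{\R^N}(G_i)=G_i\ssm Z$ on the open cells of dimension~$d$: every point of a top-dimensional cell not in $Z$ has a neighbourhood in which $G_i$ equals that cell, hence is regular of dimension~$d$; and conversely every regular point of dimension~$d$ lies on a cell that must have dimension~$d$ (invariance of dimension for submanifolds) and cannot lie in $Z$. Since $G_i$ and $Z$ are definable (closures and finite unions of definable sets are definable), $\reg_d^{\R^N}(G_i)=G_i\ssm Z$ is definable, and $G_i\ssm\reg_d^{\R^N}(G_i)\subset Z$ has dimension $<d$ when $\dim G_i=d$. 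Transporting back through the finitely many charts $\phi_i^{-1}$ gives that $\reg_d(G)$ is definable in $M$ and $\dim(G\ssm\reg_d(G))<d$.

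The main obstacle is the bookkeeping in the reduction to a single chart: one must make sure that ``being an embedded $C^p$-submanifold of $M$ of dimension~$d$ near $x$'' is genuinely detected after applying $\phi_i$, and that nothing is lost at boundary points, i.e.\ at points of $\partial M$ one still only compares with honest submanifolds (no corners), which matches the affine picture in $V\subset\R^{n_i}\times\R_{\ge0}^{m_i}$ since $V$ is open there and a submanifold through a boundary point of $V$ is still just a $C^p$-submanifold of the ambient $\R^{N}$. Once this local identification $\phi(\reg_d(G)\cap U_i)=\reg_d^{\R^N}(\phi(G\cap U_i))$ is in place, the remaining argument is the standard cell-decomposition computation.
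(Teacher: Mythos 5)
Your reduction to a single chart and the dimension bound are fine, and the dimension bound is in fact obtained in essentially the same way as in the paper (via a $C^p$-cell decomposition and the observation that top-dimensional cells are submanifolds, so that $G_i\ssm\reg_d(G_i)\subset Z$ with $\dim Z<d$). The problem is the definability claim. Your argument rests on the identity $\reg_d^{\R^N}(G_i)=G_i\ssm Z$, but only the inclusion $G_i\ssm Z\subset\reg_d^{\R^N}(G_i)$ is true. The converse fails because a cell decomposition is not canonical: it may cut through perfectly regular parts of $G_i$. Concretely, take $G_i=\R\subset\R^2$ (the $x$-axis) and $d=1$; then $\reg_1(G_i)=G_i$, but the cell decomposition $(-\infty,0)$, $\{0\}$, $(0,\infty)$ gives $Z=\{0\}$ and $G_i\ssm Z\subsetneq\reg_1(G_i)$. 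Your step ``every regular point of dimension $d$ lies on a cell that must have dimension $d$'' is exactly where this breaks: the point $0$ is regular but lies on a $0$-dimensional cell. So what you have actually produced is a definable subset of $\reg_d(G)$ whose complement in $G$ has dimension $<d$; this proves the second assertion of the lemma but not the first, since $\reg_d(G)$ itself need not equal that set.

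To get definability one needs a genuinely local, first-order characterisation of the regular points rather than a decomposition-dependent one. This is what the paper does: after reducing to a chart, it characterises $\reg_d(G_i)$ as the set of points admitting a neighbourhood in which $G_i$ is the graph of a $C^p$ map over an open subset of a projection of $\R^{n+m}$ onto $d$ of the coordinates, and then invokes the argument of \cite[B.9]{vdDMiller:96} to write this as a definable condition. If you want to repair your proof, you should replace the cell-decomposition identity by such a graph characterisation (or some other intrinsic first-order formula detecting local $d$-dimensional $C^p$-submanifold points).
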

\begin{proof}
  Assuming the first claim we begin by proving the last claim by
  contradiction.
  Suppose $H = G\ssm
  \reg_d(G)$ has dimension $\dim G=d$.
Let $U\subset M$ be the domain of a chart $\phi:U\to
V\subset\R^n\times\R^m_{\geq 0}$ which has non-empty intersection with
$H$. We can even arrange that $\dim H\cap U=d$. So $\phi(H\cap U)$ is a definable set of dimension $d$. We replace $G$ and $H$ by $\phi(U\cap G)$ and $\phi(H\cap U)$, respectively.
  So we may assume $H\subset G\subset \R^n\times\R^m_{\ge 0}$.
  We fix a $C^p$-cell decomposition of
  $G$ partitioning $H$ and $G\ssm H$.
  One cell in $H$ must have top dimension $\dim H=\dim G$ and this
  cell has a point not contained in the closure of any other cell.
  This point lies in $\reg_d(G)$, which is a contradiction.

  To show that $\reg_d(G)$ is definable it suffices to work in a single chart.
  So without loss of generality
  $G$ is a definable subset of $\R^n\times\R^m_{\ge 0}$ of dimension~$d$.
  We use the classical theory of differential
  manifolds to characterize submanifolds locally as graphs of functions: $\reg_d(G)$ is the set of points of~$G$
	that have an open neighbourhood in~$\R^{n+m}$ in which $G$ is the graph
  of a $C^p$ map defined on an open subset of a projection of~$\R^{n+m}$ to
  $d$ different coordinates.
  The argument laid out in \cite[B.9]{vdDMiller:96} applies directly
  to our slightly more general situation,
  and implies the definability of $\reg_d(G)$.
 \end{proof}

\begin{lemma}
  \label{lem:Gdecomposition}
  Let $G\subset\R^n$ be a definable subset of dimension $d$.
  Let $\pi\colon\R^n\rightarrow\R^d$ denote
  the projection to the first~$d$ coordinates.
  There are pairwise disjoint definable open subsets
  $G_0,G_1,\ldots,G_N$ of $\reg_d(G)$
  with $\dim G\ssm (G_0\cup \cdots\cup  G_N)< d$
  such that all fibres of $\pi|_{G_0}$ have positive dimension
  and such that $\pi|_{G_i} \colon G_i \to \pi(G_i)$ is a chart for
  all $i\in \{1,\ldots,N\}$.
\end{lemma}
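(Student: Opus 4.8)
The plan is to combine cell decomposition with the previous lemma. First I would invoke Lemma~\ref{Reg-definable}: since $\dim G = d$, the set $R = \reg_d(G)$ is a definable subset of $\R^n$ with $\dim(G \ssm R) < d$. So it suffices to produce the $G_i$ inside $R$ with $\dim R \ssm (G_0 \cup \cdots \cup G_N) < d$; stripping away the lower-dimensional difference $G \ssm R$ at the end costs nothing. Now apply the $C^p$-Cell Decomposition Theorem (\cite[Chapter~7.3]{D:oMin}, cf.\ \cref{o-min-dimension}) to $R$, simultaneously compatible with the coordinate projection $\pi \colon \R^n \to \R^d$ in the sense that each cell maps to a cell of $\R^d$ under $\pi$. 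Discard all cells of dimension $< d$; their union has dimension $< d$ and is absorbed into the exceptional set. What remains is a finite disjoint family $C_1, \dots, C_M$ of $C^p$-cells of dimension exactly $d$, each contained in $R$, with $\dim R \ssm (C_1 \cup \cdots \cup C_M) < d$.

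Next I would classify these top-dimensional cells by the behaviour of $\pi$. Because $C_j$ is a $d$-dimensional cell and $\pi(C_j)$ is a cell of $\R^d$, either $\dim \pi(C_j) = d$ or $\dim \pi(C_j) < d$. In the latter case every fibre of $\pi|_{C_j}$ has dimension $d - \dim\pi(C_j) \ge 1$ by the fibre-dimension formula for cells, so we throw all such cells into a single bin. In the former case, $\pi|_{C_j} \colon C_j \to \pi(C_j)$ is a definable continuous bijection onto an open cell of $\R^d$; by the cell structure (the description of a $d$-cell over a $d$-cell as a graph, \cite[(2.7), Chapter~3]{D:oMin}) this map is in fact a homeomorphism, and since $C_j \subset \reg_d(G)$ is a $C^p$-submanifold of $\R^n$ on which $d$ coordinates form a chart, $\pi|_{C_j}$ is a chart in the sense of \cref{cell_Cp}. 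Set $G_0$ to be the open subset of $R$ obtained as the union of the cells with $\dim \pi(C_j) < d$, and let $G_1, \dots, G_N$ enumerate the cells with $\dim\pi(C_j) = d$. These are pairwise disjoint definable open subsets of $\reg_d(G)$ (each $d$-cell is open in $R$ since $R$ is a $d$-dimensional manifold and the cell decomposition partitions it), all fibres of $\pi|_{G_0}$ have positive dimension, and $\pi|_{G_i}$ is a chart for $i \ge 1$. Finally $\dim G \ssm (G_0 \cup \cdots \cup G_N) \le \max\{\dim(G \ssm R),\ \dim R \ssm (C_1 \cup \cdots \cup C_M)\} < d$, as required.

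I would formulate the one genuine subtlety explicitly rather than gloss it: a priori $\pi|_{C_j}$ is only a definable continuous bijection onto $\pi(C_j)$, and "continuous definable bijection" need not be a homeomorphism in general dimension — so one must use that $C_j$ is specifically a $C^p$-\emph{cell} sitting over the $d$-cell $\pi(C_j)$, whence by construction $C_j$ is the graph of a $C^p$ function $\pi(C_j) \to \R^{n-d}$ (after the coordinate permutation built into the cell decomposition), making $\pi|_{C_j}$ a $C^p$-diffeomorphism onto an open subset of $\R^d$. This is exactly the content of \cref{cell_Cp}, so I would simply cite it. The main obstacle, such as it is, is bookkeeping: making the cell decomposition compatible with $\pi$ and with the partition into $R$ and $G\ssm R$ at the same time, and tracking that the leftover sets at each stage really do drop dimension so that their union still has dimension $< d$. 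None of the individual steps is hard once the statement of $C^p$-cell decomposition (Chapter~7.3 of \cite{D:oMin}) is in hand.
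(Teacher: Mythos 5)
Your proof is correct, but it takes a genuinely different route from the paper's. The paper first isolates the set $G'$ of points of $\reg_d(G)$ that are isolated in their fibre of $\pi$, uses definable choice to peel off finitely many definable (a priori discontinuous) sections of $\pi|_{G'}$ one at a time — the uniformly bounded fibre count dropping by one at each step — and only afterwards invokes cell decomposition on the domains of these sections to repair continuity; the low-dimensional cells together with a cell decomposition of $G\ssm G'$ then supply the exceptional set and $G_0$. You instead apply $C^p$-cell decomposition directly to $\reg_d(G)$ and sort the $d$-dimensional cells by $\dim\pi(C_j)$, using that a $d$-cell of $\R^n$ whose projection to the first $d$ coordinates is again $d$-dimensional must have index $(1,\dots,1,0,\dots,0)$ and is therefore the graph of a $C^p$ map over an open cell of $\R^d$, while a $d$-cell with lower-dimensional projection has all fibres of dimension $d-\dim\pi(C_j)>0$. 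This avoids definable choice and the uniform finiteness of fibres entirely and is a legitimate simplification; the compatibility of the projection with cells that you ask for is in fact automatic for the standard cell decomposition. Two minor points of hygiene: your stated reason that each $d$-cell is open in $\reg_d(G)$ (``since $R$ is a $d$-manifold and the decomposition partitions it'') is not itself a proof — the correct argument is invariance of domain applied to the continuous injection of the $d$-cell into the $d$-manifold $\reg_d(G)$, which is exactly the step the paper also makes explicit; and the parenthetical about a coordinate permutation is superfluous, since in the case $\dim\pi(C_j)=d$ the distinguished coordinates of the cell are necessarily the first $d$.
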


\begin{proof}
 Without loss of generality $G = \reg_d(G)$.
 Let $G'$ be the set of points of~$G$ that are isolated in their fibre of~$\pi|_G$.
 It is definable, see %
   \cite[Chapter 4, (1.6) Corollary]{D:oMin}.
  Each fibre of $\pi|_{G'}$ is discrete
  and thus finite with
  uniformly bounded
  cardinality, see \cite[Chapter~3, (3.7) Corollary]{D:oMin}. Let $N$
  be the largest cardinality of a fibre.

  By definable choice \cite[Chapter 6, (1.2) Proposition part (i)]{D:oMin},
  applied to the graph of $\pi|_{G'}$  there is a
  definable section
  $\psi_1 \colon \pi(G') \rightarrow G'$ of $\pi|_{G'}$, i.e. $\pi \circ
  \psi_1$ is the identity. The image $\psi_1(\pi(G'))$ is a definable subset of $G'$.
  The complement $G'_1=G'\ssm \psi_1(\pi(G'))$ is
  also definable. Now $\pi|_{G'_1}$ certainly still has finite fibres,
  but the maximal fibre count dropped to $N-1$. We repeat this step
  and find a section $\psi_2\colon \pi(G'_1) \rightarrow G'_1$ and again
  the fibre count of $\pi$ on $G'_2 = G'_1\ssm \psi_2(\pi(G'_1))$ drops
  by one.

  After  $N$ steps, all
  fibres are exhausted. We obtain definable maps $\psi_1,\ldots,\psi_N$
  defined on subsets of $\pi(G')$ whose images cover $G'$ and are
  pairwise disjoint.

  But the $\psi_i$
  may fail to be continuous. By the  Cell Decomposition Theorem,
  \cite[Chapter~3, (2.11) Theorem]{D:oMin}
  applied to the domain of each
  $\psi_i$, we get, after adjusting $N$ and renaming, finitely
  many continuous definable maps $\psi_i:C_i\rightarrow G'$ on cells $C_i\subset\R^d$
  with $\bigcup_i \psi_i(C_i) = G'$ and with $\pi\circ \psi_i$ the identity
  for all $1\le   i\le N$. Observe that the $\psi_i(C_i)$ remain
  pairwise distinct.

  Suppose $\dim C_i = d$; then $C_i$ is open in $\R^d$. As $G$ is a
  manifold, invariance of domain implies that $\psi_i(C_i)$ is open in
  $G$ and $\psi_i:C_i\rightarrow \psi_i(C_i)$ is a homeomorphism.
  Thus $\pi|_{\psi_i(C_i)} :\psi_i(C_i)\rightarrow C_i$ is a chart.
   We  can safely ignore cells $C_i$ with $\dim C_i<d$; the union
  $H = \bigcup_{i:\dim C_i < d} \psi_i(C_i)$ is definable of dimension
  at most $d-1$.
  Fix a cell
  decomposition of $G\ssm G'$ and let $G_0$ be the union of all
  $d$-dimensional cells; then $G_0$ is open, and possibly empty, in the submanifold
  $G$.   We add the remaining cells to $H$. We retain $\dim H
  < d$ and
  the lemma follows from $G = G_0 \cup \bigcup_{i : \dim C_i=d} \psi(C_i)
  \cup H$.
\end{proof}

\begin{lemma}
 Let $p\geq 1$ and let $(M,\phi_i)$ and $(N,\psi_j)$ be definable $C^p$-manifolds with corners.
 Then the bundles $TM$ and $T^*M$ and their exterior powers
 have a natural structure of a definable $C^{p-1}$-manifold with corners.
 Moreover, a definable $C^p$-map $f \colon M\to N$ induces
 definable $C^{p-1}$-maps $df \colon TM\to TN$ and $d^*f \colon T^*N\to T^*M$.
\end{lemma}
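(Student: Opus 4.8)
The plan is to build each of these total spaces chart by chart from the given finite atlas of $M$, the only non-formal ingredient being the fact that the derivative of a definable $C^p$ function is again definable and of class $C^{p-1}$. First I would record this input: if $g$ is definable and $C^p$ on a definable open subset $W$ of a model space $\R^{N}\times\R^{m}_{\geq 0}$, then each partial derivative $\partial_i g$ is definable --- its graph is cut out by the first-order formula asserting that the appropriate (one-sided, where necessary) difference quotient has the prescribed limit --- and is of class $C^{p-1}$; see \cite[Chapter~7]{D:oMin}. Consequently, for a definable $C^p$ map between such model spaces the total derivative $D g$ is a definable $C^{p-1}$ matrix-valued map, and on the open locus where $D g$ is invertible the inverse $(D g)^{-1}$ and the exterior powers $\Lambda^k D g$, whose entries are the $k\times k$ minors, are again definable and $C^{p-1}$, since matrix inversion and the formation of minors are semi-algebraic operations (hence definable in any structure expanding $\rcf{\R}{}$) and composition with $C^{p-1}$ maps stays $C^{p-1}$.

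Next I would construct $TM$. For a chart $\phi_i\colon U_i\to V_i\subset\R^{n_i}\times\R^{m_i}_{\geq0}$ of $M$, set $TU_i := V_i\times\R^{n_i+m_i}$; after permuting coordinates this is open in $\R^{2n_i+m_i}\times\R^{m_i}_{\geq 0}$ and definable, hence a legitimate definable model space for a $C^{p-1}$-manifold with corners (note that the tangent space at a corner point is taken to be the full $\R^{n_i+m_i}$). Glue the $TU_i$ along $(x,v)\mapsto(\phi_{ij}(x),\,D\phi_{ij}(x)\,v)$, which is definable and $C^{p-1}$ by the first paragraph; the cocycle condition is exactly the chain rule $D(\phi_{jk}\circ\phi_{ij})=(D\phi_{jk}\circ\phi_{ij})\cdot D\phi_{ij}$, so the gluing is consistent and produces a Hausdorff, second countable space with a finite atlas indexed by the same $I$, together with a definable $C^{p-1}$ projection $TM\to M$. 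For $T^*M$ one repeats this with the transition maps $(x,\xi)\mapsto(\phi_{ij}(x),\,(D\phi_{ij}(x)^{-1})^{\mathsf T}\xi)$, and for $\Lambda^k TM$ and $\Lambda^k T^*M$ with $\Lambda^k$ (resp.\ $\Lambda^k$ of the transpose-inverse) of the same matrices; definability and the $C^{p-1}$ property are inherited, and the cocycle relations follow from the chain rule together with functoriality of $(-)^{-1}$, $(-)^{\mathsf T}$ and $\Lambda^k$.

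Finally, for functoriality, given a definable $C^p$ map $f\colon M\to N$ I would write $f_{ij}:=\psi_j\circ f\circ\phi_i^{-1}$ on its definable open domain; then $df$ is given in the induced charts of $TM$ and $TN$ by $(x,v)\mapsto(f_{ij}(x),\,D f_{ij}(x)\,v)$, which is definable and $C^{p-1}$, and compatibility with the chart changes of $TM$ and $TN$ is once more the chain rule, so $df\colon TM\to TN$ is a well-defined definable $C^{p-1}$ map; likewise $d^*f$ is given in charts by the transposes $D f_{ij}(x)^{\mathsf T}$ assembled over the relevant domain, again definable and $C^{p-1}$. I do not expect a genuine obstacle here: once the definability of derivatives of definable $C^p$ functions is granted, this is the classical construction of the (co)tangent bundle and its exterior powers, and the only points deserving an explicit word are that the total spaces really are definable manifolds \emph{with corners} of the model-space type above and that the finiteness of the atlas is preserved --- both of which are immediate from the construction.
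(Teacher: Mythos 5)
Your proof is correct and takes essentially the same route as the paper's: the paper's own argument consists precisely of the remark that only definability needs verifying, which it establishes via the same first-order difference-quotient formula for the graph of the derivative of a definable function. You merely spell out the standard chart-by-chart construction of the bundles and the chain-rule cocycle checks that the paper leaves implicit.
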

\begin{proof}
We explain the case of $TM$. Let $\phi_i:U_i\to V_i\subset\R^n\times\R^m_{\geq 0}$ for $i\in I$ be a definable atlas of $M$. We glue $TM$ from 
\[ \tilde{U}_i=\phi^{-1}_i (T\R^{n+m}).\] 
It is a $\R^{n+m}$-bundle over $M$.
The induced maps 
\[ \tilde{\phi}_i:\tilde{U}_i\to \tilde{V}_i=V_i\times_{\R^{n+m}}T\R^{n+m}\subset\R^n\times\R^{m}_{\geq 0}\times\R^{n+m}
\]
 define charts for $TM$. By definition the transition maps for the charts of $M$ are $C^p$, meaning they extend to open neighbourhoods in $\R^{n+m}$ and are $C^p$ as such. The transition maps for the charts of $TM$ are $C^{p-1}$ as derivatives of $C^p$-maps.

It remains  to verify definability of the transition maps for the charts of $TM$.
 This holds because the derivative
 of a definable differentiable function is definable. 
 Indeed, in the $1$-dimensional case
 the graph $\Gamma(f')$ of the derivative is given by the formula
 \[
  \left\{ (x,y) \:\middle\vert\:
  \forall \varepsilon > 0, \exists \delta > 0, \forall x', |x' - x| < \delta \to
  \left|\frac{f(x') - f(x)}{x' - x} - y \right| < \varepsilon \right\}.
 \]
See \cite[\S 2.1]{vdDMiller:96} for a treatment on partial derivatives of more general definable maps.
\end{proof}

Many properties of affine definable sets extend immediately to the non-affine case. This is in particular the case for the notion of dimension and the stratification by submanifolds. We want to use these facts in order to integrate differential forms.

\begin{defn}
 \label{defn:diff_form}
 Let $p\geq 1$.
 Let $(M,\phi_i)$ be a definable $C^p$-manifold with corners and $G\subset M$ a definable subset.
A \emph{differential form $\omega$ of degree $d$ on $G$} is a continuous section
\[ \omega \colon G\to \Lambda^dT^*M.\]
It is called \emph{definable} if it is definable as a map in
  the sense of \cref{defn:manifold}~(\ref{it:map}).
\end{defn}

In the affine case, we can give an explicit description:
Let $x_1,\dots,x_n$ be the standard coordinates on $\R^n$. For
$I=\{i_1,\dots,i_d\}\subset\{1,\dots,n\}$ a subset with $i_1<i_2<\dots<i_d$ we write as usual
\[ \md x_I=\md x_{i_1}\wedge\dots\wedge \md x_{i_d}.\]
A differential form on $G$ can be written uniquely as
\[ \omega=\sum_{I}a_I\md x_I\]
with $a_I:G\to\R$ continuous.
It is definable if and only if the $a_I$ are definable.

\begin{rem}
Note that we do not put differentiability conditions or require that $\omega$ extends to a neighbourhood of $G$.
\end{rem}

\begin{lemma}
 Let $p\geq 1$, and $f \colon M\to N$ be a definable $C^p$-map of definable manifolds with corners.
 Let $G\subset M$ and $H\subset N$ be definable subsets with $f(G)\subset H$.
 Then the pull-back of a differential form on $H$ defines a differential form on $G$.
 If $\omega$ is definable, then so is $f^*\omega|_G$.
\end{lemma}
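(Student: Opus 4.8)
The plan is to reduce to an affine chart computation. The pull-back $f^{*}\omega$ is defined pointwise: the cotangent map $d^{*}f$ from the previous lemma restricts at each $x\in G$ to a linear map $(d^{*}f)_{x}\colon T^{*}_{f(x)}N\to T^{*}_{x}M$, and I would set $(f^{*}\omega)_{x}=\Lambda^{d}(d^{*}f)_{x}\bigl(\omega_{f(x)}\bigr)$, which makes sense because $f(G)\subset H$. This prescription is intrinsic, so the section $x\mapsto(f^{*}\omega)_{x}$ of $\Lambda^{d}T^{*}M$ over $G$ is already well defined; only its continuity, and its definability when $\omega$ is definable, remain to be checked. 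Both are local conditions on $G$, so I would verify them after composing with the (finitely many) charts $\phi_{i}$ of $M$ and $\psi_{j}$ of $N$. This reduces us to $M$ open in $\R^{n}\times\R^{m}_{\ge 0}$ with coordinates $y_{1},\dots,y_{n+m}$, $N$ open in $\R^{n'}\times\R^{m'}_{\ge 0}$ with coordinates $x_{1},\dots,x_{n'+m'}$, and $f=(f_{1},\dots,f_{n'+m'})$ definable and of class $C^{p}$ on its domain.

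In this situation I would write $\omega=\sum_{I}a_{I}\,\md x_{I}$ with $a_{I}\colon H\to\R$ continuous, definable precisely when $\omega$ is, using the affine description following \cref{defn:diff_form}. Expanding
\[
 f^{*}\omega=\sum_{I}(a_{I}\circ f)\,\md f_{i_{1}}\wedge\dots\wedge\md f_{i_{d}},
 \qquad
 \md f_{\ell}=\sum_{l}\frac{\partial f_{\ell}}{\partial y_{l}}\,\md y_{l},
\]
and collecting terms gives $f^{*}\omega|_{G}=\sum_{J}b_{J}\,\md y_{J}$, where each $b_{J}$ is a finite sum of products of one of the functions $a_{I}\circ f$ with the corresponding $d\times d$ minor of the Jacobian matrix $(\partial f_{\ell}/\partial y_{l})$, all restricted to $G$. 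Continuity of $f^{*}\omega|_{G}$ then follows at once: $a_{I}\circ f|_{G}$ is continuous because $f|_{G}\colon G\to H$ and $a_{I}\colon H\to\R$ are continuous, and the partial derivatives $\partial f_{\ell}/\partial y_{l}$ are of class $C^{p-1}$, hence continuous since $p\ge 1$. I would note explicitly that \cref{defn:diff_form} does not require a form to extend to a neighbourhood of $G$, so nothing more is needed for $f^{*}\omega|_{G}$ to be a differential form on $G$.

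For the definability statement I would invoke: that the $a_{I}$ are definable; that a composition of definable maps is definable (the projection axiom of a structure, cf.\ Chapter~1 of~\cite{D:oMin}); that partial derivatives of a definable $C^{p}$-function are definable, by the very formula written down in the proof of the previous lemma; and that finite sums and products of definable functions are definable because $\mathcal{S}$ expands $\rcf{\R}{}$. These combine to show each $b_{J}$ is definable, hence so is $f^{*}\omega|_{G}$. I do not expect a genuine obstacle here; the only points calling for a little care are organising the reduction to the finite atlases—with the observation that the chartwise computations automatically glue because the pull-back was defined intrinsically—and remembering that continuity and definability of the $a_{I}$ refer to the subspace topology on $H$, which is respected by the continuous, definable map $f|_{G}\colon G\to H$.
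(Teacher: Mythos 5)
Your proposal is correct and follows essentially the same route as the paper: the paper simply observes that $f^*\omega|_G$ is the composition $G\to H\to \Lambda^dT^*N\to \Lambda^dT^*M$ of continuous (and, when $\omega$ is definable, definable) maps, relying on the preceding lemma that $d^*f$ is definable of class $C^{p-1}$. Your chartwise expansion into Jacobian minors is just an explicit unwinding of that same argument.
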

\begin{proof}
By definition, $f^*\omega|_G \colon G\to \Lambda^d T^*M$ is the composition
\[ G\to H\to \Lambda^dT^*N\to \Lambda^dT^*M\]
of continuous   maps. Hence it is definable if $\omega$ is definable.
\end{proof}

As usual, we can only expect a well-defined integration theory for differential forms on oriented domains. 
\begin{defn}
  \label{orientation}
  Fix an integer $p\ge 1$, let $d\geq 0$ be an integer,
  and let  $M$ be a definable $C^p$-manifold with corners
  with $G\subset M$ a definable subset of dimension $d$.
\begin{enumerate}
\item\label{it:pseudo}
  A \emph{pseudo-orientation} on $G$ is the choice of
  an equivalence class of a definable open subset $U\subset\reg_d(G)$
  such that $\dim(G\ohne U)<d$ and an orientation on $U$.
  Two such pairs are equivalent if they agree on the intersection.
  We thereby obtain an equivalence relation.
\item  A \emph{pseudo-oriented} definable set  is a definable set together with the choice of a pseudo-orientation.
   \item Given a pseudo-orientation on~$G$
    with $U$ as in (\ref{it:pseudo})
    and a differential form $\omega$ of degree $d$ on $G$,
    we define
    \[ \int_G\omega:=\int_{U}\omega\]
    if the integral on the right converges absolutely.
  \end{enumerate}
\end{defn}

\begin{rem}
 The same definition also allows us to integrate a $d$-form $\omega$
 over a $G$ of dimension smaller than $d$:
 in this case $\reg_d(G)=\emptyset$ and the integral is set to $0$.
 Such integrals occur in our formulas and are to be read in this way.
\end{rem}

\begin{lemma}\label{lem:pseudo} Let $p\geq 1$.
 Let $G$ be a definable subset of a definable $C^p$-manifold with corners $M$.
 \begin{enumerate}
  \item The integral is well-defined,
   i.e., independent of the choice of representative for the pseudo-orientation.
  \item By restriction a pseudo-orientation on $G$ also induces
   a pseudo-orientation on every definable subset $G'\subset G$ with $\dim G'=\dim G$.
  \item A pseudo-orientation on $G$ induces a
   pseudo-orientation on every definable superset $G\subset G''$ such
   that $\dim(G''\ohne G)<d$, in particular on~$\bar{G}$.
  \item \label{it:modif} Let $\pi \colon G'\to G$ be a definable modification,
   i.e., there is an open definable subset $U\subset \reg_d(G)$ with $\dim(G\ohne U)<d$
   such that $\pi|_{U'} \colon U'=\pi^{-1}(U)\to U$ is an isomorphism
   of definable  $C^p$-manifolds and $\dim(G'\ohne U')<d$.
   Then a pseudo-orientation on $G$ induces a pseudo-orientation on~$G'$.
 \end{enumerate}
\end{lemma}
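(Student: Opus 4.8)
The plan is to establish the four parts in order, relying repeatedly on two facts from o-minimality: a definable subset of dimension $<d$ of a $d$-dimensional $C^p$-submanifold of $M$ is Lebesgue-null, so integrating a differential form over it gives $0$ (by cell decomposition it is a finite union of $C^p$-cells of dimension $<d$, each null in any chart); and the closure of a definable set has the same dimension, so deleting a closed definable set of dimension $<d$ changes the ambient set only by a set of dimension $<d$ (\cref{o-min-dimension}). I will also use invariance of domain in the following guise: if $A\subset B$ are submanifolds of $M$ of the same dimension $d$, then $A$ is open in $B$; this lets us recognise the pieces we cut out as open inside the regular loci. For part~(1), let $(U_1,o_1)$ and $(U_2,o_2)$ be representatives of the given pseudo-orientation, so that $o_1$ and $o_2$ agree on $U_1\cap U_2$. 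Since $\dim(G\ohne U_i)<d$ we have $\dim\big(G\ohne(U_1\cap U_2)\big)<d$, so each $U_i\ohne(U_1\cap U_2)$ is a definable subset of dimension $<d$ of the $d$-submanifold $U_i$, hence negligible for $\omega$; therefore if one of $\int_{U_1}\omega$, $\int_{U_2}\omega$ converges absolutely so does the other, and both equal $\int_{U_1\cap U_2}\omega$.

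For parts~(2) and~(3), fix a representative $(U,o)$ of the pseudo-orientation on $G$. In case~(2), with $G'\subset G$ and $\dim G'=d$, set $U':=U\cap\reg_d(G')$. As $U$ is open in $G$ and $\reg_d(G')\subset G$, the set $U'$ is open in $\reg_d(G')$, hence open in $G'$; moreover $\reg_d(G')$ is a $d$-submanifold of $M$ by \cref{rem:reg}, so $U'$ is one too, and being contained in the $d$-submanifold $U$ it is open in $U$, so $o$ restricts to an orientation on it. Finally $G'\ohne U'=(G'\ohne U)\cup(G'\ohne\reg_d(G'))$ has dimension $<d$ by monotonicity of dimension and \cref{Reg-definable}. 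In case~(3), with $G\subset G''$ and $\dim(G''\ohne G)<d$ (so $\dim G''=d$), put $Z:=\overline{G''\ohne G}$, a closed definable set with $\dim Z<d$, and $U'':=U\ohne Z$. Near any point of $U''$ one may shrink a chart neighbourhood so as to avoid $Z$, where $G''$ then coincides with the $d$-submanifold $G$; hence $U''\subset\reg_d(G'')$, and $U''=U\ohne Z$ is open in $U$, so it is a $d$-submanifold on which $o$ restricts. Here $G''\ohne U''\subset(G''\ohne G)\cup(G\ohne U)\cup Z$ has dimension $<d$; applying this to $G''=\bar G$ is legitimate since $\dim(\bar G\ohne G)<d$. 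In both cases the pair $(U',o|_{U'})$, respectively $(U'',o|_{U''})$, represents a pseudo-orientation on the new set, and its equivalence class is independent of the chosen representative on $G$ by part~(1).

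For part~(4), pick a representative $(W,o)$ of the pseudo-orientation on $G$. Replacing $W$ by $W\cap U$ — which still lies in $\reg_d(G)$, is open in $G$, has complement of dimension $<d$ in $G$, and carries the restricted orientation — we may assume $W\subset U$; then $W$, being open in $G$ and contained in the open subset $U$ of $G$, is open in $U$. Transport $W$ through the isomorphism $\pi|_{U'}\colon U'\to U$: the preimage $\widetilde W:=(\pi|_{U'})^{-1}(W)$ is open in $U'$, the map $\pi$ restricts to a $C^p$-isomorphism $\widetilde W\to W$, and we pull $o$ back along it to an orientation $o'$ on $\widetilde W$. As in part~(3), $\widetilde W_1:=\widetilde W\ohne\overline{G'\ohne U'}$ lies in $\reg_d(G')$ and is open in $U'$, hence a $d$-submanifold on which $o'$ restricts; and $G'\ohne\widetilde W_1\subset(G'\ohne U')\cup(U'\ohne\widetilde W)\cup\overline{G'\ohne U'}$ has dimension $<d$, because $U'\ohne\widetilde W$ is carried by $\pi$ onto $U\ohne W\subset G\ohne W$ and a definable isomorphism preserves dimension. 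Thus $(\widetilde W_1,o'|_{\widetilde W_1})$ represents a pseudo-orientation on $G'$, again independent of the choices by part~(1).

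The work is bookkeeping rather than deep: the only thing one must watch is that each piece cut out — by intersecting with some $\reg_d(\cdot)$ or by deleting a lower-dimensional closed definable set — really lands inside the appropriate regular locus, is open there, and still has complement of dimension $<d$ in the ambient set. The two o-minimal facts isolated at the start (lower-dimensional definable sets are null, and closures do not raise dimension), together with invariance of domain, are exactly what is needed; in part~(4) there is the additional, routine point that the given $C^p$-isomorphism $\pi|_{U'}$ transports orientations and preserves dimensions. The most delicate of these is ensuring openness inside the regular loci, which is where invariance of domain does the job.
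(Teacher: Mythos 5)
Your proof is correct and follows essentially the same route as the paper's: reduce part (1) to the fact that the symmetric difference of two representatives is a definable set of dimension $<d$ and hence null, define the restricted/extended representative by intersecting $U$ with the relevant regular locus (your part (3) variant $U\ohne\overline{G''\ohne G}$ is only cosmetically different from the paper's $U\cap\reg_d(G'')$), and handle the modification in (4) by transporting through $\pi|_{U'}$ and combining the two operations. The extra detail you supply (invariance of domain for openness in the regular loci, the explicit bookkeeping in (4)) is exactly what the paper leaves implicit.
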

\begin{proof} We start with (1). Suppose we are given two representatives of the same pseudo-orientation on definable open subsets $U_1,U_2\subset\reg_d(G)$
such that $\dim(G\ssm U_i)<d$. Then the same is true on $U_1\cap U_2$.
 Hence it suffices to consider the 
 case $U_1\subset U_2$.
		By assumption the orientation on $U_2$ restricts to
 $U_1$. We have
 \[ \int_{U_2}\omega=\int_{U_1}\omega \]
 because $U_2\ohne U_1$ has measure~$0$.
 The left hand side converges absolutely if and only if the right hand side does.

For (2)-(4), we fix a pseudo-orientation on $G$, i.e., an orientation on 
some $U\subset\reg_d(G)$ such that $\dim(G\ohne U)<d$. 

Let $G'\subset G$ and $U'=U\cap\reg_d(G')$.
The orientation on $U$ restricts to an orientation on~$U'$.
We have $\dim(G'\ohne U')<d$,
hence this data defines the pseudo-orientation on~$G'$.

Let $G\subset G''$ and $U''=U\cap\reg_d(G'')$.
The orientation on $U$ restricts to an orientation on~$U''$.
As $\dim(G''\ohne G)<d$, we also have $\dim(G''\ohne U'')<d$,
hence again this data defines a pseudo-orientation on~$G''$. 

The case of a modification combines the two operations: 
the pseudo-orientation on $G$ defines one on the subset $U$, and hence the isomorphic $U'$. It extends to the superset $G'$.
\end{proof}

\begin{cor}\label{cor:int_formel}
 Let $G,H\subset M$ be definable subsets of dimension at most $d$ of a definable $C^p$-manifold with corners,
 equipped with a pseudo-orientation on $G\cup H$.
 Let $\omega$ be a definable differential form of degree  $d$ on $G\cup H$.
 Then with the restricted pseudo-orientations
\[ \int_{G\cup H}\omega=\int_G\omega+\int_H\omega-\int_{G\cap H}\omega\]
where the left hand side is absolutely convergent if and only if all terms on the right are.
\end{cor}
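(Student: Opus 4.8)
The plan is to reduce everything to ordinary measure theory on a single oriented manifold. Fix a representative of the pseudo-orientation on $G\cup H$: a definable open $U\subseteq\reg_d(G\cup H)$ with $\dim((G\cup H)\ssm U)<d$, together with an orientation on $U$; by definition $\int_{G\cup H}\omega=\int_U\omega$. Put $U_G=U\cap\reg_d(G)$, $U_H=U\cap\reg_d(H)$ and $U_{G\cap H}=U\cap\reg_d(G\cap H)$, each carrying the orientation restricted from $U$. By \cref{lem:pseudo} these represent the restricted pseudo-orientations on $G$, $H$ and $G\cap H$ respectively (the verification that, e.g., $U_G$ is open in $\reg_d(G)$ and satisfies $\dim(G\ssm U_G)<d$ uses \cref{rem:reg} and \cref{Reg-definable}), so that $\int_G\omega=\int_{U_G}\omega$, $\int_H\omega=\int_{U_H}\omega$ and $\int_{G\cap H}\omega=\int_{U_{G\cap H}}\omega$, all three now being integrals over definable subsets of the single oriented $C^p$-manifold $U$.

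The key geometric input is a pair of ``equalities modulo a definable set of dimension $<d$'' (hence modulo a Lebesgue-null subset of $U$):
\[
U=U_G\cup U_H \quad\text{and}\quad U_G\cap U_H=U_{G\cap H}.
\]
The first holds because $U\subseteq G\cup H$ gives the exact identity $U=(U\cap G)\cup(U\cap H)$, while $(U\cap G)\ssm U_G\subseteq G\ssm\reg_d(G)$ has dimension $<d$ by \cref{Reg-definable}, and symmetrically for $H$. In the second, the inclusion $U_{G\cap H}\subseteq U_G\cap U_H$ is the crux: for $x\in U\cap\reg_d(G\cap H)$, the sets $G\cup H$ and $G\cap H$ are both $d$-dimensional submanifolds of $M$ near $x$ (the former since $x\in U\subseteq\reg_d(G\cup H)$), so $G\cap H$ is open in $G\cup H$ near $x$ by invariance of domain; as $G\cap H\subseteq G\subseteq G\cup H$ and $x\in G\cap H$, the set $G$ must then coincide with a neighbourhood of $x$ in $G\cup H$, whence $x\in\reg_d(G)$, and symmetrically $x\in\reg_d(H)$. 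The reverse inclusion modulo dimension $<d$ follows from $U_G\cap U_H\subseteq U\cap G\cap H$ together with $\dim((G\cap H)\ssm\reg_d(G\cap H))<d$.

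Granting this, the identity of the \namecref{cor:int_formel} is just inclusion--exclusion on $U$: removing Lebesgue-null sets gives $\int_U\omega=\int_{U_G\cup U_H}\omega$ and $\int_{U_G\cap U_H}\omega=\int_{U_{G\cap H}}\omega$, and then $\mathbf 1_{U_G\cup U_H}=\mathbf 1_{U_G}+\mathbf 1_{U_H}-\mathbf 1_{U_G\cap U_H}$ (all orientations being the one on $U$) yields the formula. For the finiteness claim I would run the same bookkeeping with the positive density $|\omega|$: from $\int_{U_G\cup U_H}|\omega|\le\int_{U_G}|\omega|+\int_{U_H}|\omega|$ and the two reverse inequalities, absolute convergence over $U$ is equivalent to absolute convergence over both $U_G$ and $U_H$; and absolute convergence over $U_G$ forces it over the smaller set $U_{G\cap H}$. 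Hence $\int_{G\cup H}\omega$ is finite iff $\int_G\omega$ and $\int_H\omega$ are, equivalently iff all three right-hand terms are.

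The one place I expect real care to be needed is the inclusion $U_{G\cap H}\subseteq U_G\cap U_H$: it genuinely uses that the chosen representative $U$ lies inside $\reg_d(G\cup H)$---so that $G$ can be ``squeezed'' between the submanifolds $G\cap H$ and $G\cup H$---and it relies on $\reg_d$ meaning submanifold \emph{without} boundary in the sense of \cref{defn:manifold}. Everything else is routine dimension bookkeeping via \cref{Reg-definable} and the additivity of integration over an oriented manifold.
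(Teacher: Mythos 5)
Your proof is correct. It differs from the paper's in one organisational respect: the paper first reduces to the case of \emph{disjoint} $G$ and $H$ by splitting $G\cup H$ into $G\cap H$, $G\ssm H$, $H\ssm G$, and in the disjoint case only needs that $V=U\cap\reg_d(G)$ and $W=U\cap\reg_d(H)$ are disjoint opens with $V\cup W$ representing the pseudo-orientation on $G\cup H$ --- so it never has to relate $\reg_d(G)\cap\reg_d(H)$ to $\reg_d(G\cap H)$. You instead run inclusion--exclusion of indicator functions directly on the fixed representative $U$, which forces you to prove the extra identity $U\cap\reg_d(G)\cap\reg_d(H)=U\cap\reg_d(G\cap H)$ modulo a definable set of dimension $<d$; your invariance-of-domain argument for the nontrivial inclusion is sound (and correctly exploits both that $U\subset\reg_d(G\cup H)$ and that submanifolds in \cref{defn:manifold} have no boundary), but it is precisely the delicate behaviour of $\reg_d$ under set operations that \cref{rem:reg} warns about and that the paper's disjointification sidesteps. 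Both routes then finish identically: definable sets of dimension $<d$ are Lebesgue-null, and additivity of the integral (applied to $|\omega|$ for the convergence statement) does the rest. Your version is marginally longer but has the small virtue of never invoking additivity for disjoint sets as a separate prior step.
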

\begin{proof}We may assume $\dim G = \dim H=d$. We can decompose $G\cup H$ into the disjoint subsets
$G\cap H, G\ohne H, H\ohne G$. Hence it suffices to check the formula in the case where the two sets are disjoint. 

We start with an orientation on a definable open subset $U\subset \reg_d(G\cup
H)$ with $\dim (G\cup H)\ohne U < d$. 
The pseudo-orientations on $G$ and $H$ are represented by the
restricted orientations
on  $V=U\cap \reg_d(G)$ and $W=U\cap\reg_d(H)$, respectively.
Then $V\cup W$ represents our pseudo-orientation on $G\cup H$. 
By definition
and by the standard computation rules for integration on manifolds,
we find
\[ \int_{G\cup H}\omega = \int_{V\cup W}\omega =
 \int_V\omega+\int_W\omega = \int_G\omega+\int_H\omega. \qedhere\]
\end{proof}

\begin{rem}
\begin{enumerate}
\item
As in the case of ordinary orientations,
the value of the integral depends on the choice of
pseudo-orientation. Note that even a simple definable set like an
interval $G$
admits infinitely many different pseudo-orientations.
If $U\subset G$ is the complement of finitely many points such that it has $n$ connected components,
there are $2^n$ possible orientations  giving rise to $2^n$ different pseudo-orientations of $G$. Even for the simple differential form $\md x$, this gives up to $2^n$ different values of $\int_G\md x$. By varying $U$ this leads to infinitely many pseudo-orientations and values of the integral.		
\item
  For each $G$  the choice $U = \reg_d(G)$ is canonical if it is possible
  to choose an orientation on this set. 
However, the behaviour of $\reg_d(G)$ under
standard topological operations is complicated.
It is not true that the choice of an orientation on $\reg_d(G)$
also induces an orientation on $\reg_d(\bar{G})$ (take $G=\R\ohne\{0\}$).
Neither is it true that $\reg_d(G')\subset \reg_d(G)$ if $G'\subset G$
(take the $x$-axis in the union of the coordinate axes in~$\R^2$).
Our more flexible notion sidesteps these problems.
\item
Note also that every non-empty  definable set~$G$
admits a pseudo-orientation because open cells are orientable and $G$ admits a cell decomposition.
\item 
  The restriction operation described in the proof of
  Lemma~\ref{lem:pseudo}(2) is well-defined in
  the following sense. Two representatives of a pseudo-orientation on $G$
  restrict to representatives of the same pseudo-orientation on $G'$.
   Moreover, the same holds true for the extension
  operation described in the proof of part (3).
  Finally,  extending a pseudo-orientation from $G$ to $G''$ and
  then restricting it back to $G$  recovers the original pseudo-orientation.
  So the extension in part (3) of the lemma is unique. 
\end{enumerate}
\end{rem}

Let $\mathrm{vol}(\cdot)$ denote the Lebesgue measure on $\R^n$.
\begin{remark}\label{rem:jordan}
  Let us recall some basic measure-theoretic properties of a
  definable subset $X\subset\R^n$.
  By the Cell Decomposition Theorem, $X$ is a finite union of cells.
  As cells are locally closed, $X$ is a Borel set and
  in particular Lebesgue measurable. 
  The topological boundary
  $\mathrm{bd}(X)$ is definable of dimension $\le n-1$.
  The Hausdorff dimension of $\mathrm{bd}(X)$ equals $\dim
  \mathrm{bd}(X)<n$,
  see the last paragraph on page 177 of~\cite{vdD:limitsets}.
	In particular, the $n$-dimensional Hausdorff measure of
  $\mathrm{bd}(X)$ vanishes. It is well-known that the $n$-dimensional
  Hausdorff measure coincides with the Lebesgue measure, so 
  $\mathrm{vol}(\mathrm{bd}(X))=0$. It follows from \cite[Theorem 8.50]{LaczkovichSos} that if $X$ is bounded, then
  $\mathrm{bd}(X)$ 
  is  Jordan
  measurable with Jordan measure zero.
  (See
  \cite[Chapter~3]{LaczkovichSos} for the definition and properties of
  the Jordan measure.) In particular, any bounded definable subset of
  $\R^n$ is
  Jordan measurable, with Jordan measure equal to $\mathrm{vol}(X)$,
  and has the same Jordan measure as its closure in $\R^n$
   \cite[Theorems~3.7, 3.9, and Section 8.4]{LaczkovichSos}.
\end{remark}

\begin{rem}
  \label{rem:integralexamples}
    If $G\subset\R^n$ is a definable open subset with the standard orientation
    and $\omega=\md x_1\wedge\dots \wedge \md x_n$,
    then $\int_G\omega=\vol(G)$.
    This number is always finite if $G$ is bounded.
\end{rem}

We will
 see that the example of the volume form is really the general case,
but before that we need to establish a technical lemma.

\begin{lemma}\label{lem:choose_nhd}
 Let $(M,\phi_i)$ be a definable manifold with corners and $x\in M$. Then there
 is a definable open neighourhood $U_x\subset M$ with compact closure and such that
 $\bar{U}_x\subset U_i$ for some $i$.
\end{lemma}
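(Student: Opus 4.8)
The plan is to localise the whole question to a single chart and there produce the required neighbourhood as a small Euclidean ball with \emph{rational} centre and \emph{rational} radius, so that its definability is automatic. First I would fix $i \in I$ with $x \in U_i$, put $v = \phi_i(x)$, and write $n = n_i$, $m = m_i$ to lighten notation; thus $v \in V_i$, and $V_i$ is a definable open subset of $\R^n \times \R^m_{\ge 0}$. Since $V_i$ is open in $\R^n \times \R^m_{\ge 0}$, there is $\varepsilon > 0$ with $\{\,p \in \R^n\times\R^m_{\ge 0} : |p - v| < \varepsilon\,\} \subset V_i$. As $\Q^n \times \Q^m_{\ge 0}$ is dense in $\R^n \times \R^m_{\ge 0}$, I can choose $w \in \Q^n \times \Q^m_{\ge 0}$ with $|w - v| < \varepsilon/3$, and then a rational number $\rho$ with $|w - v| < \rho < \varepsilon - |w - v|$ (such a $\rho$ exists because $2|w-v| < \varepsilon$).

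Then I would set $W = \{\,p \in \R^n \times \R^m_{\ge 0} : |p - w| < \rho\,\}$. Being cut out by a ball with rational centre and radius, $W$ is $\Q$-semi-algebraic and hence definable; it is open in $\R^n\times\R^m_{\ge 0}$; and it contains $v$ since $|v - w| < \rho$. Its closure in $\R^{n+m}$ is contained in the closed ball $\{\,p \in \R^n\times\R^m_{\ge 0} : |p - w| \le \rho\,\}$, which is closed and bounded, hence compact; and every point $p$ of that closed ball satisfies $|p - v| \le |p - w| + |w - v| \le \rho + |w-v| < \varepsilon$, so the closed ball, and with it $\bar W$, lies inside $\{\,p \in \R^n\times\R^m_{\ge 0} : |p-v| < \varepsilon\,\} \subset V_i$. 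Hence $W$ is a definable open neighbourhood of $v$ in $V_i$ whose closure $\bar W$ is compact and contained in $V_i$.

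Finally I would transport this back by setting $U_x = \phi_i^{-1}(W) \subset U_i \subset M$. As $\phi_i \colon U_i \to V_i$ is a homeomorphism and $W$ is open in $V_i$, the set $U_x$ is open in $U_i$, hence open in $M$, and contains $x$. It is definable in the sense of \cref{defn:manifold}, since for every $j \in I$ one has $\phi_j(U_x \cap U_j) = \phi_{ij}\bigl(W \cap \phi_i(U_i \cap U_j)\bigr)$, which is the image of a definable set under the definable transition map $\phi_{ij}$ and therefore definable. Moreover $\phi_i^{-1}(\bar W)$ is the continuous image of the compact set $\bar W$, hence compact, hence closed in the Hausdorff space $M$; since $U_x \subset \phi_i^{-1}(\bar W) \subset U_i$, we obtain $\bar{U}_x \subseteq \phi_i^{-1}(\bar W) \subset U_i$, and $\bar{U}_x$ is a closed subset of a compact set, so it is compact. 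The argument is short; the only things that require attention are keeping track of the two topologies involved (the subspace topology on $V_i \subset \R^n\times\R^m_{\ge 0}$ versus the topology of $M$) and keeping the centre $w$ and radius $\rho$ rational, which is exactly what guarantees that $U_x$ is definable over $k$. No o-minimality is used beyond the fact, built into the definition of a definable manifold, that transition maps and their domains are definable.
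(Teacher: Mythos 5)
Your proof is correct and follows essentially the same route as the paper: shrink to a single chart, pick a ball with definable (in your case rational) centre close to $\phi_i(x)$ and definable radius small enough that its closure stays inside $V_i$, and pull back along $\phi_i$. You are somewhat more explicit than the paper about verifying the definability of $U_x$ in the other charts via the transition maps and about why $\bar U_x\subset\phi_i^{-1}(\bar W)$, but these are elaborations of the same argument, not a different one.
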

\begin{proof}
 We fix $i$ such that $x\in U_i$. Recall that $V_i=\phi_i(U_i)$ is open in
 $\mathbb{H}:=\R^n\times\R^m_{\geq 0}$.
 Hence there is a definable $0<r<\infty$ such that the open
 ball $\mathbb{H}\cap B_r(\phi_i(x))$ is contained in $V_i$. Let $a\in \mathbb{H}$ be definable with
 distance at most $r/4$ from $\phi_i(x)$. Put
 $V_x=B_{r/2}(a)\cap\mathbb{H}$.
 Then $\bar{V}_x\subset B_r(\phi_i(x))\cap\mathbb{H}$
 is a compact %
set contained in~$V_i$.
 We put $U_x=\phi_i^{-1}(V_x)$.
\end{proof}

\begin{lemma}
 \label{lem:Zlinearvolume}
 A finite $\Z$-linear combination of volumes
 of definable bounded open subsets of~$\R^d$
 is up to sign the volume of a definable bounded open subset of~$\R^{d}$.
\end{lemma}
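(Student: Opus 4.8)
The plan is to reduce first to a difference of two volumes and then to realise that difference by removing a suitable open piece.

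Writing the combination as $\sum_i c_i \vol(A_i)$ with $c_i \in \Z$ and each $A_i\subset\R^d$ definable, bounded and open, I would dispose of the positive coefficients by taking disjoint translates: if $A_i\subset(-N,N)^d$ with $N\in\Z$, then $\bigcup_{j=0}^{c_i-1}\bigl(A_i+(2jN,0,\dots,0)\bigr)$ is definable over the same parameters (the translation vectors being rational), bounded, open, with volume $c_i\vol(A_i)$ by additivity of Lebesgue measure. Taking a disjoint union of these blocks over all $i$ with $c_i>0$ (shifting by further integer vectors to keep the blocks separated) yields a definable bounded open set $B_+\subset\R^d$ with $\vol(B_+)=\sum_{c_i>0}c_i\vol(A_i)$; in the same way one obtains $B_-$ with $\vol(B_-)=\sum_{c_i<0}(-c_i)\vol(A_i)$. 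Hence $\sum_i c_i\vol(A_i)=\vol(B_+)-\vol(B_-)$, and after possibly exchanging $B_+$ and $B_-$ (which only changes the sign) I may assume $\vol(B_+)\ge\vol(B_-)$.

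The core of the argument is to produce a definable \emph{open} subset $S\subset B_+$ with $\vol(S)=\vol(B_-)$. Once this is done, I would take $B:=B_+\ssm\bar S$: this is open (as $\bar S$ is closed), definable, bounded; and since $\bar S\ssm S$ is definable of dimension $<d$ it is a Lebesgue null set (as recorded above for boundaries of definable sets), so $\vol(\bar S)=\vol(S)$ and $\vol(B)=\vol(B_+)-\vol(S)=\vol(B_+)-\vol(B_-)$, which is exactly what is wanted. To build $S$ I would slice $B_+$ by the half‑spaces $\{x_1<t\}$: the function $\mu(t):=\vol\bigl(B_+\cap\{x_1<t\}\bigr)$ is non‑decreasing and continuous (indeed $\mu(t')-\mu(t)=\vol(B_+\cap\{t\le x_1<t'\})\to 0$ as $t'\to t$, the hyperplane $\{x_1=t\}$ meeting the bounded set $B_+$ in measure zero), it vanishes for $t$ small and equals $\vol(B_+)$ for $t$ large, so by the intermediate value theorem it attains the value $\vol(B_-)$ at some $t_0$; I then set $S:=B_+\cap\{x_1<t_0\}$.

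The one point that needs care — and which I expect to be the main obstacle — is that $S$ must be \emph{definable}, i.e.\ that $t_0$ can be chosen definable over the parameters at hand. This amounts to definability of the slicing‑volume function $\mu$ in $\mathcal{S}$, for then $\{t:\mu(t)=\vol(B_-)\}$ is a non‑empty definable subset of $\R$ whose infimum is a definable real with the required property; here is precisely where o‑minimality of $\mathcal{S}$ must be invoked. A more robust variant, which sidesteps any issue with volumes of higher‑dimensional definable families, is to construct directly a definable, measure‑preserving map $B_-\to B_+$ that is injective off a null set — using a cell decomposition of $B_-$, fibrewise shears of Jacobian one, and a ``straightening'' of each cell into a thin tube which is then routed inside $B_+$, the inequality $\vol(B_+)\ge\vol(B_-)$ providing exactly the room needed (possibly spread over several connected components of $B_+$) — and to take for $S$ the image of this map. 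All remaining ingredients are additivity of volume, the vanishing of the measure of the boundary of a definable set, and the intermediate value theorem.
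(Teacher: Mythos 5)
Your opening reduction --- packing all positive contributions into one bounded open definable set $B_+$ via disjoint integer translates, and likewise the negative ones into $B_-$, so that the task becomes realising $\vol(B_+)-\vol(B_-)$ --- is exactly the paper's first step and is fine. The core step, however, has a genuine gap, and it is the one you yourself flag: your set $S$ is only definable if the slicing function $\mu(t)=\vol\bigl(B_+\cap\{x_1<t\}\bigr)$ is definable in $\mathcal{S}$ over the given parameters, and o-minimality does \emph{not} provide this. Parameterised volumes of definable families need not be definable: already in $\R_\alg$ the $\Q$-definable family $\{(x,y)\mid 0<x<t,\ 0<y(1+x^2)<1\}$ has volume $\arctan(t)$, which is not semi-algebraic over any parameters. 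Even in structures where parameterised integration happens to behave well, your $t_0$ is pinned down by the condition $\mu(t_0)=\vol(B_-)$, and $\vol(B_-)$ is a specific real number (e.g.\ $\pi$ for a disc) that is typically not definable over $k$; ``these two definable sets have equal volume'' is not a first-order condition in the language of $\mathcal{S}$. The fallback you sketch --- a definable measure-preserving injection $B_-\to B_+$ via shears and thin tubes --- is precisely the kind of object whose definable existence would itself require proof and is not easier than the lemma.

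The paper sidesteps all of this with Viu-Sos's combinatorial argument. Cover $\R^d$ by a mesh of cubes of rational side length $\epsilon$; let $N_+$ be the number of cubes contained in $B_+$ and $N_-$ the number of cubes meeting $B_-$. Since definable bounded sets are Jordan measurable (their boundaries are definable of dimension $<d$, hence null), $N_+\epsilon^d\to\vol(B_+)$ and $N_-\epsilon^d\to\vol(B_-)$ as $\epsilon\to 0$, so $N_+\ge N_-$ for $\epsilon$ small. Assign to each cube $C$ meeting $B_-$ a distinct cube $C'\subset B_+$ and remove from $B_+$ the rigid translate of $\overline{B_-\cap C}$ placed inside $C'$. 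Only rational translations and intersections with rational cubes occur, so definability over $k$ is preserved for free, the result is open, and the removed volume is exactly $\vol(B_-)$. In other words, the definable way to produce your set $S$ is not by an exact cut but by finitely many rigid copies of pieces of $B_-$ distributed among cubes of $B_+$.
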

\begin{proof}
 All contributions with a positive coefficient
 can be combined into a single one by taking the disjoint union of
 translates of the definable sets. In the same way all contributions
 with a negative coefficient can be combined into a single one. So it
 suffices to prove that the difference of the volumina of two
 definable bounded open subsets of $\R^{d}$ is up to sign the volume
 of a definable bounded open subset of $\R^d$.
 
The argument of Viu-Sos, see \cite[Section~3]{viu-sos} in
 the semi-algebraic setting works identically in the definable case
 and provides what we want.
We recap his argument: Let $B_+$ and $B_-$ be open bounded
definable subsets of $\R^d$ such that (without loss of
generality) $\vol(B_+)>\vol(B_-)$. We cover $B_+$ and $B_-$ by a mesh
of sidelength $\epsilon$. Let $N_+$ be the number of closed
cubes fully contained in $B_+$ and $N_-$ the number of cubes meeting
the closure of $B_-$. Then 
\[ \vol(B_+)\geq N_+\epsilon^d,\quad N_-\epsilon^d\geq \vol (B_-).\]
By making $\epsilon$ smaller and smaller, we approximate the volumes
from below and above, respectively; 
indeed, see \cite[Theorem 3.4]{LaczkovichSos}
and use that $B_{+}$ and $B_-$ are
Jordan measurable, see Remark~\ref{rem:jordan}.
If the approximation is good enough, this implies $N_+>N_-$. For every
cube meeting the closure of $B_-$ we choose a cube contained in $B_+$.
We can then remove a copy of one from the other. The result is a definable subset with total volume $\vol(B_+)-\vol(B_-)$.
\end{proof}

Recall that we work with  $k$-definable sets in a fixed o-minimal structure $\mathcal{S}$ expanding $\rcf{\R}{}$.
\begin{thm}
 \label{thm:omin_volume}
  Let $p\geq 1$, and $(M,\phi_i)$ be a definable $C^p$-manifold with corners,
  $G\subset M$ a pseudo-oriented  compact 
definable subset of dimension $d$.
  Let $\omega$ be a differential form of degree $d$ on $G$
  as in \cref{defn:diff_form}.
  Then
  \[
   \int_G \omega
  \]
  converges absolutely.
  If $\omega$ is definable, then the value is up to a sign
  the volume of a definable bounded open subset of $\R^{d+1}$.
\end{thm}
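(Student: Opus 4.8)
I would argue in three steps: first dissect the oriented representative of $G$ into finitely many relatively compact pieces, each inside a single chart of $M$; then reduce, monomial by monomial, to an integral of a bounded definable function over a bounded open subset of $\R^d$; and finally realise such integrals as signed volumes of regions under graphs, which live in $\R^{d+1}$, so that \cref{lem:Zlinearvolume} finishes the job.

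\textbf{Step 1.} By \cref{orientation} we may replace $\int_G\omega$ by $\int_U\omega$ for a definable open $U\subset\reg_d(G)$ carrying the chosen orientation with $\dim(G\ssm U)<d$; since $G$ is compact and hence closed, $\overline U\subset G$ is compact and $\omega$ is bounded on it. Using \cref{lem:choose_nhd} and compactness I would cover $\overline U$ by finitely many definable opens $N_1,\dots,N_r$ of $M$ with $\overline{N_j}$ compact inside a chart domain $U_{i(j)}$, and then set $U_j=(N_j\cap U)\ssm\bigcup_{l<j}(N_l\cap U)$. These are pairwise disjoint definable subsets of $U$ of dimension $\le d$ with $\bigcup_j U_j=U$, with $\overline{U_j}\subset U_{i(j)}$ compact, and each inheriting the restricted pseudo-orientation (\cref{lem:pseudo}), so by additivity (iterating \cref{cor:int_formel}) it suffices to handle each $\int_{U_j}\omega$. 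Transporting to the chart, $\phi_{i(j)}(U_j)$ becomes a bounded definable subset of dimension $\le d$ of some $\R^N$, on which $\omega=\sum_I a_I\,\md x_I$ with $a_I$ continuous, bounded on the compact $\overline{\phi_{i(j)}(U_j)}$, and definable if $\omega$ is; by linearity I treat one summand $\omega_I=a_I\,\md x_I$ at a time.

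\textbf{Step 2.} Fixing $I$ and relabelling so that $I=\{1,\dots,d\}$, write $\md x_I=\pi^*(\md y_1\wedge\cdots\wedge\md y_d)$ for the coordinate projection $\pi\colon\R^N\to\R^d$. I would then apply \cref{lem:Gdecomposition} to $\phi_{i(j)}(U_j)$, obtaining definable open subsets $G_0,G_1,\dots,G_m$ of $\reg_d(\phi_{i(j)}(U_j))$ exhausting it up to dimension $<d$. On $G_0$ the image $\pi(G_0)$ has dimension $<d$ and hence Lebesgue measure $0$; by the inverse function theorem $\pi|_{G_0}$ is then nowhere an immersion, so $\pi^*(\md y_1\wedge\cdots\wedge\md y_d)$ vanishes identically on the $d$-manifold $G_0$ and this piece contributes nothing. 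For $i\ge1$, $\pi|_{G_i}\colon G_i\to W_i$ is a definable homeomorphism onto a bounded open $W_i\subset\R^d$; let $G_i'$ be the definable open locus of $G_i$ where $d\pi$ restricted to the tangent space is invertible. By Sard's theorem the complementary critical locus $G_i\ssm G_i'$ has image of measure $0$ under the homeomorphism $\pi|_{G_i}$, hence is of dimension $<d$, so $\int_{G_i}\omega_I=\int_{G_i'}\omega_I$. On $G_i'$ the projection $\pi$ is now a definable $C^p$-diffeomorphism onto a bounded open $W_i'\subset\R^d$, and since its inverse $\psi_i$ has identity first $d$ components, $\psi_i^*\omega_I=(a_I\circ\psi_i)\,\md y_1\wedge\cdots\wedge\md y_d$ with $a_I\circ\psi_i$ bounded; thus $\int_{W_i'}|a_I\circ\psi_i|\,\md\vol<\infty$, and summing all contributions gives the absolute convergence of $\int_G\omega$.

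\textbf{Step 3.} Assuming $\omega$ definable, each surviving contribution equals $\pm\int_W b\,\md\vol$ with $b=a_I\circ\psi_i$ bounded, continuous and definable on a bounded open $W\subset\R^d$. Writing $b=b^+-b^-$ with $b^\pm=\max(\pm b,0)$, the ``region under the graph'' $R^\pm=\{(y,t)\in W\times\R\mid 0<t<b^\pm(y)\}$ is a bounded open definable subset of $\R^{d+1}$ with $\vol(R^\pm)=\int_W b^\pm\,\md\vol$, so $\int_G\omega$ is a finite $\Z$-linear combination of volumes of bounded open definable subsets of $\R^{d+1}$; \cref{lem:Zlinearvolume} applied in dimension $d+1$ then shows it is, up to sign, the volume of one such set. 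The main obstacle I anticipate is Step 2: \cref{lem:Gdecomposition} only provides topological charts, and one must shrink to the locus where the differential is invertible---controlling the discarded critical locus by Sard---to obtain an honest $C^p$-chart with trivial Jacobian; the rest is the region-under-the-graph trick together with the bookkeeping of \cref{cor:int_formel} and \cref{lem:Zlinearvolume}.
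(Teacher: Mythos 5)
Your proof is correct and follows the same overall architecture as the paper's: reduce to a single chart by compactness and \cref{lem:choose_nhd}, treat the monomials $a_I\,\md x_I$ separately, invoke \cref{lem:Gdecomposition}, kill the piece $G_0$ because the pullback of the top form vanishes there, and finish with the region-under-the-graph trick and \cref{lem:Zlinearvolume} in $\R^{d+1}$. The one genuine divergence is the point you correctly single out as the crux. The paper simply asserts that the inverse $\psi_i$ of the chart $\pi|_{G_i}$ supplied by \cref{lem:Gdecomposition} is of class $C^p$ (this is what one gets if the cell decomposition in that lemma is taken to be a $C^p$-cell decomposition, so that each $G_i$ is a $C^p$-graph over $\pi(G_i)$), and then observes that $\psi_i^*(\md x_1\wedge\cdots\wedge\md x_d)=\md y_1\wedge\cdots\wedge\md y_d$, so no Jacobian ever appears. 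You instead treat $\pi|_{G_i}$ as merely a definable homeomorphism, pass to the locus $G_i'$ where $d\pi|_{TG_i}$ is invertible, and control the critical locus via Sard plus invariance of dimension under definable bijections. That works (and $p\ge 1$ suffices for Sard in the equidimensional case), and it buys you independence from the $C^p$-strength of the cell decomposition; note though that Sard is not even needed, since at every critical point of $\pi|_{G_i}$ the restriction of $\md x_1\wedge\cdots\wedge\md x_d$ to $T G_i$ already vanishes pointwise, exactly as in your argument for $G_0$, so $\int_{G_i}\omega_I=\int_{G_i'}\omega_I$ for free.

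One small piece of bookkeeping to add in Steps 2--3: the identity $\int_{G_i'}\omega_I=\int_{W_i'}(a_I\circ\psi_i)\,\md y$ holds only where $\pi|_{G_i'}$ is orientation-preserving for the orientation inherited from the pseudo-orientation of $G$; on different components of $G_i'$ the sign may differ, so you should either split $G_i'$ further (as the paper does) or absorb the definable sign $\varepsilon(y)=\pm1$ of the orientation comparison into the integrand $b$ before forming $b^{\pm}$ and the sets $R^{\pm}$. With that adjustment the argument is complete.
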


\begin{proof}
 We are going to rewrite our integral
 as a finite $\Z$-linear combination of other integrals.
 Eventually these summands will be absolutely convergent,
 proving absolute convergence of the original integral.
 In the definable case,
 every summand will be written as a difference
 of volumes of bounded definable open subsets of $\R^{d+1}$.
 By Lemma~\ref{lem:Zlinearvolume} this will imply
 that the original volume is up to a sign
 the volume of a single definable bounded open subset of $\R^{d+1}$
 and hence finish the proof of the theorem.

 We begin by showing how to reduce to the case $M=\R^n$.
 By \cref{lem:choose_nhd} 
each point $x\in G$
 has a definable open neighbourhood $U_x$ in $M$
 such that $\bar{U}_x$ is compact
 and contained in one of the finitely many charts of $M$.
 By hypothesis $G$  is compact,
 so it is covered by finitely many such neighbourhoods;
 let us call them $U_1,\ldots,U_a$.
The $\bar{U}_i$ and their multiple intersections inherit a pseudo-orientation from $G$. 
By the inclusion-exclusion principle from \cref{cor:int_formel},
we have
 \[ \int_{G}\omega=
  \sum_{i=1}^a\int_{
   \bar{U}_{i}}\omega-\sum_{i<j}\int_{ \bar{U}_{i}\cap
   \bar{U}_{j}}\omega \pm\dots .\]
 We now replace $G$ by one of the $\bar{U}_i$ 
 (or multiple intersections),  making it affine.
From now on we assume $M=\R^n$.

 We have $\omega=\sum_I a_I\md x_I$.
 Again, it suffices to treat the summands separately.
 After a coordinate permutation
 we may assume without loss of generality
 that $\omega =a \md x_1\wedge\dots\wedge \md x_d$ where $a$ is continuous on~$G$.
 Recall that $\pi \colon \R^n \to \R^d$ denotes
 the projection onto the first $d$ coordinates.
 Let $y_1,\dots,y_d$ be the coordinates on $\R^d$.
 Hence
 \[
  \pi^*(\md y_1\wedge\dots\wedge \md y_d)=\md x_1\wedge\dots \wedge \md x_d.
 \]

 We let $G_0,G_1,\ldots,G_N$ be pairwise disjoint as
 in \cref{lem:Gdecomposition} applied to~$G$.
 In particular, $G_0\cup G_1\cup\cdots \cup G_N$ equals $G$
 up to a subset of dimension at most $d-1$.
 All $G_i$ inherit a pseudo-orientation from $G$ and all
   $\pi|_{G_i}$ with $i\ge 1$ are charts. We may
   replace each such
   $G_i$ by a finite union of open subsets, again up to a subset
   of dimension $d-1$, and assume that all $G_1,\ldots,G_N$
   carry an orientation in
   the classical sense and that $\pi|_{G_i}\colon
   G_i\rightarrow \pi(G_i)$ is orientation preserving. Thus
 \[ \int_G \omega = \sum_{i=0}^N \int_{G_i} \omega\]
 by \cref{cor:int_formel}
 if all integrals on the right
 converge absolutely.
 Thus it suffices again   to treat a single $\int_{G_i}\omega$.
 
  We begin with the easy case $i=0$. By assumption, all fibres of
  $\pi|_{G_0}$ have positive dimension, hence the dimension of $\pi(G_0)$ is strictly smaller than $d$. Thus the restriction of any $d$-form to $\pi(G_0)$ vanishes, and so $\pi|_{G_0}^*=0$ on differential forms of degree $d$.
  In particular the restriction of $\omega=a\md x_1\wedge\cdots\wedge \md x_d$ to~$G_0$
  vanishes.
 Hence $\int_{G_0}\omega$ converges absolutely with value $0$, the volume of $\emptyset$.

  Now we treat $G_i$ with $i\ge 1$.
    Then $\pi|_{G_i} : G_i \rightarrow
  \pi(G_i) \subset\R^d$  is a chart and thus has an
  inverse $\psi_i:\pi(G_i) \rightarrow G_i$.
  Note that $\psi_i$ is of $C^p$-class. %
  The integral
  \[ \int_{\pi(G_i)} a\circ \psi_i \md y_1\wedge \cdots \wedge \md y_d\]
  converges absolutely as $a$ is continuous on the compact~$G$
  and thus in particular bounded on $G_i$.
  Finally,
\[ \psi_i^* (\md x_1\wedge
  \cdots \wedge \md x_d) = \md y_1\wedge \cdots \wedge \md y_d\]
 as
  $\pi\circ\psi_i$ is the identity. Thus
  \[ \int_{G_i} a \md x_1\wedge\cdots \wedge \md x_d = \int_{\pi(G_i)} \psi_i^*(a
  \md x_1\wedge \cdots \wedge \md x_d) = \int_{\pi(G_i)} a\circ\psi_i
  \md y_1\wedge\cdots\wedge  \md y_d\]
 converges absolutely.

  Suppose that $\omega$ is definable; then $a$ is definable.
  It remains to show that $\int_{\pi(G_i)} a\circ\psi
  \md y_1\wedge\cdots\wedge \md y_d$
   is the volume of a definable bounded
  open subset of $\R^{d+1}$. This integral equals
  \begin{equation*}
    \int_{C_+} a\circ\psi_i \md y_1\wedge\cdots \wedge  \md y_d -
        \int_{C_-} |a\circ\psi_i| \md y_1\wedge\cdots \wedge \md y_d
  \end{equation*}
  with $C_{\pm} = \{y\in \psi_i(G_i) \mid {\pm a}(\psi_i(y)) > 0\}$ both
  definable bounded and open in $\R^d$.   Hence
  it equals $\vol(U_+) - \vol(U_-)$ with
  $U_{\pm } =  \{(y,z)\in C_{\pm} \times \R : 0<z < |a(\psi_i(y))|
  \}$.
  Note that $U_{\pm}$ are both definable bounded and open in
  $\R^{d+1}$. This difference is the volume of a definable bounded
  open subset of $\R^{d+1}$ by \cref{lem:Zlinearvolume}.
\end{proof}

\begin{rem}
  Let us explain why we cannot replace $\R^{d+1}$ by $\R^d$ in the theorem above.
  Consider the half-circle $G = \{(x,y) \in \R^2\mid y\geq 0, x^2+y^2=1\}$.
  It is compact, semi-algebraic and definable without parameters.
Then $\reg_1(G)$ is the interior of $G$,
independent of $p$. We orient it anti-clockwise as usual.
  Now   $\int_G y\md x  = -\int_{-1}^1 \sqrt{1-x^2}\md x=- \pi/2 $.
  As $\pi$ is transcendental, $\int_G y\md y$ cannot be the volume of
  $\Q$-semi-algebraic subset of $\R$.
\end{rem}
\begin{rem}The natural way of computing the integral is to pull the differential form back to a chart (via the inverse of the chart map) and evaluate there. However, this pull-back involves a Jacobian matrix. Its entries are not bounded in general, hence convergence is not automatic.

Here is an explicit example:
Let $M=\R^2$, $G=\{(y^2,y)| y\in [0,1]\}$, $\omega=a\md x_1+b\md x_2$ for continuous $a,b$ on $G$. We have $\reg_1(G)=\{(y^2,y)|y\in (0,1)\}$. It is a submanifold. We can use the projections $\pi_1$ and $\pi_2$ to the first or second coordinate as a chart. In both cases the image in $\R$ is the open interval $I=(0,1)$. The inverse $\psi_1:I\to G$  of $\pi_1$ is $t\mapsto (t,\sqrt{t})$. Its Jacobian matrix is
\[ \left(1,\frac{1}{2\sqrt{t}}\right).\]
The second entry is unbounded on $I$. We have
\[ \psi_1^*\omega= (a\circ\phi_1) \md t + (b\circ\phi_1) \frac{1}{2\sqrt{t}} \md t.\]
The coefficient function is unbounded. (Note that $a\circ \phi_1$ and
$b\circ \phi_1$ are bounded because $a$ and $b$ are. Note also that
differentiability of $a$ and $b$ does not come into play. It suffices that they are continuous.)
The solution is to treat the summands
$a\md x_1$ and $b\md x_2$ separately and use the projection $\pi_1$ for the first summand and $\pi_2$ for the second summand. We then interpret
\[ a\md x_1=\pi_1^* ( (a\circ\phi_1)\md t), \quad b\md x_2=\pi_2^*( (b\circ \phi_2)\md t)\]
and the convergence issue disappears.
\end{rem}

\begin{rem}
 A similar convergence argument for integrals
 can also be found in \cite{hanamura_et_al_I}. They treat explicitly the case of $C^\infty$-forms,
 but actually this assumption is not needed.

 Alternatively, convergence also follows from the existence
 of triangulations that are strictly of class~$C^1$,
 shown in \cite{omin-triang}.
\end{rem}

\section{Oriented real blow-up}

\label{realblow}

The oriented real blow-up is a natural construction
in the context of semi-algebraic geometry.
Nevertheless,
it seems that little is written about it from this point of view.
The construction is discussed in
\S{}I.3 of~\cite{Maj84}, \S3.4 of~\cite{fresan-jossen} and~\cite{gillam}.
One of the main purposes of this section 
is to argue
that the oriented real blow-up is semi-algebraic
(in other words, definable in $\R_\alg$) with suitable parameters.
For a general discussion we refer the reader to the aforementioned sources.

Let $X$ be a topological space,
let $\pi \colon L \to X$ be a complex (topological) line bundle on~$X$,
and let $s \colon X \to L$ be a section.
Let $L^*$ be the complement of the zero section, which carries an action of $\C^*$.
We put
\[
 B^*_{L,s} = \{l \in L^* \mid s(\pi(l)) \in \R_{\geq 0}l\}.
\]
Let $x\in X$. If $s(x) = 0$, then $B^*_{L,s}$ contains $L_x \ssm \{0\}$;
otherwise, it contains the unique open half-ray generated by~$s(x)$.
In particular, $B^*_{L,s}$ is stable under the fibrewise action of~$\R_{> 0}$.

Following~\cite{gillam} and
  \cite[Section~3.4.2]{fresan-jossen},
we call the quotient the \emph{simple oriented real blow-up}:
\[
 \Blo_{L,s}(X) = B^*_{L,s}/\R_{>0}.
\]
It is equipped with the quotient topology.
The simple oriented real blow-up comes equiped with a natural projection map
$\pi \colon \Blo_{L,s}(X) \to X$
that is a  homeomorphism outside the zero locus of~$s$.
Multiplying $s$ by a continuous function $X\to\C^*$ produces a blow-up that is canonically isomorphic to $\Blo_{L,s}$.

If $X$ is a complex analytic space, $D \subset X$ an effective Cartier divisor, 
$L=\Oh(D)$ with the system $(U_i,s_i)_{i\in I}$ of tautological sections, then the $\Blo_{L|_{U_i},s_i}$ glue to a globally defined real oriented blow-up
$B_D$ or $\Blo_D(X)$.

\begin{ex}\label{ex:Ptilde}
 The oriented real blow-up $\tilde{\Pe}^1 := \Blo_{\infty}(\mathbb{P}^1_\C)$
 is a compactification of~$\C$ by a circle at infinity.
 The details of the following picture will be explained
 as we describe the general situation in local coordinates.
 \begin{center}
  \begin{tikzpicture}
   \fill[fill=blue!20!white]
   (120:3) arc (120:170:3) -- (170:1.5) arc (170:120:1.5) -- cycle;
   \filldraw[fill=blue!20!white, draw=blue!50!black]
   (-90:3) arc (-90:90:3) arc (30:-30:6) -- cycle;
   \draw[help lines] (-3,0)--(3,0);
   \draw[help lines] (0,-3)--(0,3);
   \draw (90:3) arc (90:270:3);
   \node[anchor=north east] at (0,0) {$0$};
   \draw[fill] (.804,0) circle (1pt) node[anchor=south west] {$r$};
   \draw[fill] (0:3) circle (1pt) node[anchor=west] {$1\infty$};
   \draw[fill] (90:3) circle (1pt) node[anchor=south] {$i\infty$};
   \draw[fill] (145:3) circle (1pt) node[anchor=south east] {$\theta\infty$};
   \node at (145:2.2) {$U_{\varepsilon,R}$};
   \node at (30:2) {$\bar S_r$};
  \end{tikzpicture}
 \end{center}
 For every $z \in S^1 = \{z \in \C \mid \, |z| = 1\}$
 there is a point $z\infty$ on the boundary:
 it is the point of intersection of the boundary
 and the half-ray $z \cdot \R_{\ge 0}$.
 A system of open neighbourhoods around $z\infty$ is given by the sets
 \begin{multline*}
  U_{\varepsilon,R} =
  \{w \in \C \mid \, |w| > R \text{ and } |{\arg}(w) - \arg(z)| < \varepsilon \}
 \\
 \cup \{w\infty \mid \, |\arg(w) - \arg(z)| < \varepsilon\}
 \end{multline*}
 for small~$\epsilon$ and positive real~$R$.

 The closure of the set $S_r = \{z \in \C \mid \Re(z) \ge r\}$
 is given by the union of $S_r$ and the half-circle
 $\{z\infty \mid \Re(z) \ge 0 \}$.
\end{ex}

Suppose that $L_1, \ldots, L_n$ are line bundles on~$X$
with respective sections $s_1, \ldots, s_n$,
and put $L = L_1 \otimes \cdots \otimes L_n$
with section $s_1 \otimes \cdots \otimes s_n$.
We may then form the fibre product
\[
 \Blo_{L_1,s_1}(X) \times_X \cdots \times_X \Blo_{L_n,s_n}(X)
\]
which naturally maps to $\Blo_{L,s}(X)$.

\begin{defn}\label{defn:orb}
 Let $X$ be a smooth analytic space,
 and let $D \subset X$ be a simple normal crossings divisor.
 Denote the (smooth) irreducible components of~$D$ by $D_1, \dots, D_n$.
 The \emph{oriented real blow-up} of~$X$ in~$D$, denoted by $\OBl_D(X)$
 is the fibre product
 \[
  \Blo_{D_1}(X) \times_X \cdots \times_X \Blo_{D_n}(X).
 \]
 Note that $\OBl_D(X)$ comes with a natural projection map to~$X$.
\end{defn}

One topological intuition for $\OBl_D(X)$
is the complement of a tubular neighbourhood of~$D$ in~$X$.
We now make this picture precise
by a description in local coordinates.

Consider a domain $U$ in~$\C^n$ and $D = D_1 \cup \ldots \cup D_m$
the union of the first~$m$ coordinate hyperplanes (intersected with~$U$).
In that case we have the following explicit description
of $\OBl_{D}(U)$ 
\begin{multline}
 \label{orb-local-coords}
 \{
  (z_1, \ldots, z_n, w_1, \ldots, w_m) \in \C^n \times (S^1)^m \mid\\
 (z_1,\dots,z_n)\in U, 
  z_iw_i^{-1} \in \R_{\ge0} \text{ for $1 \le i \le m$}
 \}
\end{multline}
and $\pi$ is the projection
$(z_1, \ldots, z_n, w_1, \ldots, w_m) \mapsto (z_1, \ldots, z_n)$.
In particular, it is a $C^\infty$-manifold with corners.
Local coordinates are defined by 
\begin{align}\label{eq:sa_chart}
 \OBl_{D}(U)&\to \R_{\geq 0}^{m}\times (S^1)^m\times \C^{n-m}\\
(z_1,\dots,z_n,w_1,\dots,w_m)&\mapsto \left(\frac{z_1}{w_1},\dots,\frac{z_m}{w_m}, w_1,\dots,w_m,z_{m+1},\dots,z_n\right).
\notag
\end{align}

In particular, this gives $\OBl_D(X)$  the structure of a manifold with corners.
As a consequence, we obtain the following result.

\begin{proposition}\label{prop:is_sa}
 Let $k\subset\C$ be a field which is algebraic over $k_0=k\cap\R$.
 Let $X$ be a smooth algebraic variety over~$k$
 and let $D \subset X$ be a simple normal crossings divisor.
 Then the oriented real blow-up $\OBl_D(X^\an)$
 can naturally be endowed with a structure of $k_0$-semi-algebraic
 $C^\infty$-manifold with corners (see Definition~\ref{defn:manifold})
 in such a way that the natural projection map
 $\pi \colon \OBl_D(X^\an) \to X^\an$
 is morphism of $k_0$-semi-algebraic $C^\infty$-manifolds with corners.
 \begin{proof}
  Without loss of generality $k_0=\tilde{k}$ is real closed
  and $k=\bar{k}$ algebraically closed.
  Let $(\bar{X},\bar{D})$ be a good compactification of the log pair $(X,D)$.
  It suffices to prove the proposition for $(\bar{X},\bar{D})$
  because $\OBl_D(X^\an)$ is the preimage of $X^\an$ in $\OBl_{\bar{D}}(\bar{X}^\an)$.
  In other words, without loss of generality we assume that $X^\an$ is compact.

  Without loss of generality we assume that $X$ is connected.
  By definition, for every point $x\in X$,
  there is a Zariski-open neighbourhood~$U_x$
  and an \'etale map $p \colon U_x\to \A^d$ (with $d=\dim X$)
  such that $p(x)=0$ and $D\cap U_x=p^{-1}(\{z_1\cdots z_m=0\})$.  
  By the semi-algebraic inverse function theorem \cite[Proposition~2.9.7]{BCR}
 or \cite[Chapter~7~(2.11)]{D:oMin}
  the map $p^\an$ is invertible on an open ball $B_x$ around $0$ in $\C^d$.
  Let $V_x=p^{-1}(B_x)\subset X^\an$.
  The coordinate functions $z_1,\dots,z_m$ are both holomorphic
  and $k_0$-semi-algebraic.
  Hence the preimage
  \[ \pi^{-1}(V_x)\subset \OBl_D(X^\an)\]
  has the shape described after Definition~\ref{defn:orb}. The map
  (\ref{eq:sa_chart}) defines a chart. More precisely, we also need to cover $S^1\subset\R^2$ by finitely many semi-algebraic charts.
As $X^\an$ is compact, finitely many of
  the $V_x$ suffice to cover $X^\an$. The transition maps are $C^\infty$ and
  $k_0$-semi-algebraic because the transition maps between the $p(V_x)$
  are holomorphic and $k_0$-semi-algebraic.
 \end{proof}
\end{proposition}

\begin{lemma}\label{lem:blow-up_functorial}
 The construction of the oriented real blow-up is functorial:
 Let $X_1$ and~$X_2$ be smooth analytic spaces,
 and let $D_i \subset X_i$ be  simple normal crossings divisors.
 Let $f \colon X_1 \to X_2$ be a morphism
 such that $f^{-1}(D_2) \subset D_1$.
 Then there is a natural morphism $\tilde f$
 such that the following diagram commutes:
 \[\begin{tikzcd}
  \OBl_{D_1}(X_1) \rar{\tilde f} \dar & \OBl_{D_2}(X_2) \dar \\
  X_1 \rar{f} & X_2\ .
 \end{tikzcd}\]
If $f$ is a morphism of smooth algebraic varieties, then $\tilde{f}$ is
a $C^\infty$-morphism of $k_0$-semi-algebraic manifolds with
corners.%
 \begin{proof}
  Compute in local coordinates. Let $x_1 \in X_1$ with image point $x_2$. We choose local holomorphic coordinates $(z_1,\dots,z_{n_1})$ centered at $x_1$ and $(s_1,\dots,s_{n_2})$ centered at
$x_2$, and write $f=(f_1, \dots,f_{n_2})$ where all $f_i$ are holomorphic functions
	 in $z_1,\dots,z_{n_1}$. Without loss of generality, the divisors are of the form
$D_1=\{z_1\cdots z_{m_1}=0\}$ and $D_2=\{s_1\cdots s_{m_2}=0\}$. The local coordinates for the oriented real blow-up are
$(r_1,\phi_1,\dots,r_{m_1},\phi_{m_1},z_{m_1+1},\dots z_{n_1})$ in
$(\R_{\geq 0}\times S^1)^{m_1}\times\C^{n_1-m_1}$ and
$(\rho_1,\psi_1,\dots,\rho_{m_2},\psi_{m_2},s_{m_2+1},\dots s_{n_2})$ in
$(\R_{\geq 0}\times S^1)^{m_2}\times\C^{n_2-m_2}$. In these coordinates, the extension
$\tilde{f}$ of $f$ is of the form
\[ (\tilde{f}_1,\dots,\tilde{f}_{m_2},f_{m_2+1},\dots,f_{n_2})\]
where
\begin{equation}\label{eq:ftilde}
 \tilde{f}_i=\left(|f_i|,\frac{f_i}{|f_i|}\right)\in\R_{\geq 0}\times S^1
\end{equation}
for $1\leq i\leq m_2$. The issue is well-definedness of \eqref{eq:ftilde}.
We fix $i$ and drop the index. By assumption the vanishing locus of
$f$ is contained in $D_1$. Hence $f$ is of the form %
\[ f(z_1,\dots,z_{n_1})=z_1^{v_1}\dots z_{m_1}^{v_{m_1}}h(z_1,\dots,z_{n_1})\]
with $v_j\geq 0$ and $h(0)\neq 0$. From this
\[ \tilde{f}=\left( r_1^{v_1}\dots r_{m_1}^{v_{m_1}}|h(\underline{z})|,\phi_1^{v_1}\dots,\phi_{m_1}^{v_{m_1}}h(\underline{z})/|h(\underline{z})|\right)\]
and the expression is well-defined near $0$.

In the algebraic situation, we choose the $z_i$ and $s_j$ as regular parameters.
The resulting maps are $C^\infty$ and
$k_0$-semi-algebraic. Finitely many charts suffice, as
each $\mathrm{OBl}_{D_i}(X_i)$ is covered by finitely many charts. 
 \end{proof}
\end{lemma}

\begin{rem}
 \label{alt-notn-blow-up}
 In the future, it will often be the case
 that we start with a variety~$X$ that is not proper,
 and consider the oriented real blow-up of the boundary divisor~$X_\infty$
 of a compactification~$\bar X$ of~$X$.
 In such a situation, we will write $B_{\bar X}(X)$
 instead of $\OBl_{X_\infty}(\bar X)$.
\end{rem}

\begin{rem}
 It is not clear to us whether $\OBl_D(X)$
 is affine as semi-algebraic $C^1$-manifold with corners.
 In other words, does there exist
 a semi-algebraic $C^1$-embedding of $\OBl_D(X)$ into~$\R^n$?
 Compare with \cref{affine-semialg-spaces}.
\end{rem}

\section{Naive exponential periods}\label{sec:naive}

Let $k\subset\C$, $k_0=k\cap\R$ and assume that
$k$ is algebraic over $k_0$, see the discussion in Section~\ref{ssec:field}.
Recall from Definition~\ref{defn:naive} the notion of a naive
exponential period. We denote
by $\Pnaive(k)$ the set of naive exponential periods.
Let $\Ptilde$ denote the real oriented blow-up
of $\Pe^1$ at the point at infinity, see Example~\ref{ex:Ptilde}.

\subsection{Examples of integrals}
\label{eg-naive}
We first consider some instructive examples.

\begin{ex}
 Let $G = [1,\infty) \subset \C$, $f = \frac{1}{z}$, $\omega = \md z$.
 Consider
 \[ \int_G \e^{-f}\omega = \int_1^\infty \e^{-\frac{1}{t}}\md t
 = \int_1^0 -\e^{-s}\frac{1}{s^2}\md s .\]
 It does not converge.
 Note that the image $f(G) = (0,1]$ is not closed,
 hence $f \colon G \to \C$ is not proper.
 The properness condition in the definition of a naive exponential period
 was added to exclude cases like this.
\end{ex}

\begin{ex}
 Once again let $G = [1,\infty) \subset \C$, $f=\frac 1z$,
 but $\omega = \frac{1}{z^2}\md z$.
 As in the previous example,
 the data do not satisfy the definition of a naive exponential period
 because $f \colon G \to \C$ is not proper.
 However, this time
 \[ \int_G \e^{-f}\omega = \int_1^\infty \e^{-\frac{1}{t}}\frac{1}{t^2}\md t
 = -\int_1^0\e^{-s}\md s\]
 converges.
 It can be understood as a naive exponential period with $G'=[0,1]$, $f'=z$, $\omega'=\md z$.
\end{ex}

\begin{ex}\label{ex:3}
 Let $s\in S^1$ with $\Re(s)>0$.
 Consider the half ray $G_s=\{ r s \mid r\geq 0\}$, $f=z$, $\omega=\md z$.
 If $s \neq 1$, this data do not satisfy the definition of a naive
 exponential
 period
 because $f(G_s)=G_s$ does not have bounded imaginary part.
 Nevertheless,
 \[ \int_{G_s}\e^{-f}\md z
  = \int_0^\infty \e^{-rs} s \md r
  = -\e^{-rs} \Bigr\rvert^\infty_0=1\]
 converges and is obviously an exponential period.  Note that it is independent of~$s$.
 Actually, $G_s$ defines a class in $H_1^\rd(\A^1,\{0\};\Z)$, see Section~\ref{ssec:rd} below,
 because its closure in $\Ptilde$
 is contained in $\Bcirc=\Bcirc_{\Pe^1}(\A^1,\id)$.
 The homology class is independent of $s$
 (fill in the triangle between $G_1$ and $G_s$,
 the third edge is in $\partial \Bcirc$).
 The period integral only depends on the homology class,
 hence the independence follows from the abstract theory as well.
 We do not allow $G_s$ in our definition of a naive
 exponential period,
 but the same number can be obtained as a naive exponential period for $G_1$.
 This is a general feature, see \xref{prop:comp_homol}.
 In Definition~\ref{defn:gnaive}
 we will introduce the notion of a generalised naive exponential period
 which allows all $G_s$.
\end{ex}

\subsection{General properties}

\begin{defn}\label{defn:gnaive}
A \emph{generalised naive exponential period} over $k$
is a complex number of the form
\[ \int_G\e^{-f}\omega\]
where $G\subset \C^n$ is a pseudo-oriented closed $k_0$-semi-algebraic
subset, $\omega$ is a rational algebraic differential form on $\A^n_k$ that
is regular
on $G$ and $f$ is a rational function on $\A^n_k$ such that
$f$ is regular and proper on $G$ and, moreover, the closure of $f(G)$ in $\Ptilde$
is contained in $\Bcirc=\C\cup \{s\infty \mid s\in S^1, \Re(s)>0\}$.
We denote by
$\Pgnaive(k)$ the set of generalised naive exponential periods.
\end{defn}

We are going to show in Corollary~\ref{cor:nv_converges} that these
generalised naive exponential periods converge absolutely. For
the rest of this section we assume absolute convergence.
The assumption $G\subset\C^n$ is surprising when comparing to the literature on ordinary periods, see Remark~\ref{rem:surprise} below.

\begin{lemma}\label{naive-is-generalised}
 Naive exponential periods are generalised  naive exponential periods.
\end{lemma}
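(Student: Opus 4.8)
The plan is to show that any data $(G,\omega,f)$ witnessing a naive exponential period also witnesses a generalised naive exponential period, so that the only thing requiring verification is the comparison of the two conditions imposed on $f(G)$. In the definition of a naive exponential period we require $f(G)\subset S_{r,s}=\{z\mid \Re(z)>r,\ |\Im(z)|<s\}$; in the definition of a generalised naive exponential period we instead require that the closure of $f(G)$ in $\Ptilde$ be contained in $\Bcirc=\C\cup\{s\infty\mid s\in S^1,\ \Re(s)>0\}$. All other hypotheses — $G$ a pseudo-oriented closed $k_0$-semi-algebraic subset of $\C^n$, $\omega$ a rational algebraic form regular on $G$, $f$ rational, regular and proper on $G$ — are word-for-word identical. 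So the lemma reduces to the implication: if $f(G)\subset S_{r,s}$ then $\overline{f(G)}\subset\Bcirc$, where the closure is taken in $\Ptilde$.

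First I would observe that $S_{r,s}$ is a closed subset of $\C$ in the usual topology (the strict inequalities notwithstanding, it is not, so one must be slightly careful — in fact $S_{r,s}$ is \emph{open} in $\C$; what matters is only that we control its closure). The closure of $S_{r,s}$ inside $\C$ is $\{z\mid \Re(z)\ge r,\ |\Im(z)|\le s\}$, which is still a set with bounded imaginary part and real part bounded below. So $\overline{f(G)}^{\,\C}$ is again contained in a strip of this shape. Now I would analyse what boundary points of $\Ptilde$ can lie in the closure of such a strip: a point $s_0\infty\in\partial\Ptilde$ lies in the closure of a subset $T\subset\C$ precisely when $T$ contains points $w$ with $|w|\to\infty$ and $\arg(w)\to\arg(s_0)$, using the explicit neighbourhood basis $U_{\varepsilon,R}$ from Example~\ref{ex:Ptilde}. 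If $T$ has uniformly bounded imaginary part and real part bounded below, then any sequence $w_n\in T$ with $|w_n|\to\infty$ must have $\Re(w_n)\to+\infty$, hence $\arg(w_n)\to 0$. Therefore the only boundary point of $\Ptilde$ in the closure of such a strip is $1\infty$, and $1\infty\in\Bcirc$. This gives $\overline{f(G)}\subset\C\cup\{1\infty\}\subset\Bcirc$, as required.

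It then remains to note that absolute convergence of $\int_G\e^{-f}\omega$ — needed to make sense of the number as a generalised naive exponential period under the running convergence assumption of this subsection — is exactly the convergence already established (or assumed) for naive exponential periods, since the integrand and domain are literally the same. Hence the number $\int_G\e^{-f}\omega$, being a naive exponential period, is witnessed verbatim by admissible data for a generalised naive exponential period, which proves $\Pnaive(k)\subset\Pgnaive(k)$.

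I do not expect any genuine obstacle here: this is a containment-of-definitions lemma and the entire content is the elementary topological fact that a strip $\{\Re\ge r,\ |\Im|\le s\}$, compactified inside $\Ptilde$, only acquires the single boundary point $1\infty$. The one place to be careful is the bookkeeping of open versus closed strips and of which closure ($\C$ versus $\Ptilde$) is meant at each step; writing out the neighbourhood-basis argument with the sets $U_{\varepsilon,R}$ explicitly should remove any ambiguity.
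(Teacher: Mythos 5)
Your proposal is correct and follows exactly the paper's route: the paper's entire proof is the one-line observation that $f(G)\subset S_{r,s}$ implies $\overline{f(G)}\subset\Bcirc$, and you have simply spelled out the (correct) topological verification via the neighbourhood basis $U_{\varepsilon,R}$ that the closure of a strip in $\Ptilde$ acquires only the boundary point $1\infty$.
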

\begin{proof}
The condition $f(G)\subset S_{r,s}$ from Definition~\ref{defn:naive} implies $\overline{f(G)}\subset \Bcirc$.
\end{proof}
\begin{lemma}
 \label{lem:change_field}
 The sets $\Pnaive(k)$ and $\Pgnaive(k)$ are $\bar{k}$-algebras.
 Moreover, $\Pnaive(k)=\Pnaive(\bar{k})$ and $\Pgnaive(k)=\Pgnaive(\bar{k})$.
\end{lemma}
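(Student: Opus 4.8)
The assertion has two parts: (a) each of $\Pnaive(k)$ and $\Pgnaive(k)$ is a $\bar k$-algebra, and (b) these sets do not change when $k$ is replaced by $\bar k$. I would deal with (b) first since it is the structural input, and then deduce (a).

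For (b), one inclusion is trivial: since $k \subset \bar k$, every naive (resp. generalised naive) exponential period over $k$ is one over $\bar k$. Conversely, suppose $\alpha = \int_G \e^{-f}\omega$ is a (generalised) naive exponential period over $\bar k$, so $G \subset \C^n$ is $\bar k_0$-semi-algebraic and $f, \omega$ are defined over $\bar k$. By \cref{ssec:field}, $\bar k$ is a finite extension of $k$, in fact $[\bar k : \tilde k] = 2$ and $\bar k_0 = \tilde k$ is the real closure of $k_0$; moreover the structure of $k_0$-semi-algebraic sets coincides with that of $\tilde k$-semi-algebraic sets (cf. the remark after \cref{semi-algebraic} that one may pass to an algebraic, hence real-closed, subextension). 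Thus $G$ is already $k_0$-semi-algebraic. For the coefficients: $\bar k = k_0(i)$ after enlarging, or more precisely $\bar k$ is generated over $k$ by finitely many algebraic elements; a standard restriction-of-scalars / trace argument (or: write everything in a $k$-basis of $\bar k$ and use that the integral is $\C$-linear in $\omega$) lets me reduce an integral with $\bar k$-coefficients to a $\bar k$-linear combination of integrals with $k$-coefficients over the same $G$ and the same $f$. Since $f$ must still be defined over $k$ for the reduction on $f$ to make sense, here I instead use a geometric device: replace $\A^n_{\bar k}$ by its Weil restriction (or simply the base change viewed as a $k$-variety of dimension $n[\bar k:k]$), embed $G$ and the graph of $f$ there, and observe that $f$, being the composition with a $k$-morphism, becomes defined over $k$ while the properness and strip (resp. $\Bcirc$) conditions are preserved because they are conditions on $f(G) \subset \C$, which is unchanged. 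This shows $\Pnaive(\bar k) \subset \Pnaive(k)$ and likewise for $\Pgnaive$.

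For (a), I would proceed concretely. Closure under addition: given $\alpha_i = \int_{G_i} \e^{-f_i}\omega_i$ for $i = 1,2$, place $G_1$ and $G_2$ in disjoint affine spaces $\C^{n_1}$ and $\C^{n_2}$ and form $G = (G_1 \times \{p_2\}) \sqcup (\{p_1\} \times G_2)$ inside $\C^{n_1+n_2}$ for suitable base points; extend $f_1, f_2$ and $\omega_1, \omega_2$ to this product compatibly (pulling back along the projections, after an additive adjustment of the $f_i$ so that their strips/images are compatible and none of the conditions of \cref{defn:naive} or \cref{defn:gnaive} is violated — this is where a little care is needed). Then by \cref{cor:int_formel} (and disjointness) $\int_G \e^{-f}\omega = \alpha_1 + \alpha_2$. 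Closure under multiplication: use the external product, i.e. take $G = G_1 \times G_2 \subset \C^{n_1+n_2}$, $f = \pi_1^* f_1 + \pi_2^* f_2$, $\omega = \pi_1^*\omega_1 \wedge \pi_2^*\omega_2$, so that $\e^{-f}\omega = (\e^{-f_1}\omega_1) \boxtimes (\e^{-f_2}\omega_2)$ and Fubini gives $\int_{G_1 \times G_2} = \alpha_1 \alpha_2$; one checks that $f$ is regular and proper on $G_1 \times G_2$ with $f(G_1\times G_2) \subset S_{r_1+r_2,\,s_1+s_2}$ (resp. that the closure in $\Ptilde$ lies in $\Bcirc$, using $\Re(s) > 0$ is preserved under the relevant sums), and that the product of pseudo-orientations is a pseudo-orientation. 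Finally, scaling by $\lambda \in \bar k$ and closure under the inclusion $\bar k \hookrightarrow \Pnaive(k)$: a constant $\lambda \in \bar k$ is $\int_{\{\lambda\}} 1$ trivially interpreted, or better, absorb $\lambda$ into $\omega$ (the form $\lambda \omega$ is still defined over $\bar k$, hence over $k$ by part (b)); together with additivity this gives the $\bar k$-module structure, and combined with multiplicativity, the $\bar k$-algebra structure. It remains to note $1 \in \Pnaive(k)$, e.g. via \cref{ex:3} (the integral $\int_{G_1} \e^{-z}\,\md z = 1$).

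**Main obstacle.** The genuinely delicate point is the reduction of the field of coefficients for $f$ in part (b): unlike $\omega$, on which the integral depends linearly, the function $f$ enters through $\e^{-f}$, so one cannot simply spread out a $\bar k$-coefficient by linearity. The Weil-restriction / change-of-model argument sketched above is the right idea but requires verifying that all the side conditions (regularity and properness of $f$ on $G$, and the strip condition $f(G) \subset S_{r,s}$ resp. $\overline{f(G)} \subset \Bcirc$ in $\Ptilde$) are genuinely intrinsic to the pair $(G, f)$ and survive the re-modelling — which they do, since each is a statement about the subset $f(G) \subset \C$ and the map $G \to \C$ as topological/semi-algebraic data, not about the chosen equations. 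Everything else is a routine bookkeeping exercise with products of semi-algebraic sets and Fubini, modulo the compatibility adjustments of the strips when adding the $f_i$.
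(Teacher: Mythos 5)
Your approach to the second statement is essentially the paper's: the key device is to change the model so that $f$ and $\omega$ become $k$-rational (the paper views $\A^n_L \to \Spec(L) \to \Spec(k)$ as a $k$-variety inside $\A^{n+1}_k$, whose base change to $\C$ is a disjoint union of copies of $\A^n_\C$ indexed by the embeddings $L\hookrightarrow\C$), combined with the observation that an algebraic extension of the real parameter field does not enlarge the class of semi-algebraic sets. For the algebra structure the paper simply cites the argument for $f=0$ from the ordinary-period literature, which is the disjoint-union/Fubini argument you spell out; your compatibility checks for the strip and $\Bcirc$ conditions under $f=\pi_1^*f_1+\pi_2^*f_2$ are the right ones.

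There is, however, one genuine error in your write-up: $\bar k$ is \emph{not} a finite extension of $k$ in general (for $k=\Q$ it is infinite; what \cref{ssec:field} gives is $[\bar k:\tilde k]=2$, but $\tilde k/k_0$ is itself infinite algebraic). As written, your restriction-of-scalars step would be applied along an infinite extension and would not produce a variety of finite type over $k$. The fix is the paper's first move: since any single period involves only finitely many elements of $\bar k$ (the coefficients of $f$, $\omega$ and of polynomials defining $G$), one has $\Pnaive(\bar k)=\bigcup_L \Pnaive(L)$ over \emph{finite} subextensions $L/k$, and it suffices to prove $\Pnaive(L)=\Pnaive(k)$ for such $L$. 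With that reduction in place, your argument goes through; note also that the paper's model is the $n$-dimensional $k$-variety $\A^n_L\subset\A^{n+1}_k$ (so $G$ and $f(G)$ are literally unchanged as subsets of $\C^{n+1}$ and $\C$), which is cleaner than a Weil restriction of dimension $n[L:k]$ and avoids having to re-embed $G$.
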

\begin{proof}
 The arguments are the same for both notions. We formulate it for
 naive exponential periods.

 For the first statement we use the same argument as for $f=0$,
 see \cite[Proposition~12.1.5]{period-buch}.

 We give the argument for the second.
 Let $L/k$ be a finite subextension of $\bar{k}/k$.
 Since $k$ is algebraic over $k_0$, the extension $L/L_0$ with
 $L_0=L\cap \R$ is also algebraic.
 Hence, $\Pnaive(\bar{k})=\bigcup_{L/k}\Pnaive(L)$ where $L$ runs through
 all finite subextensions of $\bar{k}/k$.
 Thus it suffices to show that
 $\Pnaive(k)=\Pnaive(L)$ for $L/k$ finite.

As $L/k$ is separable, we have $L\isom k[T]/P$ for an irreducible polynomial $P$. This equation defines $\Spec(L)\subset\A^1$ over $k$. More generally,
$\A^n_L=\A^n\times_{\Spec(k)}\Spec(L)$ can be understood as an affine $k$-varity contained in $\A^{n+1}_k$.
 We call it $\tilde{A}$.
 Then
 \[ \tilde{A}\times_k\C =\bigcup_{\sigma:L\to \C} \A^n_\C\]
 where $\sigma$ runs through all embeddings of $L$ into $\C$ fixing $k$.
 If $\int_G\e^{-f}\omega$ is a naive exponential
 period over $L$, then $f$ and $\omega$
 are defined over $k$ when viewed on $\tilde{A}\subset \A^{n+1}_k$.
 The extension $L_0/k_0$ is algebraic,
 hence every $L_0$-semi-algebraic set is also
 $k_0$-semi-algebraic.
\end{proof}

In particular, we can move between $k$, $\bar{k}$, $\bar{k}\cap\R$ and
$k_0=k\cap\R$ without changing the set of naive exponential
or generalised
exponential
naive periods.

\begin{lemma}\label{lem:real_version}
  Let $k=k_0\subset\R$. The following are equivalent for $\alpha\in\C$:
\begin{enumerate}
\item The number $\alpha$ is a naive exponential period over~$k$.
\item
It can be written as
\[ \alpha=\int_G\e^{-f}\omega\]
with $G\subset \R^n$ a pseudo-oriented
closed $k$-semi-algebraic subset of dimension~$d$,
$f\in k(i)(z_1,\dots,z_n)$ regular on $G$ such that $f|_G \colon G\to\C$ is
proper with image contained in $S_{r,s}$ and
$\omega\in\Omega^d_{k(i)(z_1,\dots,z_n)/k(i)}$ is regular on $G$.
\item Its real and imaginary part can be written as
\begin{align*}
\Re(\alpha)&= \int_G\left(\phantom{-}\cos(f_2)\e^{-f_1}\omega_1 + \sin(f_2)\e^{-f_1}\omega_2\right)\\
   \Im(\alpha) &=
   \int_G\left(-\sin(f_2)\e^{-f_1}\omega_1 + \cos(f_2)\e^{-f_1}\omega_2\right)
\end{align*}
with $G\subset\R^n$ a pseudo-oriented closed $k$-semi-algebraic subset of dimension~$d$, $f_1,f_2\in k(z_1,\dots,z_n)
$ regular on $G$ such that the map $f_1|_G$ is proper,
$f_1(G)$ is bounded from below, $f_2(G)$ is bounded, and
$\omega_1,\omega_2\in\Omega^d_{k(z_1,\dots,z_n)/k}$ regular on $G$.
\end{enumerate}
Moreover, $f_1,f_2$ in (3) are the real and imaginary parts of $f$ in
(2), respectively, and similarly for $\omega_1,\omega_2$.
Finally, $\alpha$ is a generalised naive exponential period if and only if it can be written
as in (2) with $\overline{f(G)}\subset \Bcirc$ instead of 
$f(G)\subset S_{r,s}$.
\end{lemma}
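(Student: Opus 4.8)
The plan is to establish $(1)\Leftrightarrow(2)$ by two changes of ambient space, $(2)\Leftrightarrow(3)$ by separating real and imaginary parts, and the last assertion by the same bookkeeping; the hypothesis $k=k_0\subset\R$ enters only to guarantee that the imaginary unit contributes coefficients in $k(i)$ and nothing worse. For $(1)\Rightarrow(2)$ I would identify $\C^n$ with $\R^{2n}$ via $z_j=x_j+iy_j$. A closed $k_0$-semi-algebraic $G\subset\C^n$ becomes a closed $k_0$-semi-algebraic $G'\subset\R^{2n}$ of the same dimension~$d$, and since this identification is a semi-algebraic diffeomorphism, a pseudo-orientation on $G$ in the sense of \cref{orientation} transports to one on $G'$. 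Substituting $z_j=x_j+iy_j$ turns $f\in k(z_1,\dots,z_n)$ into a rational function of the $x_j,y_j$ with coefficients in $k(i)$, i.e.\ (after renaming the $2n$ real coordinates) an element of $k(i)(z_1,\dots,z_{2n})$ regular on $G'$; using $\md z_j=\md x_j+i\,\md y_j$ the form $\omega$ likewise becomes an element of $\Omega^d_{k(i)(z_1,\dots,z_{2n})/k(i)}$ regular on $G'$. Properness of $f$ on $G$ and the containment $f(G)\subset S_{r,s}$ concern only the map $f\colon G\to\C$, which is unchanged; this gives $(2)$.

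For $(2)\Rightarrow(1)$ I would encode the imaginary unit as an auxiliary algebraic coordinate: write $k(i)=k[t]/(t^2+1)$ and set $\tilde A=\{t^2+1=0\}\subset\A^{n+1}_k$, so that $\tilde A(\C)$ is a disjoint union of two copies of $\C^n$, one for each embedding $t\mapsto\pm i$. Embed $\R^n\hookrightarrow\tilde A(\C)\subset\C^{n+1}$ by $g\mapsto(g,i)$; in real coordinates on $\C^{n+1}$ the image is cut out by the $k$-rational equations $\Im z_1=\dots=\Im z_n=0$, $\Re z_{n+1}=0$, $\Im z_{n+1}=1$, so $G$ sits as a closed $k$-semi-algebraic subset of $\C^{n+1}$ carrying the transported pseudo-orientation. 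Clearing denominators by conjugate polynomials one writes $f=(P_1+iP_2)/Q$ with $P_1,P_2,Q\in k[z_1,\dots,z_n]$ and $Q$ nowhere zero on~$G$; then $f$ is the restriction to $G\times\{i\}$ of the rational function $\tilde f:=(P_1+z_{n+1}P_2)/Q$ on $\A^{n+1}_k$, regular there, and $\omega$ is correspondingly the restriction of a form $\tilde\omega\in\Omega^d_{k(z_1,\dots,z_{n+1})/k}$ (no $\md z_{n+1}$ term occurs since $\md t=0$ on $\tilde A$). As $\tilde f(g,i)=f(g)$ and $\tilde\omega|_{G\times\{i\}}=\omega|_G$, the integral is unchanged, and properness and the strip condition transfer as before, so $\alpha\in\Pnaive(k)$.

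For $(2)\Leftrightarrow(3)$ I would split $f=f_1+if_2$ and $\omega=\omega_1+i\omega_2$ with $f_1,f_2\in k(z_1,\dots,z_n)$ and $\omega_1,\omega_2\in\Omega^d_{k(z_1,\dots,z_n)/k}$ (separating the $k(i)$-coefficients). On $G\subset\R^n$ the coordinates, hence $f_1,f_2$, are real-valued, so
\[
 \e^{-f}\omega=\e^{-f_1}\bigl[(\cos f_2\,\omega_1+\sin f_2\,\omega_2)+i(\cos f_2\,\omega_2-\sin f_2\,\omega_1)\bigr].
\]
Since integration commutes with taking real and imaginary parts, applying $\Re$ and $\Im$ to $\alpha=\int_G\e^{-f}\omega$ gives exactly the two formulas of $(3)$, with $f_1=\Re f$, $f_2=\Im f$ and $\omega_1,\omega_2$ the real and imaginary parts of $\omega$; this also proves the ``moreover''. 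Conversely, from $(3)$ one reassembles $f=f_1+if_2$, $\omega=\omega_1+i\omega_2$ and reads the displayed identity backwards. It remains to match the side conditions: $f_1=\Re f$ is proper because, $\Im f$ being bounded by some $s$ on $G$, for every compact $K\subset\R$ one has $f_1^{-1}(K)=f^{-1}(K\times[-s,s])$, which is compact; moreover $f_1(G)\subset(r,\infty)$ is bounded below and $f_2(G)\subset(-s,s)$ is bounded; and conversely $f=f_1+if_2$ is proper once $f_1$ is, with $f(G)$ lying in a strip.

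For the final assertion I would rerun the argument for $(1)\Leftrightarrow(2)$ verbatim, replacing ``$f(G)\subset S_{r,s}$'' throughout by ``the closure of $f(G)$ in $\Ptilde$ is contained in $\Bcirc$'' (the condition defining $\Pgnaive$ in \cref{defn:gnaive}); this too is a condition on the map $f\colon G\to\C$ alone, hence is preserved by the identifications $\C^n\cong\R^{2n}$ and $\R^n\hookrightarrow\C^{n+1}$. I expect no deep obstacle here: the whole proof is bookkeeping, checking that closedness, the field of definition, regularity, properness and the pseudo-orientation all survive the two changes of ambient space. The one genuinely substantive point is the treatment of the imaginary unit — that $z_j=x_j+iy_j$ introduces no coefficients worse than $k(i)$, and conversely that $i$ can be recorded algebraically by a coordinate $t$ with $t^2=-1$ — which is exactly why the lemma is stated for $k=k_0\subset\R$.
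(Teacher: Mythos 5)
Your proof is correct and follows essentially the same route as the paper: $(1)\Rightarrow(2)$ via the identification $\C^n\cong\R^{2n}$ with $f,\omega$ pulled back over $k(i)$, $(2)\Leftrightarrow(3)$ by splitting $f=f_1+if_2$, $\omega=\omega_1+i\omega_2$ and expanding $\e^{-f}\omega$, and the final claim by rerunning the first equivalence with the $\Bcirc$-condition in place of the strip condition. The only divergence is in $(2)\Rightarrow(1)$: the paper observes that the data in $(2)$ is by definition a naive exponential period over $k(i)$ and invokes \cref{lem:change_field} to conclude $\Pnaive(k(i))=\Pnaive(k)$, whereas you inline that lemma's proof via the auxiliary coordinate $t$ with $t^2+1=0$ — which is exactly the trick used there, so the content is the same.
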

\begin{proof}
 Let
  $G,f,\omega$ be as in the definition of a naive exponential period. By
  definition $G\subset\C^n$ with coordinates $z_1,\dots,z_n$.
  By sending a complex number to its real and imaginary part
  we view $G$ as a real subset $G'$ of~$\C^{2n}$
  with coordinates $x_1,y_1,x_2,y_2,\dots,x_n,y_n$.
  Let $\Sigma \colon \C^{2n}\to \C^n$ be given by
  $(x_1,y_1,\dots,x_n,y_n)\mapsto (x_1+iy_1,\dots,x_n+iy_n)$.
  By definition $\Sigma(G')=G$, compatible with the pseudo-orientation.
  Put $f'=\Sigma^*(f)$ and $\omega'=\Sigma^*(\omega)$.
  Then by the transformation rule
  \[ \int_{G'}\e^{-f'}\omega'=\int_G\e^{-f}\omega.\]
  Note that $f'$ and $\omega'$ are defined over $k(i)$.
  This shows that (1) implies~(2).
  Conversely, a number of the form in~(2)
  is by definition a naive exponential period over $k(i)$.
  By \cref{lem:change_field} it is also a naive exponential period over $k$,
  so (2) implies~(1).
  Let $G,f,\omega$ be as in~(1).
  We put $f=f_1+if_2$ and $\omega=\omega_1+i\omega_2$ and compute $\e^{-f}\omega$. The regularity and boundedness conditions on $f$ and $\omega$ are equivalent to the condition on $f_1,f_2$ and $\omega_1,\omega_2$. The map
$(f_1,f_2)$ is proper if and only if the preimages of boxes $I\times J$ are compact for all bounded closed intervals $I,J\subset\R$. As $f_2$ is bounded, say by $s$, it suffices to consider $J=[-s,s]$. We have
\[ (f_1,f_2)^{-1}(I\times [-s,s])=f_1^{-1}(I).\]
It is compact for all $I$ if and only if $f_1$ is proper.
  So properties (2) and~(3) are equivalent.

  The final claim follows as the equivalence proof of (1) and~(2).
\end{proof}

\begin{cor}\label{cor:conjugate}
The subset $\Pnaive(k)\subset\C$ is stable under complex conjugation. A complex number is a naive exponential period over $k$ if and only if its real an imaginary part are.
\end{cor}
\begin{proof}The second claim follows from the first because $\Pnaive(k)$ is a
$\Q$-vector space, see Lemma~\ref{lem:change_field}.

Let $\alpha$ be a naive exponential period. We write it as in Lemma~\ref{lem:real_version} (2) as
\[ \alpha=\int_G\e^{-f}\omega\]
with $G$, $f=f_1+if_2$, $\omega=\omega_1+i\omega_2$ as in Lemma~\ref{lem:real_version} (3). In particular, $G\subset\R^n$. It suffices to consider $\omega_1$ and $i\omega_2$ separately. 
Without loss of generality, $\omega$ is real. The period for the function
$f_1-if_2$ is complex conjugate to the one for $f$.
\end{proof}

\begin{rem}\label{rem:surprise}
Most
references on ordinary periods work with semi-algebraic $G\subset\R^n$. %
A complex number is a period over $k\subset\R$ if and only its real and imaginary part 
are of the form 
\[ \int_G\omega\]
with $G\subset\R^n$ compact semi-algebraic and $\omega$ a rational differential form on $\A^n_k$, regular on $G$. 

The case of naive exponential periods is a little more subtle. It is probably \emph{not}
enough to allow real and imaginary parts of the form
\[ \int_G\e^{-f}\omega\]
	with $G\subset \R^n$ a closed semi-algebraic subset, $\omega$
        a rational differential form over $k\subset\R$, $f$ a rational
        function over $k$ both regular on $G$ and satisfying the decay
        conditions on $f$. As the formulas in
        Lemma~\ref{lem:real_version} (3) show factors $\cos(g)$ and
        $\sin(g)$ for a second rational function over $k$ have to be
        allowed. As the referee pointed out, the problem already appears for numbers of the form $\e^{i\alpha}=\cos(\alpha)+i\sin(\alpha)$ for $\alpha\in\Qbar$.               
\end{rem}

\subsection{Convergence and definability}
The conditions on our domain of integration can be reformulated.
\begin{lemma}\label{lem:GB}
Let $f:\A^n\to\Pe^1$ be a rational function over $k$ and let $G\subset
\C^n$ be a closed semi-algebraic set such that
$f$ is regular and proper on $G$.
Let $\omega$ be a rational differential form on $\A^n$ over $k$.
Let $X\subset \Pe^n$ be the complement of the polar loci of $f$ and $\omega$,
$\bar{X}$ a good compactification of $X$ relative to $f$, see Section~\ref{ssec:gc}.
Let $\bar{G}$ be the closure of $G$ in the real oriented blow-up $B_{\bar{X}}(X)$
of $\bar{X}$ at the divisor at infinity,
see \cref{alt-notn-blow-up}, and $G_\infty=\bar{G}\ssm G$.
(The case $G_\infty=\emptyset$ can occur.)

Then $f$ extends to a semi-algebraic $C^\infty$-map
$\tilde{f}:B_{\bar{X}}(X)\to\Ptilde$ of compact semi-algebraic
$C^\infty$-manifolds with corners with boundary, mapping
$G_\infty$ to $\partial\Ptilde$.
Moreover,
\begin{enumerate}
\item(Naive exponential periods) $f(G)\subset S_{r,s}$ for some $r,s$ if and only if
$\tilde{f}(G_\infty)\subset\{1\infty\}$.
\item(Generalised naive exponential periods)
$\overline{f(G)}\subset \Bcirc$ if and only if
$\tilde{f}(G_\infty)\subset \partial \Bcirc$.
\end{enumerate}
\end{lemma}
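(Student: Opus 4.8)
The plan is to build $\tilde f$ out of the functoriality of the oriented real blowup, \cref{lem:blow-up_functorial}. First I would observe that $\bar f\colon \bar X\to\Pe^1$ is a morphism of smooth projective varieties with $\bar f^{-1}(\infty)\subseteq X_\infty$ (this is what ``good compactification relative to $f$'' buys us: the polar locus of $f$ is contained in the divisor at infinity, since $X$ was chosen to be the complement of the polar loci of $f$ and $\omega$). Hence \cref{lem:blow-up_functorial} applied to $\bar f^{\an}$ and the divisors $X_\infty\subset\bar X^{\an}$, $\{\infty\}\subset\Pe^1$ yields a $C^\infty$-morphism of compact semi-algebraic $C^\infty$-manifolds with corners $\tilde f\colon B_{\bar X}(X)\to B_{\Pe^1}(\A^1)=\Ptilde$ lying over $\bar f$, and it restricts to $f$ over $X^{\an}\subset B_{\bar X}(X)$. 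That $B_{\bar X}(X)$ and $\Ptilde$ are compact semi-algebraic $C^\infty$-manifolds with corners is \cref{prop:is_sa} (together with \cref{ex:Ptilde}).

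\textbf{Locating the boundary image.} Next I would identify $G_\infty=\bar G\smallsetminus G$ as a subset of $\partial B_{\bar X}(X)$. Since $G$ is closed in $\C^n=X^{\an}\cap(\text{affine chart})$ and $\bar G$ is its closure in the compactification $B_{\bar X}(X)$, the extra points $G_\infty$ lie in $B_{\bar X}(X)\smallsetminus X^{\an}$, which is exactly the boundary $\partial B_{\bar X}(X)$ together with the preimage of $\bar X^{\an}\smallsetminus X^{\an}$ that is not at infinity in the blowup; but properness of $f|_G$ is what forces $G_\infty$ to map into $\partial\Ptilde$. Concretely: if $x\in G_\infty$, choose a sequence (or definable curve, using curve selection in the o-minimal structure $\R_{\alg}$) $g_t\in G$ with $g_t\to x$ in $B_{\bar X}(X)$. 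Then $\tilde f(g_t)=f(g_t)\to\tilde f(x)$ in $\Ptilde$ by continuity. If $\tilde f(x)\in\C=\A^1\subset\Ptilde$, then $f(g_t)$ stays in a compact subset of $\C$, so by properness of $f|_G$ the points $g_t$ stay in a compact subset of $G$, whence $x\in G$ — contradicting $x\in G_\infty$. Therefore $\tilde f(G_\infty)\subseteq\Ptilde\smallsetminus\C=\partial\Ptilde$. This is the step that uses the properness hypothesis in an essential way, and it is the part I expect to require the most care, mostly in phrasing the limiting argument cleanly in the semi-algebraic/definable setting (curve selection, continuity of $\tilde f$).

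\textbf{The two equivalences.} For (1): if $f(G)\subseteq S_{r,s}$, then since $S_{r,s}=\{z\mid \Re(z)>r,\ |\Im(z)|<s\}$ is a strip, any boundary direction $s'\infty$ accumulated by points of $f(G)$ must satisfy $\Re(s')\ge 0$ and $\Im(s')=0$ (a half-ray $s'[0,\infty)$ meeting $S_{r,s}$ for arbitrarily large modulus forces $s'$ to be a positive real, i.e.\ $s'\infty=1\infty$); combined with $\tilde f(G_\infty)\subseteq\partial\Ptilde$ this gives $\tilde f(G_\infty)\subseteq\{1\infty\}$. Conversely, if $\tilde f(G_\infty)\subseteq\{1\infty\}$, I would argue by compactness: the set $\bar G$ is compact, $\tilde f$ is continuous, and on the compact set $\bar G$ the function $z\mapsto\Im(z)$ (extended continuously to $\Ptilde$ near $1\infty$, where $\Im/|\cdot|\to 0$ and $\Re\to+\infty$) and $z\mapsto -\Re(z)$ are bounded above — because they are continuous on the compact $\bar G$ and at the only boundary points $G_\infty\subseteq\tilde f^{-1}(1\infty)$ they tend to values $\le 0$ (resp.\ $-\infty$). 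Hence $f(G)=\tilde f(G\smallsetminus G_\infty)$ lies in some strip $S_{r,s}$. For (2): this is even more direct. By \cref{sec:not_B} we have $\Bcirc=\C\cup\{s\infty\mid\Re(s)>0\}$ and $\overline{f(G)}$ computed in $\Ptilde$ equals $f(G)\cup\{$its boundary accumulation points$\}$. Since $\tilde f$ is continuous and $\bar G=G\cup G_\infty$ with $\bar G$ compact, we get $\tilde f(\bar G)=\overline{\tilde f(G)}=\overline{f(G)}$, and $\tilde f(\bar G)=f(G)\cup\tilde f(G_\infty)$ with $f(G)\subseteq\C\subseteq\Bcirc$ automatically. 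Therefore $\overline{f(G)}\subseteq\Bcirc$ iff $\tilde f(G_\infty)\subseteq\Bcirc$, which (since $G_\infty$ maps into $\partial\Ptilde$) is the same as $\tilde f(G_\infty)\subseteq\partial\Bcirc=\{s\infty\mid\Re(s)>0\}$. I would remark that here one should be slightly careful that $\overline{f(G)}$ in the statement is understood as the closure taken inside $\Ptilde$ (consistent with the notation fixed in \cref{sec:not_B}), so that $\tilde f(\bar G)=\overline{f(G)}$ holds on the nose.
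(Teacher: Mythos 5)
Your construction of $\tilde f$ via \cref{lem:blow-up_functorial}, the argument that properness of $f|_G$ forces $\tilde f(G_\infty)\subset\partial\Ptilde$ (take a sequence converging to $x\in G_\infty$; if $\tilde f(x)\in\C$ the preimage of a closed disk is compact and contains the limit, contradiction), part~(2), and the forward direction of~(1) all match the paper's proof step for step and are fine.

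The converse of~(1) is where your argument has a genuine gap. You assert that $z\mapsto\Im(z)$ ``extends continuously to $\Ptilde$ near $1\infty$'' and that at the boundary points of $\bar G$ it ``tends to values $\le 0$'', so that $\Im\circ\tilde f$ is bounded on the compact set $\bar G$. This is false: by \cref{ex:Ptilde} the basic neighbourhoods $U_{\varepsilon,R}$ of $1\infty$ are \emph{sectors} $\{w : |w|>R,\ |\arg w|<\varepsilon\}$, on which $\Im$ is unbounded, so $\Im$ admits no continuous (or even locally bounded) extension to $1\infty$, and $\Im(f(g_i))$ need not converge, let alone to something $\le 0$, as $g_i\to G_\infty$. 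Concretely, take $n=1$, $f=z$, $\omega=\md z$ and the closed $\Q$-semi-algebraic set $G=\{x+iy : x=y^2,\ y\ge 1\}\subset\C$. Then $f$ is regular and proper on $G$, the closure of $G$ in $\Ptilde$ adds only the point $1\infty$ (since $\arg(y^2+iy)\to 0$), so $\tilde f(G_\infty)=\{1\infty\}$, yet $\Im(f(G))$ is unbounded and $f(G)$ lies in no strip $S_{r,s}$. So the step does not merely need a better justification: the implication ``$\tilde f(G_\infty)\subset\{1\infty\}\Rightarrow f(G)\subset S_{r,s}$'' cannot be obtained from compactness and continuity, and this example satisfies all the stated hypotheses of the lemma.

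For comparison, the paper's own proof of this direction is equally terse: it picks a small neighbourhood $U$ of $1\infty$ and claims that $U$ ``intersects $\C$ inside some strip of the form $S_{r,s}$'', which runs into exactly the same difficulty (neighbourhoods of $1\infty$ are sectors, not strips). So you have reproduced the paper's weak point rather than repaired it; but as submitted, your justification of the converse of~(1) is incorrect, and the example above shows why no argument of this purely point-set type can work.
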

\begin{proof}By definition of $X$, we have $\bar{f}^{-1}(\infty)\subset \bar{X}\ssm X$.
 By \cref{lem:blow-up_functorial} we get an induced $C^\infty$-morphism
 of semi-algebraic manifolds with corners~$\tilde{f}$.

Let $(g_i)_{i\geq 1}$ be a sequence in $G$ converging to $g\in\bar{G}$.
Assume $g\in G_\infty$.
We have $g\in\partial B_{\bar{X}}(X)$ because $G\subset X^\an$ is closed.
In particular, the image
of $g$ in $\bar{X}^\an$ is in the complement of $X^\an$.

We claim that $\tilde{f}(g)\notin\C$.
Assume $\tilde{f}(g)\in\C\subset \Ptilde$.
Note that $\lim \tilde{f}(g_i)=\tilde{f}(g)$ by continuity.
As $f$ is proper, $f(G)\subset \C$ is closed. All $\tilde{f}(g_i)$ are in
$f(G)$, hence so is $\tilde{f}(g)$. Let $D\subset \C$ be a closed disk
around $\tilde{f}(g)$. It is compact, hence so is its preimage
$E:=\tilde{f}|_G^{-1}(D)\subset G$ .
There is $N\geq 1$ such that $\tilde{f}(g_i)\in D$ for all $i\geq N$.
Hence their preimages $g_i$ are in $E$. As $E$ is compact, the limit
point $g$ is in $E$, in particular in $G$. This is a contradiction.
We have shown that $\tilde{f}(G_\infty)\subset\partial\tilde{\Pe}^1$.

In order to follow the proofs of (1) and (2), it may helpful to consult
the figure in Example~\ref{ex:Ptilde}.
Note that $\overline{f(G)}=\tilde{f}(\bar{G})$. 
Hence (2) is obvious.
For (1) note that $\bar{S}_{r,s}\cap\partial\Ptilde=\{1\infty\}$.
 Hence $f(G)\subset S_{r,s}$ implies $\tilde{f}(G_\infty)\subset\{1\infty\}$.
Conversely, consider a small open neighbourhood $U$ of $1\infty$ in $\Ptilde$. It
intersects $\C$ inside some strip of the form $S_{r,s}$. As
$\bar{G}$ is compact, so is $G'=\bar{G}\ssm \tilde{f}^{-1}(U)$. The image
$f(G')$ is compact, so bounded in $\C$. By enlarging $r$ and $s$, we ensure
that both $f(G')$ and $f(G)\cap U$ are contained in the same strip.
\end{proof}

\begin{lemma}\label{lem:decay}Let $f$ and $G$ be as in the definition
  of a generalised naive exponential period. Let $\bar{G}$ be
  the compactification of $G$ as in Lemma~\ref{lem:GB} and
  $G_\infty=\bar{G}\ssm G$.
  Let $c$ be a rational function on $\A^n$ which is regular on $G$. The
  extension of $\e^{-f}c$ by $0$ on $G_\infty$ yields a continuous function on
  $\bar{G}$.
\end{lemma}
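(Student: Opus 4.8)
The plan is to establish continuity of the extension at points of $G_\infty=\bar G\ssm G$, which are the only points where it is not automatic. Note first that $G$ is open in $\bar G$ (the blow-up projection $\pi$ restricts to a homeomorphism over $X^\an$, and $G$ is closed in $X^\an$), so $\e^{-f}c$ is continuous on the open part $G$, while on $G_\infty$ the extension is constantly $0$. Since $\bar G$ is compact and second countable, it thus suffices to fix $g\in G_\infty$ and a sequence $(x_i)$ in $G$ with $x_i\to g$ and to show $|\e^{-f(x_i)}c(x_i)|\to 0$.

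The first step records how $f$ behaves near $g$. By \cref{lem:GB} we have $\tilde f(g)\in\partial\Ptilde$, and since $\tilde f(\bar G)=\overline{f(G)}$ is contained in $\Bcirc$ by the definition of a generalised naive exponential period, in fact $\tilde f(g)=s\infty$ for some $s\in S^1$ with $\Re s>0$. Working in the chart of $\Ptilde$ around $s\infty$ of \cref{ex:Ptilde} and using continuity of $\tilde f$, this forces $\Re f(x)\to+\infty$ as $x\to g$ within $G$. As the same holds at every point of $G_\infty$, the semi-algebraic function $x\mapsto 1/\max(1,\Re f(x))$ on $G$ extends to a continuous semi-algebraic function $v$ on $\bar G$ — semi-algebraic because its graph is the closure of the semi-algebraic graph over $G$ — with $v>0$ on $G$ and $v^{-1}(0)=G_\infty$. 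So for $x$ near $G_\infty$ one has $|\e^{-f(x)}|=\e^{-\Re f(x)}=\e^{-1/v(x)}$, which decays faster than any power of $v(x)$.

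The remaining point, which I expect to be the main obstacle, is to bound the possible blow-up of $c$ near $g$ by a power of $\Re f$, uniformly in the direction of approach. My plan is to form the compact semi-algebraic set
\[
 \Gamma:=\overline{\{\,(x,\,1/\max(1,\Re f(x)),\,1/\max(1,|c(x)|)) : x\in G\,\}} \subseteq \bar G\times[0,1]\times[0,1],
\]
and to observe that on $\Gamma$ the zero set of the last coordinate is contained in that of the middle coordinate: over $G$ the last coordinate is strictly positive (as $c$ is regular, hence finite, on $G$), whereas over $G_\infty$ the middle coordinate vanishes. The \L{}ojasiewicz inequality for continuous semi-algebraic functions on a compact semi-algebraic set then supplies $C>0$ and $N\in\Na$ with $(\text{middle coordinate})^N\le C\cdot(\text{last coordinate})$ on $\Gamma$; read off on the graph over $G$, this is $\max(1,|c(x)|)\le C\max(1,\Re f(x))^N$ for all $x\in G$. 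Combining the two ingredients, for $x\in G$ close enough to $g$ that $\Re f(x)\ge 1$,
\[
 |\e^{-f(x)}c(x)|=\e^{-\Re f(x)}|c(x)|\le C\,\e^{-\Re f(x)}\,\Re f(x)^N,
\]
and the right-hand side tends to $0$ as $x\to g$ because $\Re f(x)\to+\infty$; this finishes the proof.

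Conceptually the statement is just the principle ``exponential decay beats polynomial growth'': along any single semi-algebraic arc approaching $g$ it is immediate from the Puiseux expansions of the semi-algebraic functions $\Re f$ and $|c|$, and the \L{}ojasiewicz inequality serves only to make the polynomial bound $|c|=O(\Re f^N)$ uniform over all approaches to $G_\infty$ at once. The subtlety worth flagging is that $c$ is permitted to have poles on the divisor at infinity used to form $\bar G$; this is harmless, because properness of $f$ on $G$ (as exploited in \cref{lem:GB}) forces every point of $G_\infty$ to lie over $\bar f^{-1}(\infty)$, so wherever $c$ blows up near $G_\infty$ the quantity $\Re f$ is simultaneously forced to $+\infty$, and the estimate above absorbs the blow-up.
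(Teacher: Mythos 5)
Your proof is correct and follows the same route as the paper's own (very terse) argument: pass to a sequence converging to a point of $G_\infty$, use $\overline{f(G)}\subset\Bcirc$ together with \cref{lem:GB} to force $\Re f\to+\infty$ there, and conclude that the exponential decay dominates the growth of $c$. The paper simply asserts that ``the exponential factor decays faster than $|c|$ grows''; your \L{}ojasiewicz argument on the compact semi-algebraic closure of the graph of $\bigl(1/\max(1,\Re f),\,1/\max(1,|c|)\bigr)$ is a valid way to make the uniform polynomial bound $\max(1,|c|)\le C\max(1,\Re f)^N$ rigorous --- the only point worth flagging is that to invoke the \L{}ojasiewicz inequality you should first regard $\bar G$ as an affine compact semi-algebraic set (e.g.\ via Robson's embedding theorem recalled in \cref{affine-semialg-spaces}, or by working in the finitely many charts of $B_{\bar X}(X)$).
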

\begin{proof}
  Let $(g_i)_{i\geq 1}$ be a sequence in $G$ converging to $g\in G_\infty$.
  Then
  \[  |\e^{-f(g_i)}|=\e^{-\Re(f(g_i))}\to 0\]
  because $f(g_i)$ tends to $\tilde{f}(g)\in \partial \Bcirc$. The function
  $c$ has at worst a pole in $g$, but the exponential factors decays faster than
  $|c(g_i)|$ grows. In total
  \[ \lim_{i\to\infty}|\e^{-f(g_i)}c(g_i)|=0. \qedhere \]
\end{proof}

\begin{cor}\label{cor:nv_converges}Assume that $G,f,\omega$ define a
  generalised naive exponential period.
Then
\[ \int_G\e^{-f}\omega\]
converges absolutely.
\end{cor}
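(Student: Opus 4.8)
The plan is to deduce absolute convergence from \cref{thm:omin_volume} after compactifying the domain of integration by means of \cref{lem:GB}. Let $X\subset\Pe^n$ be the complement of the polar loci of $f$ and $\omega$, let $\bar X$ be a good compactification of $X$ to which $f$ extends as $\bar f\colon\bar X\to\Pe^1$, and let $\bar G$ be the closure of $G$ in the oriented real blow-up $B_{\bar X}(X)$, with $G_\infty=\bar G\ssm G$. By \cref{prop:is_sa}, $B_{\bar X}(X)$ is a compact $k_0$-semi-algebraic $C^\infty$-manifold with corners, so $\bar G$ is a compact definable subset of it. Write $d=\dim G$. Since taking closures does not raise the dimension and the frontier of a definable set has strictly smaller dimension (see \cref{o-min-dimension}), we have $\dim\bar G=d$ and $\dim G_\infty<d$.

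The first substantial point is to check that $\e^{-f}\omega$, extended by $0$ along $G_\infty$, is a continuous differential form of degree $d$ on $\bar G$ in the sense of \cref{defn:diff_form}. On the open part $G$ it is simply the algebraic form $\e^{-f}\omega$, which is smooth because $f$ and $\omega$ are regular there. Continuity at a point of $G_\infty$ is verified in a chart of $B_{\bar X}(X)$ of the shape \eqref{eq:sa_chart}: in such a chart the coefficients of $\omega$ are, up to bounded $C^\infty$-factors arising from the coordinate change \eqref{eq:sa_chart} (powers of the $w_i\in S^1$ and of the relatively compact coordinates $z_i/w_i$ and $z_j$), rational functions that are regular on $G$. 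Multiplying by $\e^{-f}$ and applying \cref{lem:decay} to each of these coefficients shows that every coefficient of $\e^{-f}\omega$ in the chart extends continuously by $0$ to $G_\infty$, as required.

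It then remains to assemble the pieces. As $\dim(\bar G\ssm G)<d$, \cref{lem:pseudo}(3) endows $\bar G$ with the pseudo-orientation induced from the one on $G$, and both are represented by a common oriented open subset $U\subset\reg_d(G)$ with $\dim(\bar G\ssm U)<d$. Thus $\bar G$ is a compact pseudo-oriented definable subset of dimension $d$ of a definable $C^\infty$-manifold with corners, carrying the continuous degree-$d$ form built above, so \cref{thm:omin_volume} applies and yields that $\int_{\bar G}\e^{-f}\omega$ converges absolutely. Since the extended form agrees with $\e^{-f}\omega$ on $U$, we conclude $\int_G\e^{-f}\omega=\int_U\e^{-f}\omega=\int_{\bar G}\e^{-f}\omega$, which settles the corollary. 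The only delicate step is the boundary continuity of the second paragraph: one must be sure that in a blow-up chart a coefficient of $\omega$ can fail to be bounded only through a pole of the underlying rational function along the divisor at infinity---so that it is tamed by the exponential decay of \cref{lem:decay}---whereas all factors produced by the coordinate change stay bounded on the relatively compact chart; everything else is bookkeeping with dimensions.
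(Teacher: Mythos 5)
Your proposal is correct and follows essentially the same route as the paper: compactify via \cref{lem:GB} inside the manifold with corners $B_{\bar X}(X)$, use \cref{lem:decay} to extend $\e^{-f}\omega$ by zero to a continuous form on the compact $\bar G$, and conclude with \cref{thm:omin_volume}. The only difference is that you spell out the chart computation behind the continuity of the extended form, which the paper leaves implicit.
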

\begin{proof}
We apply Theorem~\ref{thm:omin_volume} to $\bar{G}\subset B_{\bar{X}}(X)$
as in Lemma~\ref{lem:GB}. It is compact. By Lemma~\ref{lem:decay}, the
$C^\infty$-form $\e^{-f}\omega$ on $G$ extends to a continuous form
on $\bar{G}$. This is enough.
\end{proof}

\begin{thm}\label{thm:naive_is_volume}
 If a number $\alpha\in\C$ is a naive exponential period over $k$,
 then its real and imaginary part are up to signs volumes of
	compact subsets $S\subset \R^n$ definable in the o-minimal structure
 $\R_{\sin,\exp} = \rcf{\R}{, {\sin}|_{[0,1]},\exp}$
 with parameters from $k_0$.
 \begin{proof}By Lemma~\ref{lem:change_field}, we may assume $k=k_0$.
  We use the characterisation of naive exponential periods given in
  parts~(2) and~(3) of \cref{lem:real_version}.
  Thus $\alpha = \int_G e^{-f}\omega$ with
  \begin{align*}
   \Re(\alpha) &=\int_G\left(
    \phantom{-}\cos(f_2)\e^{-f_1}\omega_1 + \sin(f_2)\e^{-f_1}\omega_2\right),\\
   \Im(\alpha) &=\int_G\left(
    -\sin(f_2)\e^{-f_1}\omega_1 + \cos(f_2)\e^{-f_1}\omega_2\right)
  \end{align*}
  where  $G\subset \R^n$ is closed and $k$-semi-algebraic of dimension $d$
  carrying a pseudo-orientation, $f_1,f_2\in k(z_1,\dots,z_n)$ are regular
  and proper on $G$, $f_1(G)$ is bounded from below, $f_2(G)$ is bounded, and
  $\omega_1,\omega_2\in \Omega^d_{k(z_1,\dots,z_n)/k}$ regular on $G$.

  We want to apply Theorem~\ref{thm:omin_volume}.
  Again we apply it to the compact
  $k_0$-semi-algebraic $C^\infty$-manifold with corners $B_{\bar{X}}(X)$
  of Lemma~\ref{lem:GB} and the closure $\bar{G}$ of $G$ in $B_{\bar{X}}(X)$.
  It is compact and a semi-algebraic subset of $B_{\bar{X}}(X)$,
  hence definable in~$\R_{\sin,\exp}$.
  The forms $\Re(\e^{-f}\omega)$ and $\Im(\e^{-f}\omega)$ are definable
  on $G_\infty$ because they vanish identically. Hence it remains
  to verify the definability on the affine $G$ itself. The forms $\omega_1$ and
  $\omega_2$ are algebraic, in particular definable.
  By assumption $f_2$ is bounded on $G$,
  hence using \Cref{Rsinexp_cos_pi} the function
  $\sin(f_2)$ is definable in our o-minimal structure. The same
  is true for $\cos(f_2)$ because  $\cos(f) = \sin(f + \pi/2)$,
  and $\pi$ is definable in the o-minimal structure $\R_{\sin,\exp}$.
 \end{proof}
\end{thm}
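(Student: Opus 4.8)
The plan is to reduce the statement to a single application of \cref{thm:omin_volume} on a suitable real oriented blow-up. The one genuine obstacle is that the \emph{complex} exponential is not definable in any o-minimal structure; so we must exploit the strip hypothesis $f(G)\subset S_{r,s}$ from \cref{defn:naive}, which bounds $\Im(f)$ on $G$, in order to rewrite $\e^{-f}$ as a definable expression built from the \emph{real} exponential and ${\sin}|_{[0,1]}$.

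First I would use \cref{lem:change_field} to reduce to $k=k_0\subset\R$, and then invoke parts~(2) and~(3) of \cref{lem:real_version}. This produces a closed $k_0$-semi-algebraic subset $G\subset\R^n$ of dimension $d$ carrying a pseudo-orientation, rational functions $f_1,f_2\in k_0(z_1,\dots,z_n)$ that are regular and proper on $G$ with $f_1(G)$ bounded below and $f_2(G)$ bounded, and algebraic $d$-forms $\omega_1,\omega_2$ regular on $G$, such that (writing $f=f_1+if_2$) the numbers $\Re(\alpha)$ and $\Im(\alpha)$ are, as in \cref{lem:real_version}(3), integrals over $G$ of $d$-forms assembled from $\omega_1$, $\omega_2$, $\e^{-f_1}$, $\sin(f_2)$ and $\cos(f_2)$.

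Next I would compactify using \cref{lem:GB}: with $X\subset\Pe^n$ the complement of the polar loci of $f_1,f_2,\omega_1,\omega_2$ and $\bar X$ a good compactification relative to $f$, one obtains a compact $k_0$-semi-algebraic $C^\infty$-manifold with corners $B_{\bar X}(X)$, the closure $\bar G$ of $G$ in it (a compact $k_0$-semi-algebraic, hence $\R_{\sin,\exp}$-definable set), and $G_\infty=\bar G\ssm G$ of dimension $<d$. The pseudo-orientation on $G$ extends to $\bar G$ by \cref{lem:pseudo}(3). I then claim each of the relevant $d$-forms extends to a definable $d$-form on $\bar G$. On $G_\infty$ it extends by $0$ (this is \cref{lem:decay}), hence is definable there. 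On the affine part $G$ the $\omega_i$ are algebraic; $\e^{-f_1}$ is definable because $\exp$ is one of the basic functions of $\R_{\sin,\exp}$; and, since $f_2(G)$ is bounded, \cref{Rsinexp_cos_pi} shows that $\sin(f_2)$ and $\cos(f_2)=\sin(f_2+\pi/2)$ are definable over $k_0$ (using that $\pi$ is definable). Gluing is harmless: the graph of the form over $\bar G$ is the closure of its graph over $G$ by continuity, and closures of definable sets are definable.

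Finally I would apply \cref{thm:omin_volume} to $\bar G\subset B_{\bar X}(X)$ and each of the two definable forms; since $G_\infty$ is Lebesgue-null and the forms vanish on it, the integral over $\bar G$ equals that over $G$. Hence $\Re(\alpha)$ and $\Im(\alpha)$ are, up to signs, volumes of bounded $k_0$-definable open subsets of $\R^{d+1}$, and passing to their closures --- which are compact, $k_0$-definable, and of the same volume, their topological boundaries being $k_0$-definable of dimension $<d+1$ and so Lebesgue-null --- exhibits them as volumes of compact $k_0$-definable subsets of $\R^{d+1}$, as required. As flagged above, I expect the only subtle point to be the reliance on the bound for $\Im(f)$ on $G$: without it $\e^{-f}$ could not be rendered definable, which is precisely why the strip condition enters \cref{defn:naive}.
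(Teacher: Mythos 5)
Your proposal is correct and follows essentially the same route as the paper: reduction to $k=k_0$ via \cref{lem:change_field}, the real/imaginary decomposition of \cref{lem:real_version}, compactification on $B_{\bar X}(X)$ via \cref{lem:GB}, definability of $\sin(f_2)$, $\cos(f_2)$ from the boundedness of $f_2$ and \cref{Rsinexp_cos_pi}, and a final application of \cref{thm:omin_volume}. The only addition is your explicit passage from bounded open definable sets to their compact closures, a small point the paper leaves implicit.
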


\begin{rem}
 The above argument does not work for generalised naive exponential periods.
 It is essential that the imaginary part of $f$ is bounded on~$G$.
 However, we are going to show (see \xref{thm:compare_all})
 that every generalised naive exponential period is actually
 a naive exponential period, hence the consequence still applies.
\end{rem}

\begin{rem}
In contrast to the case of ordinary periods, we do not expect that all volumes of definable
sets in this o-minimal structure are naive exponential periods. The above argument only produces very special definable sets: there is no need of nesting
	$\exp$ or ${\sin}|_{[0,1]}$. The Euler number $\e$ is definable
(as $\exp(1)$), hence also $\e^\e$ (as $\exp(\e)$).
The number $\e$ is known to be an exponential period (e.g., $\int_0^1(\e^s+1)\md s$).
However,  we do not see an  obvious way to write $\e^\e$ as an exponential period.
	It would be very interesting to give a characterisation of the
        definable sets whose volumes are exponential periods.
\end{rem}

\subsection{The definition of Kontsevich and Zagier}
\label{contrast_with_KZ}

In \S4.3 of~\cite{kontsevich_zagier},
Kontsevich and Zagier give the following definition.
An \emph{exponential period in the sense of Kontsevich--Zagier}
is ``an absolutely convergent integral
of the product of an algebraic function
with the exponent of an algebraic function,
over a real semi-algebraic set,
where all polynomials entering the definition have algebraic coefficients''.
We take this to mean numbers of the form
\[ \int_G\e^{-f}\omega\]
where $G\subset\R^n$ is semi-algebraic over $\tilde{\Q}=\Qbar\cap \R$,
$f\in\Qbar(z_1,\dots,z_n)$, and $\omega$ a rational algebraic differential form
defined over $\Qbar$ such that the integral converges absolutely.
It is not clear to us if they want $\dim(G)=n$.
In this case, there is a prefered orientation from the orientation of $\R^n$,
in the general case we have to orient $G$.

We have shown that naive and generalised naive exponential periods over $\Qbar$
are absolutely convergent.
In particular, a generalised naive exponential period over~$\Qbar$
is an exponential period in the sense of Kontsevich--Zagier.

What about the converse?

\begin{ex}\label{ex:not_naiv}Let $G=[1,\infty)\subset\R$, $f=iz$, $\omega=\frac{1}{z^2}\md z$. Then
\[ \int_G\e^{-f}\omega=\int_1^\infty \frac{1}{t^2}\e^{-it}\md t=\int_1^\infty \frac{1}{t^2}\cos(t)\md t-i\int_1^\infty \frac{1}{t^2}\sin(t)\md t\]
converges absolutely because $\sin$ and $\cos$ are bounded by $1$.  However,
the data do not define a generalised naive exponential period. The interval
$G$ is \emph{not} a cycle for rapid decay homology of $(\A^1,\{1\})$.
We do
not have $\lim_{t\to \infty}\Re(f(t))\to \infty$ on $G$. In more detail: as we will see in Proposition~\ref{prop:fj_rd}, rapid decay $1$-cycles 
for  $(\A^1,f)$ are represented by $1$-chains
in $\Bcirc(\A^1,f)$ with boundary in $\partial \Bcirc(\A^1,f)$. In the present case
$\partial\Bcirc(\A^1,f)\subset\Ptilde$ is 
\[ \partial\Bcirc(\A^1,f)=\{s\infty\in\partial\Ptilde\mid \Re(f(s))>0\}=
\{s\infty\in\partial\Ptilde\mid \Im(s)<0\}\]
whereas the end point of $G$ is $1\infty$ with $\Im(s)=0$.

This example led us to conjecture that not all exponential periods in the sense of Kontsevich--Zagier are (generalised) naive exponential periods. However, Jossen~\cite{jossen-brief} communicated the following argument to us:  For $\delta\geq 0$ let $\gamma_\delta:[0,\infty)\to\C$ be given by $t\mapsto \delta t +it$ and consider
\[ I(\delta)=\int_{\gamma_\delta}e^{-f}\omega.\]
For $\delta=0$ we get back the integral considered above.
For $\delta>0$ it is a generalised naive exponential period. Moreover, its value is independent of $\delta$ (use Cauchy's theorem). Finally, absolute convergence of all integrals implies that $
\lim_{\delta\to 0}I(\delta)=I(0)$ and even $I(\delta)=I(0)$ for some/all $\delta>0$. This makes $I(0)$ a (generalised) naive exponential period.
\end{ex}
This leads us to expect:
\begin{conjecture}[Jossen]
All periods in the sense of Kontsevich--Zagier
are (generalised) naive exponential periods.
\end{conjecture}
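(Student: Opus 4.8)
The plan is to push Jossen's deformation argument from Example~\ref{ex:not_naiv} to full generality. Fix a Kontsevich--Zagier exponential period $\alpha=\int_G\e^{-f}\omega$, with $G\subset\C^n$ a closed $\tilde{\Q}$-semi-algebraic set of dimension $d$ carrying a pseudo-orientation, $f\in\Qbar(z_1,\dots,z_n)$, $\omega\in\Omega^d_{\Qbar(z_1,\dots,z_n)/\Qbar}$, the integral converging absolutely; by \cref{lem:change_field} we may enlarge the field freely. Let $X\subset\Pe^n$ be the complement of the polar loci of $f$ and $\omega$, choose a good compactification $\bar X$ relative to $f$ so that $f$ extends to $\bar f\colon\bar X\to\Pe^1$, form the real oriented blow-up $B_{\bar X}(X)$ (see \cref{alt-notn-blowup}), let $\bar G$ be the closure of $G$ in it and $G_\infty=\bar G\ssm G$. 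Since $\bar X$ is projective, $B_{\bar X}(X)$ and hence $\bar G$ are compact. The obstruction to $\alpha$ being a generalised naive exponential period is exactly that the continuous extension $\tilde f\colon B_{\bar X}(X)\to\Ptilde$ (furnished by \cref{lem:blow-up_functorial}) need not carry $G_\infty$ into $\Bcirc$: along some ends of $G$ the function $f$ may fail to be proper, or $\Re(f)$ may stay bounded, tend to $-\infty$, or oscillate.

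The first step is to construct a semi-algebraic deformation of $G$ repairing this. After a semi-algebraic $C^1$-triangulation of $\bar G$ (\cref{prop:triangle_manifold}) we treat one simplex $\sigma\colon\bar\Delta_d\to B_{\bar X}(X)$ at a time, the bad ends being the faces mapped into $\partial B_{\bar X}(X)$. In local coordinates near a point of $X_\infty$ the divisor is a union of coordinate hyperplanes and $\bar f$ is a Laurent monomial times a unit, so that $\arg f$ is governed by the phases $\arg(z_i)$ of the coordinates cutting out $X_\infty$ --- precisely the data recorded by the boundary of the oriented real blow-up. The idea is to rotate these phases: near such a face, replace $\sigma$ by $\sigma_\delta(t)$ obtained by multiplying the offending coordinates by factors $\e^{i\delta\theta_j(t)}$ with $\theta_j$ semi-algebraic, chosen so that for $\delta>0$ the deformed end steers $f$ into a strip $S_{r,s}$ (for the \emph{naive} conclusion) or at least into the right half-plane (for the \emph{generalised} one). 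Patching these local moves into a single semi-algebraic homotopy $H\colon G\times[0,1]\to\C^n$ with $H_0=\mathrm{id}_G$, compatible with the pseudo-orientation (transported along each $H_\delta$ as in \cref{lem:pseudo}), is the combinatorial heart of the construction: near the corners of $B_{\bar X}(X)$, where several components of $X_\infty$ meet, the admissible phase rotations are constrained and must be made mutually consistent.

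The second step is invariance of the integral along the deformation. Applying Stokes' theorem (\cref{thm:stokes}) facewise to the swept-out $(d{+}1)$-chain $C_\delta=H(G\times[0,\delta])$ yields
\[
 \int_{G_\delta}\e^{-f}\omega-\int_{G}\e^{-f}\omega
 =\int_{C_\delta}\e^{-f}(d\omega-df\wedge\omega)
 +\int_{H(\partial G\times[0,\delta])}\e^{-f}\omega .
\]
When $\dim G=n$ the form $\e^{-f}\omega$ is a holomorphic $n$-form on $X$, hence $d$-closed, so the first term on the right vanishes; the boundary term involves $\partial G$, of dimension $<d$, and is --- after the same treatment --- a finite combination of lower-dimensional Kontsevich--Zagier periods, so one concludes by induction on $d$ that $\int_{G_\delta}\e^{-f}\omega$ is independent of $\delta$. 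Letting $\delta\to0^+$ and using absolute convergence to pass to the limit under the integral identifies $\alpha$ with $\int_{G_1}\e^{-f}\omega$, a generalised naive exponential period by construction; \cref{thm:compare_all} then upgrades this to a naive exponential period. For $\dim G<n$ one either reduces to the top-dimensional case (realising $\omega|_G$ through a submersion onto its image and integrating along fibres) or carries the extra term $\int_{C_\delta}\e^{-f}(d\omega-df\wedge\omega)$ along the same induction as one more Kontsevich--Zagier integral of strictly larger domain dimension.

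I expect the main obstacle to be precisely the interface of these two steps. First, producing a \emph{globally} semi-algebraic, pseudo-orientation-preserving homotopy whose ends all land in $\Bcirc$: the phase-rotation data must be chosen coherently across the corner strata of $B_{\bar X}(X)$, and it is not a priori clear that such a coherent choice always exists for arbitrary $\bar f$. Second, controlling the limit $\delta\to0$: unlike the one-dimensional situation of Example~\ref{ex:not_naiv}, where closedness of $\e^{-f}\omega$ makes the tube contribution vanish outright and a dominating function is immediate, in higher dimension one needs a uniform integrable majorant for the family $\e^{-f}\omega|_{G_\delta}$, and bare absolute convergence of $\int_G\e^{-f}\omega$ does not obviously supply one. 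Establishing such a majorant --- or, better, arranging the homotopy so that the displayed identity already holds with $\delta$ reaching $0$ (an exact homotopy invariance rather than a limiting one) --- is, I think, the crux.
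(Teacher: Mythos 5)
The statement you are proving is not a theorem of the paper: it is stated there as an open \emph{conjecture}, attributed to Jossen, and the only supporting evidence the paper offers is the one--dimensional contour--tilting argument of Example~\ref{ex:not_naiv}. Your proposal is a natural and reasonable plan for generalising that argument, but as written it is not a proof, and the two places where it is incomplete are exactly the places you yourself flag as ``the crux''. First, the existence of a globally coherent, semi-algebraic, pseudo-orientation--compatible homotopy pushing every end of $G$ into $\Bcirc$ is asserted, not established. Near a corner of $B_{\bar X}(X)$ where components $D_1,\dots,D_m$ of $X_\infty$ meet, $\tilde f$ restricted to the boundary torus is $(w_1,\dots,w_m)\mapsto c\,w_1^{-d_1}\cdots w_m^{-d_m}$, and a single stratum of $G_\infty$ may sweep out a set of phases on which this monomial covers the whole circle at infinity; no small rotation of the individual $w_j$ then lands $\tilde f(G_\infty)$ in the open right half-circle, and the admissible rotations prescribed on adjacent strata need not glue. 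Nothing in your sketch rules this out or explains how to subdivide $G$ so that it cannot happen. Second, the Stokes identity you display cannot be applied directly: the chain $C_\delta$ is noncompact, and the form $\e^{-f}\omega$ extends continuously by~$0$ only over the part of $\partial B_{\bar X}(X)$ where $\Re\tilde f=+\infty$ --- which is precisely \emph{not} where the ends being repaired live. One must therefore exhaust $C_\delta$ and control boundary contributions at infinity, and for that (as for the limit $\delta\to0^+$) bare absolute convergence of $\int_G\e^{-f}\omega$ supplies no dominating function. In Example~\ref{ex:not_naiv} both issues are settled by explicit one-variable estimates ($|\e^{-(\delta+i)t}|\le 1$ and $|\omega|\sim t^{-2}\md t$); no analogue is provided in general.

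There is also a smaller but genuine problem with the reduction for $\dim G<n$: the term $\int_{C_\delta}\e^{-f}(d\omega-df\wedge\omega)$ does not vanish, and ``carrying it along the induction as one more Kontsevich--Zagier integral'' presupposes that this new integral converges absolutely and that the induction is on a quantity that actually decreases --- the codimension goes \emph{down}, not up, so the proposed induction does not obviously terminate. In short: your strategy is the one the authors (following Jossen) have in mind, but the conjecture remains open precisely because the coherent deformation and the uniform majorant have not been constructed; supplying either would be genuine progress rather than a routine verification.
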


We propose the following modification:

\begin{defn}\label{defn:abs}
An \emph{absolutely convergent exponential period} over $k$
is a complex number obtained as the value of an absolutely convergent integral of the form
\begin{equation}
  \label{eqn:absconexpper}
 \int_G\e^{-f}\omega 
\end{equation}
where $G\subset\C^n$ is a pseudo-oriented (not necessarily closed) $k_0$-semi-algebraic subset,
$\omega$ is a rational algebraic differential form on
$\A_k^n$  that is regular on $G$, $f$ a rational function on $\A^n_k$ regular on $G$ and the closure of $f(G)$ in $\Ptilde$ is contained in $\Bcirc$.

We denote by  $\Pabs(k)$ the set of all absolutely convergent exponential periods over $k$.
\end{defn}
\begin{rem}
The regularity condition for $f$ and $\omega$ on $G$ is harmless. We may replace
$G$ by the open subset $G'$ of points in which $f$ and $\omega$ are regular.
The value of the integral only changes if $\dim(G\ssm G')=\dim(G)$, i.e., if there is an open
$U\subset G$ on which $f$ or $\omega$ are have poles. The integral $\int_U\e^{-f}\omega$
does not make sense in this case, so we %
definitely want to exclude it.
Note that the condition on $f(G)$ excludes \cref{ex:not_naiv}
where we have $\overline{f(G)}=[i,i\infty]$ and $i\infty\notin \Bcirc$.
\end{rem}

We are going to show that every absolutely convergent exponential period is a generalised naive exponential period. Also for later use, let us be more precise.

\begin{proposition}\label{prop:geometry}
 Let $\alpha$ be an absolutely convergent exponential period over
 $k\subset\R$ with domain of integration as in
 \eqref{eqn:absconexpper} of dimension $d$.
 Then there are:
 \begin{itemize}
  \item a smooth affine variety $X$ over $k$ of dimension $d$,
  \item a simple normal crossings divisor $Y\subset X$,
\item
    a closed $k$-semi-algebraic subset $G\subset X(\R)$ of dimension $d$
   such that $\partial G = G \ssm G^\interior$ is contained in~$Y$,
\item a pseudo-orientation on $G$,
 \item a morphism $f \colon X_{k(i)} \to \A^1_{k(i)}$ such that $f|_G \colon G \to\C$
   is proper and such that the closure
   $\overline{f(G)}\subset\Ptilde$ is contained in $\Bcirc$,
  \item  a regular algebraic $d$-form $\omega$ on $X_{k(i)}$,
 \end{itemize}
 such that
 \[ \alpha=\int_{G}\e^{-f}\omega.\] %
\end{proposition}

\begin{proof}
 We start with a presentation
 \[ \alpha=\int_G\e^{-f}\omega\]
 with $G$ of dimension $d$, $f$ and $\omega$
 as in the definition of an absolutely convergent exponential period
 and modify the data without changing the value. This means
$G\subset\C^n$ is a pseudo-oriented $k(i)$-semi-algebraic subset (not necessarily closed), $\omega$ a rational algebraic differential form on $\A^n_{k(i)}$ regular on $G$, and $f$ a regular function on $\A^n_{k(i)}$ regular on $G$ with closure of
$f(G)$ in $\Ptilde$ contained in $\Bcirc$.
 
With the same trick as in Lemma~\ref{lem:real_version}, we may
 assume that $G\subset\R^n=\A^n_k(\R)$ is $k$-semi-algebraic with
 $f,\omega$ algebraic over $k(i)$.

 Let $X_0\subset\Pe_k^{n}$ be the Zariski-closure of $G$.
 It is an algebraic variety defined over $k$ of dimension~$d$, see the
 characterisation of dimension in \cite[Definition~2.8.1]{BCR}.
 Moreover, $\dim X_0(\R) =d$ as a real algebraic set.
 By assumption, $f$ is a rational map on $X_{0,k(i)}$. After replacing
 $X_0$ by a blow-up
 centered in the smallest subvariety of $X_0$ defined over $k$ containing the locus of indeterminancy of $f$,
 it extends to a morphism $f_0 \colon X_{0,k(i)}\to\Pe^1_{k(i)}$.
  By construction, $G\subset X_0(\R)$.
 Let $G_0:=\bar{G}\subset X_0^\an$ be the closure.
 It is contained in $X_0(\R)$ and compact because so is $X_0^\an$.
 It inherits a pseudo-orientation from $G$.
 Let $Y_0 \subset X_0$ be the union of~$X_{0,\sing}$
 and the Zariski closure of~$\partial G_0$,
 where the boundary is taken inside $X_0(\R)$.
 It has dimension less than $d$.

As the next step, let $\pi \colon X_1\to X_0$ be a resolution of singularities
at which the preimage $Y_1$ of $Y_0$ is  a divisor with normal crossings.
The map $\pi$ is an isomorphism outside $Y_0$.
As $Y_0\subset X_0$ has codimension at least $1$,
the intersection $G_0 \cap  Y_0(\R)$ has real codimension at least~$1$ in~$G_0$.
Let $G_1$ be the ``strict transform'' of $G_0$ in $X_1^\an$,
i.e., the closure of the preimage of $U=G_0\ssm (G_0\cap Y_0(\R))$.
By construction $\partial G_1\subset G_1\ohne \pi^{-1}(U)\subset Y_1$.
Let $\omega_1=\pi^*\omega$. Let $f_1=f\circ\pi$. By \cref{lem:pseudo}~(\ref{it:modif}), the set $G_1$ inherits a pseudo-orientation.
Moreover,
\[ \int_{G_1}\e^{-f_1}\omega_1=\int_G\e^{-f}\omega,\]
where the left hand side converges absolutely because the right hand
side does.

We claim that after further blow-ups, we can reach $\pi_2:X_2\to X_1$  preserving the properties of $X_1$, $Y_1$, and $G_1$ such that, in addition, points of $G_2$ in the polar locus of $\omega_2$ are contained in the polar locus of
$f_2= f_1\circ \pi_2$.

We first prove the claim.
Let $X_{1,\infty}$ be the polar locus of $f$ and $X_{1,\omega}$ the polar locus of $\omega$, i.e., the smallest closed subvarieties over $k$ such that
their base change to $k(i)$ contains the poles of $f$ and $\omega$, respectively. Note that $G_1$ is disjoint from $X_{1,\infty}$ because $f_1$ is regular on
$G_1$ and $G_1$ is contained in the real points of $X_1$.

Let $x\in G_1$ be a point at which $f_1$ is regular, but $\omega_1$ has a pole.
Let $U_1$ be a small compact neighbourhood of $x$ in $G_1$ in which $f_1$ is regular. By assumption,
\[\int_{U_1}\e^{-f_1}\omega_1\]
converges absolutely.
As $f_1$ is regular on $U_1$, the factor $\e^{-f_1}$ and its inverse are bounded.
Hence the absolute convergence of the integral is equivalent to the absolute convergence of the integral
\[ \int_{U'_1}\omega_1.\]
This case already shows up in the case of ordinary periods, see the proof of
\cite[Lemma~12.2.4]{period-buch}. The following argument is due to Belkale and Brosnan
in \cite{belkale-brosnan}: After a blow-up
$X_2\to X_1$ we find holomorphic coordinates
such that the pull-back $\omega_2$ of $\omega_1$ has the shape
\[ \text{unit}\times \prod_{j=1}^n z_j^{e_j}\md z_1\wedge\dots\wedge
  \md z_n\]
with $e_j\in\Z$ and such that $G_2$ contains a full coordinate quadrant.
Absolute convergence is only possible if $e_j\geq 0$ for all $j$, i.e., if
$\omega_2$ is regular on $U_2$.
This finishes the proof of the claim.

Let $X$ be the complement of the polar
loci of $f_2$ and $\omega_2$, $Y=X\cap Y_2$,
$f$ and $\omega$ the restrictions of $f_2$ and $\omega_2$ to $X$,
and $G=X^\an \cap G_2$.
The map $f_2\colon G_2\to (\Pe^1)^\an$ is proper, and hence so is $f\colon G\to\C$. The data satisfy all properties stated in the proposition, 
with the exception that $X$ is only quasi-projective rather than affine. 
We have $X\subset\Pe^N_k$ for some $N$.
Let $H$ be the hypersurface defined by the equation $X_0^2+\dots+X_N^2=0$.
Then $\Pe^N_k\ohne H$ is affine.

Note that $G\cap H^\an=\emptyset$ because $G\subset \Pe^N(\R)$ and $H(\R)=\emptyset$.
Hence we may replace $X$ by $X\ohne (X\cap H)\subset \Pe^N\ssm H$, making it quasi-affine. This means that it is open in a closed affine subvariety $X'\subset \Pe^N\ssm H$. The complement $X'\ssm X$ is closed, hence of
the form $V(s_1,\dots,s_m)$ for finitely many $s_i\in\Oh(X')$.
Let $H'=V(s_1^2+\dots +s_m^2)$. Then $H'(\R)\subset V(s_1,\dots,s_m)(\R)$ is disjoint of $X(\R)$, so we have $X(\R)=(X\ohne H')(\R)$.
Hence we may replace $X$ by its open subset $X'\ohne H'$, making it affine as a  the complement of a hypersurface in an affine variety.
\end{proof}

\begin{cor}\label{cor:abs=naive}
  The set of absolutely convergent exponential
  period equals the set of generalised naive exponential periods:
  \[ \Pgnaive(k)=\Pabs(k).\]
\end{cor}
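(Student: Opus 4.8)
The plan is to prove the two inclusions $\Pgnaive(k)\subseteq\Pabs(k)$ and $\Pabs(k)\subseteq\Pgnaive(k)$ separately. The first is formal; the second will follow from \cref{prop:geometry} after some bookkeeping, so the only substantial input is that proposition.

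First I would dispose of $\Pgnaive(k)\subseteq\Pabs(k)$. This is essentially definitional: if $\alpha=\int_G\e^{-f}\omega$ is a generalised naive exponential period, then the data of \cref{defn:gnaive}---a pseudo-oriented closed $k_0$-semi-algebraic set $G\subset\C^n$, a rational form $\omega$ regular on $G$, and a rational function $f$ regular (indeed proper) on $G$ with $\overline{f(G)}\subset\Bcirc$---is a special case of the data allowed in \cref{defn:abs}, and the integral converges absolutely by \cref{cor:nv_converges}. Hence $\alpha\in\Pabs(k)$.

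Next I would reduce the converse to the case $k=k_0\subset\R$. Since $k$ is algebraic over $k_0$ we have $\bar k=\overline{k_0}$ inside $\C$, so \cref{lem:change_field} gives $\Pgnaive(k)=\Pgnaive(k_0)$. I would then show $\Pabs(k)=\Pabs(k_0)$ (only $\subseteq$ is needed): given $\alpha=\int_G\e^{-f}\omega\in\Pabs(k)$, let $L/k_0$ be a finite subextension of $k/k_0$ containing the coefficients of $f$ and $\omega$, and let $\theta$ be a primitive element of $L/k_0$; viewing $\A^n_L$ accordingly as a $k_0$-subvariety of some $\A^{N}_{k_0}$, the set $G\subset\C^n$ becomes $\{\theta_0\}\times G$, where $\theta_0\in\C$ is the image of $\theta$. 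Since $\theta_0$ is algebraic over $k_0$, the singleton $\{\theta_0\}\subset\C$ is $k_0$-semi-algebraic (passing to the real closure does not change the semi-algebraic sets), so $\{\theta_0\}\times G$ is a $k_0$-semi-algebraic subset of $\C^N$; meanwhile $f$ and $\omega$ become defined over $k_0$ and the image $f(G)$---hence the condition $\overline{f(G)}\subset\Bcirc$---is unchanged. Thus $\alpha\in\Pabs(k_0)$, and it remains to prove $\Pabs(k_0)\subseteq\Pgnaive(k_0)$ for $k_0\subset\R$.

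For this last inclusion I would feed $\alpha\in\Pabs(k_0)$ into \cref{prop:geometry}, which gives a presentation $\alpha=\int_G\e^{-f}\omega$ with $X$ a smooth affine variety over $k_0$, $G\subset X(\R)$ a pseudo-oriented closed $k_0$-semi-algebraic subset of dimension $d$, $f\colon X_{k_0(i)}\to\A^1_{k_0(i)}$ proper on $G$ with $\overline{f(G)}\subset\Bcirc$, and $\omega$ a regular algebraic $d$-form on $X_{k_0(i)}$. Then I would fix a closed embedding $X\hookrightarrow\A^N_{k_0}$: now $X(\R)$ is closed in $\R^N$, so $G$ is a closed $k_0$-semi-algebraic subset of $\R^N\subset\C^N$, and the pseudo-orientation is unaffected by the change of ambient manifold, since a $d$-dimensional submanifold of the $d$-dimensional manifold $X(\R)$ is open in it and hence is a submanifold of $\C^N$ as well. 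Because $X$ is affine, $\Omega^d_{X_{k_0(i)}}$ is generated by the restricted forms $\md x_I$ with coefficients in $\Oh(X_{k_0(i)})$, so $\omega$ extends to a polynomial $d$-form $\tilde\omega$ on $\A^N_{k_0(i)}$ and $f$ extends to a polynomial function $\tilde f$; their restrictions to $G$ are unchanged, so $\alpha=\int_G\e^{-\tilde f}\tilde\omega$ with $\tilde f$ regular and proper on $G$, $\overline{\tilde f(G)}\subset\Bcirc$, and $\tilde\omega$ regular on $G$. By the final assertion of \cref{lem:real_version} (in its generalised version) this exhibits $\alpha$ as a generalised naive exponential period over $k_0$. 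Combining the three steps gives $\Pgnaive(k)=\Pabs(k)$. I do not expect a genuine obstacle in this argument: all of the geometric content sits in \cref{prop:geometry}, and what is left is the routine bookkeeping of fields of definition and of passing between a variety and an ambient affine space.
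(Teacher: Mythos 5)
Your proposal is correct and follows essentially the same route as the paper: the inclusion $\Pgnaive(k)\subseteq\Pabs(k)$ is \cref{cor:nv_converges} plus the definitions, and the converse reduces to $k\subset\R$ (by the argument of \cref{lem:change_field}) and then feeds the output of \cref{prop:geometry} into an ambient affine space so that $f$ and $\omega$ become rational data on $\A^N$. The only cosmetic difference is that you extend $f$ and $\omega$ via a closed embedding of the affine $X$, while the paper first restricts $G$ to a dense open affine subset (harmless, since the complement has smaller dimension); both are routine.
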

\begin{proof}
 By Corollary~\ref{cor:nv_converges},
 every generalised naive exponential period
 is an absolutely convergent exponential period.

 Let $\alpha$ be an absolutely convergent exponential period.
 By the same argument as for naive exponential periods (see Lemma~\ref{lem:change_field}),
 we may replace $k$ by $k\cap\R$.
 We apply Proposition~\ref{prop:geometry}. Let $X$, $G$, etc. be as specified there. In particular, $X$ is affine and we fix an embedding $X\subset\A^n$. The morphism $f \colon X_{k(i)}\to\A^1_{k(i)}$
 extends to a rational morphism $\A^n_{k(i)}\to \A^1_{k(i)}$.
 The differential form~$\omega$ on~$X$
 extends to a rational differential form on $\A^n_{k(i)}$.
 This data satisfy the assumptions
 of the definition of a generalised naive exponential period.
\end{proof}

\begin{rem} In the definition of absolutely convergent exponential periods, we may restrict to $G\subset\R^n$, by the analogue of Lemma~\ref{lem:real_version}. 
 However, it is not clear to us if we may additionally require
$n=\dim(G)$.
 We tend to expect that it fails to be true.

 The analogous statement for ordinary periods holds true
 because they turn out to be volumes of bounded semi-algebraic sets
 (see \cite[Section~12.2]{period-buch}, also \cite{viu-sos}).
 We have replaced this by our Theorem~\ref{thm:omin_volume}.
 Close inspection of the proof only shows that every naive
 exponential period
 (and hence by \xref{thm:compare_all}
 also all absolutely convergent exponential periods)
 can be written as a $\Z$-linear combination of numbers of the form
 \[ \int_G \e^{-f}\md x_1 \wedge \dots  \wedge \md x_n\]
 for $G\subset\R^n$ of dimension $n$,
 $f \colon G\to\C$ continuous with \emph{semi-algebraic} real and imaginary part.
\end{rem}

\begin{rem}We pick up again on \cref{ex:not_naiv}.
As explained previously, the integral $\int_1^\infty \e^{-it}\frac{\md
  t}{t^2}$ converges absolutely, but does not obviously define a generalised naive period.
We concentrate on the real part. Integration by parts gives
\begin{align*}
 \int_1^\infty\frac{\cos(t)}{t^2}\md t&=\cos(1)-\int_1^\infty\frac{\sin(t)}{t}\md t\\
     &=\cos(1)- \frac{\pi}{2}+\int_0^1\frac{\sin(t)}{t}\md t
\end{align*}
because of the classical identity
$\int_0^\infty\frac{\sin(t)}{t}\md t=\frac{\pi}{2}$.
Note that the function $\frac{\sin(t)}{t}$ is entire, so there are no convergence issues with the last integral.
The numbers $\cos(1)$ and $\pi/2$ are definable
in the o-minimal structure $\R_{\sin,\exp}$ of \cref{defn:Rexpsin}.
The same is true for the function $\frac{\sin(t)}{t}$.
Hence we have written our number as the volume of a set that is definable in $\R_{\sin,\exp}$.
Note, however, that the formula does not give a presentation as an absolutely convergent exponential period. We have
\[ \int_0^1\frac{\sin(t)}{t}\md t=\int_0^1\Im\left(\frac{\e^{it}}{t}\right)\md t,\]
but the real part does not
converge for the choice  $G=(0,1)$, $f=iz$, and $\omega=\frac{\md z}{z}$.
\end{rem}

\section{Review of cohomological exponential periods}\label{sec:exp}

Throughout this section let $k\subset\C$ be a subfield. All varieties are defined over $k$.

We give the definition of exponential periods following
Fres\'an and Jossen in \cite{fresan-jossen} concentrating on the smooth affine case at the moment. The general case will be described in detail in Section~\ref{sec:exp_gen}.

\subsection{Rapid decay homology}\label{ssec:rd}

We follow \cite[1.1.1]{fresan-jossen} 
and set
\[ S_r=\{ z\in\C \mid \Re(z)\geq r\}\]
for $r\in\R$.

\begin{defn}\label{defn:rd_original}
Let $X$ be a complex algebraic variety, $Y\subset X$ a closed subvariety, $f\in\Oh(X)$.
The \emph{rapid decay homology} of $(X,Y,f)$ is defined as
\[ H_n^\rd(X,Y,f)=\lim_{r\to\infty}H_n(X^\an,Y^\an\cup f^{-1}(S_r);\Q).\]
\end{defn}

For $r'\geq r$, there is a projection map on relative homology, so
this really is a projective limit.
A direct limit construction using singular cohomology yields 
rapid decay cohomology $H^n_\rd(X,Y,f)$. It is dual to rapid decay homology.
By \cite[3.1.2]{fresan-jossen}, these limits stabilise, so it suffices to work with a single, big enough $r$. Indeed:

\begin{theorem}[Verdier~{\cite[Corollaire~5.1]{Verdier:76}}]\label{thm:verdier}
  There is a finite set $\Sigma\subset \C$ such that
$f|_{f^{-1}(\C\ssm\Sigma)} \colon f^{-1}(\C\ssm\Sigma)\rightarrow\C\ssm\Sigma$
is a  fibre bundle.
\end{theorem}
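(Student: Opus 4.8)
The plan is to deduce this from the theory of Whitney stratifications and Thom's first isotopy lemma, following Verdier.

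\emph{Reduction to a proper map.} Because $f$ is not proper on the affine variety $X$, one cannot simply invoke Ehresmann's fibration theorem: the fibration can fail to be locally trivial near a value $t_0$ even when $f$ is a submersion over a neighbourhood of $t_0$, since the fibre may degenerate ``at infinity''. So I would first choose a projective variety $\bar X$ containing $X$ as a Zariski-dense open subset such that the rational map $X \dashrightarrow \Pe^1$ extends to a morphism $\bar f \colon \bar X \to \Pe^1$ with $D := \bar X \ssm X$ a normal crossings divisor; such $\bar X$ exists by Hironaka's theorem, exactly as in the construction of good compactifications relative to $f$ recalled in \cref{ssec:gc}. Now $\bar f$ is a proper morphism of complex algebraic varieties.

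\emph{Stratification and isotopy lemma.} Next I would equip $\bar X$ with a Whitney stratification --- even one satisfying Verdier's regularity condition $(w)$, whose genericity is the main content of \cite{Verdier:76} --- such that $D$ is a union of strata and such that $\bar f$ is a \emph{stratified map}: there is a stratification of $\Pe^1$ with finitely many strata for which $\bar f$ carries each stratum of $\bar X$ submersively onto a stratum of $\Pe^1$. Such a stratification exists: using generic smoothness in characteristic $0$ one first makes each stratum smooth over its image, then refines to obtain Whitney/Verdier regularity; all steps are algebraic, so the induced stratification of $\Pe^1$ has only finitely many strata. Let $\Sigma \subset \C$ be the finite set of zero-dimensional strata of $\Pe^1$ lying in $\C$; equivalently, $\C \ssm \Sigma$ is the part over $\C$ of the open one-dimensional stratum, and over it $\bar f$ is a proper stratified submersion. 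Thom's first isotopy lemma, valid for Whitney and a fortiori for Verdier $(w)$-regular stratifications (see \cite[\S4]{Verdier:76}, or Thom and Mather), then gives that $\bar f$ is a locally trivial fibration of stratified spaces over $\C \ssm \Sigma$. Since $X = \bar X \ssm D$ is a union of (open) strata, restricting the trivialisations to $X$ shows that $f \colon f^{-1}(\C \ssm \Sigma) \to \C \ssm \Sigma$ is a locally trivial $C^\infty$-fibre bundle, which is the assertion.

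\emph{The main obstacle.} The crux is Thom's first isotopy lemma itself. Over a small disc $\Delta \subset \C \ssm \Sigma$ one must lift the constant vector field $\partial_t$ on $\Delta$ to a vector field on $\bar f^{-1}(\Delta)$ that is simultaneously tangent to every stratum and compatible with a system of control data (Mather's tubular neighbourhoods of the strata together with their retractions and tubular functions); integrating this \emph{controlled} vector field yields the trivialising flow, properness of $\bar f$ guaranteeing that the flow is complete, while the Whitney (resp. Verdier $(w)$) regularity is what forces the flow to depend continuously on the point, i.e. to glue the individual flows on the strata into a homeomorphism. Constructing compatible control data and checking that controlled flows are continuous across strata are the delicate points; by contrast, the reduction to a proper map, the genericity of the stratification, and the finiteness of $\Sigma$ (a proper Zariski-closed subset of the affine line) are routine.
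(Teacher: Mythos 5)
Your argument is correct and is precisely the approach of the cited source: the paper itself offers no proof but quotes Verdier's Corollaire 5.1, whose content is exactly the compactification--stratification--isotopy-lemma chain you describe. One small caveat: Thom's first isotopy lemma only yields a \emph{topological} local trivialisation (controlled vector fields are continuous, not smooth, across strata), so you should claim a fibre bundle rather than a $C^\infty$-fibre bundle --- which is all the statement asserts anyway.
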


As $S_r$ is contractible,
this implies that all $f^{-1}(S_r)$ with $r$ sufficiently large
are homotopy equivalent to a  fibre of $f$.

There is an alternative description of $H_n^\rd(X,f)$ which is better
suited to the computation of periods. It is originally due to Hien and
Roucairol, see \cite{hien-roucairol}. We follow the presentation of
Fres\'an and Jossen in \cite[Section~3.5]{fresan-jossen}.

We fix a smooth variety $X$ and $f\in\Oh(X)$.
Let $\bar{X}$ be a good compactification relative to $f$,
i.e., such that $\bar{X}$ is smooth projective,
$X_\infty=\bar{X}\ssm X$ is a divisor with normal crossing
and $f$ extends to $\bar{f}\colon\bar{X}\to\Pe^1$.
We decompose $X_\infty=D_0\cup D_\infty$ into simple normal crossings divisors
such that $\bar{f}(D_\infty)=\{\infty\}$
and $\bar{f} \colon D_0\to \Pe^1$ is dominant on all components,
i.e., into vertical and horizontal components.

\begin{defn}\label{defn:our_Bcirc}
 We denote by $\pi \colon B_{\bar X}(X) \to \bar X^\an$
 the real oriented blow-up $\OBl_{X_\infty}(\bar{X})$, see \Cref{defn:orb}.
 Let $\tilde{f}:B_{\bar{X}}(X)\to\Ptilde$ be the induced map,
 see Lemma~\ref{lem:blow-up_functorial}.
 We also define
 \begin{align*}
  B_{\bar{X}}^\circ(X,f)
  &= B_{\bar{X}}(X)\ssm \left(\pi^{-1}(D_0^\an)\cup \tilde{f}^{-1}(\{s\infty\in\Ptilde\mid \Re(s)\leq 0\})\right),\\
  \partial \Bcirc_{\bar{X}}(X,f)
  &= \Bcirc_{\bar{X}}(X,f)\ssm X^\an
  = \Bcirc_{\bar{X}}(X,f)\cap \tilde{f}^{-1}(\{s\infty\in\Ptilde\mid \Re(s)>0\}).
 \end{align*}
\end{defn}
We are going to omit the subscript $\bar{X}$ as long as it does not cause confusion.

At this point we only consider them as topological spaces.
In fact $\Bcirc_{\bar{X}}(X,f)$ is
a semi-algebraic $C^\infty$-manifold with corners.

\iffalse
\begin{rem}\annette{Die ganze Bemerkung kann eigentlich weg, oder? Statt dessen Referenz an die Definition}\johan{ok}
	At the time of writing,
 our definition of  $\Bcirc(X,f)$ does not agree with 
  \[ B^{\circ,\mathrm{FJ}}=X^\an\cup \tilde{f}^{-1}(\{s\infty\in\Ptilde| \Re(s)>0\})\]
as defined by Fres\'an--Jossen \cite[Section~3.5]{fresan-jossen} 
and the earlier rapid decay literature.
   They agree in the curve case where $D_0\cap D_\infty=\emptyset$ always holds.
 The two definitions differ if $D_0$ and $D_\infty$ intersect.
 In these cases
 $B^{\circ,{\mathrm{FJ}}}$ is not a manifold with corners
 whereas $\Bcirc(X,f)$ always is. The issue is also addressed in
\cite[Section~2]{matsubara}. 
\end{rem}
\fi

\begin{proposition}[{\cite[Proposition~3.5.2]{fresan-jossen}}]\label{prop:fj_rd}
Let $X$ be a smooth variety over $k$.
For sufficiently large~$r$,
the inclusion induces natural isomorphisms
 \begin{align*}
 H_n(X^\an,f^{-1}(S_r);\Q)&\isom H_n(B(X),\tilde{f}^{-1}(S_r);\Q) \\
                         &\isom H_n(\Bcirc(X,f),\partial \Bcirc(X,f);\Q).
\end{align*}
In particular,
\[ H_n^\rd(X,f)\isom H_n(\Bcirc(X,f),\partial \Bcirc(X,f);\Q).\]
\end{proposition}

\begin{rem}\label{rem:comment}
We will extend this (and more) to relative rapid decay homology
\[ H_n^\rd(X,Y,f)\isom H_n(\Bcirc(X,f), Y^\an\cup \partial \Bcirc(X,f);\Q)\]
in Section~\ref{sec:relative}. This is particularly easy to see if $Y$ is disjoint from $\bar{X}\ohne X$ by comparing the long exact sequences for relative homology.
\end{rem}

Recall from Definition~\ref{defn:int_simplex} that $S_*(M)$ denotes the complex of $\Q$-linear combinations of
$C^1$\nobreakdash-simplices for a $C^p$-manifold with corners $M$ or a closed subset thereof.
It computes singular cohomology by Theorem~\ref{thm:compare}.

\begin{defn}\label{defn:S^rd}
 Let $X$ be a smooth variety, $f\in\Oh(X)$.
	Choose a good compactification $\bar{X}$ relative to $f$.
 We put
 \[ S_*^\rd(X,f)=S_*(\Bcirc_{\bar{X}}(X,f))/S_*(\partial \Bcirc_{\bar{X}}(X,f)).  \]
\end{defn}

\begin{rem}\label{rem:fj}
Fres\'an and Jossen work with piecewise $C^\infty$-simplices instead,
see \cite[Section~7.2.4]{fresan-jossen}.
We opt for the slightly more complicated notion of $C^1$-simplices as opposed to $C^\infty$-simplices
because they are well-suited for working with our semi-algebraic sets.
\end{rem}

\subsection{Twisted de Rham cohomology: the smooth case}\label{ssec:twist}
Let $X/k$ be a smooth variety, $f\in\Oh(X)$. We define a vector bundle
with connection $\Eh^f=(\Oh_X,d_f)$ with $d_f(1)=-df$. The de Rham complex $\DR(\Eh^f)$ has
the same entries as the standard de Rham complex for $X$, but with differential
$\Omega^p\to\Omega^{p+1}$ given by $d\omega-df\wedge \omega$.

\begin{defn}Let $(X,f)$ be as above. We define \emph{algebraic de Rham cohmology} $H^*_\dR(X,f)$ of $(X,f)$ as the hypercohomology of $\DR(\Eh^f)$.
\end{defn}
If $X$ is affine, this is nothing but the cohomology of the complex
\[ \RdR(X):=[\Oh(X)\xrightarrow{d_f}\Omega^1(X)\xrightarrow{d_f}\dots].\]
The definition needs to be extended to the relative cohomology of singular varieties. We first consider a special case.
Let $X$ be smooth and $Y\subset X$ a divisor with simple normal crossings.
Let $Y_\bullet\to Y$ be the \v{C}ech-nerve of the cover
of $Y$ by the disjoint union of its irreducible components, see Section~\ref{ssec:hypercover} for more details.
It is a smooth proper hypercover.
In particular, $H_n(Y_\bullet^\an,\Z)=H_n(Y^\an,\Z)$.

\begin{defn}\label{defn:dR-snc}
Let $X$ be a smooth variety, $Y\subset X$ a divisor with simple normal crossings. We define
\emph{algebraic de Rham cohomology} $H^*_\dR(X,Y,f)$ of $(X,Y,f)$ as the hypercohomology
of the complex of sheaves on $X$ 
\[ \cone\left(\pi_*\DR(\Eh^f|_{Y_\bullet})\to \DR(\Eh^f)\right)[-1]\]
where $\pi:Y_\bullet\to Y\to X$ is the natural map.
\end{defn}

\subsection{The period isomorphism}\label{ssec:pairing_smooth}

 Hien and Roucairol established the existence of a canonical isomorphism
 \[  H^n_{\rd}(X,f)\tensor_\Q\C\to H^n_\dR(X,f)\tensor_k\C
 \]
 for smooth affine varieties $X$,
see \cite[Theorem~2.7]{hien-roucairol}. 
It is also explained and extended to the relative case for any variety $X$ and closed subvariety $Y$ by Fres\'an and Jossen, see
\cite[Theorem~7.6.1]{fresan-jossen}. We refer to it as the  \emph{period isomorphism}. It induces a \emph{period pairing}
\begin{equation}\label{eq:pairing} \langle-,-\rangle \colon H^n_\dR(X,Y,f)\times H_n^\rd(X,Y,f)\to\C.\end{equation}
We will go through the details in Section~\ref{sec:exp_gen}.
\begin{defn}\label{defn:coh_period}
Let $X$ be a variety, $f\in\Oh(X)$ a regular function, $Y\subset X$ a closed subvariety, $n\in\Na_0$. The elements in the image of the period pairing (\ref{eq:pairing}) are
called the \emph{(cohomological) exponential periods} of $(X,Y,f,n)$.

We denote by $\Pcoh(k)$ the set of cohomological exponential periods
for varying $(X,Y,f,n)$ over $k$.
We denote by $\Plog(k)$ the subset of cohomological exponential periods
for varying $(X,Y,f,n)$ such that $(X,Y)$ is a log-pair.
\end{defn}

We record for later use: 
\begin{lemma}\label{lem:change_field_coh}
 Let $K/k$ be an algebraic extension. Then
 \[\Pcoh(K)=\Pcoh(k).\]
\end{lemma}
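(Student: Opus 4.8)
The plan is to reduce to two elementary statements: that $\Pcoh(K) \supset \Pcoh(k)$ is trivial, and that the reverse inclusion follows from a finite-field-of-definition argument for the relevant complexes of varieties together with a spreading-out of the period pairing. First, $\Pcoh(k) \subset \Pcoh(K)$ is immediate: a tuple $(X,Y,f,n)$ over $k$ base-changes to a tuple $(X_K, Y_K, f_K, n)$ over $K$, and the period pairing is compatible with this base change (both twisted de Rham cohomology and rapid decay homology are unchanged as abstract vector spaces upon extension of scalars inside $\C$, since the analytifications $X^\an$, $Y^\an$ only depend on the $\C$-points), so every cohomological exponential period over $k$ is one over $K$.

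For the reverse inclusion $\Pcoh(K) \subset \Pcoh(k)$, the plan is to write $K = \bigcup_L L$ as the filtered union of its finite subextensions $L/k$, and observe that it suffices to prove $\Pcoh(L) = \Pcoh(k)$ for $L/k$ finite, since every tuple over $K$ is already defined over some finite $L$. Given $(X,Y,f,n)$ over $L$ with $L/k$ finite, I would use restriction of scalars: view $X$, $Y$ as $k$-varieties $\mathrm{Res}_{L/k}$-style, i.e.\ simply regard the $L$-scheme $X \to \Spec L \to \Spec k$ as a $k$-variety $X^{(k)}$, and similarly $Y^{(k)}$, $f^{(k)}$. Then $X^{(k)} \times_k \C = \coprod_{\sigma\colon L \to \C} X^\sigma$, a disjoint union over the embeddings of $L$ into $\C$ fixing $k$; the same holds for $Y^{(k)}$ and the structure map restricts on each component to $f^\sigma$. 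Since rapid decay homology and twisted de Rham cohomology both send disjoint unions to direct sums, $H_n^\rd(X^{(k)}, Y^{(k)}, f^{(k)}) = \bigoplus_\sigma H_n^\rd(X^\sigma, Y^\sigma, f^\sigma)$ and likewise on the de Rham side, compatibly with the period pairing; in particular the period pairing for $(X^{(k)}, Y^{(k)}, f^{(k)}, n)$ has the same image in $\C$ as $\bigoplus_\sigma$ of the pairings for the conjugates, and the summand corresponding to the given embedding $L \hookrightarrow \C$ recovers the original period. Hence the original period lies in $\Pcoh(k)$.

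The main technical point — and the step I'd expect to require the most care — is checking that the abstract period pairing of Definition~\ref{defn:relative_deRham}, which is only defined up to the choices of smooth affine hypercover and good compactification, really is compatible with the disjoint-union decomposition and with base change along $\Spec L \to \Spec k$ and along embeddings $\sigma\colon L \to \C$. Concretely one should choose a smooth affine hypercover of $X^{(k)}$ whose base change to $\C$ is the disjoint union of hypercovers of the $X^\sigma$ (possible since a hypercover of $X$ over $L$ induces one over each $\C$-point), choose good compactifications compatibly, and then note that $S_*^\rd$, $\RdR$, and the integration pairing of Lemma~\ref{lem:period_map_smooth} are all additive in disjoint unions; functoriality in Corollary~\ref{cor:filtering} guarantees the construction is independent of these choices up to canonical isomorphism in the derived category. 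One also uses, as in Lemma~\ref{lem:change_field}, that $k$ being algebraic over $k_0$ forces $L$ to be algebraic over $L_0 = L \cap \R$, so the hypotheses of the theory are preserved throughout; this is needed to keep all objects within the scope of the definitions but does not enter the argument in an essential way beyond that. With these compatibilities in hand, the proof is a formal comparison of images of pairings, exactly parallel to the proof of Lemma~\ref{lem:change_field} in the naive setting.
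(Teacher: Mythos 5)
Your proposal is correct and follows essentially the same route as the paper: the paper's proof is a one-line citation of the classical argument for ordinary periods (\cite[Corollary~11.3.5]{period-buch}), which is precisely the reduction to finite subextensions plus restriction of scalars and the decomposition $X^{(k)}\times_k\C=\coprod_\sigma X^\sigma$ that you spell out (and which also underlies \cref{lem:change_field} for naive periods). The compatibility checks you flag are the right ones to worry about, but the paper treats them as routine.
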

\begin{proof}
 The same argument as in the classical case, 
 \cite[Corollary~11.3.5]{period-buch},
also applies in the exponential case.
\end{proof}

The construction of the period map is non-trivial.
At this point, we only need its explicit description in a special case.
See Definition~\ref{defn:relative_deRham} for the general case.

\begin{defn}\label{defn:period_pairing_smooth}
Let $X$ be smooth affine. We define a pairing
\[ \Omega^n(X)\times S_n^\rd(X^\an,f)\to\C\]
by mapping $(\omega, \sigma)$ to
\[ \int_{\sigma}\e^{-f}\omega^\an.\]
\end{defn}

\begin{lemma}\label{lem:period_map_smooth}The pairing is well-defined and induces a morphism of
complexes
\[ \Omega^*(X,f)\to \Hom(S_*^\rd(X^\an,f),\C).\]
On cohomology it induces the pairing (\ref{eq:pairing}).
\end{lemma}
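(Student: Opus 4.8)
The statement packages four claims: (i) $\int_\sigma\e^{-f}\omega^\an$ converges absolutely for every $C^1$-simplex $\sigma$ in $\Bcirc(X,f)$; (ii) the resulting pairing $\Omega^n(X)\times S_n(\Bcirc(X,f))\to\C$ kills $S_n(\partial\Bcirc(X,f))$, hence factors through $S_n^\rd(X^\an,f)$; (iii) it is a morphism of complexes into $\Hom(S_*^\rd(X^\an,f),\C)$; and (iv) the induced map in cohomology is the period pairing \eqref{eq:pairing}. The computation underlying (iii) and (iv) is the identity $\e^{-f}(d_f\omega)^\an=d(\e^{-f}\omega^\an)$ on $X^\an$, valid because $d\e^{-f}=-\e^{-f}\,df$ and hence $d(\e^{-f}\omega^\an)=\e^{-f}(d\omega-df\wedge\omega)^\an$; this reduces the chain-map property to Stokes's theorem and identifies its cohomological effect with the de Rham differential.

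The first step is to sharpen \cref{lem:decay}: I claim the extension of $\e^{-f}\omega^\an$ by $0$ across $\partial\Bcirc(X,f)$ is a $C^\infty$-form on $\Bcirc(X,f)$, not merely a continuous one. In the local coordinates \eqref{eq:sa_chart} a point of $\partial\Bcirc(X,f)$ sits over a component of $X_\infty$ along which $\bar f$ has a pole, and on $\Bcirc(X,f)$ the defining condition forces $\Re(f)$ to grow at least like a positive power of the inverse of a boundary-defining coordinate $\rho_j$; thus $\e^{-f}$ and all of its partial derivatives decay faster than any power of the $\rho_j$, while the coefficients of $\omega^\an$ have at worst polynomial growth in the $\rho_j$ (its only poles being along $X_\infty$). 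Granting this, claim (i) is immediate from \cref{rem:easy_convergence}, since $\sigma^*(\e^{-f}\omega^\an)$ is then a continuous top-form on the compact $\bar\Delta_n$; and claim (ii) is immediate because the extended form vanishes identically on $\partial\Bcirc(X,f)$, so $\int_\sigma\e^{-f}\omega^\an=0$ whenever $\sigma$ has image in $\partial\Bcirc(X,f)$. This sharpening of \cref{lem:decay} is the one genuinely analytic point, and I expect it to be the main obstacle.

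For (iii), fix $\omega\in\Omega^{n-1}(X)$ and a $C^1$-simplex $\sigma\in S_n^\rd(X^\an,f)$. Since $\Bcirc(X,f)$ is a semi-algebraic $C^\infty$- (in particular $C^2$-) manifold with corners and $\e^{-f}\omega^\an$ is a $C^1$-form on it, \cref{thm:stokes} yields $\int_\sigma d(\e^{-f}\omega^\an)=\int_{\partial\sigma}\e^{-f}\omega^\an$. By continuity $d(\e^{-f}\omega^\an)$ is the extension by $0$ of $\e^{-f}(d_f\omega)^\an$, so the left-hand side is the value of the pairing on $(d_f\omega,\sigma)$ and the right-hand side its value on $(\omega,\partial\sigma)$; up to the standard sign this says exactly that $\omega\mapsto\bigl(\sigma\mapsto\int_\sigma\e^{-f}\omega^\an\bigr)$ is a map of complexes $\Omega^*(X,f)\to\Hom(S_*^\rd(X^\an,f),\C)$, the differential on the target being dual to $\partial$.

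Finally, for (iv): by \cref{thm:compare} and \cref{prop:fj_rd} the complex $S_*^\rd(X^\an,f)$ computes $H_*^\rd(X,f)$, so the chain map of (iii) induces, via the universal coefficient theorem over $\Q$, a pairing $H^n_\dR(X,f)\times H_n^\rd(X,f)\to\C$; it remains to match it with \eqref{eq:pairing}. Here I would invoke the construction of the period isomorphism of Hien--Roucairol and Fres\'an--Jossen in \cite[Theorem~7.6.1]{fresan-jossen}: there \eqref{eq:pairing} is realised by precisely the integration rule $(\omega,\sigma)\mapsto\int_\sigma\e^{-f}\omega^\an$, the only difference being that one uses piecewise-$C^\infty$-simplices (cf.\ \cref{rem:fj}). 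Since a $C^\infty$-simplex is in particular a $C^1$-simplex, every piecewise-$C^\infty$-chain becomes a $C^1$-chain after barycentric subdivision while representing the same singular homology class, and the integration functional is literally the same formula on both classes of simplices, so the two induced pairings on $H_*^\rd$ coincide. An alternative to the smoothness input that avoids $C^\infty$-regularity is a cut-off argument — replace $\e^{-f}\omega^\an$ by $\psi(\varepsilon\Re f)\e^{-f}\omega^\an$ with $\psi$ a fixed bump function, apply ordinary Stokes on $X^\an$, and let $\varepsilon\to0$, the error term being annihilated by the $\e^{-\Re f}$-factor — but bounding $d(\Re f)$ near $\partial\Bcirc(X,f)$ forces the same local coordinate computation anyway.
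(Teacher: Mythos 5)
Your proposal is correct and follows essentially the same route as the paper: the well-definedness rests on the fact that $\e^{-f}\omega^\an$ vanishes to all orders along $\partial\Bcirc(X,f)$ and hence extends smoothly by zero, the chain-map property is the identity $d(\e^{-f}\omega^\an)=\e^{-f}(d_f\omega)^\an$ combined with \cref{thm:stokes}, and the identification with \eqref{eq:pairing} is delegated to the Fres\'an--Jossen construction modulo the $C^\infty$-versus-$C^1$ comparison of \cref{rem:fj}. The paper's proof is just a terser version of yours.
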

\begin{proof}
Let $\omega\in\Omega^n(X)$ and $\sigma$ an $n$-dimensional $C^1$-simplex in
$S_n^\rd(X^\an,f)$.
The smooth form
$\omega^\an$ on $X^\an$ defines a smooth form $\e^{-f}\omega^\an$ on $\Bcirc(X,f)$.
(Note that $\e^{-f}\omega^\an$ vanishes to any order on $\partial \Bcirc(X,f)$.
So it can be extended by $0$ to a neighbourhood of the boundary).
Hence the integral is well-defined.

The compatibility with the boundary map translates as
\[ \int_{\sigma}\e^{-f}d_f\omega^\an=\int_{\partial \sigma}\e^{-f}\omega^\an\]
which holds by Stokes's formula (see Theorem~\ref{thm:stokes})
because $d_f\omega = d\omega - df \wedge \omega$.

The construction is the one of \cite[Chapter~7.2.7]{fresan-jossen}, only with
our $S_*(X)$ instead of their complex, see Remark~\ref{rem:fj}.
\end{proof}

By taking double complexes, this extends to general $X$ and $Y$.
We will discuss this in detail in \xref{sec:exp_gen}.
At this point, we handle the simplest case.

\begin{ex}\label{ex:compute_period}
Let $X$ be a smooth affine variety,
$Y\subset X$ a smooth closed subvariety, $f\in\Oh(X)$.
Then relative twisted de Rham cohomology
is computed by the complex
\begin{align*}
\RdR(X,Y,f)&=\cone\left(\Omega^*(X)\to\Omega^*(Y)\right)[-1]\\
 &=\left[\Omega^0(X)\to \Omega^1(X)\oplus\Omega^0(Y)\to \Omega^2(X)\oplus\Omega^1(Y)\to\dots\right]
\end{align*}
with differential induced by $d_f$ and restriction.
Its rapid decay homology is computed by the
complex 
\[ S_*^\rd(X,Y,f)=\cone\left(S_*^\rd(Y,f)\to S_*^\rd(X,f)\right).\]
Explicitly:
let $\bar{X}$ be a good compactification of $X$ relative to $f$ in the sense of
Section~\ref{ssec:gc}
such that the closure $\bar{Y}$ of~$Y$ in~$\bar{X}$
is a good compactification as well.
Then
\begin{multline*}
 \cone(S^\rd_*(Y,f)\to S^\rd_*(X,f)) =\\
 [S^\rd_0(X,f)\leftarrow S^\rd_1(X,f)\oplus S^\rd_0(Y,f)\leftarrow S^\rd_2(X,f)\oplus S^\rd_1(Y,f)\leftarrow\dots].
\end{multline*}
Let $\sigma$ be a cycle in $S_n^\rd(X,Y,f)$, i.e., a chain $\sigma_X$ on
$X$ such that $\partial \sigma_X=\sigma_Y$ is supported on $Y$. In the second incarnation, we identify it with 
\[ (\sigma_X,-\sigma_Y)\in S_n^\rd(X,f)\oplus S_{n-1}^\rd(X,f).\]
Let $\omega$ be a cocycle in $\RdR(X,Y,f)$, i.e., a pair of differential
forms 
\[ (\omega_X,\omega_Y)\in\Omega^n(X)\oplus\Omega^{n-1}(Y)\]
 such that
$d\omega_X=0$ and $d\omega_Y=\omega_X|_Y$. Their period is
\[\langle [\omega],[\sigma]\rangle=\int_{\sigma_X}\omega_X-\int_{\sigma_Y}\omega_Y.\]
\end{ex}

\section{Triangulations}\label{sec:triangle}

We fix a real closed field $\tilde{k}\subset \R$ and work with semi-algebraic sets of $\R^N$ defined over $\tilde{k}$. We expect that everything holds in general for o-minimal structures, but we do not need this for our application.
We use the set-up of \cite[Chapter~8]{D:oMin}
for complexes. It is not completely standard, but very convenient for us.

Let $n\in\IN_0$. Let $a_0,\dots,a_n\in \tilde{k}^N$ be affinely independent, i.e,  the vectors $a_1-a_0,\dots,a_n-a_0$ are linearly independent.
The \emph{open $n$-simplex} defined by these vectors is the set
\begin{multline*}
\sigma = (a_0,\ldots,a_n)\\
=\left\{ \sum_{i=0}^n \lambda_i a_i \in\R^N:  \text{for all $i$ we have
$\lambda_i>0$ and }\lambda_0+\cdots + \lambda_n =
1\right\}.
\end{multline*}
 We fix the orientation given by $d\lambda_1\wedge\dots \wedge d\lambda_n$.
The closure of $\sigma$ is denoted by $[a_0,\ldots,a_n]$ and
obtained by relaxing to $\lambda_i\ge 0$ in the definition above.
We call $[a_0,\ldots,a_n]$ a \emph{closed $n$-simplex}.
The points $a_0,\ldots,a_n$ are uniquely determined by
$[a_0,\ldots,a_n]$ and thus by $\sigma$.
As usual, a face of $\sigma$ is a simplex spanned by a non-empty
subset of $\{a_0,\ldots,a_n\}$.
Then $[a_0,\ldots,a_n]$ is a disjoint union of faces of $\sigma$.
We write $\tau < \sigma$ if $\tau$ is a face of $\sigma$ and
$\tau\not=\sigma$.

A finite set $K$ of simplices in $\R^N$ is called a \emph{complex} if
for all $\sigma_1,\sigma_2\in K$ the intersection
$\overline{\sigma_1}\cap\overline{\sigma_2}$ is either empty or the
closure of a common face $\tau$ of $\sigma_1$ and $\sigma_2$.
Van den Dries's definition does not ask for $\tau$ to lie in $K$. So
the \emph{polyhedron} spanned by $K$
$$|K|  = \bigcup_{\sigma \in K} \sigma$$
may not be a closed subset of $\R^N$.
We call $K$ a closed complex if $|K|$ is closed or equivalently,
if for all $\sigma\in K$ and all faces $\tau$ of $\sigma$, we have $\tau\in K$.
Note that $\bigcup_{\sigma\in K}\sigma$ is a disjoint union; this is
an advantage of working with ``open'' simplices.
We write $\overline K$ for the complex obtained by taking all faces of
all simplices in $K$. Note that $\overline K$ is $\tilde{k}$-semi-algebraic.

\begin{defn}\label{defn:triangulation}
Let $M$ be a $\tilde{k}$-semi-algebraic $C^1$-manifold with corners,  $A\subset M$ be a $\tilde{k}$-semi-algebraic subset. A \emph{semi-algebraic triangulation} of $A$ is a pair
$(h,K)$ where $K$ is a complex and where
$h:|K|\rightarrow A$ is a $\tilde{k}$-semi-algebraic  homeomorphism.
We say that it is \emph{globally of class $C^1$}
if $h$ extends to a $C^1$-map on an open neighbourhood of $|K|$.

Let $B\subset A$ be a $\tilde{k}$-semi-algebraic subset. We say that $(h,K)$ is compatible
with $B$ if $h(B)$ is a union of elements of~$K$.
\end{defn}

\begin{rem}
 Note that there are other definitions
 of $C^p$-triangulations (for $p\geq 1$) in the literature, where regularity is required only on the \emph{interior} of all faces,
 see for example
 \cite[Remark~9.2.3~(a)]{BCR} or \cite[Chapter~II]{shiota-book}. These are not enough to deduce Stokes's
formula in the semi-algebraic setting, as needed in order to get a well-defined period pairing.
Our solution is the following $C^1$-triangulation theorem by
Czapla--Paw\l{}ucki. Later on we will extend it from semi-algebraic
subsets of $\R^d$ to semi-algebraic manifolds with corners. 
\end{rem}

\begin{proposition}[{Czapla--Paw\l{}ucki, \cite[Main Theorem]{omin-triang}}]
  \label{prop:czaplapawlucki}
    Let $X$ be a compact  $\tilde k$-semi-algebraic
    subset of $\R^d$ and let $A_1,\ldots,A_M$ be $\tilde
    k$-semi-algebraic subsets of $X$. Then there exists a
    $\tilde{k}$-semi-algebraic triangulation of $X$ that is globally of class
    $C^1$ and compatible with each $A_j$.
\end{proposition}
  
\begin{rem}\label{rem:correction}

Ohmoto and Shiota formulate the above
in the locally semi-algebraic setting, see \cite{ohmoto-shiota-triangulation}. (Note that in their convention, ``semi-algebraic'' is used as a short-hand for ``locally semi-algebraic''.) As pointed out by 
Brackenhofer in \cite{brackenhofer}, their proof has a gap. In their
Lemma~3.4 they claim that a certain map is (locally) semi-algebraic.
In particular, it has to be semi-algebraic on closed simplices. No
argument is given, and indeed, the construction of the map using a
locally semi-algebraic partition of unity would not produce a
semi-algebraic map in general.

Meanwhile, Paw\l ucki~\cite{Pawlucki:Cp} proved a $C^p$-triangulation theorem in the
o-minimal setting for all $p\ge 0$. 
\end{rem}

\subsection{Existence of triangulations}

Our aim is to triangulate %
semi-algebraic manifolds with corners, see \cref{defn:manifold}.

\begin{lemma}\label{lem:functions}Let $X$ be a compact $d$-dimensional
  $\tilde{k}$-semi-algebraic $C^1$-manifold with corners. Then there exist 
$\tilde{k}$-semi-algebraic maps 
\[ g_1,\dots,g_m:X\to \R^d\]
of class $C^1$ with the following property:
Let $Y$ be another  $\tilde{k}$-semi-algebraic $C^1$-manifold with corners and $h:Y\to X$ a continuous $\tilde{k}$-semi-algebraic map. Then $h$ is $C^1$ if and only if all $g_j\circ h$ are $C^1$.
\end{lemma}
\begin{proof}%
 Pick an atlas
  $(\phi_i:U_i\to V_i|i=1,\dots,N)$ of $X$. We claim that there are
  $\tilde{k}$-semi-algebraic functions of class $C^1$
  \[ f_1,\dots,f_m:X\to [0,1]\] such that for every $j$ there is an $i(j)$ such that
  \begin{itemize}
  \item the support of $f_j$ is contained in $U_{i(j)}$,
  \item there is an open subset $W_j\subset U_{i(j)}$ on which $f_j$ is
    identically $1$,
  \end{itemize} and, moreover, the $W_j$ are a cover of $X$.

For each $P\in X$ we fix a chart $U_i$ containing $P$.
  Each $V_i$ is an open subset of some $\R^{n_i}\times\R_{\ge
    0}^{m_i}$. There is an open ball $B$ in $\R^{n_i+m_i}$ centered at
  $\phi_i(P)$
  of radius $r>0$
  such that
  $\phi_i(P) \in B\cap  (\R^{n_i}\times\R_{\ge 0}^{m_i})
  \subset V_i$.
  Let $f_P'\colon \R^{n_i+m_i}\rightarrow [0,1]$ be a $\tilde{k}$-semi
  algebraic $C^1$-function that
  is identically $1$ on the open ball of radius $r/2$ centered at
  $\phi_i(P)$ and with support contained completely in $B$. 
  We denote by $W_P$ the preimage in $U_i$ of the said ball of radius
  $r/2$ and by $f_P$ the composition $f_P'\circ \phi_i$ extended by
  zero on $X\ohne U_i$. As $X$ is compact, there are finitely many
  $P_1,\ldots,P_m$ such that $W_{P_1}\cup\cdots\cup W_{P_m}=X$.
  The claim follows with  $W_j = W_{P_{j}}$ and $f_j = f_{P_j}$. 

Now let
\[ g_j=f_j\phi_{i(j)}:X\to \R^d\]
be the product  and consider $h:Y\to X$ as in the statement. If $h$ is $C^1$, then so are all compositions $g_j\circ h$. Conversely, assume that all $g_j\circ h$ are $C^1$.  As $W_1,\dots,W_m$ cover $X$, it suffices to check the
claim after restricting to the preimage of some $W_j$. 
By definition, a map is $C^1$ if its composition with all $\phi_{i(j)}$ is. This is the case
because $g_j=f_j\phi_{i(j)}=\phi_{i(j)}$ on $W_j$ and $g_j\circ h$ is $C^1$.
\end{proof}

\begin{proposition}\label{prop:triangle_manifold}
 Let $X$ be a compact $\tilde{k}$-semi-algebraic $C^1$-manifold with corners,
 $A_1,\dots,A_M$ semi-algebraic subsets of $X$.
 Then there is a $\tilde{k}$-semi-algebraic triangulation of $X$
 compatible with $A_1,\dots,A_M$
 that is globally of class~$C^1$.
\end{proposition}
\begin{proof}
	By \cite[Theorem~1]{Robs83},
	there exists a $\tilde{k}$-semi-algebraic set $X' \subset \R^n$
	and a $\tilde{k}$-semi-algebraic homeomorphism $\psi \colon X \to X'$.
	We stress that $\psi$ is not $C^1$ in general.

	Now let $g_1,\dots,g_m \colon X \to \R^d$ be as in \cref{lem:functions}.
	We consider the graph
	\[ \Gamma = \{(\psi(x), g_1(x), \dots, g_m(x)) \mid x \in X\} \subset \R^n \times (\R^d)^m. \]
	We denote by $\pi_0 \colon \Gamma \to X'$ the first projection,
	and $\pi_j \colon \Gamma \to \R^d$, for $j = 1,\dots,m$ the other projections.
	Let $\iota =(\psi,g_1,\dots,g_m)\colon X \to \Gamma$ be the canonical map. It is a homeomorphism with
	inverse $\psi^{-1} \circ \pi_0$.

	Let $\Phi \colon |K| \to \Gamma$ be a $\tilde{k}$-semi-algebraic triangulation globally of class $C^1$ and compatible with $\iota(A_1),\dots,\iota(A_M)$. Such a triangulation 
	exists by \xref{prop:czaplapawlucki}.
	Let $h \colon |K| \to X$ be the composition
        $\psi^{-1} \circ \pi_0 \circ \Phi$.  Hence the diagram
	\begin{center}
		\begin{tikzcd}
			X \ar[rr, shift left=1, "\iota"] && \Gamma \ar[ll, shift left=1, "\iota^{-1}=\psi^{-1} \circ \pi_0"] \\
			& {|K|} \ar[ur, "\Phi"'] \ar[ul, "h"]
		\end{tikzcd}
	\end{center}
commutes.
	Clearly, $h$ is a homeomorphism, as it is the composition of two homeomorphisms.
	We claim that it is also $C^1$ (but not necessarily a diffeomorphism).
	By \cref{lem:functions} it suffices to check that the compositions $g_j \circ h$ are $C^1$.
		We have the equalities
	\[ g_j \circ h = (\pi_j\circ\iota)\circ  (\iota^{-1}\circ\Phi)=
   \pi_j \circ \Phi, \]
 and the latter is a composition of $C^1$ maps.
 So $h$ is a $C^1$-triangulation of $X$. It is compatible
   with each $A_j$. We are done.
\end{proof}

The existence of these triangulation is used to relate cohomological exponential periods to naive exponential periods. In the curve case this happens in Step~3 of the proof of Proposition~\ref{curve-log-sub-nv}. In the general case it is the input for Proposition~\ref{prop:rd_simplicial}.

\subsection{A deformation retract}
We are going to show that, up to deformation,
a complex $K$ can be identified with a closed complex.
The arguments are similar to the ones in \cite[Chapter 8, (3.5)]{D:oMin}. %
Compare also with Friedrich's \cite[Proposition~2.6.9]{period-buch} and its proof, \cite[p~69]{period-buch}.

If $\sigma$ is a simplex in $\R^N$, then $b(\sigma)$ denotes its barycenter.
Let $K\subset \R^N$ be a complex.
We denote by $\beta(K)$ its barycentric subdivision as defined in
\cite[Chapter 8, (1.8)]{D:oMin}.
Note that $|K| = |\beta(K)|$.

We define the \emph{closed core} of a complex $K$ as
\begin{equation*}
 \cc{K} = \{ \sigma \in K \mid K \text{ contains all faces of $\sigma$}\}.
\end{equation*}
Then $\cc{K}$ is a subcomplex of $K$.
It is a closed complex by definition.
But it can be empty:
consider a complex consisting of a single simplex of positive dimension.
This problem is remedied by passing to the barycentric subdivision.
More precisely, if $K$ is non-empty, then $\cc{\beta(K)}$ is non-empty.
Indeed, the barycenter $b(\sigma)$ of $\sigma \in K$
defines a face $(b(\sigma))$ of $\beta(K)$, which lies in
$\cc{\beta(K)}$.

Finally, note that if $L$ is a subcomplex of $K$, then $\beta(L)\subset \beta(K)$ and
$\cc{L}\subset \cc{K}$, so we have $\cc{\beta(L)}\subset
\cc{\beta(K)}$.

\begin{proposition}
 \label{lem:cc}
 Let $K$ be a complex.
 There exists a  $\tilde k$-semi-algebraic
 retraction $r \colon |K|=|\beta(K)|\rightarrow |\cc{\beta(K)}|$ with the
 following properties.
 \begin{enumerate}
 \item [(i)] For each $x\in |K|$ the half open line segment
   $[x,r(x))$ is contained in the simplex of $\beta(K)$
   containing $x$.
  \item[(ii)] The map
    \begin{equation*}
      H(x,t) = (1-t)x + tr(x)
    \end{equation*}
    is a $\tilde k$-semi-algebraic strong deformation retraction
    $H \colon |K|\times[0,1]\rightarrow |K|$ onto $|\cc{\beta(K)}|$.
 \end{enumerate}
\end{proposition}
We use a variation of the arguments found in 
  \cite[Chapter 8, \S 3]{D:oMin}.
\begin{proof}
Let $b=b(\sigma)$ be a vertex of $\beta(K)$.
As in loc.\ cit.\ we define a continuous semi-algebraic function
\[ \lambda_\sigma \colon |K| = |\beta(K)| \to [0,1]\]
which vanishes on $|\tau|$ if $b$ is not a vertex of $\tau\in\beta(K)$ and
equals the barycentric coordinate with respect to $b$ if it is.

Let us define  furthermore
\[ \Lambda(x)= \sum_{\sigma \in  K} \lambda_\sigma(x).\]

We claim that $\Lambda(x)>0$ for all $x\in |K|$.
Indeed, $x$ is contained in a simplex
$(b(\sigma_0),\ldots,b(\sigma_n))$ of $\beta(K)$; here
$\sigma_0< \cdots < \sigma_n$ are open simplices of $\overline K$ and
$\sigma_n\in K$.
In particular,  $\lambda_{\sigma_n}(x)>0$.
Thus the contribution coming from $\sigma_n$ to the sum $\Lambda(x)$
is strictly positive. As all other contributions are non-negative we
find $\Lambda(x)>0$, as desired.

We are ready to define $r(x)$ for $x\in |K|$ as
$$
r(x) = \frac{\sum_{\sigma \in  K}
\lambda_\sigma(x) b(\sigma)}{\Lambda(x)}.
$$
Thus $r \colon |K| \to \R^N$ is $\tilde k$-semi-algebraic and continuous.

Let us verify that $r(|K|)\subset |\cc{\beta(K)}|$.
Say $x\in |K|$ and let $\sigma_0, \ldots,\sigma_n$ be as before.
Say $\sigma \in K$.
We recall that $\lambda_\sigma(x)>0$ if and only
if $\sigma$ is among $\{\sigma_0,\ldots,\sigma_n\}$.
Let $\sigma_{i_0}<\cdots < \sigma_{i_k}=\sigma_n$ be those among the
$\sigma_0,\ldots,\sigma_n$ that lie in $K$.
So $r(x) = \sum_{j=0}^k \alpha_j b(\sigma_{i_j})$
with coefficients $\alpha_j\in [0,1]$ such that
 $\sum_{j=0}^k \alpha_j=1$. Observe that $\alpha_j>0$ since $x\in
(b(\sigma_0),\ldots,b(\sigma_n))$. Thus $r(x) \in
(b(\sigma_{i_0}),\ldots,b(\sigma_{i_k}))$.
Finally,
$b(\sigma_{i_j})\in \sigma_{i_j}\in K$ for all $j$.
Therefore,  $\beta(K)$ contains all faces of the simplex
$(b(\sigma_{i_0}),\ldots,b(\sigma_{i_k}))$ which must  thus
be an element of $\cc{\beta(K)}$. We conclude $r(x)\in
|\cc{\beta(K)}|$. So the target of $r$ is $\cc{\beta(K)}$, as claimed.

Moreover,  
  $(b(\sigma_{i_0}),\ldots,b(\sigma_{i_k}))$ is a face of
$(b(\sigma_0),\dots,b(\sigma_n)) \in \beta(K)$, hence by convexity the ray
$[x,r(x))$  is
in the simplex of $\beta(K)$ containing $x$.

We now verify that $r$ is a retraction. We still assume $x\in
  (b(\sigma_0),\ldots,b(\sigma_n))$ as above.
 Note that $x = \sum_{\sigma\in\overline K}\lambda_\sigma(x) b(\sigma)$
and $\sum_{\sigma\in\overline K}\lambda_\sigma(x) =1$.
If $\lambda_\sigma(x)>0$ for some $\sigma\in \overline K$, then $\sigma$ is among
$\sigma_0,\ldots,\sigma_n$. Hence
\begin{equation*}
  \sum_{i=0}^n \lambda_{\sigma_i}(x) b(\sigma_i) = x
  \quad\text{and}\quad
  \sum_{i=0}^n \lambda_{\sigma_i}(x) =1.
\end{equation*}

Now suppose $x\in |\cc{\beta(K)}|$. By definition, $\beta(K)$ contains
all faces of $(b(\sigma_0),\ldots,b(\sigma_n))$. In particular,
$b(\sigma_i)\in |K|$ and hence $\sigma_i \in K$ for all $i$. So
$\Lambda(x)=1$ and $r(x)=x$. In particular, $r$ is a
  retraction.

For all $x \in (b(\sigma_0), \ldots, b(\sigma_n)) \in \beta(K)$
and all $t$,
 we find that
\[
  (1-t)x + tr(x) =  \sum_{i=0}^n \frac{\left((1-t)\Lambda(x)
  + t w_{\sigma_i}(x)\right)\lambda_{\sigma_i}(x)}{\Lambda(x)} b(\sigma_i),
\]
where $w_{\sigma_i}$ is constant $1$ if $\sigma_i\in K$ and constant
$0$ else. If $t\in [0,1)$ each factor in the sum on the right is strictly
positive and their sum is $1$. This implies $(1-t)x + tr(x) \in
(b(\sigma_0),\ldots,b(\sigma_n))$.
As we have seen before, $r(x) = x$ for $x \in |\cc{\beta(K)}|$.
Altogether, this proves claim~(ii).
\end{proof}

\section{The case of curves}
\label{sec:curves}

Let $k \subset \C$ be a subfield which is algebraic over $k_0 = k\cap\R$.
For simplicity, we assume that $k$ is algebraically closed.
In this section we will show that
naive exponential periods of the form $\int_G \e^{-f}\omega$
where $G$ is $1$-dimensional
are the same as cohomological exponential periods of smooth marked curves.
This comparison is a special case
of the general result in \xref{thm:compare_all},
but we include it to illustrate the key ideas of the general proof,
while avoiding several technical problems.

This section is organised as follows:
first we give some elementary examples of cohomological exponential periods
and explain why they are naive exponential periods.
This is followed by an intermezzo
in which we describe the oriented real blow-up of a marked curve,
because it features several times in the remainder of the section.
Finally, we prove the equality of the the two notions of periods.

\subsection{Examples of cohomological exponential periods}
In \cref{eg-naive} we saw explicit examples of naive exponential periods.
We will now look at some examples of cohomological exponential periods,
before considering the case for general curves.
\begin{ex}
 \label{eg-coh}
 We start with the simplest non-trivial case: $X=\A^1$, $Y=\{0\}$, $f=\id$.
 Then 
\[ H_1^\rd(\A^1,\{0\},\id)=
 H_1(\Bcirc(\A^1,\id),\{0\}\cup \partial \Bcirc(\A^1,\id);\Q).\]
 Both $\Bcirc=\Bcirc(\A^1,\id)$ and its boundary are contractible,
 hence $H_1^\rd(\A^1,\{0\},\id)$ is of dimension $1$.
 The generator is the path from $0$ to a point on $\partial \Bcirc$,
 i.e., one of the $G_s$ of Example~\ref{ex:3}.
 We use $G_1=[0,\infty)$ because it is in the subspace $\Bsharp$ as
 defined in \cref{sec:not_B}.
 \def\figrad{1}
 \[
  \Bcirc:
  \quad
  \begin{tikzpicture}[baseline]
   \filldraw[fill=blue!20!white, draw=blue!50!black, dotted] (0,0) circle (\figrad);
   \draw[draw=blue!50!black, very thick] (-90:\figrad) arc (-90:90:\figrad);
   \node at (0,0) {$\C$};
   \draw[fill] (0:\figrad) circle (1pt) node[anchor=west] {$1\infty$};
   \draw[fill=white, draw=black, thin] (-90:\figrad) circle (1pt);
   \draw[fill=white, draw=black, thin] (90:\figrad) circle (1pt)
        node[anchor=south] {$i\infty$};
  \end{tikzpicture}
  \qquad
  \qquad
  \Bsharp:
  \quad
  \begin{tikzpicture}[baseline]
   \filldraw[fill=blue!20!white, draw=blue!50!black, dotted] (0,0) circle (\figrad);
   \node at (0,0) {$\C$};
   \draw[fill=blue!50!black] (0:\figrad) circle (1pt)
        node[anchor=west] {$1\infty$};
  \end{tikzpicture}
 \]
 The boundary in singular homology
 maps it to minus the class of the point $0$.

 The relative de Rham complex has the shape
 \[ k[z]\xrightarrow{P\mapsto (dP-P\md z,P(0))}k[z]\md z\oplus k .\]
 As in \Cref{ex:compute_period}, the periods of $(Q\md z,a)$ are computed as
 \[ \int_{G_1} \e^{-z}Q\md z-a.\]
 The general theory tells us that $H_\dR^1(\A^1,\{0\},\id)$ also has dimension~$1$.
 It is easy to see that $(\!\md z,0)$ is  not in the image of the differential:
 Indeed $P\mapsto dP-P\md z$ is injective,
 and the preimage of $\md z$ under this injection
 is the constant polynomial $-1$,
 which does not have constant coefficient~$0$.
 Hence $(\!\md z, 0)$ generates our cohomology.
 The periods of $(\A^1,\{0\},\id,1)$ are precisely the elements of
 $k$ as
 \[ \int_{G_1} \e^{-z}\md z=1.\]
 Unsurprisingly, these elements are
  naive exponential periods as explained in
 Example~\ref{ex:3}. 

 We now turn to  $X=\A^1, Y= \{0\}$ and  $f=z^n$ for $n\geq 2$. In this case the
 boundary of $\Bcirc(\A^1, f)$ has $n$ components, hence
 $H_1^\rd(\A^1,\{0\},f)$ is of dimension $n$. As generators for homology
 we can use the $n$ different preimages of $[0,\infty)$ under $z\mapsto z^n$.
 They are of the form $G_{s^m}$ for $m=0,\dots,n-1$ with $s=\e^{2\pi i/n}$.
 The boundary map in singular homology maps each of them to
 minus the class of the point $0$.

 In this case the de Rham complex has the shape
 \[ k[z]\xrightarrow{P\mapsto (dP-nz^{n-1}P\md z,P(0))} k[z]\md z\oplus k.\]
 All elements in $H^1_\dR(\A^1,\{0\},f)$
 are represented by pairs $(Qdz,a)$.
 Their periods are computed as
 \[ \int_{G_{s^m}} \e^{-z^n}Q\md z-a.\]
 These are naive exponential  periods.
It is easy to see that the classes 
of $(z^j\md z,0)$ for $j=0,\dots,n-1$ generate $H^1_\dR(\A^1,\{0\},f)$. 
Hence we get $k$-linear combinations of the numbers
\begin{align*}
 \int_{G_{s^m}}\e^{-z^n}z^j\md z
&=\int_0^\infty \e^{- t^n}(\e^{2\pi mi/n}t)^j \e^{2\pi im/n}\md t\\
&=\e^{2\pi i(j+1)m/n}\int_0^\infty \e^{-s}s^{\frac{j}{n}}\frac{1}{n}s^{\frac{1}{n}-1}\md s\\
&=\frac{\e^{2\pi i(j+1)m/n}}{n}\int_0^\infty e^{-s}s^{\frac{j+1}{n}-1}\md s\\
&\in\Qbar^*\cdot \Gamma\left(\frac{j+1}{n}\right).
\end{align*}
\end{ex}

\begin{rem}
 \label{bcirc-vs-bsharp}
 The preceding example provides
  an explicit instance of \xref{prop:comp_homol}
 which is an important ingredient in the final comparison theorem:
	rapid decay homology  is not only computed by $\Bcirc(\A^1,f)$,
 but also by $\Bsharp(\A^1,f)=\C\cup \tilde{f}^{-1}(1\infty)$
 so we can choose intervals with endpoints $E$ in~$\{0\}\cup\tilde{f}^{-1}(1\infty)$.
\end{rem}

\subsection{The oriented real blow-up of a marked curve}
\label{ssec:orbmc}
Let $\bar C$ be a smooth projective complex curve,
or in other words, a compact Riemann surface.
Let $\bar f \colon \bar C \to \Pe^1$ be a non-constant
meromorphic function. 
Let $Q_1, \dots, Q_n \in \bar C$ denote the poles of~$\bar f$,
let $P_1, \dots, P_m$ be some points on~$\bar C$ distinct from the~$Q_i$,
and denote by $C \subset \bar C$ the complement of
$\{P_1, \dots, P_m, Q_1 \dots Q_n\}$.
Denote by $f \colon C \to \A^1$ the restriction of~$\bar f$ to~$C$.

We now consider the real oriented blow-up $B(C)=B_{\bar
  C}(C)$
and the map $\tilde f \colon B(C) \to \Ptilde$ induced by~$f$.
It adds a circle to $C^\an$ in each of the points $P_i$ and~$Q_j$.
The algebraic map $f \colon C \to \A^1$
induces a semi-algebraic map of manifolds with boundary
$\tilde f \colon B(C) \to \Ptilde$.
The circles around the $P_i$ are mapped to $f(P_i) \in \C$.
The circles around the $Q_i$ are mapped to the circle at infinity of~$\Ptilde$.
As in Definition~\ref{defn:our_Bcirc} let $\Bcirc(C,f) \subset B(C)$
be the open subset of points either in~$C^\an$ or
mapping to $\Re(s\infty) > 0$ on the boundary of~$\Ptilde$.
So it removes the circles around the~$P_i$'s
and some circle segments from the circles around the the~$Q_j$'s.

The following figure illustrates the case $\bar C = \Pe^1$.

\def\parc#1#2#3#4#5#6{
  \draw[#6,blue!50!black] ([shift=(#4:#3)]#1,#2) arc (#4:#4+#5:#3);
}

\def\segmentedcircle#1#2#3#4#5#6#7{
 \fill[fill=#7] (#1,#2) circle (#3);
 \
 \foreach \a in {0,#5,...,179} {
  \parc{#1}{#2}{#3}{2*\a+#4}{#5}{thick};
  \parc{#1}{#2}{#3}{2*\a+#5+#4}{#5}{dotted};
 }
 \node at (#1,#2) {#6};
}

\def\dashedcircle#1#2#3#4{
  \fill[fill=white] (#1,#2) circle (#3);
  \draw[draw=blue!50!black,dotted] (#1,#2) circle (#3);
  \node at (#1,#2) {#4};
}

\begin{center}
 \begin{tikzpicture}
  \segmentedcircle{0}{0}{3}{80}{36}{}{blue!20!white}
  \segmentedcircle{1}{1}{.4}{20}{36}{$Q_1$}{white}
  \segmentedcircle{-1.3}{1.2}{.4}{110}{45}{$Q_2$}{white}
  \segmentedcircle{1.4}{-.8}{.4}{40}{60}{$Q_3$}{white}
  \node[anchor=west] at (2.5,2) {\rlap{\quad$Q_4$}};

  \dashedcircle{-1.5}{-1.3}{.4}{$P_1$}
  \dashedcircle{-.2}{2}{.4}{$P_2$}
 \end{tikzpicture}
\end{center}
Here the shaded area should be understood as a subset of the $2$-sphere, so that the outside (around $Q_4$) is one more disk.

\subsection{A $1$-dimensional comparison}
We now show that generalised naive exponential periods
are cohomological exponential periods.

\begin{proposition}
 \label{curve-nv-sub-log}
 Let $\alpha = \int_G \e^{-f} \omega$ be
 a generalised naive exponential period over~$k_0$
 as in \cref{defn:gnaive}.
 Assume that $\dim(G) = 1$.
 Then $\alpha$ is a cohomological exponential period
 for a tuple $(C, Y, f, 1)$,
 where $C$ is a smooth curve defined over~$k$,
 $Y \subset C$ is a finite set of points,
 and $f \colon C \to \A^1_k$
 is a regular function.
\end{proposition}
This is a special case of \xref{naive_is_effective}.

\begin{proof}
 By \cref{cor:nv_converges},
 every generalised naive exponential period is absolutely convergent.
 Hence we may apply \cref{prop:geometry} to obtain
 a smooth affine curve $C$ over~$k_0$,
 a finite set of points $Y \subset C(k_0)$,
	a closed pseudo-oriented $1$-dimensional $k$-semi-algebraic
 subset $G$ of~$C(\R)$ 
 with endpoints in~$Y$,
 a function $f \colon C_k \to \A^1_k$ that is proper on~$G$
 and such that $\overline{f(G)} \subset \Bcirc$,
 and a regular $1$-form~$\omega$ on~$C_k$,
 such that $\alpha = \int_G \e^{-f} \omega$.
 By abuse of notation we replace
 $C$ and $Y$ by $C_k$ and $Y_k$ from now on.

 Certainly, the form $\omega$ defines a class $[\omega] \in H^1_\dR(C,Y,f)$.

 The semi-algebraic set $\reg_1(G)$ is semi-algebraically homeomorphic
 to a  finite union of open intervals and
 circles.  We may consider connected components separately.
 Thus, without loss of generality,
 $\reg_1(G)$ is homeomorphic to an open interval and $G$ its closure in $C^\an$.
 The semi-algebraic set $G$ is homeomorphic to either a circle, 
or an interval. Its endpoints are in $Y$ or at infinity.
 By assumption, we are given an orientation on the complement of finitely many points of $G$.
By enlarging $Y$ and cutting $G$ into subintervals, we may assume that $G$ is oriented.

Let $\bar{C}$ be a smooth compactification of $C$,
 and $\bar G$ the closure of $G$ in $B_{\bar C}(C)$.
 It is compact because $B(C)$ is.
 \Cref{lem:GB} implies $\bar{G}\subset \Bcirc_{\bar{C}}(C,f)$.

 By construction, the boundary of $\bar G$
 is contained in $Y \cup \partial \Bcirc_{\bar C}(C,f)$. 
 It defines a class
 $[G] \in H_1^\rd(C,Y,f) = H_1(\Bcirc_{\bar C}(C,g),Y\cup\partial \Bcirc_{\bar C}(C,f);\Q)$.
 Finally, as in \Cref{ex:compute_period},
 the period pairing of these classes is computed as
 \[ \langle [\omega],[G]\rangle = \int_{\bar{G}}\e^{-f}\omega = \alpha.\]
 This proves the result:
 $\alpha$ is indeed a cohomological exponential period.
\end{proof}

\subsection{Converse direction}
We now want to express cohomological exponential periods
as naive exponential periods.
This means that we start with a marked curve $(C,Y)$, a regular function $f\in\Oh(Y)$
and cohomology classes $\gamma \in H_1^\rd(C,Y,f)$
and $\omega \in H_\dR^1(C,Y,f)$.
We want to show that the period pairing $\langle \omega, \gamma \rangle$
is a naive exponential period.
Let us sketch the ingredients of the proof:
\begin{enumerate}[label=(\textit{\roman*})]
 \item The first step is the observation that rapid decay homology~$H_1^\rd(C,Y,f)$
   is computed  via the ordinary homology of the space~$\Bcirc(C,f)$.
 \item We then note that $\Bcirc(C,f)$ is homotopic to a certain subset~$\Bsharp(C,f)$.
  We will give an ad hoc definition of this subset here;
   for the general definition see \xref{defn:sharp}.

  This step is crucial,
  because in the next step
  it will allow us to obtain semi-algebraic sets~$G$
  whose image is contained in a suitable strip: $f(G) \subset S_{r,s}$.
  See also \cref{bcirc-vs-bsharp}.
 \item Finally, we use semi-algebraic triangulation results
  and the delicate \cref{lem:cc}
  to realise $\gamma$ as a linear combination
  of homology classes of semi-algebraic sets.
  This will allow us to realise $\langle \omega, \gamma \rangle$
  as a naive exponential period.
\end{enumerate}

\begin{proposition}[Special case of \xref{prop:coh_implies_naive}]
 \label{curve-log-sub-nv}
 Let $C\subset\A^n$ be a smooth affine curve over $k$,
 $f\in\Oh(C)$, and $Y\subset C$ a proper closed subvariety.
 Then every cohomological exponential period of $(C,Y,f,1)$
 is a naive exponential period.
\end{proposition}

\begin{proof}
 By definition, $f\in k[C]$.
 We also write $f$ for a polynomial in $k[z_1,\dots,z_n]$ representing it.
 As $C$ is affine,
 the twisted de Rham cohomology $H^1_\dR(C,Y,f)$
	is a quotient of $\Omega^1(C) \oplus \bigoplus_{y \in Y} k$, 
 hence every element is represented by a tuple $(\omega,a_y)$.
As in Example~\ref{ex:compute_period}, the $a_y$ contribute
	$\sum_{\sigma_Y}a_Y\in k$ to the cohomological exponential period. This is a naive exponential period. Hence it suffices to consider the case $a_y=0$ for all $y\in Y$ from now on.
We also write $\omega$
 for the element of $\Omega^1(\A^n)$ representing $\omega\in\Omega^1(C)$.

 \textbf{Step 1.}
 Let $\bar{C}$ be a smooth compactification of $C$
 and let $Z=\bar{C}\ssm C$ be the points at infinity.
 By \cref{prop:fj_rd} and \cref{rem:comment}
 \[ H_1^\rd(C,Y,f)=H_1(\Bcirc(C,f),Y^\an\cup \partial \Bcirc(C,f);\Q).\]
We decompose $Z=Z_f\cup Z_\infty$ such that
 $f$ is regular at the points of $Z_f$
 and has a pole at the points of $Z_\infty$.
 Let $d_z\ge 1$ be the multiplicity
 of $\bar{f}$ at $z\in Z$.
 The oriented real blow-up of $\bar{C}$ in $Z$
 replaces each point $z\in Z^\an$ by a circle $S_z$.
 It is compact.
 The boundary is a disjoint union of circles.
 The map $\tilde{f} \colon B(C)\to \Ptilde$
   maps these circles either to $\C$ (the case $z\in Z_f$)
 or to the circle at infinity (the case $z\in Z_\infty$).
 In the latter case, the map on the circle is a $d_z$ to $1$ cover.

 By definition the subset $B^{\circ}(C,f)$
 is the union of the preimage of~$C^\an$ and those points $P$ in the circles $S_z$ above $z\in Z_\infty^\an$
 that are
 in the preimage of the half circle $\{w\infty \mid \Re(w)>0\}$.
 Hence the boundary  of $\Bcirc(C,f)$ consists of $d_z$ many circle segments
 for every $z\in Z_\infty^\an$.

  \textbf{Step 2.}
 Now consider the smaller subset $\Bsharp(C,f)$
 defined as the union of the preimage of~$C^\an$ and
the points $P$ in the circles $S_z$ above $z\in Z_\infty^\an$
 that are
 in the preimage of $1\infty$.  Hence the boundary $\partial \Bsharp(C,f)$ of $\Bsharp(C,f)$ consists of $d_z$ many disjoint points
 for every $z\in Z_\infty^\an$.
 In particular the boundaries of $\Bcirc(C,f)$ and $\Bsharp(C,f)$
 are homotopy equivalent.
 Both $\Bcirc(C,f)$ and $\Bsharp(C,f)$ are homotopy equivalent to $C^\an$.
 Thus
 \[ H_1^\rd(C,Y,f)=H_1(\Bsharp(C,f),Y^\an\cup \partial \Bsharp(C,f);\Q).\] 
 Note that $\Bsharp(C,f)$ is not a manifold with corners,
 hence we are not able to interpret the right hand side
 in the sense of $C^1$-homology as defined in \cref{sec:homology}.
 However, it is a topological space
 so ordinary singular homology is perfectly well-defined
 and this is how we interpret the right-hand side.

\textbf{Step 3.}
 The space $\Bcirc(C,f)$ is a $k_0$-semi-algebraic $C^\infty$-manifold with boundary. The subset $\Bsharp(C,f)$ is $k_0$-semi-algebraic.
 By \cref{prop:triangle_manifold},
	$\Bcirc(C,f)$ has a $k_0$-semi-algebraic triangulation
 compatible with $\Bsharp(C,f)$, $Y$ and $\partial\Bsharp(C,f)$
 that is
 globally of class $C^1$. 
 In particular, the points in $Y^\an\cup\partial\Bsharp(C,f)$
 are vertices. By \Cref{lem:cc}, the closed core of its barycentric
 subdivision is a strong deformation retract of $\Bsharp(C,f)$. 
 We denote the closed core by $A$. It contains all vertices of the triangulation, in particular $Y^\an\cup \partial\Bsharp(C,f)\subset A$.
 Hence
\[ H_1(\Bsharp(C,f),Y^\an\cup\partial \Bsharp(C,f);\Q)=H_1(A,Y^\an\cup\partial\Bsharp(C,f);\Q).\]

 The subcomplex $A$ is compact, hence simplicial and singular homology of $A$ agree.
 Therefore every homology class is represented by a linear combination
 of closed semi-algebraic $1$-simplices in~$A$.
 The triangulation is $C^1$,
 hence the closed $1$-simplices in the triangulation of $C^\an$ define elements of $S_1(C,f)$.
 In all, each homology class in $H_1^\rd(C,Y;\Q)$
 is represented by a linear combination of $C^1$-paths in
 $\Bsharp(C,f)$ (in fact even in $A$) with boundary in $Y^\an\cup \partial \Bsharp(C,f)$.
 The period integral is defined by integrating $\e^{-f}\omega$ on
 these paths, see Definition~\ref{defn:period_pairing_smooth} and  Lemma~\ref{lem:period_map_smooth}.

 Let $\gamma \colon [0,1]\to A$ be one these $C^1$-paths.
 We put $G=\gamma([0,1])\cap C^\an$.
 We need to check that it satisfies the conditions needed for naive exponential periods. The subset $G$ is $k_0$-semi-algebraic.
 The closure $\bar{G}=\gamma([0,1])$ differs from $G$ by at most two points,
 the endpoints.
 The image $\tilde{f}(\bar{G})$ in $\Ptilde$ is compact
 and contained in $\Bsharp(C,f)$,
 hence $f(G)$ is contained in a suitable strip $S_{r,s}$ for $r\in\R,s>0$.
 The map $\tilde{f} \colon \bar{G}\to \Ptilde$ is proper
 because $\bar{G}$ is compact.
 By definition,
 the preimage $\tilde{f}^{-1}(1\infty)$ does not contain any points of $G$.
 Hence $f \colon G\to \C$ is also proper.
 We conclude that $\int_G\e^{-f}\omega$ is a naive
 exponential period.

 Our cohomological period was a linear combination of such.
 By Lemma~\ref{lem:change_field} a linear combination of naive exponential periods
 is a naive exponential period.
\end{proof}

\section{Exponential periods: the general case}
\label{sec:exp_gen}

Throughout this section let $k\subset\C$ be a subfield
such that $k$ is algebraic over $k_0=k\cap\R$.
All varieties are defined over $k$.

We turn to the definition of exponential periods for general $(X,Y,f)$
again following Fres\'an and Jossen in \cite{fresan-jossen}.
Notation for the smooth affine case was set up in \xref{sec:exp}.
\subsection{Complexes of varieties and their compactifications}
We denote by $\SmAff/\A^1$ the category of smooth affine varieties $X$
together with a structure map $f \colon X \to \A^1$.
Note that we do not require $f$ to be smooth. We often write
$X$ instead of $(X,f)$.
Let $\Z[\SmAff/\A^1]$ be the additive hull of $\SmAff/\A^1$:
\begin{itemize}
\item
the objects are the objects of $\SmAff/\A^1$;
\item the morphisms are formal $\Z$-linear combinations of morphisms in $\SmAff/\A^1$, more precisely 
for connected $X$ we have
\[ \Hom_{\Z[\SmAff/\A^1]}((X,Y) = \Z[\Mor_{\SmAff/\A^1}(X,Y)]; \]
\item the disjoint union is the direct sum.
\end{itemize}
We denote by $C_+(\SmAff/\A^1)$
the category of bounded below homological complexes over $\Z[\SmAff/\A^1]$.

We denote by $\Sm/\Pe^1$ and  $\SmProj/\Pe^1$
the category of smooth and smooth projective varieties $X$, respectively, together with
a structure map $f \colon X \to \Pe^1$.
As in the affine case we define $\Z[\Sm/\Pe^1]$,
$\Z[\SmProj/\Pe^1]$, as well as $C_+(\Z[\Sm/\Pe^1])$ and $C_+(\Z[\SmProj/\Pe^1])$.

Let $X$ be a smooth variety, $f \colon X\to\A^1$.
Recall from \Cref{ssec:gc2} that a \emph{good compactification}
of $X$ relative to $f$
is a pair $(\bar{X},\bar{f})$ where $\bar{X}$ is smooth and projective,
$\bar{f} \colon \bar{X}\to\Pe^1$ a morphism and
$X\to \bar{X}$ is a dense open immersion such that the complement
$X_\infty$ is a divisor with simple normal crossings and $\bar{f}$ extends $f$.

\begin{defn}\label{defn:good_complex}
Let $(X_\bullet,f_\bullet)\in C_+(\SmAff/\A^1)$. A \emph{good compactification} of
$X_\bullet$ relative to $f_\bullet$ is an object $(\bar{X}_\bullet,\bar{f}_\bullet)\in C_+(\SmProj/\Pe^1)$ together with a morphism $\iota_\bullet:X_\bullet\to\bar{X}_\bullet$ in
	$C_+(\Sm/\Pe^1)$  such that for all $n$ the morphism $\iota_n:X_n\to \bar{X}_n$ is a good compactification relative to $f_n$.
\end{defn}

In particular, each $\iota_n$ is assumed to be a morphism of varieties, not a general morphism in $\Z[\Sm/\Pe^1]$.

\begin{lemma}
 Let $X$ be a smooth variety and $f \colon X\to \A^1$ a morphism.
 \begin{enumerate}
  \item The system of good compactifications of $(X,f)$ is filtered.
  \item Given a morphism $g \colon Y\to X$ of smooth varieties
   and a good compactification of $X$ relative to $f$
   there is a good compactification $\bar{Y}$ of $Y$ relative to $f\circ g$
   and a morphism $\bar{Y}\to\bar{X}$ over~$g$.
 \end{enumerate}
\end{lemma}
\begin{proof}
 Let $X\to X_1$ and $X\to X_2$ be good compactifications of $X$ relative to $f$.
 Let $X_3'$ be the closure of $X$ in $X_1\times_{\Pe^1}X_2$.
 Let $X_3\to X_3'$ be a desingularisation such that the complement $X_3\ssm X$ is
 a simple normal crossings divisor.
	A morphism $h:X_1\to X_2$ of good compactifications of $X$ relative to $f$ is uniquely determined if it exists because
 $X$ is dense in~$X_1$.

 Let $g \colon Y\to X$ be a morphism of smooth varieties.
	Let $\bar{X}$ be a good compactification of $X$ relative to $f$.
 Choose any compactification $Y'$ of $Y$.
 Possibly after replacing $Y'$ by a blow-up, the map $g$ extends to $Y'$.
 Picking a desingularisation~$\bar Y$ of~$Y'$
  finishes the proof of this lemma.
\end{proof}

\begin{cor}\label{cor:filtering}
 Let $(X_\bullet,f_\bullet)$ be a bounded below (homological) complex
 in $\Z[\SmAff/\A^1]$. Then the system of good compactifications
 of $(X_\bullet,f_\bullet)$ is non-empty, filtering and functorial.
\end{cor}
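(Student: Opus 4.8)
The plan is to bootstrap the three single-variety statements of the preceding lemma up to complexes by an induction on the homological degree; this is legitimate precisely because the complex is bounded below, so the induction can start from the lowest nonzero degree. Two elementary remarks will carry all the commutativity checks: a morphism between two good compactifications of a fixed $(X,f)$ lying over $\id_X$ is unique if it exists, since $X$ is dense in each; and two morphisms out of such a compactification into a separated target that agree on the dense open $X$ coincide. Granting these, at each inductive step it suffices to produce the underlying varieties and maps, and every square commutes automatically.

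First I would prove non-emptiness. Writing $X_n=0$ for $n<n_0$, pick a good compactification of $(X_{n_0},f)$ (Hironaka, as recalled in \cref{ssec:gc2}) and extend degree by degree: given $\bar X_{n_0}\leftarrow\cdots\leftarrow\bar X_{n-1}$ already built in $\Z[\SmProj/\Pe^1]$, the differential $d_n\colon X_n\to X_{n-1}$ is a matrix of $\Z$-linear combinations of finitely many morphisms of smooth affine varieties between connected components; part~(2) of the lemma extends each of these after passing to a suitable good compactification of $(X_n,f)$, and part~(1) (filteredness) lets one dominate all of these finitely many compactifications by a single $\bar X_n$, relative to $f$, which then carries the whole matrix $\bar d_n$. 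The relation $\bar d_{n-1}\bar d_n=0$ is forced by $d_{n-1}d_n=0$ and density, yielding a good compactification $\bar X_\bullet$ of $X_\bullet$.

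Next I would treat filteredness, that is, that two good compactifications $\bar X_\bullet,\bar X'_\bullet$ of $X_\bullet$ are dominated by a third. Again by a degreewise induction: having built the first $n-1$ degrees of $\bar X''_\bullet$ with compatible maps to $\bar X_\bullet$ and $\bar X'_\bullet$, use part~(1) of the lemma to find a good compactification of $(X_n,f)$ dominating $\bar X_n$ and $\bar X'_n$, then refine it further (using parts~(1) and~(2)) so that it also admits extensions of the morphisms of varieties making up $d_n$ with target $\bar X''_{n-1}$; the density remark turns the resulting maps into morphisms of complexes over $X_\bullet$, and uniqueness of dominating morphisms gives the filtered property. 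Functoriality is the same induction once more: given $\phi\colon X_\bullet\to X'_\bullet$ and a good compactification $\bar X'_\bullet$ of $X'_\bullet$, at each step apply part~(2) of the lemma to the finitely many morphisms of varieties comprising $\phi_n$ and $d_n$, combine them with part~(1), and obtain $\bar X_\bullet$ together with a lift $\bar\phi$ of $\phi$; compatibility with composition and with the transition maps inside the filtered systems is automatic by density and separatedness.

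I do not expect a genuine obstacle here: the corollary is a formal consequence of the lemma. The one point that needs care is the bookkeeping with the $\Z$-linear combinations --- the differentials and the components of $\phi$ are matrices of $\Z$-linear combinations of morphisms of varieties, not single morphisms --- so each appeal to part~(2) of the lemma must be made separately for each of the finitely many constituent morphisms, and then all of them absorbed into one common good compactification via part~(1). Keeping that indexing straight, and being consistent that ``bounded below'' means the induction begins at the lowest degree, is essentially the whole content of the argument.
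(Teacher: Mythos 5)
Your proposal is correct and follows essentially the same route as the paper: a degreewise induction starting at the lowest degree (using boundedness below), decomposing each differential into its constituent morphisms of connected components, lifting each via part~(2) of the lemma, dominating the resulting finitely many compactifications by a common refinement via part~(1), and deducing $\bar d\circ\bar d=0$ (and all other commutativities) from density of the open parts. The paper treats filteredness and functoriality with the same one-line remark (``the same method also produces\dots''), so your slightly more explicit write-up of those two points is consistent with, and no different in substance from, the intended argument.
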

\begin{proof}
 We construct $\bar{X}_n$ by induction on $n$.
 For small enough $n$ there is nothing to show.
 Suppose we have constructed good compactifications for $n< N$.
 Let $X_N=\bigcup X_N^j$ be the decomposition into connected components.
 The differential $d \colon X_N\to X_{N-1}$
 is of the form $d=\sum_{i=1}^m a_ig_i$
 for morphisms $g_i \colon X^{j(i)}_{N}\to X_{N-1}$ and $a_i\in\Z$.
 Let $Y_i$ be a good compactification of $X_N$ such that $g_i$ lifts.
 Let $\bar{X}_N$ be a common refinement of $Y_1,\dots,Y_m$.
 By construction $d$ lifts to $\bar{X}_N$.
 We need to check that the composition
 $\bar{X}_N\to \bar{X}_{N-1}\to \bar{X}_{N-2}$ vanishes.
 This is a combinatorial identity on the coefficients of the $g_i$.
 It can be checked on the dense open subsets $X_N\to X_{N-1}\to X_{N-2}$,
 where it holds because $X_\bullet$ is a complex.
 This finishes the proof of existence.

 The same method also produces common refinements of two good compactifications
 and lifts of morphisms of complexes.
\end{proof}

\subsection{Rapid decay homology for complexes}
Recall from \xref{defn:S^rd} the description of rapid decay homology for
$(X,f)\in\SmAff/\A^1$. We put
 \[ S_*^\rd(X,f)=S_*(\Bcirc_{\bar{X}}(X,f))/S_*(\partial \Bcirc_{\bar{X}}(X,f))  \]
where $S_*(-)$ is as in Section~\ref{sec:homology} the complex of $C^1$-simplices and $\Bcirc_{\bar{X}}(X,f)$ the semi-algebraic partial compactification of $X^\an$ of Definition~\ref{defn:our_Bcirc}.  By
\xref{thm:compare} it computes singular homology.

Note that the complex $S_*^\rd(X,f)$
depends on the choice of a good compactification~$\bar{X}$
relative to~$f$. Viewed in the derived category it becomes independent of the choice up to canonical isomorphism. It is essential for our constructions to work with actual complexes.
We want to extend the construction to complexes of varieties.

\begin{defn}\label{defn:rd_general}
Let $(X_\bullet,f_\bullet)$ be in $C_+(\SmAff/\A^1)$. We define
\[ S^\rd_*(X_\bullet,f_\bullet)\]
as the total complex of the double complex $(S_m^\rd(X_n,f_n))_{n,m}$
for some choice of good compactification $(\bar{X}_\bullet,\bar{f}_\bullet)$ of
$(X_\bullet,f_\bullet)$.
\end{defn}

\begin{rem}
 By Corollary~\ref{cor:filtering},
 this is well-defined up to canonical isomorphism in the derived category.
\end{rem}

\subsection{Twisted de Rham cohomology and periods for complexes}

Recall from \cite{fresan-jossen}, see also \xref{ssec:twist},
that the twisted de Rham cohomology of $(X,f)\in\SmAff/\A^1$
is defined as the cohomology of the complex $\Omega^*(X)$ with differential
$\Omega^p(X)\to\Omega^{p+1}(X)$ given by $d\omega-df\wedge \omega$. 
As before we denote the complex of sheaves by $\DR(\Eh^f)$.

\begin{defn}\label{defn:deRham_general}
 Let $(X_\bullet,f_\bullet)\in C_+(\SmAff/\A^1)$. We define
$H_\dR^n(X_\bullet,f_\bullet)$ to be the cohomology of the total complex
$\RdR(X_\bullet,f_\bullet)$ of the double complex   $\DR(\Eh^{f_\bullet})(X_\bullet)$.
\end{defn}

\begin{lemma}Let $(X_\bullet,f_\bullet)\in C_+(\SmAff/\A^1)$. Then the period
map of \xref{defn:period_pairing_smooth} extends to a pairing of complexes
\[ \RdR(X_\bullet,f_\bullet)\times S_*^\rd(X_\bullet,f_\bullet)\to \C, \]
i.e., a morphism of complexes
\[ \RdR(X_\bullet,f_\bullet)\to\Hom(S_*^\rd(X_\bullet,f_\bullet),\C).\]
\end{lemma}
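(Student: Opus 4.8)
The plan is to build the pairing degreewise from the smooth affine case and then check compatibility with both constituents of each total differential. Fix a good compactification $(\bar X_\bullet,\bar f_\bullet)$ of $(X_\bullet,f_\bullet)$; this exists by \cref{cor:filtering}, and with respect to it $S^\rd_*(X_\bullet,f_\bullet)=\tot\bigl(S^\rd_m(X_n,f_n)\bigr)_{n,m}$ while $\RdR(X_\bullet,f_\bullet)=\tot\bigl(\Omega^q(X_p,\Eh^{f_p})\bigr)_{p,q}$. Define the pairing on a bihomogeneous piece $\Omega^q(X_p,\Eh^{f_p})\times S^\rd_m(X_n,f_n)$ to be $0$ unless $p=n$, and for $p=n$ to be the pairing $(\omega,\sigma)\mapsto\int_\sigma\e^{-f_n}\omega^\an$ of \cref{defn:period_pairing_smooth} (which itself vanishes unless $q=m$), extended $\Z$-linearly over the connected components of each $X_n$. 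Each integral converges by \cref{rem:easy_convergence}: a $C^1$-simplex $\sigma$ in $S^\rd_m(X_n,f_n)$ has image in $\Bcirc(X_n,f_n)$, on which $\e^{-f_n}\omega^\an$ extends to a form with continuous coefficients (smooth and vanishing to infinite order along $\partial\Bcirc(X_n,f_n)$), and the integral is $0$ when $\sigma$ lands in $\partial\Bcirc(X_n,f_n)$, so the pairing descends to the quotient defining $S^\rd_*$.

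To see that the induced map $\RdR(X_\bullet,f_\bullet)\to\Hom(S^\rd_*(X_\bullet,f_\bullet),\C)$ is a morphism of complexes it suffices to match the two constituents of each total differential. The vertical compatibility---adjointness of the twisted de Rham differential $d_{f_n}$ on $\Omega^*(X_n)$ with the topological boundary $\partial$ on $S^\rd_*(X_n,f_n)$---is precisely \cref{lem:period_map_smooth}, which rests on Stokes's theorem \cref{thm:stokes} on the $C^1$-manifold with corners $\Bcirc(X_n,f_n)$ together with the identity $d\bigl(\e^{-f_n}\omega^\an\bigr)=\e^{-f_n}\bigl(d\omega-df_n\wedge\omega\bigr)^\an=\e^{-f_n}(d_{f_n}\omega)^\an$. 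The horizontal compatibility concerns a structure morphism $g\colon X_n\to X_{n-1}$ with $f_{n-1}\circ g=f_n$ and reduces to the naturality of integration under pullback: for a $C^1$-simplex $\sigma$ on $\Bcirc(X_n,f_n)$ and $\omega\in\Omega^*(X_{n-1})$ one has $\int_{\tilde g_*\sigma}\e^{-f_{n-1}}\omega^\an=\int_\sigma\tilde g^*\bigl(\e^{-f_{n-1}}\omega^\an\bigr)=\int_\sigma\e^{-f_n}(g^*\omega)^\an$, since $g^*$ is a homomorphism of differential graded algebras and hence $\tilde g^*(\e^{-f_{n-1}}\omega^\an)=\e^{-f_{n-1}\circ g}(g^*\omega)^\an$; the same computation with $d$ inserted handles the mixed squares. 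The Koszul signs occurring in the two total differentials cancel in the usual way, so the vertical--vertical, vertical--horizontal, horizontal--vertical and horizontal--horizontal squares all commute.

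The step that needs genuine care---and the expected main obstacle---is showing that the pushforward $\tilde g_*\colon S^\rd_*(X_n,f_n)\to S^\rd_*(X_{n-1},f_{n-1})$ is well defined. Functoriality of the oriented real blow-up (\cref{lem:blow-up_functorial}), applied to the extension $\bar g\colon\bar X_n\to\bar X_{n-1}$ supplied by the good compactification of the complex, gives a $C^\infty$-map $\tilde g\colon B_{\bar X_n}(X_n)\to B_{\bar X_{n-1}}(X_{n-1})$ with $\tilde f_{n-1}\circ\tilde g=\tilde f_n$, and post-composing a $C^1$-simplex with $\tilde g$ is again a $C^1$-simplex. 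One must still verify that $\tilde g$ carries $\Bcirc(X_n,f_n)$ into $\Bcirc(X_{n-1},f_{n-1})$ and $\partial\Bcirc(X_n,f_n)$ into $\partial\Bcirc(X_{n-1},f_{n-1})$: the identity $\tilde f_{n-1}\circ\tilde g=\tilde f_n$ takes care of the half-disc at infinity, but compatibility with the part $\pi^{-1}(D_0^\an)$ removed in \cref{defn:our_Bcirc} has to be checked in local coordinates, possibly after refining the good compactification of $X_\bullet$ so that $\bar g$ respects the horizontal/vertical decompositions of the boundary divisors. Once this strict compatibility of the blow-ups with their subspaces $\Bcirc$ is in place, $\tilde g_*$ is a chain map of relative $C^1$-chain complexes, the argument of the previous paragraph applies, and the resulting pairing of complexes is, by \cref{cor:filtering}, independent of the chosen good compactification up to canonical isomorphism.
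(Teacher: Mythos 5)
Your proof is correct and follows essentially the same route as the paper, whose entire argument for this lemma is ``apply \cref{lem:period_map_smooth} to each $X_n$, then take total complexes.'' The horizontal compatibilities, the sign bookkeeping, and the well-definedness of the pushforward $\tilde g_*$ on the relative $C^1$-chain complexes that you spell out are all left implicit in the paper --- the last point is in fact already presupposed by \cref{defn:rd_general}, which needs $(S^\rd_m(X_n,f_n))_{n,m}$ to be a double complex in the first place --- so your additional care fills in detail rather than changing the method.
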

\begin{proof}
We apply \xref{lem:period_map_smooth} to each $X_n$, then take total complexes.
\end{proof}

\begin{defn}
 \label{defn:smaff}
Let $(X_\bullet,f_\bullet)\in C_+(\SmAff/\A^1)$, $n\in\Na$.
The \emph{period pairing} for $(X_\bullet,f_\bullet,n)$ is the induced map
\[ H^n_\dR(X_\bullet,f_\bullet)\times H^\rd_n(X_\bullet,f_\bullet)\to \C.\]
The elements in the image of this pairing are
called the \emph{exponential periods} of $(X_\bullet,f_\bullet,n)$.
We denote the set of these numbers for varying $(X_\bullet,f_\bullet,n)$
by $\Psmaff(k)$.
\end{defn}
\begin{rem}
Fres\'an--Jossen interpret these periods
as periods for a suitable category of effective exponential motives.
We consider them in \cref{sec:concl}.
As usual the category of all exponential motives is defined as the localisation of the category of effective exponential motives with respect to the Tate object. On the level of periods this amounts to inverting $\pi$. We do not consider the non-effective case in our paper.
\end{rem}

\subsection{Rapid decay homology via hypercovers}
Let $X$ be a variety over $k$ and $f\in\Oh(X)$ a regular function.
We describe $H_n^\rd(X,f)$ via $C_+(\SmAff/\A^1)$. 

A simplicial or bisimplicial variety $X_\bullet\to X$ is called
a \emph{hypercover} of~$X$
if it is a hypercover for the h-topology.
We do not go into details about this topology, which is introduced and studied in \cite{voesing}. For our purposes it suffices to remark that  hypercovers are universal homological isomorphisms: 
 $H_n(X^\an_\bullet,\Z) \to H_n(X^\an,\Z)$ is an isomorphism and the same is true after passing to any base change $T\to X^\an$. The only examples that we are going to need are open and closed covers, \cref{ssec:hypercover}.
We say that a hypercover is smooth, proper and/or affine, respectively,
if all $X_n$ are smooth, proper and/or affine.
By resolution of singularities, every variety has an h-cover by a smooth affine variety. It is obtained as an open affine cover of a desingularisation.
By applying the procedure of \cite[Section~6.2.5]{hodge3}, this implies that
every hypercover can be refined by a smooth affine hypercover.

\begin{lemma}\label{lem:rd_absolute}
	Let $X$ be a variety over $k$, and $f\in\Oh(X)$ a regular function. 
Let $X_\bullet\to X$ be a smooth affine hypercover and
$C(X)$ the total complex of $X_\bullet$ in $C_+(\SmAff/\Z)$. 
Then there is a natural isomorphism
\[ H_n^\rd(X,f)\isom H_n^\rd(C(X)).\]
\end{lemma}
\begin{proof}
	Let $r\in\R$. We put $T_r(X)=f^{-1}(S_r)$ with $S_r=\{z\in\C\mid \Re(z)\geq r\}$. By Definition~\ref{defn:rd_original} and the comment below it, rapid decay homology of $X$ is computed by
\[ S_*(X^\an)/S_*(T_r(X))\]
for $r$ big enough. We pass to the projective limit with respect to $r$.
Note that the projective limit is exact in this case because the homology groups are finite dimensional. It preserves quasi-isomorphisms, in particular the limit complex 
\[ \projlim_r S_*(X^\an)/S_*(T_r(X))\]
still computes rapid decay homology. Our task is to link it to
$S^\rd_*(X_\bullet,f_\bullet)$ via a chain of quasi-isomorphisms.

Let again $r\in\R$. We put
$T_r(X_n) = f_n^{-1}(S_r) \subset X_n^\an$
where $f_n \colon X_n\to X\to \A^1$ is the structure map of $X_n$.
The hypercover $X_\bullet\to X$ is a universal homological isomorphism, hence
the base change $T_r(X_\bullet)\to T_r(X)$ is also a universal homological isomorphism. This implies that $S_*(X^\an)/T_r(X)$ is quasi-isomorphic to
(the total complex of) $S_*(X^\an_\bullet)/S_*(T_r(X_\bullet))$. By passing to the limit, we deduce that
\[ \projlim_r S_*(X^\an)/S_*(T_r(X))\leftarrow \projlim_r S_*(X^\an_\bullet)/S_*(T_r(X_\bullet))\]
is a quasi-isomorphism.

By \cite[Proposition 3.5.2]{fresan-jossen} (see also \xref{prop:fj_rd})
and the fact that $S_*(-)$ computes singular homology (see \xref{thm:compare}),
we have for each~$n$ and sufficiently large~$r$, a quasi-isomorphism
\[ S_*(X^\an_n)/S_*(T_r(X_n))\to
 S_*(B_{\bar{X}_n}(X_n))/S_*(T_r(X_n))\leftarrow S_*^\rd(X_n,f_n).\]
We pass to the projective limit and then take total complexes. (Not the other way around; $r$ ``big enough'' may depend on $n$.) This induces  quasi-isomorphisms
\[
	\begin{tikzpicture}
		\node at (0,0) {$\projlim_r S_*(B_{\bar{X}_\bullet}(X_\bullet))/S_*(T_r(X_\bullet))$};
		\node at (-4,-1.5) {$\projlim_r S_*(X_\bullet^\an)/S_*(T_r(X_\bullet))$};
		\node at (4,-1.5) {$S^\rd_*(X_\bullet,f_\bullet)$};
		\draw[->] (-3,-1.0) -- (-1,-.5);
		\draw[->] (3,-1.0) -- (1,-.5);
	\end{tikzpicture}
\]
We have completed our task.
\end{proof}

\begin{rem}\label{rem:chain}
\begin{enumerate}
\item
The proof accomplishes more than was claimed: We find functorial chains of quasi-isomorphisms computing rapid decay homology rather than simply an isomorphism on homology.
\item The proof was written for the simplicial case, but the same argument works for bisimplical hypercovers as used for $Y$ in Section~\ref{ssec:hypercover}.
\end{enumerate}
\end{rem}

\subsection{The relative case}\label{sec:relative}
Let $X$ be a variety over $k$, $Y\subset X$ a closed subvariety and $f\in\Oh(X)$ a regular function.
We want to define exponential periods for $H_n^\rd(X,Y,f)$ by reduction to
the case $C_+(\SmAff/\A^1)$.

\begin{lemma}\label{lem:rd_relative}
	Let $X$ be a variety over $k$, $f\in\Oh(X)$ a regular function and $Y\subset X$ a closed subvariety.
Let $X_\bullet\to X$ be a smooth affine hypercover, $Y_\bullet\to Y$ a smooth affine hypercover with a morphism $Y_\bullet\to X_\bullet$ of simplicial schemes compatible with the inclusion. Let
\[ C(X,Y)=\cone( Y_\bullet\to X_\bullet)\]
be the cone of the associated map of total complexes in $C_+(\SmAff/\Z)$. Then
there is a natural isomorphism
\[ H_n^\rd(X,Y,f)\isom H_n^\rd(C(X,Y)).\]
\end{lemma}
\begin{proof}
We apply Lemma~\ref{lem:rd_absolute} to $X$ and $Y$. As pointed out in
Remark~\ref{rem:chain} we get a compatible chain of quasi-isomorphisms, hence
also a quasi-isomorphism for the cone.

The latter computes
relative rapid decay homology, which sits in the sequence
\[ \to H_n^\rd(Y,f)\to H_n^\rd(X,f)\to H_n^\rd(X,Y,f)\to H_{n-1}^\rd(Y,f)\to \qedhere \]
\end{proof}

Given this lemma, we are led to define:

\begin{defn}[{\cite[Definition~7.1.6]{fresan-jossen}}]\label{defn:relative_deRham} Let $X$ be a variety over $k$, $f\in\Oh(X)$, $Y\subset X$ a closed
subvariety. Choose $C(X,Y)\in C_+(\SmAff/\A^1)$ 
as in Lemma~\ref{lem:rd_relative}.
\begin{enumerate}
\item
We define $H^n_\dR(X,Y,f)$ as the cohomology of
\[ \RdR(X_\bullet,Y_\bullet,f)=\RdR(C(X,Y)),\]
see Definition~\ref{defn:deRham_general}.
\item We define the period pairing for $(X,Y,f,n)$ as the period pairing
\[ H_n^\rd(X,Y,f)\times H^n_\dR(X,Y,f)\to\C\]
 for $C(X,Y)$, see Definition~\ref{defn:smaff}.
\end{enumerate}
\end{defn}
This generalises Definition~\ref{defn:dR-snc}, where $X$ was assumed smooth and $Y$ a simple normal crossing divisor, to arbitrary $X$ and $Y$.
Now all details have been filled in for the definition of
cohomological exponential periods, see Definition~\ref{defn:coh_period}. 

For later use we record a tautological property of our construction.
\begin{rem}\label{rem:is_smaff}
Every exponential period is the period of a complex in $C_+(\SmAff)$:
\[ \Pcoh\subset\Psmaff.\]
\end{rem}

\section{Cohomological exponential periods are naive exponential periods}
\label{sec:coh_is_naive}

The aim of this section is to prove the key comparison in \cref{prop:coh_implies_naive}.
See \xref{curve-log-sub-nv} for the corresponding statement
in the special case where $X$ is a curve.
In that case, the main ideas of the proof are present,
but several technicalities are avoided.

\begin{proposition}\label{prop:coh_implies_naive} Let $k\subset \C$ be as in \Cref{ssec:field2}.
Let $(X,Y)$ be a log pair, i.e.,
$X$ a smooth variety, $Y\subset X$ a simple normal crossings divisor.
Let $f\in\Oh(X)$, and let $\alpha$ be a cohomological exponential period of
$(X,Y,f,n)$ (see \Cref{defn:coh_period}).
Then $\alpha$ is a naive exponential period:
\[ \Plog(k)\subset\Pnaive(k).\]
\end{proposition}

\begin{rem}
 This justifies that our fairly restrictive definition
 of a naive exponential period was a reasonable choice.
\end{rem}

The proof is technical and will take the rest of the section.

\subsection{Notation}
Throughout, let $k$ be as in \Cref{ssec:field2}, $k_0=k\cap \R$.

If $X$ is a smooth variety, $f\in\Oh(X)$ a regular function, and
$\bar{X}$ a good compactification relative to $f$,
then we put
$X_\infty = \bar{X} \ohne X$. 
We decompose $X_\infty=D_0\cup D_\infty$
where $D_0$ consists of the horizontal components
and $D_\infty$ of the vertical components mapping to $\infty$ in $\Pe^1$.

As before,
we denote by $\tilde{f} \colon B_{\bar{X}}(X) \to \Ptilde$ the induced map
on the oriented real blow-up of $\bar{X}^\an$ in $X_\infty^\an$.

Recall from \Cref{defn:our_Bcirc} that
\begin{align*}
 \Bcirc_{\bar{X}}(X,f) &=
 B_{\bar X}(X) \ohne
 \{ x \in \partial(B_{\bar{X}}(X)) \mid
  \pi(x)\in D_0^\an
  \text{ or } \Re(\tilde{f}(x)) \le 0 \},\\
\partial \Bcirc_{\bar{X}}(X,f) &=\Bcirc_{\bar{X}}(X,f)\ssm X^\an.
\end{align*}
We introduce a variant.
\begin{defn}
 \label{defn:sharp}
We put
\begin{align*}
 \Bsharp_{\bar{X}}(X,f) &=
 B_{\bar X}(X) \ohne
 \{ x \in \partial(B_{\bar{X}}(X)) \mid
  \pi(x) \in  D_0^\an \text{ or } \tilde{f}(x) \neq 1\infty \},\\
 \partial \Bsharp_{\bar{X}}(X,f)&=\Bsharp_{\bar{X}}(X)\ssm X^\an.
\end{align*}
\end{defn}

The spaces $B_{\bar{X}}(X)$ and $\Bcirc_{\bar{X}}(X,f)$ are  $k_0$-semi-algebraic manifolds with
corners by \xref{prop:is_sa} and $\Bsharp_{\bar{X}}(X,f)$ is a $k_0$-semi-algebraic subset.

\subsection{A comparison of homology}
The first step in the argument is an alternative description
of rapid decay homology using $\Bsharp(X,f)$ rather than $\Bcirc(X,f)$. 
 Let us motivate why this is needed. We are going to represent homology classes
by $k_0$-semi-algebraic sets $G$ such that
 $\bar{G} \subset \Bcirc_{\bar{X}}(X,f)$.
 Hence $\overline{f(G)}\subset \Bcirc$ as in the definition of a generalised naive exponential period. The proposition will allow us to even choose $\bar{G}\subset \Bsharp_{\bar{X}}(X,f)$. Hence
$\overline{f(G)}\subset\Bsharp$ and the data defines a naive exponential period.
Indeed, the closure of the strip
\[
 S_{r,s}=\{z\in\C \mid \Re(z)>r, |\Im(z)|<s\}
\]
inside $\tilde\Pe^1$ is contained in~$\Bsharp$ (see Section~\ref{sec:not_B}).
Actually, we can only apply this argument in the case of smooth  $X$, but see \cref{ssec:hypercover} for the reduction.

\begin{proposition}\label{prop:comp_homol}
Let $V$ be a smooth variety, $f\in\Oh(V)$,
$\bar{V}$ a good compactification relative $f$, $n\geq 0$.
Then the natural map
\[ H_n(\Bsharp_{\bar{V}}(V,f),\partial \Bsharp_{\bar{V}}(V,f);\Z)\to
 H_n(\Bcirc_{\bar{V}}(V,f),\partial \Bcirc_{\bar{V}}(V,f);\Z)\]
is an isomorphism. 
\end{proposition}

\begin{proof}
 We are going to show the equivalent statement on cohomology.
The spaces are paracompact Haussdorff and locally contractible,
hence we may compute singular cohomology as sheaf cohomology.
We abbreviate
$\Bcirc(V)=\Bcirc_{\bar{V}}(V,f)$ and $\Bsharp(V)=\Bsharp_{\bar{V}}(V,f)$.
Let $j^\circ \colon V^\an\to \Bcirc(V)$
and $j^\sharp \colon V^\an\to \Bsharp(V)$ be the open immersions.
Our relative cohomology groups are computed by applying
$R\Gamma$ to $j^\circ_!\Z$ and $j^\sharp_!\Z$, respectively.

We compare their higher direct images on a subset of
${\bar{V}}^\an$. As in the definition of $\Bcirc(V)$ let  $\bar{V}\ohne V=D_0\cup D_\infty$ such
	that $D_\infty\subset \bar{f}^{-1}(\infty)$ and $\bar{f}$ is rational on $D_0$,
	i.e., it is not the constant function $\infty$ on any of the components of $D_0$.
Furthermore let
$p^\circ:\Bcirc(V)\to \bar{V}^\an\ohne D_0^\an$ and $p^\sharp:\Bsharp(V)\to \bar{V}^\an\ohne D_0^\an$ be the projections. We consider the natural map
\[ Rp^\circ_*j^\circ_!\Z\to Rp^\sharp_* j^\sharp_!\Z\]
and claim that it is a quasi-isomorphism.

We compute its stalks. For $x\in V^\an$, both sides are simply equal to $\Z$ concentrated in degree $0$.
Let $x\in D_\infty^\an\ssm D_0^\an$.
The stalk of $R^ip^\sharp_*j^\sharp_!\Z$ at $x$ is given by the limit of
$H^i( p^{\sharp-1}(U),p^{\sharp-1}(U)\cap \partial \Bsharp(V);\Z)$ for $U$ running through the system
of neighbourhoods of $x$. The analogous formula hold for $p^\circ$. Hence
it suffices to show that
\[ H^i( p^{\sharp-1}(U),p^{\sharp-1}(U)\cap \partial \Bsharp(V);\Z)\to  H^i( p^{\circ-1}(U),p^{\circ-1}(U)\cap \partial \Bcirc(V);\Z)\]
is an isomorphism for all $U$ sufficiently small.
This is a local question on~$\bar{V}$.
We choose local coordinates $z_1,\dots,z_n$ on $\bar{V}$ centered at $x$
such that $\bar{f}(z_1,\dots,z_n)=z_1^{-d_1}\cdots z_m^{-d_m}$,
where $m$ is the number of components of~$D_0$ passing through~$x$.
Let $U_\epsilon$ be the polydisc of radius~$\epsilon$ around the origin.
On $U_\epsilon$ the real oriented blow-up is given by
\[ \{(z_1,\dots,z_n,w_1,\dots,w_m)\in B_\epsilon(0)^n\times (S^1)^m \mid z_iw^{-1}_i\in\R_{\geq 0}\}.\]
We make a change of coordinates by writing $z_i=r_iw_i$ with $r_i\in [0,\epsilon)$.
Hence over $U_\epsilon$ the real oriented blow-up is given by
\[ (r_1,\dots,r_m,w_1,\dots,w_m,z_{m+1},\dots,z_n)\in [0,\epsilon)^m\times (S^1)^m\times B_\epsilon(0)^{n-m}.\]
In it $\partial\Bsharp(V)$ is the subset of points with $r_1\cdots r_m=0$, $w_1^{d_1}\cdots w_m^{d_m}=1$
and $\partial \Bcirc(V)$ is the subset  of points with $r_1\cdots r_m=0$ and $\Re(w_1^{d_1}\cdots w_m^{d_m})>0$.

We apply the long exact sequence for relative cohomology.
Hence it suffices to compare the cohomology of $p^{\sharp-1}(U_\epsilon)$ and $p^{\circ -1}(U_\epsilon)$ and
their boundaries separately. Both $p^{\sharp-1}(U_\epsilon)$ and $p^{\circ -1}(U_\epsilon)$ are homotopy equivalent to their intersection with $V^\an$, hence they have the same cohomology.

We now concentrate on the boundary. In both cases they are fibre bundles
over 
\begin{multline*}
 \{ (r_1,\dots,r_m,w_1,\dots,w_{m-1},z_{m+1},\dots,z_{n})
\in \\
[0,\epsilon)^m\times (S^1)^{m-1}\times B_\epsilon(0)^{n-m} \mid r_1\cdots r_m=0\}.
\end{multline*}
In the case of $\partial \Bsharp(V)$, the fibre consists of $d_m$ points, the solutions of $w_m^{d_m}=(w_1^{d_1}\dots w_{m-1}^{d_{m-1}})^{-1}$. In the case
of $\partial \Bcirc(V)$, the fibre consist of
$d_m$ open circle arcs centered around these points. In particular, the
inclusion $p^{\sharp-1}(U_\epsilon)\cap\partial \Bsharp\to p^{\circ-1}(U_\epsilon)\cap \partial \Bcirc$ is fibrewise a homotopy equivalence, hence it induces an isomorphism on cohomology.
\end{proof}

The goal of this whole section is to express $\alpha$ as a naive exponential period.
In order to find the set~$G$ as in the definition of a naive exponential period,
we are going to choose a $k_0$-semi-algebraic triangulation of $\Bsharp(V,f)$
that is globally of class~$C^1$ (see \xref{defn:triangulation}),
and with $V$ as in the setting of the preceding proposition.
Our next goal is therefore to construct
a suitable smooth~$V$ from the log-pair $(X,Y)$.

\subsection{Hypercovers} \label{ssec:hypercover}
By definition of cohomological exponential periods, we need to fix a smooth affine hypercover of our log-pair $(X,Y)$.
We do this explicitly.

Let $p \colon S \to T$ be a morphism of schemes.
Its \v{C}ech-nerve is the simplicial scheme $S_\bullet \to T$ with
\[ S_n = S \times_T \dots \times_T S \quad \text{($n+1$ factors)} \]
and the usual face and degeneracy maps.
It is a hypercover if $p$ is a cover for the $h$-topology.
\begin{ex}\label{ex:simplex}
If $T=\Spec(k)$ and $S=\{s_1,\dots,s_n\}$, then the \v{C}ech-nerve is a simplicial set, in fact the simplicial $n$-simplex. It is contractible and has trivial homology.
\end{ex}

We need two easy cases.

Let $X$ be a smooth variety, 
$U^1,\dots,U^M$ an affine open cover.
We put
\[ U_0 = U^1 \amalg \dots \amalg U^M \to U. \]
Let $U_\bullet$ be its \v{C}ech-nerve.
Explicitly, we have
\[ U_n = \coprod_{J \in \{1,\dots,M\}^{n+1}} U^J \]
with
\[ U^{(j_0,\dots,j_n)} = \bigcap_{i=0}^n U^{j_i}. \]
Singular homology satisfies descent for open covers (the Mayer--Vietoris property),
hence $U_\bullet \to X$ is a smooth affine hypercover,
the \v{C}ech-complex defined by the open cover.

For the second special case, let $X$ be a smooth variety,
$Y \subset X$ a simple normal crossings divisor
with irreducible components $Y^1,\dots,Y^N$.
By assumption they are smooth.
We put
\[ Y_0 = Y^1 \amalg \dots \amalg Y^N \to Y. \]
Let $Y_\bullet$ be its \v{C}ech-nerve.
Explicitly, we have
\[ Y_n = \coprod_{J\in\{1,\dots,N\}^{n+1}} Y^J \]
with
\[ Y^{(j_0,\dots,j_n)} = \bigcap_{i=0}^n Y^{j_i}. \]
Singular homology satisfies proper base change, 
hence the stalks of the higher direct images of the constant sheaf $\Q$
under $Y_\bullet^\an\to Y^\an$ are of the shape described in Example~\ref{ex:simplex}. In particular their cohomology is concentrated in degree $0$ and equal to $\Q$ there.
This makes
$Y_\bullet \to Y$ a smooth hypercover,
the \v{C}ech-complex defined by the closed cover.

We can combine the two constructions.
The bisimplicial scheme
\[ Y_\bullet \cap U_\bullet \to Y \]
is a smooth affine hypercover. 

Let $f\in\Oh(X)$ be a regular function. We denote by $f^J$ its restriction to $Y^J$ and by $f_n:Y_n\to\A^1$ the map defined by the $f^J$.
In the notation of Lemma~\ref{lem:rd_relative}, we have the equality
\[ C(X,Y)=\cone(Y_\bullet\cap U_\bullet\to U_\bullet)\]
in the category of complexes $C_+(\SmAff/\A^1)$ (and not in the category of simplicial varieties!).
We write $Y_{-1}=X$; then all terms of $C(X,Y)$ are direct sums
of objects of the form $Y_n\cap U_m$ for $n\geq -1$ and $m\geq 0$.

The definition of the period pairing also requires the choice of a good compactification of $C(X,Y)$ in the sense of Definition~\ref{defn:good_complex}.
 We proceed as follows.
Let $\bar{X}$ be a good compactification of $(X,Y)$ relative to $f$. This implies that for each component of $Y$ the closure in $\bar{X}$ is again a good compactification relative to the restriction of $f$. 
We choose an open cover $U^1,\dots,U^M$ by affine subvarieties of~$X$
such that $\bar{Y}+\sum_{i=1}^MU^i_\infty$ is still a simple normal crossings divisor.
This can be achieved by choosing $U^i$ as the intersection of $X$ with the complement of a generic hyperplane in $\bar{X}$. It is affine as the preimage of an affine variety (the complement) under an affine map (the inclusion $X\subset\bar{X}$).
Note that $\bar{X}$ is a good compactification of each of the  $U^J$.
Hence the \v{C}ech-nerve of the map
\[ \coprod_{i=1}^M\bar{X}\to\bar{X}\]
is a good compactification of $U_\bullet$. We denote it $\bar{U}_\bullet$.
For each $I\subset \{1,\dots,N\}$ let $\bar{Y}^I$ be the closure of
$Y^I$ in $\bar{X}$. By the transversality assumption it is smooth and a good compactification. Hence
\[ \bar{Y}_n=\coprod_{J\in\{1,\dots,N\}^{n+1}} \bar{Y}^J \]
defines a good compactification of $Y_n$ and of $Y_n\cap U_m$ for all $m$. The complex
\[ \cone( \bar{U}_\bullet\cap \bar{Y}_\bullet\to \bar{U}_\bullet)\in C_+(\SmProj/\A^1)\]
is a good compactification of $C(X,Y)$.

\begin{cor}
 Let $(X,Y)$ be a log pair, $f \colon X\to\A^1$.
 With the notation above
 \[ \RdR(X,Y,f)=\RdR(C(X,Y))=\Omega^*(C(X,Y))\]
 and rapid decay homology of $(X,Y,f)$ is computed by
 \[ S^\rd_*(X,Y,f) :=
  \cone(S^\rd_*(U_\bullet\cap Y_\bullet,f_\bullet)\to
  S^\rd(U_\bullet,f_\bullet))\]
with respect to the good compactification $\cone(\bar{U}_\bullet\cap\bar{Y}_\bullet\to\bar{U}_\bullet)$.
\end{cor}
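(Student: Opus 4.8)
The plan is to assemble the statement from results already in place; there is essentially no new content, only bookkeeping that records which choices were made in \cref{ssec:hypercover}. First I would observe that the simplicial scheme $U_\bullet\to X$ and the bisimplicial scheme $Y_\bullet\cap U_\bullet\to Y$ constructed there are smooth affine hypercovers: for $U_\bullet$ this is the Mayer--Vietoris property (descent of singular homology for open covers), and for $Y_\bullet$ it is proper base change applied to the closed cover of $Y$ by its smooth irreducible components; forming the fibre product $Y_\bullet\cap U_\bullet$ preserves both smoothness and affineness. Hence $C(X,Y)=\cone(Y_\bullet\cap U_\bullet\to U_\bullet)$ is an object of $C_+(\SmAff/\A^1)$ of exactly the type required in \cref{lem:rd_relative} and \cref{defn:relative_deRham}, so that the period pairing for $(X,Y,f,n)$ is, by definition, the one attached to $C(X,Y)$.

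Second, for the de Rham side I would simply unwind \cref{defn:relative_deRham}(1): $H^n_\dR(X,Y,f)$ is by definition the cohomology of $\RdR(C(X,Y))$, and $\RdR$ of a complex in $C_+(\SmAff/\A^1)$ is the total complex of the double complex $\Omega^*(-,\Eh^{f})$ applied termwise. Since every term $Y_n\cap U_m$ of $C(X,Y)$ is a smooth \emph{affine} variety, its twisted de Rham cohomology is computed by the global-sections complex $\Omega^*(-,\Eh^{f})$ (no hypercohomology is needed), so $\RdR(C(X,Y))=\Omega^*(C(X,Y))$ essentially by definition. Independence of this complex, up to canonical isomorphism, from the choices of hypercover and of good compactification is \cref{cor:filtering} together with the remark following \cref{defn:rd_general}; this is what licenses the explicit choices of \cref{ssec:hypercover}.

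Third, for rapid decay homology I would combine \cref{lem:rd_relative}, which gives $H_n^\rd(X,Y)\isom H_n^\rd(C(X,Y))$, with \cref{defn:rd_general}, according to which $S^\rd_*(C(X,Y))$ is the total complex of the double complex $(S^\rd_m(\text{term},f))$ formed from a good compactification of $C(X,Y)$. Taking the chosen good compactification $\cone(\bar U_\bullet\cap\bar Y_\bullet\to\bar U_\bullet)$ and using that the cone of a morphism commutes with passage to total complexes, this total complex is exactly $\cone\bigl(S^\rd_*(\bar U_\bullet\cap\bar Y_\bullet,f_\bullet)\to S^\rd_*(\bar U_\bullet,f_\bullet)\bigr)$, which is the asserted $S^\rd_*(X,Y,f)$.

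The one point genuinely requiring care --- and the only place the geometry of \cref{ssec:hypercover} is really used --- is the verification that $\cone(\bar U_\bullet\cap\bar Y_\bullet\to\bar U_\bullet)$ is a \emph{good} compactification of $C(X,Y)$ relative to $f$ in the sense demanded by \cref{defn:rd_general}: one must check that $\bar X$ and the closures $\bar Y^J$ are smooth and projective, that the boundary divisors $\bar Y+\sum_i U^i_\infty$ and their intersections with the $\bar Y^J$ are simple normal crossings, and that $\bar f\colon\bar X\to\Pe^1$ restricts to an extension of $f$ on each term (so that the decomposition $X_\infty=D_0\cup D_\infty$ and the spaces $\Bcirc$, $\Bsharp$ of \cref{defn:our_Bcirc} make sense compatibly along all face maps). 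This is where the choice of the $U^i$ as complements of \emph{generic} hyperplanes enters, guaranteeing the transversality; granting it, the face and degeneracy maps extend to the compactifications and the construction is functorial by \cref{cor:filtering}. Everything else is formal manipulation of double complexes, total complexes and cones.
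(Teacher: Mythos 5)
Your proposal is correct and follows the same route as the paper: the paper's own proof is a two-line citation of \cref{defn:relative_deRham} for the de Rham statement and \cref{lem:rd_relative} (applied degreewise) for rapid decay homology, which is exactly the skeleton you flesh out. The transversality/goodness verification you rightly flag as the only substantive point is carried out by the paper in the surrounding text of \cref{ssec:hypercover} rather than inside the proof of the corollary itself.
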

\begin{proof}
 The statement for de Rham cohomology is simply \Cref{defn:relative_deRham}.
 The claim for rapid decay homology is Lemma~\ref{lem:rd_relative}. 
 Note that $U_\bullet\cap Y_\bullet$ is a bisimplical smooth affine hypercover rather than a simplicial one, but the results are still valid by Remark~\ref{rem:chain}.
\end{proof}

Our next aim is to get a clearer understanding of $\Bcirc(-,f)$ and $\Bsharp(-,f)$ applied to $C(X,Y)$ and its good compactification $C(\bar{X},\bar{Y})$.

\subsection{Real oriented blow-up and closed \v{C}ech complexes}

Let $X$ be  a smooth variety, $Y \subset X$ a simple normal crossings divisor,
$f \in \Oh(X)$.
Let $\bar{X}$ be a good compactification of $(X,Y)$ relative to $f$, i.e., such that $Y + X_\infty$ is a simple normal crossing divisor and $f$ extends to $\bar{X}$. Let $\bar{Y}$ be the closure of $Y$ in $\bar{X}$.
Denote by $B_{\bar X}(Y)$, $B_{\bar X}^\circ(Y,f)$ and $\Bsharp_{\bar{X}}(Y,f)$ the closure of $Y^\an$
in $B_{\bar X}(X)$, $B_{\bar X}^\circ(X,f)$ and $\Bsharp_{\bar{X}}(X,f)$, respectively.
As in the last section let $Y_\bullet\to Y$ and $\bar{Y}_\bullet\to \bar{Y}$ be the \v{C}ech-complexes
for the closed covers of $Y$ and $\bar{Y}$ by their irreducible components.

Applying our oriented blow-ups,
we get simplicial $k_0$-semi-algebraic manifolds with corners $B_{\bar{Y}_\bullet}(Y_\bullet)$
and $ B_{\bar{Y}_m}^\circ(Y_\bullet, f_\bullet)$
and  $k_0$-semi-algebraic subsets $\Bsharp_{\bar{Y}_\bullet}(Y_\bullet)$.
Note that 
\begin{align*}
 B_{\bar{Y}_m}(Y_m,f_m)&=\coprod_{J\in\{1,\dots,N\}^{n+1}} B_{\bar{Y}^J}(Y^J,f^J),\\
 \Bcirc_{\bar{Y}_m}(Y_m,f_m)&=\coprod_{J\in\{1,\dots,N\}^{n+1}} \Bcirc_{\bar{Y}^J}(Y^J,f^J),\\
 \Bsharp_{\bar{Y}_m}(Y_m,f_m)&=\coprod_{J\in\{1,\dots,N\}^{n+1}} \Bsharp_{\bar{Y}^J}(Y^J,f^J).
\end{align*}

\begin{proposition}\label{prop:hypercover_B}
 The simplicial $k_0$-semi-algebraic sets
 \[
  B_{\bar{Y}_\bullet}(Y_\bullet)\to B_{\bar{X}}(Y),\quad
  \Bcirc_{\bar{Y}_\bullet}(Y_\bullet,f_\bullet)\to \Bcirc_{\bar{X}}(Y,f),\quad
  \Bsharp_{\bar{Y}_\bullet}(Y_\bullet,f_\bullet)\to \Bsharp_{\bar{X}}(Y,f)\
 \]
 are the \v{C}ech-nerves for the corresponding closed covers
\[ B_{\bar{Y}_0}(Y_0)\to B_{\bar{X}}(Y), \quad
  \Bcirc_{\bar{Y}_0}(Y_0,f_0)\to \Bcirc_{\bar{X}}(Y,f),\quad
  \Bsharp_{\bar{Y}_0}(Y_0,f_0)\to \Bsharp_{\bar{X}}(Y,f).\
 \]
\end{proposition}
 \begin{proof}
Fix $m\geq 0$ and  $J\subset \{1,\dots,N\}^{m+1}$. Let $Z=Y^J$. 
Then $\bar{Z}$ is transverse to $X_\infty$.
In suitable local coordinates $z_1,\dots,z_d$ of $\bar{X}$ centered at a point in $\bar{Z}\ssm Z$, the divisor
$X_\infty$ is defined by $z_1\cdots z_a=0$ and $\bar{Z}$ by $z_{a+1}=\dots=z_b=0$ with $1\leq a<b\leq d$. The real oriented blow-up of $\bar{X}$ in $X_\infty$ has
local coordinates 
\[ (r_1,\dots,r_a,\omega_1,\dots,\omega_a,z_{a+1},\dots,z_{d})\in\R_{\geq 0}^a\times (S^1)^a\times \C^{d-a}.\] 
The real oriented blow-up of $\bar{Z}$ in $Z_\infty$ has local coordinates 
\[ (r_1,\dots,r_a,\omega_1,\dots,\omega_a,0,\dots,0,z_{b+1},\dots,z_{d})\in\R_{\geq 0}^a\times (S^1)^a\times \C^{d-a}.\] 

This gives
 \begin{align*}
  B_{\bar Z}(Z) = \bar Z^\an \times_{\bar X} B_{\bar X}(X),\quad
  B_{\bar Z}^\circ(Z,f) &= \bar Z^\an \times_{\bar X^\an} B_{\bar X}^\circ(X,f),\\
 \quad
 \Bsharp_{\bar{Z}}(Z,f)&=\bar Z^\an \times_{\bar X^\an} \Bsharp_{\bar X}(X,f).
 \end{align*}
In total we have
  \begin{align*}
B_{\bar{Y}_\bullet}(Y_\bullet)&=\bar{Y}_\bullet\times_{\bar{X}}B(X),\\
\Bcirc_{\bar{Y}_\bullet}(Y_\bullet,f_\bullet)&= \bar{Y}_\bullet \times_{\bar{X}}\Bcirc(X,f),\\
\Bsharp_{\bar{Y}_\bullet}(Y_\bullet,f_\bullet)&= \bar{Y}_\bullet \times_{\bar{X}}\Bsharp(X,f).
\end{align*}
This gives the claim on \v{C}ech-nerves.
\end{proof}

\begin{cor}\label{cor:alter}
	Let $X$ be a smooth variety, $f\in\Oh(X)$ a regular function, $Y\subset X$ a simple normal crossings divisor.
	Choose a good compactification $\bar{X}$ of $(X,Y)$ relative to $f$.
	Let $\Bcirc_{\bar{X}}(Y,f)$ and $\Bsharp_{\bar{X}}(Y,f)$ be the closure of $Y^\an$ in $\Bcirc_{\bar{X}}(X,f)$ and $\Bsharp_{\bar{X}}(X,f)$, respectively.
	Then
	\[ H_n^\rd(Y,f)\isom H_n(\Bcirc_{\bar{X}}(Y,f),\partial \Bcirc_{\bar{X}}(Y,f);\Q)
	\isom H_n(\Bsharp_{\bar{X}}(Y,f),\partial\Bsharp_{\bar{X}}(Y,f);\Q)\]
	and 
	\begin{align*}
		H_n^\rd(X,Y,f)&\isom
		H_n(\Bcirc_{\bar{X}}(X,f), \Bcirc_{\bar{X}}(Y,f)\cup \partial \Bcirc_{\bar{X}}(X,f); \Q)\\
		&\isom H_n(\Bsharp_{\bar{X}}(X,f),\Bsharp_{\bar{X}}(Y,f)\cup\partial\Bsharp_{\bar{X}}(X,f); \Q).
	\end{align*}
\end{cor}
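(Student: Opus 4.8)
The plan is to bootstrap everything from the absolute smooth case — which is \cref{prop:fj_rd} for $\Bcirc$, upgraded to $\Bsharp$ by \cref{prop:comp_homol} — using the \v{C}ech-nerve description of \cref{prop:hypercover_B} to handle the (non-smooth) divisor $Y$, and then to assemble the relative statement by five-lemma arguments on long exact sequences. First I would prove the statement for $Y$. By \cref{prop:hypercover_B} the simplicial semi-algebraic set $\Bcirc_{\bar Y_\bullet}(Y_\bullet,f_\bullet)$ with its simplicial boundary is the \v{C}ech-nerve of the finite closed cover $\Bcirc_{\bar Y_0}(Y_0,f_0)\to\Bcirc_{\bar X}(Y,f)$, and likewise with $\Bsharp$ in place of $\Bcirc$. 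Since singular homology satisfies descent for finite closed covers (the property used in \cref{ssec:hypercover}), the relative group $H_n(\Bcirc_{\bar X}(Y,f),\partial\Bcirc_{\bar X}(Y,f);\Q)$ is computed by the total complex of a double complex whose $m$-th row computes the relative homology of $\Bcirc_{\bar Y_m}(Y_m,f_m)=\coprod_J\Bcirc_{\bar Y^J}(Y^J,f)$; since each $Y^J$ is smooth with good compactification $\bar Y^J$, that row computes $H_*^\rd(Y_m,f)$ by \cref{prop:fj_rd} and \cref{thm:compare}. Hence the total complex computes $H_n^\rd(Y,f)$ — this is the case of \cref{lem:rd_relative} for a smooth, not necessarily affine, hypercover with empty subvariety, affineness being irrelevant to that argument. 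The $\Bsharp$-version is word for word the same once one observes, via \cref{prop:comp_homol} applied to each $Y^J$, that the two double complexes are quasi-isomorphic row by row, hence on total complexes.

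For the relative statement I would argue by five lemma twice. The inclusion $\Bsharp_{\bar X}(X,f)\hookrightarrow\Bcirc_{\bar X}(X,f)$ carries $\Bsharp_{\bar X}(Y,f)$ and $\partial\Bsharp_{\bar X}(X,f)$ into $\Bcirc_{\bar X}(Y,f)$ and $\partial\Bcirc_{\bar X}(X,f)$, so it induces a morphism between the long exact sequences of the triples $(\Bcirc_{\bar X}(X,f),\ \Bcirc_{\bar X}(Y,f)\cup\partial\Bcirc_{\bar X}(X,f),\ \partial\Bcirc_{\bar X}(X,f))$ and its $\Bsharp$-analogue. Using that $\Bcirc_{\bar X}(Y,f)\cap X^\an=Y^\an$ one gets $\Bcirc_{\bar X}(Y,f)\cap\partial\Bcirc_{\bar X}(X,f)=\partial\Bcirc_{\bar X}(Y,f)$ (and the same for $\Bsharp$), so excision identifies the ``subspace'' terms with $H_*(\Bcirc_{\bar X}(Y,f),\partial\Bcirc_{\bar X}(Y,f);\Q)$ resp.\ $H_*(\Bsharp_{\bar X}(Y,f),\partial\Bsharp_{\bar X}(Y,f);\Q)$, on which the comparison is an isomorphism by the previous paragraph, while on the ``total-space'' terms it is \cref{prop:comp_homol}; the five lemma gives the $\Bsharp$-versus-$\Bcirc$ relative isomorphism. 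To identify $H_n^\rd(X,Y,f)$ with the $\Bcirc$-relative group, note that the cone triangle $S_*^\rd(\bar U_\bullet\cap\bar Y_\bullet,f)\to S_*^\rd(\bar U_\bullet,f)\to S_*^\rd(X,Y,f)$ of \cref{ssec:hypercover} puts $H_*^\rd(X,Y,f)$ in a long exact sequence with $H_*^\rd(Y,f)\to H_*^\rd(X,f)$, which maps to the triple long exact sequence on the $\Bcirc$-side; the comparison is \cref{prop:fj_rd} on the $X$-term and the first paragraph on the $Y$-term, and a second five lemma finishes.

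The hard part will be the compatibility used in this last step: one has to verify that the chain-level identifications are compatible with the connecting homomorphisms — i.e.\ that rapid decay homology of $(X,Y,f)$, built in \cref{ssec:hypercover} as a cone of total complexes of double complexes over the open and closed \v{C}ech-nerves, matches the relative singular homology of the pair $(\Bcirc_{\bar X}(X,f),\Bcirc_{\bar X}(Y,f)\cup\partial\Bcirc_{\bar X}(X,f))$ in a way that respects the long exact sequences. This ultimately rests on \cref{prop:hypercover_B} and the local-coordinate description of $\OBl$, which ensure that forming the real oriented blow-up commutes with these covers \emph{in the pointed sense} — respecting subspaces and boundaries — and on elementary but fiddly excision identities such as $\Bcirc_{\bar X}(Y,f)\cap\partial\Bcirc_{\bar X}(X,f)=\partial\Bcirc_{\bar X}(Y,f)$.
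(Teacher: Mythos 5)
Your proposal is correct and follows essentially the same route as the paper: descent along the closed \v{C}ech-nerve of the irreducible components (\cref{prop:hypercover_B}), the smooth-component identification from \cref{prop:fj_rd}, and \cref{prop:comp_homol} for the $\Bsharp$-versus-$\Bcirc$ comparison. The paper's proof is a one-line invocation of the proper-hypercover property and leaves the relative case to cones of the resulting quasi-isomorphisms of total complexes, whereas you spell the same content out via excision and the five lemma; the two assemblies are equivalent.
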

\begin{proof}
	Recall that the case of rapid decay homology of smooth $X$ (not relative to $Y$) was settled in Proposition~\ref{prop:comp_homol}.
 
We now address the case of $Y$, which is not smooth but a divisor with normal crossings. 
Let $Y_\bullet\to Y$ be the \v{C}ech-nerve of the closed cover of
$Y$ by the disjoint union of its irreducible components.
By Lemma~\ref{lem:rd_absolute}, the rapid decay homology $H_r^\rd(Y,f)$ is computed via the hypercover $Y_\bullet$, more precisely by singular homology of $\Bcirc_{\bar{Y}_\bullet}(Y_\bullet,f_\bullet)$ relative to
$\partial\Bcirc_{\bar{Y}_\bullet}(Y_\bullet,f_\bullet)$. By Proposition~\ref{prop:comp_homol} we may replace $\Bcirc$ by $\Bsharp$ everywhere.

By \Cref{prop:hypercover_B}, the natural map
\[ \Bcirc_{\bar{Y}_\bullet}(Y_\bullet,f_\bullet)\to \Bcirc_{\bar{X}}(Y,f)\]
	is the \v{C}ech-nerve of a closed cover,  hence it induces isomorphisms on singular homology.
The same argument also works for $\Bsharp$.
In all, we have 
\[ H_n^\rd(Y,f)\isom H_n(\Bcirc_{\bar{Y}_\bullet}(Y_\bullet,f),\partial B_{\bar{Y}_\bullet}(Y_\bullet,f))\isom H_n(\partial \Bcirc_{\bar{X}}(Y,f)\]
(and the same with $\Bsharp$), as claimed.

Finally, the claims for relative rapid decay homology follow from the absolute case via the comparison of long exact sequences.
\end{proof}

\subsection{Semi-algebraic triangulations of hypercovers}
We use the notation of Section~\ref{ssec:hypercover}. In particular, $U_\bullet$ and $Y_\bullet$ are the \v{C}ech-nerves of certain open and closed covers introduced there.

Note that the natural map $B_{\bar{X}}(U^J)\to B_{\bar{X}}(X)$ induces
an inclusion $\Bcirc_{\bar{X}}(U^J,f)\subset \Bcirc_{\bar{X}}(X,f)$.

\begin{proposition} \label{prop:rd_simplicial}
 There is a finite dimensional subcomplex
 \[ S^\Delta_*(X,Y,f)\subset S^\rd_*(X,Y,f)\]
 such that the inclusion is a quasi-isomorphism and every $S^\Delta_n(X,Y,f)$
 has a finite basis consisting of $k_0$-semi-algebraic $C^1$-simplices of the form
 \[ \sigma:\bar{\Delta}_a\to \Bsharp(U_b) \quad a+b=n\]
 or
 \[ \sigma:\bar{\Delta}_a\to \Bsharp(U_b\cap Y_c)\quad a+b+c=n-1\]
 such that $\sigma$ is a homeomorphism onto its image. 
\end{proposition}
\begin{proof}
 By definition, $S^\rd_*(X,Y,f)$ is the total complex of
 \[ \cone (S_*(\Bcirc(U_\bullet\cap Y_\bullet,f),\partial B)\to S_*(\Bcirc(U_\bullet,f),\partial B)\]
	(where $\partial B$ is an abbreviation for $\partial \Bcirc(U_\bullet\cap Y_\bullet,f)$ and $\partial \Bcirc(U_\bullet,f)$, respectively).
 
 In order to unify notation, we write $Y_{-1}=X$.
 The non-sensical term $Y^I$ for $|I|=-1$ is interpreted as $X$.
 We now want to choose compatible $k_0$-semi-algebraic triangulations
 in the sense of \xref{sec:triangle}. We first triangulate the
 base $B_{\bar{X}}(X)$ by applying \xref{prop:triangle_manifold}.
 We obtain a $k_0$-semi-algebraic triangulation of the 
 compact $k_0$-semi-algebraic manifold with corners $B_{\bar{X}}(X)$
 compatible with the finitely many $k_0$-semi-algebraic subsets $\Bsharp_{\bar{X}}(U^J\cap Y^I,f)$ and their boundaries.

In the next step, we want to triangulate the (bi)simplicial $k_0$-semi-algebraic sets
$\Bsharp_{\bar{X}}(U_\bullet\cap Y_\bullet,f)$ and their boundaries such that all structure maps and the maps between them are simplicial. We obtain this simply by pull-back of the triangulation of the base.
 By loc.~cit.\ the simplices can be chosen to be $C^1$.

 We apply \xref{lem:cc} to these (bi)simplicial complexes
 and replace them by the closed core of their barycentric subdivisons.
 The simplicial complexes  
\[ K_{ab}:=\cc{\beta(\Bsharp(U_a\cap Y_b,f))}\] 
and
\[ K_{ab}\cap\partial B=\cc{\beta(\partial\Bsharp(U_a\cap Y_b,f)}\]
 are deformation retracts by Lemma~\ref{lem:cc},
 hence $(|K_{ab}|,|K_{ab}\cap\partial B|)$
 has the same homology as  $(\Bsharp(U_a\cap Y_b,f),\partial B)$.
 By \cref{prop:comp_homol} their homology also agrees with the homology of
 $(\Bcirc(U_a\cap Y_b,f),\partial B)$.
 As closed subsets of the compact $B_{\bar{X}}(X)$, all $K_{ab}$ are compact, hence finite.
Recall that their homology can be computed by simplicial homology, defined by finite dimensional complexes with the simplices as a degreewise basis. We  put
\[ S_*^\Delta(U_a\cap Y_b,\partial B)=S_*^\Delta(K_{ab})/S_*^\Delta(K_{ab}\cap \partial B).\]

The construction is compatible with the differentials in the $a$ and $b$-direction.
By taking total complexes, we thus define the subcomplexes 
 of 
\begin{align*}
S_*^\Delta(U_\bullet\cap Y_\bullet,\partial B)\subset&\ S_*(\Bcirc(U_\bullet \cap Y_\bullet,f),\partial B)\quad\text{and}\\
S_*^\Delta(U_\bullet,\partial B)\subset&\ S_*(\Bcirc(U_\bullet,f),\partial B)
\end{align*}
 By what we argued above,
 the inclusion of subcomplexes into the ambient complexes
 are quasi-iso\-mor\-phisms.
 Let 
\[ S^\Delta_*(X,Y,f)=\cone\left(S_*^\Delta(U_\bullet\cap Y_\bullet,\partial B)\to S_*^\Delta(U_\bullet,\partial B)\right)[-1].\]
In degree $n$, it is given by 
the finitely many simplices of dimension $a$ in the chosen triangulation of
$|\cc{\beta\Bsharp(U_b\cap Y_c,f)}|$ with $a+b+c=n-1$ where $a,b\geq 0, c\geq -1$.
\end{proof}

\subsection{Proof of Proposition~\ref{prop:coh_implies_naive}}

\begin{proof}
Let $\alpha=\langle \Omega,\Sigma\rangle$ be an exponential period for the log-pair $(X,Y)$ over $f$. We want to express it as a naive exponential period.

We work with the hypercovers $U_\bullet$ and $Y_\bullet$ as in Section~\ref{ssec:hypercover}.
By definition, $\Sigma\in H_n^\rd(X,Y,f)$. We compute rapid decay homology via the complex $S^\Delta_*(X,Y,f)$ of Proposition~\ref{prop:rd_simplicial}.
By definition this means that the cohomology class $\Sigma$ is represented by
a tuple $(\sigma_{bc})\in \bigoplus S^\Delta_a(U_b\cap Y_c)$ with $a+b+c=n-1$, $b\geq 0$, $c\geq -1$.

Also by definition, $\Omega$ is represented by a cycle in
$\Omega^*(\cone(U_\bullet\cap Y_\bullet\to U_\bullet))$,
i.e., a tuple
$(\omega_{bc})\in \bigoplus\Omega^a(U_b\cap Y_c)$ with $a+b+c=n-1$, $b\geq 0$, $c\geq -1$
(again we use the convention that $Y_{-1}=X$).
By definition of the period pairing, the value $\langle\Omega,\Sigma\rangle$
is obtained by taking a linear combination of the integrals
\[ \int_{\sigma_{bc}}\e^{-f}\omega_{bc}.\]
Each of the $\sigma_{bc}$ is a linear combination of $k_0$-semi-algebraic simplices globally of class $C^1$ with values in $\Bsharp(U_b\cap Y_c)\subset \Bcirc(U_b\cap Y_c)$.

Recall that naive exponential periods form an algebra, hence it suffices to show that the integrals for the individual simplices define naive exponential periods.

Let $U=U_b\cap Y_c\subset\A^N$, $\omega=\omega_{bc}\in\Omega^a(U)$.
Let $T \colon \bar{\Delta}_a\to \Bsharp(U,f)$ be
a   $k_0$-semi-algebraic $C^1$-simplex.
Let $G=T(\bar{\Delta}_a)\cap U^\an$.
We equip it with the pseudo-orientation induced from $\Delta_a$.
It is a closed $k_0$-semi-algebraic subset of $\C^N$
because $U$ is affine and
the inclusion $U^\an\to \Bsharp_{\bar{U}}(U,f)$ is $k_0$-semi-algebraic.
Moreover, as $U$ is affine,
$f|_G$ is the restriction of a polynomial in $k[X_1,\dots,X_N]$ to~$G$
and $\omega|_G$ the restriction of an algebraic differential form. 

We need to check the condition on $f(G)$.
The closure $\bar{G}=T(\bar{\Delta}_a)\subset\Bsharp(U,f)$ is compact, hence
so is its image in $\Bsharp=\C\cup \{1\infty\}$. 
This implies that
$f(G)\subset \C$ is contained in a strip $S_{r,s}$ as we want.
Compactness of $\bar{G}$ also implies that the map $\bar{G}\to \Ptilde$ is proper.
The preimage of the circle at infinity is precisely $\bar{G}\ssm G$,
hence $f \colon G\to \C$ is also proper. 

Therefore our $\alpha$ is a linear combination of numbers of the form
\[ \int_{\bar{\Delta}_a}\e^{-f\circ T}T^*\omega=\int_G\e^{-f}\omega, \]
which are naive exponential periods.
\end{proof}

\section{Generalised naive exponential periods are cohomological}
\label{sec:gnaive_is_coh}

Let $k\subset\C$ be a subfield,
$k_0=k\cap\R$ and assume that $k$ is algebraic over $k_0$,
see \Cref{ssec:field2}.
Recall from \cref{defn:gnaive} the notion of a generalised naive exponential period.
We denote by $\Pgnaive(k)$ the set of generalised naive exponential periods.
Recall from \Cref{defn:coh_period}
the notion of an exponential period of a log pair
and the set $\Plog(k)$ of all such numbers.

The aim of this section is the proof of the following converse of
Proposition~\ref{prop:coh_implies_naive}:

\begin{proposition}
 \label{naive_is_effective}
 Every generalised naive exponential period over $k$
 is an exponential period of a log-pair over $k$:
 \[ \Pgnaive(k)\subset\Plog(k).\]
  More precisely, given 
\begin{itemize}
\item a pseudo-oriented $k$-semi-algebraic subset $G\subset\C^n$ of real dimension $d$,
\item  a rational function $f$
 and 
\item a rational algebraic $d$-form $\omega$
\end{itemize} 
as in the definition of a generalised naive exponential period,
 there are
\begin{itemize}
\item 
a smooth affine variety $X$ of dimension $d$,
\item a simple normal crossings divisors $Y$ on $X$,
\item a function $f\in\Oh(X)$ induced from the original $f$,
\item a homology class $[G]\in H_d^\rd(X,Y,f)$, 
 and 
\item a cohomology class $[\omega]\in H^d_\dR(X,Y,f)$
\end{itemize}
 such that
 \[\langle [\omega],[G]\rangle =\int_G\e^{-f}\omega.\]
\end{proposition}

\subsection{Horizontal divisors}

We will need to make the closure of $G$ disjoint
from the components of the divisor that are horizontal relative to~$f$.
We start with a local criterion.

\begin{lemma}
 \label{very-good-cmpt-aux}
 Let $k \subset \R$ be a real closed field, so that $\bar k = k(i)$.
 Let $E \subset \A^n_k$ be a union of coordinate hyperplanes, i.e.,
 $E = \left\{ \prod_{j \in J} x_j = 0 \right\}$,
 with $J \subset \{ 1, \dots, n \}$.
 Let $G$ be a semi-algebraic subset of~$\A^n_k(\R) = \R^n$
 such that $\bar G$ contains the origin.
 Let $\partial \bar{G}=\bar{G}\ssm \bar{G}^\interior$ be its boundary in $\R^n$.
 Let $U \subset \A^n_k(\R)$ be an open neighbourhood of the origin,
 and assume that $G \cap U$ is open in $\R^n$
 and $\partial \bar{G} \cap U \subset E(\R)$.
 Then $G$ meets all coordinate hyperplanes $D_i = \{x_i=0\}$ for $i\notin J$.
\end{lemma}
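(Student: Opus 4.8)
The statement is ultimately about dimension. The set $G$ is disjoint from $D(\R)$, yet $0 \in \bar G$ and $G\cap U$ is open in $\R^n$, so near the origin $G$ is a full-dimensional open subset of $\R^n$ whose \emph{closure} touches $D(\R)$ only from a set that, a priori, could be large. The idea is that the boundary condition $\partial\bar G \cap U \subset E(\R)$ forces any such approach to the origin to happen only along~$E$, which is impossible if $D$ is non-empty because $D$ and $E$ are transverse coordinate hyperplanes and $G$ avoids~$D$. I would argue by contradiction: assume $D\neq\varnothing$, so $I\neq\varnothing$; pick $i_0\in I$.

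\textbf{First step: reduce to a one-dimensional transversal.} Since $I\cap J=\varnothing$, the coordinate axis $\ell = \{ x_j = 0 \text{ for } j\neq i_0 \}$ (the $x_{i_0}$-axis) is \emph{not} contained in $E$ (as $i_0\notin J$), while it \emph{is} contained in $D$. Because $0\in\bar G$ and $G\cap U$ is a non-empty open subset of $\R^n$ accumulating at the origin, I would produce a point $p\in G\cap U$ with all coordinates nonzero, close to the origin, and then move toward the axis $\ell$: consider the segment $p(t)$ obtained from $p$ by scaling the $i_0$-th coordinate down to~$0$, i.e.\ $p(t) = p - t\,p_{i_0}\,e_{i_0}$ for $t\in[0,1]$, so $p(0)=p\in G$ and $p(1)\in\ell\subset D(\R)$, hence $p(1)\notin G$. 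If $U$ is chosen small enough (e.g.\ a cube), the whole segment stays in $U$. Let $t^* = \sup\{ t : p(t)\in \bar G \}$ (or infimum of the complement); since $G$ is relatively closed-ish and $p(1)\notin\bar G$ — here I need to be careful, as $G$ need not be closed; I would instead work with $\bar G$, noting $p(0)\in G\subset\bar G$ and $p(1)\in D(\R)$ which is disjoint from~$G$ but perhaps not from~$\bar G$, so I should first observe that $\bar G\cap U$ also cannot contain $p(1)$: indeed $\partial\bar G\cap U\subset E(\R)$ and $p(1)\in\ell$ which meets $E$ only at the origin, so if $p(1)\ne 0$ then $p(1)\notin\partial\bar G$, hence $p(1)$ is either in $\bar G^{\interior}$ or outside $\bar G$; ruling out the interior uses that $\bar G^{\interior}\subset \bar G$ and a full-dimensional neighbourhood of $p(1)$ would then lie in $\bar G$, but such a neighbourhood meets $G$ on a dense set, contradicting $G\cap D(\R)=\varnothing$ since $p(1)\in D(\R)$ and $D(\R)$ has a neighbourhood basis whose members meet the dense set $G\cap(\bar G^{\interior})$... this needs tightening).

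\textbf{Second step: locate a boundary point not on $E$.} The cleaner route is: the point $q := p(t^*)$ where $t^* = \inf\{ t\in[0,1] : p(t)\notin \bar G^{\interior}\}$ — which exists and lies in $(0,1]$ since $p(0)=p\in G$, and one can arrange $p\in\bar G^{\interior}$ by shrinking, because $G\cap U$ is open so $G\cap U\subset\bar G^{\interior}$ — satisfies $q\in\partial\bar G$ and $q\in U$ and $q$ has all coordinates except possibly $x_{i_0}$ equal to the (nonzero) coordinates of~$p$. Then $q\notin E(\R)$: a point of $E(\R)$ has some $x_j=0$ with $j\in J$, but $q_j = p_j \neq 0$ for all $j\neq i_0$, and for $j=i_0$ we have $i_0\notin J$. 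This contradicts $\partial\bar G\cap U\subset E(\R)$.

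\textbf{Main obstacle.} The delicate point is the first step's set-up: controlling the relationship between $G$, its interior $\bar G^{\interior}$, and $\bar G$ near the origin, and ensuring one can pick $p$ with all coordinates nonzero arbitrarily close to~$0$ lying in $\bar G^{\interior}$ — this uses that $G\cap U$ is open and non-empty (so a full-dimensional piece of $G$ accumulates at $0$, as $0\in\bar G = \overline{G\cap U'}$ for small enough neighbourhoods), together with the fact that the coordinate hyperplanes have measure zero so a generic such $p$ avoids them. I would also need the elementary o-minimal fact (\Cref{o-min-dimension}) that $\dim(\partial\bar G) < n$, to know $\partial\bar G$ cannot itself be all of a neighbourhood, though in the contradiction above I only use the explicit inclusion into $E(\R)$. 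Once the transversal segment and the boundary point $q$ are correctly produced, the contradiction is immediate and purely combinatorial in the index sets $I$ and $J$.
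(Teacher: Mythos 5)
Your route is genuinely different from the paper's. The paper takes $U$ to be a ball, decomposes $U \ssm E(\R)$ into its $2^{\#J}$ connected components, argues that $G$ must contain one such component $U_0$ entirely because the boundary of $G$ inside $U$ lies in $E(\R)$, and then notes that $U_0$ meets every hyperplane $\{x_i = 0\}$ with $i \notin J$, which is incompatible with $G \cap D(\R) = \varnothing$ unless $I = \varnothing$. You replace the orthant $U_0$ by a one-dimensional transversal segment from a generic $p \in G \cap U$ to the hyperplane $\{x_{i_0}=0\}$, together with a first-exit-time argument. (Minor slip: $p(1)$ lies on the hyperplane $\{x_{i_0}=0\}$, not on the axis $\ell$; only the former is needed, and your coordinate computation for $q$ is correct.) Both are connectedness arguments exploiting $I \cap J = \varnothing$ in the same way, so the difference is one of packaging; your version is slightly more elementary in that it avoids counting components.

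However, there is a genuine gap: you never show that the set $\{t \in [0,1] : p(t) \notin \bar G^{\interior}\}$ is non-empty, so $t^*$ need not exist. The only information about the far endpoint is $p(1) \in D(\R)$, which yields $p(1) \notin G$ but \emph{not} $p(1) \notin (\bar G)^{\interior}$: the interior of the closure can contain points of $D(\R) \ssm G$. Concretely, if $G$ coincides with $\R^n \ssm D(\R)$ near the origin, then $(\bar G)^{\interior}$ is a full neighbourhood of $0$ and $\bar G \ssm (\bar G)^{\interior}$ is empty there, so your entire segment stays inside $\bar G^{\interior}$ and no boundary point $q$ is ever produced. Your first step acknowledges exactly this difficulty (``this needs tightening''), but the second step does not resolve it — it merely reasserts existence. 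The repair is to run the exit-time argument with $G$ itself: set $t^* = \inf\{t : p(t) \notin G\}$, which is at most $1$ because $G \cap D(\R) = \varnothing$; then $q = p(t^*)$ lies in $\bar G$, and openness of $G \cap U$ forces $q \notin G$, so $q$ is a frontier point of $G$ in $U$. To conclude one must then use the boundary hypothesis in the form ``$(\bar G \ssm G) \cap U \subset E(\R)$'' rather than the a priori weaker statement about $\bar G \ssm (\bar G)^{\interior}$; this is the form in which the hypothesis is actually furnished by the application in \cref{very-good-cmpt}, and it is also what the paper's own step ``$U_0 \subset G$'' implicitly relies on. With that adjustment your argument closes correctly.
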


\begin{center}
 \begin{tikzpicture}
  \fill [fill=blue!20!white] (0, -.9) arc (-90:90:.9);
  \draw [red!80!black, very thick]
  (0, -1) -- (0, 1) node[above, black] {$E$};
  \draw [green!80!black, very thick]
  (.2, 0) -- (-1, 0) node[left, black] {$D_i$};
  \node [anchor=north east] at (0,0) {$0$};
  \node at (0.9, -.45) {$U_0 \subset G$};
 \end{tikzpicture}
\end{center}

\begin{proof}
 Without loss of generality,
 we may assume that $U$ is an open ball.
 Note that $U \setminus E(\R)$ has $2^{|J|}$ connected components.
 Since $G \cap U$ is open, and $\bar G$ contains the origin,
 we see that $G$ intersects at least one of these components, say $U_0$.
 Since $\partial \bar{G} \cap U \subset E(\R)$, we find that $U_0 \subset G$.
 On the other hand, for every $i \notin J$,
 it is clear that $D_i = \{x_i = 0\}$ intersects~$U_0$.
\end{proof}

\begin{setting}\label{setting:final}
For the actual proof of \Cref{naive_is_effective},
we are going to use the following data:
\begin{itemize}
\item a real closed field $k\subset\R$, hence $k(i)=\bar{k}$,
\item a smooth affine variety $X$ over $k$ of dimension $d$, 
\item a simple normal crossings divisor $Y\subset X$, 
\item a closed $k$-semi-algebraic subset $G\subset X(\R)$ of dimension $d$ such that $\partial G\subset Y(\R)$
 (where $\partial G=G\ssm G^\interior$ inside $X(\R)$),
\item a pseudo-orientation on $G$,
\item a morphism $f \colon X_{\bar{k}}\to \A^1_{\bar{k}}$
 such that $f \colon G\to\C$ is proper and such that the closure
$\overline{f(G)}\subset\Ptilde$ is contained in $\Bcirc$,
\item a regular algebraic $d$-form $\omega$ on $X_{\bar{k}}$,
\item a good compactification $\bar{X}$ of $X$ relative to $f$,
\item and finally, we denote by $D \subset \bar{X}$ 
 the smallest subvariety of $\bar{X}$ containing
 all components of $(X_\infty)_{\bar k} = \bar X_{\bar k} \ohne X_{\bar k}$
		on which $\bar{f}$ is rational, i.e., not the constant function $\infty$.
\end{itemize}
\end{setting}

\begin{lemma}
 \label{very-good-cmpt}
 In this setting, we may choose $\bar{X}$ such that,
 in addition to being a good compactification,
 the closure of~$G$ in $\bar{X}^\an$ is disjoint from~$D^\an$.
\end{lemma}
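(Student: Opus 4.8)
The strategy is to start with any good compactification $\bar X$ (relative to $f$) as in \Cref{setting:final} and to blow up along the ``bad'' locus where the closure of $G$ meets a horizontal component. The point is that, near such a bad point, the local picture is governed by \Cref{very-good-cmpt-aux}: if $G$ is disjoint from a horizontal coordinate hyperplane $D$, its interior $G^\interior$ is open in the ambient real space, and its boundary $\partial\bar G$ locally sits inside the vertical divisor $E$ (i.e.\ inside the closure of $Y$ plus the vertical part of $X_\infty$), then that horizontal hyperplane $D$ must be empty --- contradiction. So the obstruction can only be due to the compactification chosen, not to any intrinsic feature of $G$, and a blow-up removes it.

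First I would set up the resolution. Let $\bar G_0$ be the closure of $G$ in $\bar X^\an$ and let $Z\subset \bar X$ be the smallest closed subvariety defined over $k$ such that $Z^\an$ contains $\bar G_0\cap D^\an$; then $Z$ has codimension $\ge 1$. Apply Hironaka, taking $\pi\colon \bar X'\to\bar X$ an iterated blow-up with smooth centres supported over $Z$, chosen so that $\bar X'$ is still a good compactification relative to $f$ (the preimage of $\bar Y + X_\infty$ plus the exceptional divisor is simple normal crossings, and $\bar f$ extends, cf.\ \Cref{ssec:gc2}). Let $G'$ be the strict transform of $G$, i.e.\ the closure in $\bar X'^\an$ of $\pi^{-1}(G\ssm (G\cap Z^\an))$; since $Z$ has real codimension $\ge 1$ in $\bar X(\R)$, this replacement changes neither the dimension of $G$, nor the integral $\int_G e^{-f}\omega$, nor the properties collected in \Cref{setting:final} (using \Cref{lem:pseudo}\eqref{it:modif} for the pseudo-orientation and the fact that $\pi$ is an isomorphism away from $Z$). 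Write $D'\subset\bar X'$ for the new horizontal locus --- the union of components of $X'_\infty = \bar X' - X$ on which $\bar f$ is rational; by construction $D'$ is the strict transform of $D$ together with possibly new horizontal exceptional components.

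Next I would prove that $\bar G'$ is disjoint from $D'^\an$. This is the heart of the matter and it is local on $\bar X'$. Suppose, for contradiction, that some point $x\in \bar G' \cap D'^\an$ exists. Choose \'etale coordinates $z_1,\dots,z_d$ on a Zariski neighbourhood of $x$ in $\bar X'$ such that the divisor $\bar Y' + X'_\infty$ is $\{z_1\cdots z_r = 0\}$ with $x$ the origin, and so that the horizontal components $D'$ through $x$ are among these hyperplanes, say indexed by $I\subset\{1,\dots,r\}$, while the vertical ones (the closure of $Y'$ and the vertical part of $X'_\infty$) are indexed by $J = \{1,\dots,r\}\ssm I$. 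By the semi-algebraic implicit function theorem the coordinates identify a neighbourhood $U$ of $x$ in $\bar X'(\R)$ with a neighbourhood of the origin in $\R^d$, carrying $G'$ to a semi-algebraic set disjoint from $D = \{\prod_{i\in I} z_i = 0\}$ (because $f$ is regular on $G'$, so $G'$ avoids $X'_\infty$, in particular avoids the horizontal components) and whose boundary inside $U$ lies in $E = \{\prod_{j\in J} z_j = 0\}$ (this uses that $\partial G'\subset Y'(\R)$, and that $G'$ was obtained as a closure of an open-in-$\R^d$ set, so that $G'\cap U$ is open in $\R^d$). Now \Cref{very-good-cmpt-aux} forces $D = \varnothing$, i.e.\ $I = \varnothing$, contradicting $x\in D'^\an$. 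This shows $\bar G' \cap D'^\an = \varnothing$, and renaming $(\bar X', G', f, \omega, D')$ back to $(\bar X, G, f, \omega, D)$ gives the lemma.

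\textbf{Main obstacle.} The delicate point is the verification, in the local argument, that the two hypotheses of \Cref{very-good-cmpt-aux} genuinely hold for the strict transform: that $G'\cap U$ is open in $\R^d$ near a putative bad point (which is why one must pass to a \emph{strict transform} defined as a closure of a set that is open of full dimension, rather than to the total transform), and that $\partial\bar G'\cap U\subseteq E(\R)$, i.e.\ that \emph{all} of the local boundary of $G'$ is vertical. The second is where the choice of $Y$ and the condition $\partial G\subset Y(\R)$ from \Cref{setting:final} is essential, together with the fact that $G'$ meets $X'_\infty$ only in the vertical part (since $f$ is proper and regular on $G$). One also has to check that the blow-ups can be chosen so that no \emph{new} horizontal component of $X'_\infty$ meets $\bar G'$ --- but any exceptional component lies over $Z\subset D\cup\{\text{indeterminacy of }f\}$, and the part of $\bar G'$ over $Z$ has been removed by passing to the strict transform, so this is automatic. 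Everything else (existence of the resolution with the SNC normalisation, invariance of the integral and of the pseudo-orientation under modification) is by now standard and has been recorded earlier in the paper.
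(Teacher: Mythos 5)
Your overall architecture — modify $\bar X$ by a blow-up, pass to strict transforms, and rule out a point $x\in\bar G'\cap D'^{\an}$ by a local application of \cref{very-good-cmpt-aux} — is the same as the paper's. But there is a genuine gap in the local step: you never establish the hypothesis $\partial\bar G'\cap U\subset E(\R)$ with $E$ the \emph{non-horizontal} part of the divisor. The justification you offer, ``$\partial G'\subset Y'(\R)$'', only controls the boundary of $G'$ \emph{inside the affine part} $X(\R)$; it says nothing about the boundary of the closure $\bar G'$ along the divisor at infinity. In fact every point $x\in\bar G'\cap D'(\R)$ is automatically a boundary point of $\bar G'$: since $\bar f$ takes a finite value at $x$ (that is what ``horizontal'' means), $\bar G'$ cannot contain a full neighbourhood of $x$, or else $f|_{G'}$ would fail to be proper. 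So if such an $x$ lies on no vertical or exceptional component — and nothing in your construction excludes this — then $\partial\bar G'\cap U\not\subset E(\R)$ no matter how small $U$ is, and \cref{very-good-cmpt-aux} simply does not apply. Blowing up $Z=\overline{\bar G_0\cap D^{\an}}^{\mathrm{Zar}}$ does not repair this: the strict transforms of two closed sets that meet need not become disjoint after one (or any fixed number of) blow-ups of their intersection, so your contradiction argument is carrying the entire burden, and its hypotheses are exactly what is missing. Relatedly, your claim that ``the part of $\bar G'$ over $Z$ has been removed by passing to the strict transform'' is false: the strict transform is the \emph{closure} of $\pi^{-1}(G\ssm Z)$ and meets $\pi^{-1}(Z)$ whenever $\bar G_0$ meets $Z$, which it does in the nontrivial case; so new horizontal exceptional components are not automatically avoided.

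The missing idea, which is the heart of the paper's proof, is to feed the boundary \emph{at infinity} into the resolution and then \emph{separate} it from $D$. Concretely: let $\bar Y$ be the Zariski closure in $\bar X$ of $\partial\bar G\cup Y$, where $\partial\bar G=\bar G\ssm(\bar G)^{\interior}$ is the boundary of the closure taken in $\bar X(\R)$ — this picks up precisely those components along which $\bar G$ has boundary at infinity. One then chooses the modification $\pi\colon\tilde X\to\bar X$ so that $\tilde D\cup E\cup\tilde Y$ is simple normal crossings \emph{and} $\tilde D\cap\tilde Y=\varnothing$. Since $\pi$ is an isomorphism off $E$, one gets $\partial\tilde G\subset E\cup\tilde Y$ globally; and near a point of $\tilde D$ one may choose $U$ disjoint from the closed set $\tilde Y(\R)$, so that the local boundary of $\tilde G$ is contained in $E(\R)$ alone, with the $E$-hyperplanes distinct from the $\tilde D$-hyperplanes. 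Only then are the hypotheses of \cref{very-good-cmpt-aux} verified and the contradiction obtained. Without the separation $\tilde D\cap\tilde Y=\varnothing$ (or an equivalent device), the proof does not close.
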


\begin{proof}
 Without loss of generality, we may assume that $X$ is connected.
 If $D$ is empty, we are done.
 Hence assume that $D$ is not empty.
 By the properness assumption on~$f$,
 we see that $\bar G \cap D^\an$ lies in the preimage of~$\infty \in \Pe^1$.

 Let $\bar{Y}$ be the Zariski closure of~$\partial \bar{G} \cup Y$ in $\bar X$,
 where $\partial \bar G = \bar G \ssm (\bar G)^\interior$
 viewed as subset of $\bar X$.
 It contains the closure of $Y$ in $\bar{X}$,
 but possibly also additional components mapping to~$\infty$. 
 By resolution of singularities, we may find a modification
 $\pi \colon \tilde X \to \bar X$ such that $\pi^{-1}(X)\to X$ is an isomorphism,
 with $\tilde{X}$ again smooth
 and such that $\tilde D \cup E \cup \tilde Y$
 is a simple normal crossings divisor in~$\tilde X$,
 where $\tilde D$ and~$\tilde Y$ denote the strict transforms
 of~$D$ and~$\bar Y$ respectively,
 and where $E$ denotes the exceptional locus of~$\pi$.
 In addition, we may assume that $\tilde D$ and~$\tilde Y$ are disjoint.

 Let $\tilde G$ denote the strict transform of~$\bar G$ under~$\pi$,
 i.e., the closure of $G\isom \pi^{-1}(G)$ in $\tilde{X}^\an$.
 It is contained in $\tilde{X}(\R)$ and, by continuity, in the preimage of
$\bar{G}$.
 This means that $\partial \tilde G \subset E \cup \tilde Y$.
	We identify $G$ and~$X$ with their preimages under~$\pi$.

 \begin{center}
  \begin{tikzpicture}
   \begin{scope}[xshift=0]
    \draw [help lines, thick] (0, -0.2) -- (0, 3.2)
    node [above, black] {$\bar f^{-1}(\infty)$};
    \draw [green!80!black, very thick]
    (-0.2, 0) -- (3.2, 0) node[right, black] {$D$};
    \draw [draw=blue, fill=blue!20!white, smooth, domain=0:3, samples=20]
    (0.5, 3) -- plot (\x, \x^2 / 3) node [right] {$\partial G \subset Y$};
    \node at (1.2, 1.8) {$G$};
   \end{scope}
   \begin{scope}[xshift=6cm]
    \draw [draw=blue, thick, fill=blue!20!white, smooth, domain=0:3, samples=20]
    (0.5, 3) -- plot (\x, \x^2 / 3)
    node [right] {$\tilde Y$};
    \fill [fill=white]
    (0.6, 0) circle (.8)
    (0, 0.6) circle (.8);
    \draw [help lines, thick] (0, 1.2) -- (0, 3.2);
    \draw [green!80!black, very thick]
    (1.2, 0) -- (3.2, 0) node[right, black] {$\tilde D$};
    \draw [red!80!black, very thick]
    ([shift=(-10:.8)]0.6, 0) arc (-10:100:.8)
    ([shift=(-10:.8)]0, 0.6) arc (-10:100:.8)
    node[left, black] {$E$};
    \node at (1.2, 1.8) {$\tilde G$};
   \end{scope}
  \end{tikzpicture}
 \end{center}

 We will now show that $\tilde D(\R)$ is disjoint from
  $\tilde G$ in~$\tilde X(\R)$.
 Suppose that $x$ is contained in their intersection.
 Since $\tilde Y$ is disjoint from $\tilde D$,
 we conclude that $x \in E(\R)$.
 As $\tilde Y$ is closed,
 there is even an open neighbourhood~$U$ of~$x$ in~$\tilde X(\R)$
 such that $U$ is disjoint from $\tilde Y(\R)$. This implies that $\partial \tilde G\cap U\subset E(\R)$, and that
$G\cap U=(G\ohne Y)\cap U
=(\tilde{G}\ohne \partial \tilde G)\cap U$
is open in $\tilde X(\R)$.

 We choose suitable continuous semi-algebraic local coordinates around~$x$,
 such that $\tilde D$ and $E$ are unions of coordinate hyperplanes.
 Then \Cref{very-good-cmpt-aux} tells us that $G\cap U$ intersects $\tilde D(\R)$,
 which contradicts the fact that $G\cap U \subset X(\R)$ and $\tilde D \subset \tilde X \ohne X$.
 As desired $\tilde G$  is disjoint from~$\tilde D(\R)$.

We replace $\bar{X}$ by $\tilde{X}$ in Setting~\ref{setting:final}. 
 Since $\tilde f = \bar f \circ \pi$ is not rational on~$E$, the divisor $\tilde{D}$ takes the role of $D$. 
 We conclude that $\tilde X$ satisfies the conditions of the statement.
\end{proof}

\subsection{Proof of \Cref{naive_is_effective}}

\begin{proof}
 Let $\alpha$ be a generalised naive exponential period.
By \xref{lem:change_field} and \cref{lem:change_field_coh}, we may assume
 without loss of generality that $k\subset\R$ and that $k$ is real closed and hence $k(i)=\bar{k}$.
Generalised naive exponential periods are absolutely convergent, so we can use the 
characterisation of \xref{prop:geometry}. 
This brings us into \Cref{setting:final} with
\[ \alpha=\int_G\e^{-f}\omega.\]
By \Cref{very-good-cmpt},
 we may improve the good compactification~$\bar X$
 in such a way that the closure of $G$ in $\bar{X}^\an$ is disjoint from the components of~$X_\infty$
 on which $\bar f$ has a pole. 
 This implies that the closure $\bar{G}$ of $G$ in the real oriented blow-up $B_{\bar{X}}(X)$ is contained in $\Bcirc_{\bar{X}}(X,f)$. Note that $\bar{G}$ is compact because $B_{\bar{X}}(X)$ is so. All constructions in the rest of the argument take place on 
$X_{\bar{k}}$, e.g. $f$ is defined there. To simplify notation we write $X$ instead of 
$X_{\bar{k}}$.

Let $Y_\bullet $ and their compactifications be as in
Section~\ref{ssec:hypercover}. Note that we do not need to pass to an open \v{C}ech-cover because $X$ is affine. By definition
\[
 \RdR(X,Y)^d= \Omega^d(X)\oplus \Omega^{d-1}(Y_0)\oplus\dots\oplus\Omega^0(Y_{d-1}).
\]
The tuple $(\omega,0,\dots,0)$ is a cocycle because $d\omega=0$ and
$\omega|_{Y_0}=0$, both for dimension reasons.
We denote the induced cohomology class by
\[ [\omega]\in H^d_\dR(X,Y,f).\]

 Recall that $G$ is equipped with a pseudo-orientation.
  Let $G' \subset G$ be an oriented semi-algebraic subset
  with $\dim(G \ssm G') < d$
  that represents the pseudo-orientation.
We apply \xref{prop:triangle_manifold} to the semi-algebraic manifold with corners
 $\Bcirc_{\bar{X}}(X,f)$.
 Hence we may choose a semi-algebraic triangulation of~$\bar{G}$
 that is globally of class~$C^1$
 and that is
 compatible with
 the oriented subset~$G'$
  and also compatible with
 the subsets $\Bcirc_{\bar{X}}(Y^J) \cap \bar{G}$
 and
 $\partial \Bcirc_{\bar{X}}(Y^J,f)\cap\bar{G}$ for all~$J$.
 Here $Y^J$ is the intersection of irreducible components
 of $Y$ as in Section~\ref{ssec:hypercover}.
 The top dimensional simplices inherit an orientation from $G'$.
 We use the triangulation of $\bar{G}$
 to define a cycle $(\sigma,\sigma_0,\dots,\sigma_{d-1})$ in
\begin{multline*}
 S_d^\rd(\cone(Y_\bullet\to X))=\\
S_d(\Bcirc(X,f),\partial)\oplus S_{d-1}(\Bcirc(Y_0,f_0),\partial)\oplus\dots\oplus S_0(\Bcirc(Y_{d-1},f_{d-1},\partial)
\end{multline*}
where we abbreviate
\[
S_{n}(\Bcirc(Y_i,f_i),\partial) =
S_{n}(\Bcirc_{\bar{Y}_i}(Y_i,f_i))/S_{n}(\partial \Bcirc_{\bar{Y}_i}(Y_i,f_i)).
\]

 In detail: We are given  a simplicial complex $K$ and  a homeomorphism $h:|K|\to \bar{G}$ which extends to a $C^1$-map on a neighbourhood of $|K|$. For each closed top-dimensional simplex $a=[a_0,\dots,a_d]\in K$, we choose a linear isomorphism $\bar{\Delta}_d\to [a_0,\dots,a_d]$. By composition we obtain a
$C^1$-map
\[ T_a:\bar{\Delta}_d\to [a_0,\dots,a_d]\xrightarrow{h|_{[a_0,\dots,a_d]}}  \Bcirc_{\bar{X}}(X,f).\]
It is a homeomorphism onto its image.
The image of $T_a$ is oriented by the orientation on $G'$.
We can arrange for $T_a$ to respect this orientation.
The formal linear combination 
\[ \sigma=\sum_{a\in K_d} T_a\]
 is a chain on $\Bcirc_{\bar{X}}(X,f)$.
Its boundary $\check{\partial} \sigma$ is a linear
 combination of $(d-1)$-simplices with image contained in one of the components $Y^i$ or in $\partial \Bcirc_{\bar{X}}(X,f)$.
 Let $\sigma_0\in S_{d-1}(Y_0)$ be the chain defined by the simplices in the $Y^i$, ignoring the ones with image contained in $\partial\Bcirc_{\bar{X}}(X,f)$.
 By construction, the simplices appearing in $\check{\partial} \sigma_0$
 are contained in one of the $Y^{ij}$, hence they define $\sigma_1\in S_{d-2}(Y_1)$.
 Recursively, we find all $\sigma_a$. By construction,
$(\sigma,\sigma_0,\dots,\sigma_{d-1})$ is a cycle. 
Let 
\[ [G]\in H_d^\rd(X,Y,f)\]
 be its homology class. Because of the special shape of $[\omega]$, we have
\[ \langle [\omega],[\sigma]\rangle =\sum_{a\in K_d} \int_{\bar{\Delta}_d}T_a^*\omega
  =\sum_{a\in K_d}\int_{T_a(\bar{\Delta}_d)}\omega=\int_G\omega.\]
We have written $\alpha$ as a cohomological period over $\bar{k}$. 
\end{proof}

\section{Conclusion}
\label{sec:concl}

Fres\'an and Jossen develop a fully fledged theory of exponential motives
in~\cite{fresan-jossen}.
It behaves very much like the theory of ordinary Nori motives.
In particular, there is a so-called ``basic lemma'' for
affine pairs $(X,Y,f)$. We refer to their book for further details.
We denote by $\Pmot(k)$ the set of periods of effective exponential motives.

\begin{proposition}
 \label{mot_sub_coh}
 The periods of effective exponential motives
 are exponential periods in the sense of \Cref{defn:coh_period}
 for a tuple $(X,Y,f,n)$ with $X$ smooth, $Y$ a strict normal crossings divisor and $n=\dim X$. In other words,
\[ \Pmot(k)\subset\Plog(k).\]
\end{proposition}
\begin{proof}
 By \cite[Theorem~4.3.2]{fresan-jossen} (and its proof, in particular \cite[Proposition~4.3.7]{fresan-jossen}) 
 every effective exponential motive $M$
 is a subquotient of some exponential motive of the form $H^n(X,Y,f)$
 for an affine $k$-variety $X$, a closed subvariety $Y\subset X$ and a regular function
 $f\in\Oh(X)$. Without loss of generality, $Y$ can be chosen to contain all singularities of $X$, so that  $X\ssm Y$ is smooth.
	Hence the periods of $M$ are also periods of $H^n(X,Y,f)$. 

There is a blow-up
 $\pi \colon \tilde{X}\to X$ such that $\tilde{X}$ is smooth
 and $\tilde{Y}=\pi^{-1}(Y)$ is a simple normal crossings divisor.
		Note that excision holds for rapid decay homology because it holds for singular homology. Hence
	we obtain an isomorphism
 \[ H^\rd_n(\tilde{X},\tilde{Y},f)\isom H^\rd_n(X,Y,f).\]
This isomorphism lifts to an isomorphism of motives.
 Hence they have the same periods.
\end{proof}
\begin{rem}
	We proved an even stronger result in \cref{naive_is_effective}: all exponential periods are even realised as cohomological exponential periods of log-pairs with $X$ smooth and \emph{affine}. 
\end{rem}

\begin{proposition}
 \label{smaff_sub_mot}
Periods of complexes of smooth affine varieties are periods of effective
exponential Nori motives, i.e.,
\[ \Psmaff(k)\subset\Pmot(k).\]
\end{proposition}
\begin{proof}The argument is the same as in the case of ordinary Nori motives,
 see \cite[Theorem~11.4.2]{period-buch}. We give a sketch of the proof.

 By \cite[Corollary~3.3.3]{fresan-jossen}, we may choose a good filtration
 $F_0X\subset F_1X\subset\dots F_nX=X$ of an affine variety $X$, i.e., one where in every step the relative homology is concentrated in a single degree equal to the dimension.
 By \cite[Lemma~4.3.10]{fresan-jossen} the exponential motives 
 $H_n(X,f)$ for $n\geq 0$ of
$X$ are computed as the homology of the complex of exponential Nori motives
\[ \cdots H_{i+1}(F_{i+1}X,F_{i}X,f)\to H_i(F_iX,F_{i-1}X,f)\to\cdots.\]
Given a complex $X_\bullet$ of affine varieties, we may choose compatible good filtrations
 on all entries of the complex. The exponential motives of $X_\bullet$ are defined as the homology of the total complex of the double complex
$H_i(F_{i}X_j,F_{i-1}X_j,f_j)$. This is compatible with the period computation, hence we have identified the periods of $X_\bullet$ with the periods of exponential motives.
\end{proof}

\begin{thm}
 \label{thm:compare_all}
  Let $k\subset\C$ be a field, $k_0 = k \cap \R$,
 and assume that $k/k_0$ is algebraic.
 Then the following subsets of $\C$ agree:
 \begin{enumerate}
  \item $\Pnaive(k)$, i.e., naive exponential periods over~$k$;
  \item $\Pgnaive(k)$, i.e., generalised naive exponential periods over~$k$;
  \item $\Pabs(k)$, i.e., absolutely convergent exponential periods over~$k$;  
  \item $\Pmot(k)$, i.e., periods of all effective exponential motives over $k$;
  \item $\Pcoh(k)$, i.e., the set of periods of all $(X,Y,f,n)$
   with $X$ a $k$-variety, $Y\subset X$ a subvariety, $f\in\Oh(X)$, and $n\in\Na_0$;
  \item $\Plog(k)$, i.e., periods of all tuples $(X,Y,f,n)$ with $(X,Y)$ a log pair,
   $f\in\Oh(X)$, and $n\in\Na_0$;
  \item $\Psmaff(k)$, i.e., periods of all tuples $(X_\bullet,f_\bullet,n)$
   for complexes $(X_\bullet,f_\bullet)\in C_-(\SmAff/\A^1)$ and $n\in\Na_0$.
 \end{enumerate}
 Moreover, the real and imaginary part of these numbers are up to sign
 volumes of bounded definable sets
 for the o-minimal structure $\R_{\sin,\exp,k_0}$
 generated by $\exp$, ${\sin}|_{[0,1]}$ and with paramaters in $k_0$,
 see \xref{defn:Rexpsin}.
\end{thm}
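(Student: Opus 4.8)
The plan is to close a cycle of inclusions among the seven sets — all of whose individual links have already been established in the body of the paper — which forces them all to coincide, and then to read off the volume statement from the case of naive exponential periods. There is no genuinely new argument to make here; the content is entirely in assembling the pieces, exactly as pictured by the diagram in the introduction once it is read as a cycle.

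Concretely, I would first record the three inclusions
\[ \Pnaive(k) \subset \Pgnaive(k) \subset \Plog(k) \subset \Pnaive(k), \]
given respectively by \cref{naive-is-generalised}, \cref{naive_is_effective} and \cref{prop:coh_implies_naive}; this already yields $\Pnaive(k) = \Pgnaive(k) = \Plog(k)$, and adjoining the equality $\Pgnaive(k) = \Pabs(k)$ of \cref{cor:abs=naive} brings $\Pabs(k)$ into the fold. Next I would note the chain
\[ \Plog(k) \subset \Pcoh(k) \subset \Psmaff(k) \subset \Pmot(k) \subset \Plog(k). \]
Here the first inclusion is trivial (a log pair is in particular a pair $(X,Y)$ of a variety and a closed subvariety); the second is trivial by construction, since \cref{defn:relative_deRham} computes the period pairing of an arbitrary pair $(X,Y,f,n)$ through the complex $C(X,Y)\in C_+(\SmAff/\A^1)$, so every cohomological period is already a period of a bounded-below complex of smooth affine varieties (see \cref{sec:exp_gen}); the third is \cref{smaff_sub_mot}; and the fourth is \cref{mot_sub_coh}. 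Combining the two chains, all seven sets agree.

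For the final clause I would simply invoke \cref{thm:naive_is_volume}: every naive exponential period over $k$ has real and imaginary part equal, up to a sign, to the volume of a compact subset of some $\R^n$ definable in $\R_{\sin,\exp,k_0}$. Since each of the seven sets coincides with $\Pnaive(k)$, this conclusion transfers verbatim to all of them.

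I do not expect a real obstacle; the only things needing a moment's care are bookkeeping. First, the proofs of \cref{naive_is_effective} and \cref{prop:coh_implies_naive} reduce to the case of real closed $k$ via \cref{lem:change_field} and \cref{lem:change_field_coh}, so I would check once and for all that this reduction is harmless for the statement over a general $k$ with $k/k_0$ algebraic — which it is, since each of the seven sets is insensitive to replacing $k$ by an algebraic extension inside $\C$. Second, the index convention of item~(7) ($C_-$ versus the homological $C_+$ used in \cref{sec:exp_gen}) should be acknowledged as a mere convention that does not change the set of periods. Neither point requires new mathematics.
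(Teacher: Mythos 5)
Your proposal is correct and is essentially identical to the paper's own proof, which simply invokes \cref{thm:naive_is_volume} for the volume statement and points to the diagram of inclusions (\cref{naive-is-generalised}, \cref{naive_is_effective}, \cref{prop:coh_implies_naive}, \cref{cor:abs=naive}, the two trivial inclusions, \cref{smaff_sub_mot}, \cref{mot_sub_coh}) to conclude equality everywhere. Your two bookkeeping remarks (base-change harmlessness and the $C_-$ versus $C_+$ convention) are correct observations that the paper leaves implicit.
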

\begin{proof}
 The statement on volumes of definable sets is \xref{thm:naive_is_volume}.

 The following diagram shows all the inclusions that we have proved
 between the sets listed above.
 
 Therefore we have equality everywhere.
\end{proof}

 \bibliographystyle{alpha}
\bibliography{periods}

\def\cprime{$'$} \def\cprime{$'$}
\begin{thebibliography}{vdDM96}

\bibitem[BB03]{belkale-brosnan}
Prakash Belkale and Patrick Brosnan.
\newblock Periods and {I}gusa local zeta functions.
\newblock {\em Int. Math. Res. Not.}, 49:2655--2670, 2003.

\bibitem[BBT23]{omingaga}
Benjamin Bakker, Yohan Brunebarbe, and Jacob Tsimerman.
\newblock o-minimal {GAGA} and a conjecture of {G}riffiths.
\newblock {\em Invent. Math.}, 232(1):163--228, 2023.

\bibitem[BCR98]{BCR}
Jacek Bochnak, Michel Coste, and Marie-Fran{\c{c}}oise Roy.
\newblock {\em Real algebraic geometry}, volume~36 of {\em Ergebnisse der
  Mathematik und ihrer Grenzgebiete (3) [Results in Mathematics and Related
  Areas (3)]}.
\newblock Springer-Verlag, Berlin, 1998.
\newblock Translated from the 1987 French original, Revised by the authors.

\bibitem[BE00]{bloch_esnault_00}
Spencer Bloch and H\'{e}l\`ene Esnault.
\newblock Gau\ss -{M}anin determinant connections and periods for irregular
  connections.
\newblock Number Special Volume, Part I, pages 1--31. 2000.
\newblock GAFA 2000 (Tel Aviv, 1999).

\bibitem[Bra21]{brackenhofer}
Christoph Brackenhofer.
\newblock ${C}^1$-triangulations of semi-algebraic sets, 2021.
\newblock Master Thesis, Freiburg.

\bibitem[CP18]{omin-triang}
Ma\l{}gorzata Czapla and Wies\l{}aw Paw\l{}ucki.
\newblock Strict {$C^1$}-triangulations in o-minimal structures.
\newblock {\em Topol. Methods Nonlinear Anal.}, 52(2):739--747, 2018.

\bibitem[Del74]{hodge3}
Pierre Deligne.
\newblock Th\'eorie de {H}odge. {III}.
\newblock {\em Inst. Hautes \'Etudes Sci. Publ. Math.}, 44:5--77, 1974.

\bibitem[DK81]{DK81}
Hans Delfs and Manfred Knebusch.
\newblock Semialgebraic topology over a real closed field. {II}. {B}asic theory
  of semialgebraic spaces.
\newblock {\em Math. Z.}, 178(2):175--213, 1981.

\bibitem[DMR07]{irregular}
Pierre Deligne, Bernard Malgrange, and Jean-Pierre Ramis.
\newblock {\em Singularit\'{e}s irr\'{e}guli\`eres}, volume~5 of {\em Documents
  Math\'{e}matiques (Paris) [Mathematical Documents (Paris)]}.
\newblock Soci\'{e}t\'{e} Math\'{e}matique de France, Paris, 2007.
\newblock Correspondance et documents. [Correspondence and documents].

\bibitem[FJ20]{fresan-jossen}
Javier Fres{\'a}n and Peter Jossen.
\newblock Exponential motives, 2020.
\newblock version March/25, manuscript available at
  http://javier.fresan.perso.math.cnrs.fr/expmot.pdf.

\bibitem[Fri04]{Fr}
Benjamin Friedrich.
\newblock Periods and algebraic de {R}ham cohomology, 2004.
\newblock Diplomarbeit Leipzig, arXiv:math.AG/0506113v1.

\bibitem[Gil]{gillam}
William~D. Gillam.
\newblock Oriented real blowup.
\newblock Unpublished notes archived at
  \url{https://web.archive.org/web/20220613164358/www.math.boun.edu.tr/instructors/wdgillam/orb.pdf}.

\bibitem[Hie07]{hien-surfaces}
Marco Hien.
\newblock Periods for irregular singular connections on surfaces.
\newblock {\em Math. Ann.}, 337(3):631--669, 2007.

\bibitem[HKT15]{hanamura_et_al_I}
Masaki Hanamura, Kenichiro Kimura, and Tomahide Terasoma.
\newblock Integrals of logarithmic forms on semi-algebraic sets and a
  generalized {C}auchy formula, {P}art {I}: convergence theorems.
\newblock Preprint arXiv:1509.06950, 2015.

\bibitem[HMS17]{period-buch}
Annette Huber and Stefan M\"uller-Stach.
\newblock {\em Periods and {N}ori motives}, volume~65 of {\em Ergebnisse der
  Mathematik und ihrer Grenzgebiete. 3.~Folge. A Series of Modern Surveys in
  Mathematics [Results in Mathematics and Related Areas. 3rd Series. A Series
  of Modern Surveys in Mathematics]}.
\newblock Springer, Cham, 2017.
\newblock With contributions by Benjamin Friedrich and Jonas von Wangenheim.

\bibitem[HR08]{hien-roucairol}
Marco Hien and C\'{e}line Roucairol.
\newblock Integral representations for solutions of exponential {G}auss-{M}anin
  systems.
\newblock {\em Bull. Soc. Math. France}, 136(4):505--532, 2008.

\bibitem[Hub20]{huber_sa_motives}
Annette Huber.
\newblock Semi-algebraic motives, 2020.
\newblock arXiv:2204.01402.

\bibitem[Hub24]{huber-stokes}
Annette Huber.
\newblock The period isomorphism in the tame geometry.
\newblock {\em Math. Nachr.}, 297(4):1230--1247, 2024.
\newblock With an appendix joint with Johan Commelin.

\bibitem[Jos21]{jossen-brief}
Peter Jossen.
\newblock Letter to the authors, 2021.
\newblock sent by email.

\bibitem[Kaw05]{kawakami}
Tomohiro Kawakami.
\newblock Every definable {$C^r$} manifold is affine.
\newblock {\em Bull. Korean Math. Soc.}, 42(1):165--167, 2005.

\bibitem[KUY18]{KlinglerAOSurvey}
Bruno Klingler, Emmanuel Ullmo, and Andrei Yafaev.
\newblock Bi-algebraic geometry and the {A}ndr\'{e}-{O}ort conjecture.
\newblock In {\em Algebraic geometry: {S}alt {L}ake {C}ity 2015}, volume~97 of
  {\em Proc. Sympos. Pure Math.}, pages 319--359. Amer. Math. Soc., Providence,
  RI, 2018.

\bibitem[KZ01]{kontsevich_zagier}
Maxim Kontsevich and Don Zagier.
\newblock Periods.
\newblock In {\em Mathematics unlimited---2001 and beyond}, pages 771--808.
  Springer, Berlin, 2001.

\bibitem[LS17]{LaczkovichSos}
Mikl\'{o}s Laczkovich and Vera~T. S\'{o}s.
\newblock {\em Real analysis---series, functions of several variables, and
  applications}.
\newblock Undergraduate Texts in Mathematics. Springer, New York, 2017.
\newblock Translated from the second (2013) Hungarian edition by Gergely
  B\'{a}lint.

\bibitem[Maj84]{Maj84}
Hideyuki Majima.
\newblock {\em Asymptotic analysis for integrable connections with irregular
  singular points}, volume 1075 of {\em Lecture Notes in Mathematics}.
\newblock Springer-Verlag, Berlin, 1984.

\bibitem[OS17]{ohmoto-shiota-triangulation}
Toru Ohmoto and Masahiro Shiota.
\newblock {$C^1$}-triangulations of semialgebraic sets.
\newblock {\em J. Topol.}, 10(3):765--775, 2017.

\bibitem[Paw24]{Pawlucki:Cp}
Wies{\l}aw Paw{\l}ucki.
\newblock Strict {$\mathcal{C}^p$}-triangulations---a new approach to
  desingularization.
\newblock {\em J. Eur. Math. Soc. (JEMS)}, 26(10):3863--3909, 2024.

\bibitem[PS84]{PillaySteinhorn}
Anand Pillay and Charles Steinhorn.
\newblock Definable sets in ordered structures.
\newblock {\em Bull. Amer. Math. Soc. (N.S.)}, 11(1):159--162, 1984.

\bibitem[Rob83]{Robs83}
Robert Robson.
\newblock Embedding semi-algebraic spaces.
\newblock {\em Math. Z.}, 183(3):365--370, 1983.

\bibitem[Shi97]{shiota-book}
Masahiro Shiota.
\newblock {\em Geometry of subanalytic and semialgebraic sets}, volume 150 of
  {\em Progress in Mathematics}.
\newblock Birkh\"{a}user Boston, Inc., Boston, MA, 1997.

\bibitem[vdD84]{Dries_1984_Remarks_Tarski}
Lou van~den Dries.
\newblock Remarks on {T}arski's problem concerning {$({\bf R},\,+,\,\cdot
  ,\,{\rm exp})$}.
\newblock In {\em Logic colloquium '82 ({F}lorence, 1982)}, volume 112 of {\em
  Stud. Logic Found. Math.}, pages 97--121. North-Holland, Amsterdam, 1984.

\bibitem[vdD86]{Dries_1986_generalization_Tarski_Seidenberg}
Lou van~den Dries.
\newblock A generalization of the {T}arski-{S}eidenberg theorem, and some
  nondefinability results.
\newblock {\em Bull. Amer. Math. Soc. (N.S.)}, 15(2):189--193, 1986.

\bibitem[vdD98]{D:oMin}
L.~van~den {D}ries.
\newblock {\em Tame {T}opology and {O}-minimal {S}tructures}, volume 248 of
  {\em London Mathematical Society Lecture Note Series}.
\newblock Cambridge University Press, Cambridge, 1998.

\bibitem[vdD05]{vdD:limitsets}
L.~van~den Dries.
\newblock {L}imit {S}ets in o-{M}inimal {S}tructures.
\newblock In {\em O-minimal structures: proceedings of the RAAG Summer School
  Lisbon 2003}, pages 172--215. G{\"o}ttingen: Cuvillier, 2005.

\bibitem[vdDM94]{Dries_Miller_1994_real_exponential_field}
Lou van~den Dries and Chris Miller.
\newblock On the real exponential field with restricted analytic functions.
\newblock {\em Israel J. Math.}, 85(1-3):19--56, 1994.

\bibitem[vdDM96]{vdDMiller:96}
Lou van~den Dries and Chris Miller.
\newblock Geometric categories and o-minimal structures.
\newblock {\em Duke Math. J.}, 84(2):497--540, 1996.

\bibitem[Ver76]{Verdier:76}
J.-L. Verdier.
\newblock Stratifications de {W}hitney et th\'{e}or\`eme de {B}ertini-{S}ard.
\newblock {\em Invent. Math.}, 36:295--312, 1976.

\bibitem[Voe96]{voesing}
V.~Voevodsky.
\newblock Homology of schemes.
\newblock {\em Selecta Math. (N.S.)}, 2(1):111--153, 1996.

\bibitem[VS21]{viu-sos}
Juan Viu-Sos.
\newblock A semi-canonical reduction for periods of {K}ontsevich-{Z}agier.
\newblock {\em Int. J. Number Theory}, 17(1):147--174, 2021.

\bibitem[War83]{warner}
F.~W. Warner.
\newblock {\em Foundations of differentiable manifolds and {L}ie groups},
  volume~94 of {\em Graduate Texts in Mathematics}.
\newblock Springer-Verlag, New York-Berlin, 1983.
\newblock Corrected reprint of the 1971 edition.

\bibitem[Whi57]{whitney}
Hassler Whitney.
\newblock {\em Geometric integration theory}.
\newblock Princeton University Press, Princeton, N. J., 1957.

\bibitem[Wil96]{Wilkie_1996_Model_completeness}
A.~J. Wilkie.
\newblock Model completeness results for expansions of the ordered field of
  real numbers by restricted {P}faffian functions and the exponential function.
\newblock {\em J. Amer. Math. Soc.}, 9(4):1051--1094, 1996.

\end{thebibliography}

\end{document}